% ----------------------------------------------------------------
% AMS-LaTeX Paper ************************************************
% **** -----------------------------------------------------------
% 

\documentclass[a4paper, 10pt, reqno]{amsart}

% *****************   Packages *****************************

\usepackage{amssymb, amsmath, amsthm}

\usepackage{graphicx, psfrag, setspace, subfigure, gensymb}
\usepackage{amsfonts}
\usepackage{bbm}
\usepackage{fix-cm}
\usepackage{comment}
\usepackage{tikz,tikz-cd}
\usetikzlibrary{matrix,arrows,decorations.pathmorphing,decorations.pathreplacing}
\tikzset{commutative diagrams/diagrams={baseline=-2.5pt},commutative diagrams/arrow style=tikz}
\usepackage[colorlinks]{hyperref}
\usepackage{float}
\usepackage{setspace} % needed for redefined \marginpar
\usepackage[utf8]{inputenc}
% \usepackage{changes}
% \definechangesauthor[color = blue]{J}
% \definechangesauthor[color = purple]{E}%because the references are green :)

% ******************* Font shorthands ***********************

\newcommand\A{\mathbb A}
\newcommand\Z{\mathbb Z}
\newcommand\C{\mathbb C}

\newcommand\N{\mathbb N}

\newcommand{\bbS}{\mathbb{S}}

\newcommand{\cA}{\mathcal{A}}
\newcommand{\cB}{\mathcal{B}}
\newcommand{\cC}{\mathcal{C}}

\newcommand{\cE}{\mathcal{E}}
\newcommand{\cF}{\mathcal F}
\newcommand{\cG}{\mathcal{G}}
\newcommand{\cH}{\mathcal{H}}

\newcommand{\cK}{\mathcal{K}}
\newcommand{\cL}{\mathcal{L}}

\newcommand{\cO}{\mathcal{O}}
\newcommand{\cP}{\mathcal{P}}

\newcommand{\cS}{\mathcal{S}}
\newcommand{\cT}{\mathcal T}
\newcommand{\cU}{\mathcal{U}}

\newcommand{\cW}{\mathcal{W}}
\newcommand{\cX}{\mathcal{X}}
\newcommand{\cY}{\mathcal{Y}}
\newcommand{\cZ}{\mathcal{Z}}

% *********** Operators and other maths *****************

\newcommand{\set}[1]{\left\{{#1}\right\}}

\newcommand\onehalf{\tfrac12}

\newcommand\isoto{\stackrel{\sim}{\To}}
\newcommand\id{\mathrm 1}

\newcommand\into{\hookrightarrow}
\newcommand\onto{\twoheadrightarrow}
\newcommand\To{\longrightarrow}

\newcommand\Coh{\operatorname{Coh}}
\newcommand\Hom{\operatorname{Hom}}
\newcommand\REnd{\operatorname{REnd}}
\newcommand\End{\operatorname{End}}

\renewcommand\hom{\mathcal{H}om }
\newcommand\RHom{\operatorname{RHom}}
\newcommand\Ext{\operatorname{Ext}}

\renewcommand{\P}{\mathbb{P}}
\newcommand\Gr{\operatorname{Gr}}
\newcommand{\Pf}{\mathrm{Pf}}
\newcommand\pt{\operatorname{pt}}

\newcommand{\Sym}{\operatorname{Sym}}
\newcommand{\Spec}{\operatorname{Spec}}

\newcommand{\Crit}{\operatorname{Crit}}

\newcommand{\Wedge}{\mbox{\scalebox{1.2}{$\wedge$}}}
\newcommand{\GL}{\operatorname{GL}}
\newcommand{\cok}{\operatorname{cok}}
\newcommand{\sm}{\mathrm{sm}}

% ************** Layout and environments *****************************

\newcommand{\al}[1]{\begin{align*}#1\end{align*}}

\newcommand{\beq}[1]{\begin{equation}\label{#1} }
  \newcommand{\eeq}{\end{equation}}
\newcommand{\pgap}{\vspace{5pt}}

\newtheorem{prop}[equation]{Proposition}
\newtheorem{thm}[equation]{Theorem}
\newtheorem{lem}[equation]{Lemma}
\newtheorem{defn}[equation]{Definition}

\newtheorem{cor}[equation]{Corollary}

\theoremstyle{remark}
\newtheorem{rem}[equation]{Remark}

%%% causes equations etc. to be numbered by section
\makeatletter \@addtoreset{equation}{section} \makeatother

%%% code to format the TOC nicely
\setcounter{tocdepth}{2}
\let\oldtocsection=\tocsection
\let\oldtocsubsection=\tocsubsection
\let\oldtocsubsubsection=\tocsubsubsection
\renewcommand{\tocsection}[3]{\hspace{0em}\oldtocsection{#1}{#2}{#3}}
\renewcommand{\tocsubsection}[3]{ \hspace{1em} \oldtocsubsection{#1}{\small{#2}}{\small{#3}} }
\renewcommand{\tocsubsubsection}[3]{\hspace{2em}\oldtocsubsubsection{#1}{\small{#2}}{\small{#3}}}

%%% nice marginpars
\setlength{\marginparwidth}{1in}
\newcommand{\marginparstretch}{0.6}
\let\oldmarginpar\marginpar
\renewcommand\marginpar[1]{\-\oldmarginpar[\framebox{\setstretch{\marginparstretch}\begin{minipage}{\marginparwidth}{\raggedleft\scriptsize #1}\end{minipage}}]{\framebox{\setstretch{\marginparstretch}\begin{minipage}{\marginparwidth}{\raggedright\scriptsize #1}\end{minipage}}}}

%%% strengthen subsubsection headings
\makeatletter
\def\subsubsection{\@startsection{subsubsection}{3}%
  \z@{.5\linespacing\@plus.7\linespacing}{-.5em}%
  {\normalfont\bfseries}}
\makeatother

% ************** Miscellaneous *****************************

\newcommand{\qquotes}[1]{``{#1}''}

\newcommand{\aand}{\quad\quad\mbox{and}\quad\quad}

% ************** Document specific ****************************

\newcommand{\GSp}{\mathrm{GSp}}
\newcommand{\Sp}{\mathrm{Sp}}
\newcommand{\supp}{\operatorname{supp}}
\newcommand{\codim}{\operatorname{codim}}
\newcommand{\Br}{\mathsf{DB}}

\newcommand{\Schur}[1]{\bbS^{#1}}
\newcommand{\Schpur}[1]{\bbS^{\langle#1\rangle}}
\newcommand{\pr}{\prime}

\renewcommand{\ss}{\mathrm{ss}}
\DeclareMathOperator{\depth}{depth}
\newcommand{\Stack}[2]{\big[#1\,/\,#2\big]}
\renewcommand{\mod}{\mbox{-mod}}
\newcommand{\rmod}{\mbox{mod-}}
\newcommand{\tms}{\!\times\!}
\newcommand{\dualW}{W'}
   %Unstable locus
\newcommand{\pd}{\mathrm{pd}}

\newcommand{\proj}{\Omega}

\newcommand{\Vect}{\mathrm{Vect}}

\newcommand\OS{O}

% \newcommand\proj{\includegraphics[scale=.7]{nonrect}}

% ************** Theorems stated in more than one place ******
\newtheorem*{thm:SPHoriDuality}{Theorem \ref{thm:SPHoriDuality}}
\newtheorem*{thm:HPDualityAlgebrasConcrete}
{Theorem \ref{thm:HPDualityAlgebrasConcrete}}
\newtheorem*{thm:ProjectiveWindows}{Proposition \ref{thm:ProjectiveWindows}}

% ***********************************************

% ----------------------------------------------------------------

\title{Hori-mological projective duality}
\author{J\o rgen Vold Rennemo and Ed Segal}
\begin{document}

\begin{abstract}
  Kuznetsov has conjectured that Pfaffian varieties should admit non-commutative crepant resolutions which satisfy his Homological Projective Duality. We prove half the cases of this conjecture, by interpreting and proving a duality of non-abelian gauged linear sigma models proposed by Hori.
\end{abstract}

\maketitle

\tableofcontents

\section{Introduction}

Let $V$ be a vector space of odd dimension $v$. For any even number $0\leq 2q<v$, we have a Pfaffian variety
$$\Pf_q \; \subset \P(\Wedge^2 V^\vee) $$
consisting of all 2-forms on $V$ whose rank is at most $2q$. This variety is not a complete intersection, and is usually highly singular -- the singularities occur where the rank drops below $2q$. We only get smooth varieties in the cases $q=1$, which gives the Grassmannian $\Gr(V, 2)$, and $q=\onehalf(v-1)$, which gives the whole of $\P(\Wedge^2 V^\vee)$.  

The projective dual of $\Pf_s$ is another Pfaffian variety; it's the locus
$$\Pf_s\; \subset \P(\Wedge^2 V) $$
consisting of bivectors of rank at most $2s$, where $2s = v-1-2q$.
\pgap

This paper achieves two closely-connected goals. The first is to
establish that \emph{Homological Projective Duality} (HPD) holds for this pair of varieties. This is a conceptual framework due to Kuznetsov \cite{kuznetsov_homological_2007}, for understanding how we should compare the derived categories $D^b(X)$ and $D^b(Y)$ for a pair of projectively-dual varieties $X$ and $Y$.  The idea is that we should pick a generic linear subspace $L$ and then look at the derived category of the slice $X\cap \P L$ and of the dual slice $Y\cap \P L^\perp$; then the \qquotes{interesting part} of these categories will be equivalent. Often there is a critical value of $\dim L$ such that both slices are Calabi--Yau, and they are derived equivalent. 
% \deleted[id=E]{The most interesting case is when $\Pf_q\cap \P L$ is Calabi--Yau, because then $\Pf_s\cap \P L^\perp$ is also Calabi--Yau, and the statement is that their derived categories are equivalent.} 

A complicating factor here is that Pfaffian varieties are singular, and it seems that it is not sensible to try to apply HPD to singular varieties.  Instead Kuznetsov suggests we replace both of them with non-commutative crepant resolutions. A non-commutative resolution of a variety $X$ is a sheaf of non-commutative algebras $A$ on $X$ which has an appropriate smoothness property, and is Morita-equivalent to $\cO_X$ over some open subset. Then instead of working with $\cO_X$-modules we work with $A$-modules, and obtain a category that behaves a lot like the derived category of a geometric resolution.  \v{S}penko and Van den Bergh \cite{spenko_non-commutative_2015} have constructed non-commutative crepant resolutions for Pfaffian varieties, we denote the resolution of $\Pf_s$ by $A$ and the resolution of $\Pf_q$ by $B$.  

We prove that the non-commutative varieties $(\Pf_s, A)$ and $(\Pf_q, B)$ are HP dual to each other.  For example, in the Calabi--Yau case we have the following result (a special case of Theorem \ref{thm:HPDualityAlgebrasConcrete} and Proposition \ref{thm:SerreOnTheProjectiveNCR}):

% \deleted[id=E]{ For example, in the Calabi--Yau case we have the following result (a special case of our Theorem ...):}

% \added[id=E]{The equivalence is a special case of theorem \ref{cor:HPDlite} or Theorem \ref{thm:HPDualityAlgebrasConcrete}, but where do we see the CYness?}

% \added[id=E]{Might have to delete this bit, since we haven't proved half of it.}
\begin{thm}\label{thm:CYcase} Let $L\subset \Wedge^2 V^\vee$ be a generic subspace of dimension $sv$. Then the sheaves $A$ and $B$ restrict to give  non-commutative crepant resolutions of the varieties $\Pf_s\cap \P L^\perp$ and  $\Pf_q\cap \P L$, the categories
  $$ D^b(\Pf_s \cap \P L^\perp,\; A|_{\P L^\perp}) \aand D^b(\Pf_q\cap \P L, \;B|_{\P L}) $$
  are both Calabi--Yau of dimension $2qs - 1$, and they are equivalent.
\end{thm}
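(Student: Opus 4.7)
Theorem \ref{thm:CYcase} is the Calabi--Yau special case of the wider HPD statement for the non-commutative Pfaffian varieties, and the plan is to deduce it from the general result Theorem \ref{thm:HPDualityAlgebrasConcrete} together with Proposition \ref{thm:SerreOnTheProjectiveNCR}. The proof would have the following structure: (a) present $A$ and $B$ as magic-window subcategories inside the derived categories of two non-abelian gauged Landau--Ginzburg models, following Hori's GLSM descriptions of the Pfaffian; (b) prove the equivalence of these two LG models and upgrade it to an HPD between $(\Pf_s, A)$ and $(\Pf_q, B)$; (c) specialise to a generic $L$ of dimension $sv$ and read off the equivalence of the two slice categories from HPD; and (d) identify the Serre functor via Proposition \ref{thm:SerreOnTheProjectiveNCR} to verify the Calabi--Yau property and its dimension.

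\textbf{Setup and the HPD equivalence.} In step~(a) I would realise $A$ as a magic-window subcategory (in the sense of Halpern-Leistner--Sam and \v{S}penko--Van den Bergh) inside $D^b\big[\Hom(\C^{2s},V)\tms\Wedge^2 V^\vee \,/\, \Sp(2s)\times\GL(1)\big]$, equipped with the natural $\Sp(2s)$-invariant superpotential obtained by pairing a 2-form on $V$ with the symmetric pairing of two maps $\C^{2s}\to V$, and symmetrically realise $B$ with $\Sp(2q)$ in place of $\Sp(2s)$. This is Hori's GLSM presentation of the projective NCCR, and is the input that Kuznetsov's conjecture calls for. For step~(b), one must prove an equivalence of matrix-factorisation categories between these two non-abelian LG models. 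A plausible route is to embed both as distinct GIT chambers of a common larger ambient stack and apply a non-abelian window equivalence theorem in the spirit of Ballard--Favero--Katzarkov and Halpern-Leistner; one then upgrades the resulting equivalence to HPD by verifying its compatibility with the Lefschetz module structures over $\P(\Wedge^2 V)$ and $\P(\Wedge^2 V^\vee)$.

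\textbf{Specialisation and main obstacle.} Once HPD is in place, step~(c) is automatic: for a generic $L$ with $\dim L = sv$, the Lefschetz decomposition of $D^b(\Pf_s, A)$ collapses under restriction to $\P L^\perp$ to a single primitive block canonically identified with $D^b(\Pf_s\cap \P L^\perp,\, A|_{\P L^\perp})$, and symmetrically on the $B$-side; HPD then supplies the equivalence. For step~(d), Proposition \ref{thm:SerreOnTheProjectiveNCR} computes the Serre functor of the projective NCCR as a twist by a line bundle pulled back from the ambient projective space; at $\dim L = sv$ this twist cancels, leaving a pure shift, which gives the Calabi--Yau property and pins down the dimension. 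The claim that $A|_{\P L^\perp}$ and $B|_{\P L}$ remain NCCRs for generic $L$ is a Bertini-type statement, following from transversality of generic linear sections with the singular loci of $\Pf_s$ and $\Pf_q$ together with openness of the NCCR condition. The \emph{main obstacle} is step~(b): constructing a window equivalence between two non-abelian GIT quotients whose gauge groups are $\Sp(2s)$ and $\Sp(2q)$, and matching the combinatorics of Weyl chambers and weights precisely enough that the equivalence is compatible with the ambient projective structures that HPD requires. Non-abelian magic windows are substantially more delicate than abelian ones, and this is where the technical heart of the project lies.
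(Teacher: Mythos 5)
Your steps (c) and (d) do match how the paper derives Theorem \ref{thm:CYcase}: with $\dim L = sv$ (so $\dim L^{\perp} = qv$) both semiorthogonal decompositions of Theorem \ref{thm:HPDualityAlgebrasConcrete} collapse to the common piece $\cC_{L}$, giving the equivalence, and Proposition \ref{thm:SerreOnTheProjectiveNCR} gives the Calabi--Yau property because the twist $\cO_{\P(\wedge^2 V)}(sv-\dim L)$ becomes trivial; the claim that $A|_{\P L^\perp}$ and $B|_{\P L}$ remain NCCRs for generic $L$ is exactly the non-commutative Bertini theorem \cite{RSVdB} that the paper invokes. So the reduction of the statement to the main HPD theorem is sound.

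The genuine gap is in your step (b), which you yourself identify as the heart of the argument. The two models $(\cX\tms_{\C^*}L,\,W')$ and $(\cY\tms_{\C^*}L^{\perp},\,W)$ are \emph{not} two GIT chambers of a common ambient stack: their gauge groups are $\GSp(S)$ and $\GSp(Q)$ with $2s\neq 2q$ in general, and there is no space carrying a single group action whose variation of stability exchanges them, so no non-abelian window or magic-window theorem can be applied to produce the equivalence; Hori duality is a duality between different gauge theories, not a phase transition of one GLSM (the paper even remarks that the natural kernel cannot be defined over $[\Wedge^2 V\tms\Wedge^2 V^{\vee}/\C^*]$, reflecting this asymmetry). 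What the paper actually does is construct an explicit Fourier--Mukai kernel $\psi^*\cK$ relative to $[\Wedge^2 V/\C^*]$, obtained from equivariant Kn\"orrer periodicity on $[\Hom(S,Q)/\GSp(S,Q)]$; prove it is an equivalence over the rank $\geq 2s$ locus by local Kn\"orrer-periodicity arguments in formal neighbourhoods (Theorem \ref{thm:genericequivalence}); and then extend across the singular locus by showing the endomorphism algebra of the dual objects $\cP_\delta$ is concentrated in degree zero and Cohen--Macaulay, hence isomorphic to $A$ (Proposition \ref{prop:AandA'areEquivalent}). Windows do appear, but only to compare the two phases of the \emph{single} $\cY$-side model $\cY\tms_{\C^*}L^{\perp}$, and even there the window $[-qv,qv)$ needed for the $\cY^{ss}$ phase is not the one provided by the general VGIT theory of Ballard--Favero--Katzarkov/Halpern-Leistner: Theorem \ref{thm:ProjectiveWindows} requires bespoke local-cohomology and Serre-duality arguments. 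Likewise, the "upgrade to HPD" is not a verification of compatibility with Lefschetz structures; it is carried out by building the Lefschetz decomposition of $\Br(\cY^{ss})$ from that window theorem and identifying $\Br(\cY\tms_{\C^*}(\Wedge^2 V\setminus 0),W)$ with the tautological HP dual in the sense of \cite{rennemo_homological} (Proposition \ref{thm:GITHPDualEqualsTautologicalHPDual}). As written, your proposed mechanism for step (b) would not get off the ground, and everything that replaces it in the paper is precisely the missing content of your sketch.
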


Our results build on Kuznetsov's own pioneering work \cite{kuznetzov_lines_2006}, where he proved that for $v\leq 7$ the Grassmannian $\Gr(V, 2)$ is HP dual to a non-commutative resolution of $\Pf_{v-3}$. In the same work, he conjectures that this could be made to work for all Pfaffians, if one could find the correct non-commutative resolutions. Hence we have confirmed Kuznetsov's conjecture (in the case where $\dim V$ is odd; the even case is discussed more below).

\pgap

Our second goal is to interpret and prove a duality in quantum field theory proposed by Hori \cite{hori_duality_2013}. This duality relates certain gauged linear sigma models (GLSMs). These are gauge theories in two dimensions with $N=(2,2)$ supersymmetry, and the models in question have symplectic gauge groups. Using various physical arguments, Hori proposes that each such model has a dual model producing an equivalent theory.  These dual GLSMs are closely related to projectively-dual Pfaffian varieties and it is clear that there is a connection to HPD; in fact Hori states that this connection was one of the motivations that led to his proposal.

In this paper we give a rigorous formulation of this duality at the level of B-branes. Each GLSM should have an associated category of B-branes, with a purely algebro-geometric construction, and then the duality predicts that dual models will produce equivalent categories. We propose a definition of these categories, and prove that with our definition the predicted equivalence does indeed hold. We then use this result to deduce our HPD statement, using some ideas from the physics together with other mathematical ingredients.

Our proposal for the category of B-branes appears to be new -- it is not present in Hori's paper -- so we hope that this aspect is a useful contribution to the physics literature.
\pgap

As mentioned above, another important input for this paper is the work of \v{S}penko and Van den Bergh \cite{spenko_non-commutative_2015}. They describe a very general procedure for constructing non-commutative (crepant) resolutions for quotient singularities, and our proposed category of B-branes turns out to be an example of their procedure. Hence their results imply that we obtain non-commutative resolutions of the Pfaffian varieties. 

As well as these external sources, this paper is  a continuation of previous work of the authors and their coauthors \cite{ADS_pfaffian_2015, ST_2014, rennemo_homological_2015}. 
Indeed, once we have defined the correct non-commutative resolution and the functor relating the two sides, (both of which were only available for $s,q \le 4$ before), the proof that HPD holds over the smooth locus is a simple extension of these previous works. However, extending this to a full HPD statement valid over the singular loci requires completely new arguments.
\pgap

Up to this point we have assumed that $v=\dim V$ is odd, but one can also consider Pfaffians for even-dimensional vector spaces. These too come in projectively-dual pairs $\Pf_s$ and $\Pf_q$, where now $2s=v-2q$, and Kuznetsov conjectures that HPD holds in this case too. However it seems that there is no physical duality in this case, because the field theories defined by the GLSMs are not regular if $v$ is even. Despite this, we can prove some partial results. For ease of exposition we keep $v$ odd for most of the paper, and in Section \ref{sec:EvenCase} we explain what is different about the even case. 
\pgap

For a more significant variation one can swap two-forms $\Wedge^2 V^\vee$ for quadratic forms $\Sym^2 V^\vee$. In the GLSMs this corresponds to replacing the symplectic gauge groups with orthogonal groups, and Hori predicts a similar duality. 

We believe strongly that HPD can be proved in this situation using essentially the methods of this paper, and we hope to present such a proof in future work. This would  generalise work of Kuznetsov \cite{kuznetsov_derived_2008} corresponding to quadratic forms of rank $1$, and of Hosono--Takagi \cite{hosono_duality_2013} and the first author \cite{rennemo_homological_2015} for forms of rank $\le 2$.

\pgap

The remainder of the introduction discusses the constructions used in this paper and sketches the main ideas of the proofs. 

\subsubsection*{Conventions}
For an algebra $A$, we write $D^b(A)$ for the derived category whose objects are complexes of left  $A$-modules with finitely generated cohomology.
If $A$ is graded, then $D^b(A)$ means the derived category of graded modules.

If $(X,A)$ is a variety with a coherent sheaf of algebras on it, we write $D^b(X,A)$ for the derived category whose objects are complexes of left  $A$-modules with coherent cohomology.

If $\cE$ is a chain-complex in some abelian category, we write $h_{\bullet}(\cE)$ for its homology object.

\subsubsection*{Acknowledgements}
It's a pleasure to thank Michel Van den Bergh for his patient explanations of \cite{spenko_non-commutative_2015}, and Kentaro Hori, \v{S}pela \v{S}penko and Michael Wemyss for illuminating discussions. E.S. also thanks Nick Addington, Will Donovan, and Richard Thomas -- this paper draws heavily on his previous joint work with them -- and  Dan Halpern-Leistner for some helpful conversations. Finally we would like to thank the anonymous referees for their careful readings of the paper.
\pgap

This project has received funding from the European Research Council (ERC) under the European Union Horizon 2020 research and innovation programme (grant agreement No.725010).

\subsection{The non-commutative resolutions}\label{sec:introNCRs}

In this section we briefly describe the non-commutative resolutions, see Sections \ref{sec:statement} and \ref{sec:Br(X)} for more details. Let $V$ be an odd-dimensional vector space as before, and let $Q$ be a symplectic vector space of dimension $2q$. Let $\widetilde{\cY}$ be the stack:
$$\widetilde{\cY} = \Stack{\Hom(V, Q)}{\Sp(Q)} $$
This stack maps to $\Wedge^2 V^\vee$ by pulling-back the symplectic form, and its image is the cone 
$$\widetilde{\Pf_q} \subset \Wedge^2 V^\vee$$
over the Pfaffian variety $\Pf_q$. 

Since we're ultimately interested in projective varieties we need to also quotient by rescaling. Let $\GSp(Q)$ denote the `symplectic similitude' group of $Q$, the subgroup of $\GL(Q)$ that preserves the symplectic form up to scale. It's a semi-direct product:
$$\GSp(Q) = \Sp(Q)\rtimes \C^* $$
Then we let:
$$\cY = \Stack{\Hom(V, Q)}{\GSp(Q)} $$
This stack is a quotient of $\widetilde{\cY}$ by an additional $\C^*$, and it maps to the stack $[\Wedge^2 V^\vee \,/\, \C^*]$ with image $[\widetilde{\Pf_q} \,/ \,\C^*]$. The pre-image of the origin is the locus where $V$ maps to an isotropic subspace in $Q$, and if we delete this locus then we get an open substack
$$\cY^{ss} \subset \cY$$
whose underlying scheme is the projective variety $\Pf_q$. Our notation here reflects the fact that $\cY^{ss}$ is the semi-stable locus for the obvious GIT stability condition on $\cY$, but note that if $q>1$ then it is still an Artin stack.

In some sense the stack $\cY^{ss}$ is a resolution of $\Pf_q$, but the category $D^b(\cY^{ss})$ is very large; for example its Hochschild homology is infinite-dimensional. To get a more finite category, more akin to the derived category of an honest geometric resolution, we pick out a subcategory $\Br(\cY^{ss}) \subset D^b(\cY^{ss})$. The notation here refers to B-branes, as we'll explain in the next section. To define this subcategory we start by defining a subcategory of $D^b(\widetilde{\cY})$, as follows.

Recall that irreducible representations of $\Sp(Q)$ are indexed by Young diagrams of height at most $q=\onehalf \dim(Q)$. For any such Young diagram $\delta$ there is a corresponding vector bundle on $\widetilde{\cY}$, associated to that irrep, and we'll denote this vector bundle by:
$$\Schpur{\delta} Q $$
Here the operation $\Schpur{\delta}$ is a `symplectic Schur functor' \cite[Section 17.3]{fulton_representation_1991}, we'll use the notation
$\Schur{\delta}$ for ordinary ($\GL$) Schur functors.

For any $a,b\in \N$, let's write $Y_{a,b}$ for the set of Young diagrams of height at most $a$ and width at most $b$.  To define the category $\Br(\widetilde{\cY})$ we consider the set $Y_{q,s}$,
where 
$$ s = \onehalf(v - 1) - q $$
as in the classical projective duality discussed above.  There is a corresponding set of vector bundles on $\widetilde{\cY}$, and we define $\Br(\widetilde{\cY})$ to be the subcategory generated by these vector bundles,  that is:
$$\Br(\widetilde{\cY}) = \langle\, \Schpur{\delta}Q,\; \gamma\in Y_{q,s}\,\rangle\quad\subset D^b(\widetilde{\cY})$$
Our original motivations for this definition were the analogy with Kapranov's exceptional collections on Grassmannians \cite{kapranov_derived_1984}, and Hori's calculation of the Witten index of the associated GLSM (see the next section). However, a much more compelling reason to consider it is found in the recent work of \v{S}penko and Van den Bergh \cite{spenko_non-commutative_2015}. The category $\Br(\widetilde{\cY})$ has a tilting bundle
$$\widetilde{T}  = \bigoplus_{\delta\in Y_{q,s}}   \Schpur{\delta}Q $$
essentially by definition, and we can consider the non-commutative algebra:
$$\widetilde{B} = \End_{\widetilde{\cY}}(\widetilde{T}) $$
There is an evident map from the commutative ring $\cO_{\widetilde{\Pf_q}}$ to the centre of $\widetilde{B}$, since $\cO_{\widetilde{\Pf_q}}$ is the ring of invariant functions on $\widetilde{\cY}$. 

\v{S}penko and Van den Bergh prove that $\widetilde{B}$ is a non-commutative crepant resolution of the singularity $\widetilde{\Pf_q}$. In particular, we have an equivalence
$$ \Hom(\widetilde{T}, - ) : \Br(\widetilde{\cY}) \isoto  D^b(\widetilde{B} \mod) $$
and $\Br(\widetilde{\cY})$ has all the properties of the derived category of a geometric crepant resolution. For example, it is a `non-compact Calabi--Yau' in the following sense: given objects $E, F \in \Br(\widetilde{\cY})$ with $F$ supported over $0 \in \widetilde{\Pf}_q$, we have
$$\Hom^\bullet(E,F) \cong \Hom^{\dim  \widetilde{\Pf}_q -\bullet}(F,E)^\vee,$$
by \cite[Lemma 6.4.1]{van_den_bergh_non-commutative_2004}. In fact \v{S}penko--Van den Bergh give a very general construction that produces a non-commutative resolution for any quotient of a smooth affine variety by a reductive group, but they cover $\widetilde{\Pf_q}$ as an explicit example, and show that in this case the resolution can be chosen to be crepant \cite[Section 6]{spenko_non-commutative_2015}.

Now we add in our additional $\C^*$-action, and define a subcategory
$$\Br(\cY) \subset D^b(\cY) $$
to be the pre-image of $\Br(\widetilde{\cY})$ under pull-back along the map $\widetilde{\cY}\to\cY$. To understand this subcategory observe that a representation of $\GSp(Q)$ defines a vector bundle on $\cY$, and pulling-back to $\widetilde{\cY}$ corresponds to restricting the representation to $\Sp(Q)$. For each $\delta\in Y_{q,s}$ there are $\Z$-many irreps of $\GSp(Q)$ that restrict to give $\Schpur{\delta} Q$, so the set $Y_{q,s}$ specifies an infinite set of vector bundles of $\cY$ and they generate $\Br(\cY)$. The subcategory can also be defined by a `grade-restriction-rule' at the origin (see Section \ref{sec:statement}).

We can choose a vector bundle  $T$ on $\cY$ that pulls-back to $\widetilde{T}$, then by taking endomorphisms of $T$ we can form a graded algebra $B$ whose underlying ungraded algebra is $\widetilde{B}$. Then $\Br(\cY)$ is equivalent to $D^b(B\mod)$. 

Now delete the origin, \emph{i.e.} restrict to the open substack $\cY^{ss}$ and the projective variety $\Pf_q$.  The subcategory $\Br(\cY)$ restricts to a subcategory
$$\Br(\cY^{ss}) \subset D^b(\cY^{ss})$$
(generated by the restrictions of the vector bundles that generate $\Br(\cY)$), the graded algebra $B$ restricts to give a sheaf  of non-commutative algebras on $\Pf_q$, and we have an equivalence:
$$D^b(\Pf_q, B) \; \cong \; \Br(\cY^{ss})$$
This is the non-commutative crepant resolution of $\Pf_q$. 
\pgap

As mentioned above, for HPD we are interested in linear slices of $\Pf_q$, \emph{i.e.} considering  $\Pf_q\cap \P L$  for a subspace $L\subset \Wedge^2 V^\vee$. We can restrict $B$ to $\Pf_q\cap \P L$ and get a sheaf of algebras $B|_{\P L}$; for generic $L$ this will yield a non-commutative crepant resolution of $\Pf_q\cap \P L$ by the `non-commutative  Bertini theorem' \cite{RSVdB}.

\pgap

We can of course do everything in a precisely analogous way for the  projectively-dual Pfaffian $\Pf_s \subset \P(\Wedge^2 V) $. We fix a symplectic vector space $S$ of dimension $2s$, and define the stacks:
$$\widetilde{\cX} = \Stack{\Hom(S, V)}{\Sp(S)}\aand \cX = \Stack{\Hom(S, V)}{\GSp(S)}$$
To define a subcategory $\Br(\widetilde{\cX})$ we just `rotate our rectangle', and consider the set of Young diagrams $Y_{s,q}$. These all correspond to irreps of $\Sp(S)$, and we define:
$$\Br(\widetilde{\cX}) = \langle\, \Schpur{\gamma}Q,\; \gamma\in Y_{s,q}\,\rangle\quad\subset D^b(\widetilde{\cX})$$
Now we proceed through the same steps: we have an algebra $\widetilde{A}$ which is a non-commutative crepant resolution of $\widetilde{\Pf_s}$, and a graded algebra $A$ which restricts to give a sheaf of algebras over $\Pf_s$.
\pgap

It should already be evident that at the most crude level our duality comes down to a bijection between the sets $Y_{q,s}$ and $Y_{s,q}$. However even at this level one must take care to choose the right bijection!  For any $\gamma\in Y_{s,q}$, let us denote by $\gamma^c$ the Young diagram in $Y_{s,q}$ obtained by taking the complement of $\gamma$ in a rectangle of height $s$ and width $q$ (and then rotating $180^\circ$).  The relevant bijection for us is the function:
\begin{align}\label{eq:combinatorialduality}\nonumber  Y_{s,q} &\isoto Y_{q,s} \\ \gamma &\mapsto (\gamma^c)^\top \end{align}
Unsurprisingly, it will take quite a lot of work to lift this to an actual comparison of any categories.

\subsection{A sketch of the physics}\label{sec:introPhysics}

For a string-theorist, a stack like $\cY$ or $\widetilde{\cY}$ can be thought of as the input data for a non-abelian GLSM. This kind of GLSM was analysed in some detail in \cite{hori_duality_2013} (see also \cite{hori_linear_2013}) and a duality was proposed, as discussed above. In this section we'll give a very rough summary of this proposed duality and its connection to HPD for Pfaffians.

Let's begin with the stack $\cY$, which corresponds to a theory with gauge group $\GSp(Q)$. This theory has a Fayet--Iliopoulos parameter $r$, which roughly corresponds to the value of the moment map or GIT stability condition, and for $r\gg 0$ the classical space of vacua is the GIT quotient $Y^{ss}/\GSp(Q) = \Pf_q$. One might expect that the theory reduces, in some limit, to the $\sigma$-model with target $\Pf_q$. This can only be approximately correct because $\Pf_q$ is singular, but one might hope that quantum corrections somehow resolve the singularities. 

To get a $\sigma$-model on a slice $\Pf_q\cap \P L$ we perform a standard trick that goes back to Witten. Write $L^\perp\subset \Wedge^2 V$ for the annihilator of $L$, and define an action of  $\GSp(Q)$  on $L^\perp$ by setting the subgroup $\Sp(Q)$ to act trivially, and the residual $\C^*$ to act diagonally with weight $-1$. Then we add this into our stack/GLSM data, forming:
$$\cY\tms_{\C^*} L^\perp = \Stack{\Hom(V, Q)\tms L^\perp}{\GSp(Q)} $$
This stack has a canonical invariant function on it, namely
\beq{eq:W}W(y, a) = \omega_Q(\wedge^2 y (a)) \eeq
where $y\in \Hom(V,Q)$ and $a\in L^\perp$ and $\omega_Q\in \Wedge^2 Q^\vee$ is the symplectic form. We add $W$ to our GLSM as a superpotential.

If we didn't add $W$, then in the $r\gg 0$ phase we might expect to get the $\sigma$-model on the corresponding GIT quotient, which is the total space of the vector bundle $L^\perp(-1)$ over $\Pf_q$. The presence of $W$ localizes the theory onto the critical locus $\Crit(W)$. After quotienting by $\GSp(Q)$ the superpotential becomes quadratic, and $\Crit(W)$ is the subvariety:
$$\set{ a = 0, \; y\circ a = 0 } = \Pf_q \cap \P L $$
In fact this is only true over the smooth locus in $\Pf_q$, really $\Crit(W)$ has some non-compact branches over the singular locus. But again we might dream that quantum corrections will somehow solve this problem.

The addition of $L^\perp$ has another important consequence. In the previous model if we set $r\ll 0$ then the (classical) vacuum space is empty, but in the new model this is not true and we have a second interesting phase; however,  this second phase is more difficult to analyze. In GIT terms, only the locus $\set{a=0}$ is unstable, and we can think of this phase as family of models living over $\P L^\perp$. Each fibre is the stack/GLSM $\widetilde{\cY}$, equipped with a superpotential $W_a$ which varies with the point $[a]\in \P L^\perp$.  To connect to  (homological) projective duality, we need to show that these fibre-wise models are very simple: they give no contribution at all unless $[a]$ lies in the dual Pfaffian $\Pf_s$, and when $[a]$ does lie in $\Pf_s$ they look like a $\sigma$-model on a point.

In earlier work \cite{hori_aspects_2007} Hori and Tong directly analyse these fibre-wise models in the case $q=1$ and give some arguments that this desired conclusion holds;  the paper \cite{ADS_pfaffian_2015} was a mathematical treatment of this work. However in \cite{hori_duality_2013} Hori gives a much cleaner approach. Based on various physical arguments (that we do not understand well enough to summarize), he proposes that $\widetilde{\cY}$ has a dual description as the model:
$$\widetilde{\cX}\tms \Wedge^2 V^\vee  = \Stack{\Hom(S, V)}{\Sp(S)} \tms \Wedge^2 V^\vee $$
Here, as in the previous section, $S$ is a symplectic vector space of dimension $2s=v-2q-1$. This dual model comes equipped with a tautological superpotential
\beq{eq:W'}\dualW(x, b) = b(\wedge^2 x (\beta_S)) \eeq
where $x\in \Hom(S, V)$ and $b\in \Wedge^2 V$ and $\beta_S\in \Wedge^2 S$ is the Poisson bivector. We will explain shortly how we interpret this duality as predicting an equivalence of categories, but let us first fill in the final steps connecting it to projective duality.

As just stated this duality looks very asymmetric, since the dual side has `extra directions'  $\Wedge^2 V^\vee$ and a superpotential. To correct this asymmetry we choose a subspace $L\subset \Wedge^2 V^\vee$, and cross both sides with $L^\perp$. On the original side this gives the model $\widetilde{\cY}\tms L^\perp$, and to this we can add a superpotential $W$ as in \eqref{eq:W}. On the dual side, we get:
$$  \widetilde{\cX}\tms \Wedge^2 V^\vee  \tms L^\perp $$
Under the duality, the variable $\omega_Q(\wedge^2 y)\in \Wedge^2 V^\vee$ corresponds to $b$. Hence adding $W$ on the original side corresponds, on the dual side, to adding the term $b(a)$ to the existing superpotential $\dualW$. This term is quadratic, so we may integrate out its non-degenerate part, which means deleting the directions $(L^\perp)^\vee\tms L^\perp$.  What remains is the model $\cX\tms L$. So a more symmetric way to state the duality is that it exchanges
$$\widetilde{\cY}\tms L^\perp \quad \leftrightarrow \quad \widetilde{\cX} \tms L $$
with their tautological superpotentials. Now we simply add the additional $\C^*$ action, promoting our gauge groups to $\GSp(Q)$ and $\GSp(S)$.  This introduces an FI parameter, and the duality exchanges the `easy' ($r\gg 0$) and `difficult' ($r\ll 0$) phases of the two sides. We have argued that the original model reduces in the easy phase to a $\sigma$-model on the target $\Pf_q\cap \P L$, or some kind of resolution thereof, so when we pass to the difficult phase and apply the duality we  must reduce to a $\sigma$-model with the target $\Pf_s\cap \P L^\perp$. 
\pgap

Now we discuss the implications of this story for B-branes. A $\sigma$-model on a smooth variety $X$ should have an associated category of B-branes, and everyone knows that this is the derived category $D^b(X)$. A GLSM should also have an associated category of B-branes, but this is much less well understood, particularly if the model has FI parameters. The GLSM $\widetilde{\cY}$ has no FI parameters because the symplectic group is simple, so one can predict with some confidence that there should be a single associated category. Our proposal is that the category of B-branes in this GLSM is the subcategory
$$\Br(\widetilde{\cY}) \subset D^b(\widetilde{\cY}) $$
defined in the previous section. Our evidence for this is:
\begin{enumerate}
\item $\Br(\widetilde{\cY})$ is a non-commutative crepant resolution, so it behaves like a non-compact Calabi--Yau variety. This would seem to be a desirable property for the B-brane category. Furthermore, Van den Bergh conjectures that all (non-commutative or commutative) crepant resolutions are derived equivalent \cite[Conj.\ 4.6]{van_den_bergh_non-commutative_2004}, if this is correct then this property uniquely determines the B-brane category.

\item Hori calculates \cite[Section 5.3]{hori_duality_2013} that the Witten index of this model is $ {q+s \choose q} $, and this is the size of the set $Y_{q,s}$ indexing the generators of $\Br(\widetilde{\cY})$. One might conjecture that it is also the Euler characteristic of the cyclic homology of the category.
\end{enumerate}

Of course we also propose that the category of B-branes for the GLSM $\widetilde{\cX}$ should be the subcategory $\Br(\widetilde{\cX})\subset D^b(\widetilde{\cX})$. To interpret Hori's duality we need a little bit more, we need to identify the category of B-branes in the GLSMs $\widetilde{\cY}\tms L^\perp$ and $\widetilde{\cX} \tms L $ with their tautological superpotentials $W$ and $\dualW$. Fortunately there is an obvious way to generalize our proposal --  we consider analogous subcategories inside the categories of matrix factorizations:
$$D^b(\widetilde{\cY}\tms L^\perp,\, W) \aand D^b(\widetilde{\cX}\tms L,\, \dualW)$$
A matrix factorization can be represented as a vector bundle, equipped with a `twisted differential', and we define full subcategories
$$\Br(\widetilde{\cY}\tms L^\perp,\, W) \aand \Br(\widetilde{\cX}\tms L,\, \dualW)$$
where we insist that this vector bundle is a direct sum of the bundles coming from the sets $Y_{q,s}$ or $Y_{s,q}$ (a slightly more elegant definition of these categories is given in Section \ref{sec:statement}).

So for us, Hori's duality becomes the predicted equivalence of categories
$$\Br(\widetilde{\cY}\tms L^\perp,\, W) \;\cong\; \Br(\widetilde{\cX}\tms L,\, \dualW) $$
for any $L$. In fact we really want this with the additional $\C^*$ in place, so it's an equivalence between categories defined on $\cY\tms_{\C^*} L^\perp$ and $\cX\tms_{\C^*} L$.

\subsection{A sketch of our proof}\label{sec:introSketchProof}

There are two things we need to prove: first that our interpretation of Hori's duality holds, and second that this implies HPD for Pfaffians. The first point will be proved in Section \ref{sec:affine}, and the second in Section \ref{sec:Projective}. Here we give a sketch of both proofs.
\pgap

We begin with Hori's duality in the extreme case $L = 0$, so we want to prove the equivalence:
$$\Br(\cX) \;\cong \;\Br(\cY \tms_{\C^*} \Wedge^2 V,\, W) $$
Recall that the stack $\cX$ maps to $[\Wedge^2 V /\C^*]$, hitting the locus $[\widetilde{\Pf_s}/\C^*]$.  The other stack $\cY \tms_{\C^*} \Wedge^2 V$ also maps to $[\Wedge^2 V /\C^*]$, just by projection onto the second factor. An important aspect of our argument is that our equivalence will be constructed relative to this common base, \emph{i.e.} it comes from a Fourier-Mukai kernel defined on their fibre product over this base. The definition of this kernel is fairly straightforward (see Section \ref{sec:Kernel}).

Having defined our functor, we can then base-change to open substacks of  $[\Wedge^2 V /\C^*]$ and examine it there. In particular we can delete the rank $< 2s$ locus: this removes all the singularities of $\widetilde{\Pf_s}$, and $\cX$ becomes equivalent to the smooth locus in $\Pf_s$. Here the methods of our previous papers apply, and we use them to prove that over this locus our functor gives an equivalence between the $\Br$ subcategories on each side (see Section \ref{sec:generic}).

Next we need to extend this over the singularities (Section \ref{sec:Extending}). As we discussed in Section \ref{sec:introNCRs}, $\Br(\cX)$ is generated by a finite set of objects, and is equivalent to the derived category of their endomorphism algebra $A$. On the other side, we identify a `dual' set of generating objects in the category $\Br(\cY \tms_{\C^*} \Wedge^2 V,\, W)$, and prove that the endomorphisms of these dual objects also form a Cohen--Macaulay algebra. Because our functor is generically an equivalence these two algebras are generically isomorphic, and then we can use the Cohen--Macaulay property to deduce that they are isomorphic everywhere. It follows immediately that our functor is an equivalence.

This proves the case $L=0$, and then it's easy to prove it for general $L$ using the physical sketch of the previous section - just replace `integrating out the quadratic term' with Kn\"orrer periodicity. Then we have an equivalence
$$\Br(\cX\tms_{\C^*} L, \, W') \; \cong \; \Br(\cY\tms_{\C^*} L ^\perp,\, W) $$
which is our version of Hori's duality (Theorem \ref{thm:HoriDualityWithL}). 

\pgap

Now we move on to deducing HPD. Our equivalence is relative to $\Wedge^2 V$, so restricting  to the complement of the origin gives an equivalence:
$$\Br(\cX^{ss}\tms_{\C^*} L, \, W') \; \cong \; \Br(\cY\tms_{\C^*} (L^\perp\setminus 0),\, W)$$
In the terminology of the previous section, this relates the `easy' phase on the left-hand side with the `difficult' phase on the right-hand side. Let's discuss the left-hand side first. The non-stacky locus in $\cX^{ss}$ is the smooth locus in $\Pf_s$, and here a standard application of Kn\"orrer periodicity implies that (assuming $L$ is generic)
$$D^b(\Pf_s^{sm} \tms_{\C^*} L, \, W')  \cong D^b(\Pf_s^{sm}\cap \P L^\perp ) $$
as in the physical sketch. It's fairly straightforward to extend this over the singular locus and we prove in Section \ref{sec:NCRslices} that we have an equivalence
$$ \Br(\cX^{ss} \tms_{\C^*} L, \, W') \cong D^b( \Pf_s \cap \P L^\perp,\, A|_{\P L^\perp}) $$
where $A$ is the sheaf of non-commutative algebras discussed in Section \ref{sec:introNCRs}. This proves that, for generic $L$, our non-commutative resolution of  $\Pf_s\cap \P L^\perp$ is equivalent to the `difficult' phase of the dual model.

A more challenging step is to compare the two categories
$$\Br(\cY\tms_{\C^*} (L^\perp\setminus 0),\, W) \aand \Br(\cY^{ss}\tms_{\C^*} L^\perp,\, W)  $$
\emph{i.e.} the categories for the `difficult' and `easy' phases of the dual model. This is a kind of variation-of-GIT process, and we use the idea of `windows' \cite{segal_equivalence_2011,halpern-leistner_derived_2015,ballard_variation_2012} (which was also used in \cite{ADS_pfaffian_2015,ST_2014,rennemo_homological_2015}). What we do is to lift both categories to the ambient stack $\cY \tms_{\C^*} L^\perp$, by finding  subcategories of $\Br(\cY\tms_{\C^*} L^\perp,\, W)$ to which they are equivalent. The window for the `difficult' phase is essentially standard, but the window that we need for the `easy' phase is not the one provided by general theory, and although it's easy to describe it takes us quite a lot of new calculations to prove that it works (see Section \ref{sec:Windows}).

One of our windows is obviously contained in the other, with the direction of containment depending on the dimension of $L$, and equality in the case $\dim L = sv$. This means that we have an embedding
\beq{eq:WindowCompare}\Br(\cY\tms_{\C^*} (L^\perp\setminus 0),\, W)\;\into\; \Br(\cY^{ss}\tms_{\C^*} L^\perp,\, W)  \eeq
or vice-versa. Putting this together with our previous results, we get an embedding
$$D^b( \Pf_s \cap \P L^\perp,\, A|_{\P L^\perp})\;  \into \; D^b( \Pf_q \cap \P L,\, B|_{\P L})  $$
or vice-versa. In the critical case $\dim L = sv$ the two categories are equivalent, and in this case both are Calabi--Yau (as stated in Theorem \ref{thm:CYcase}).

The claims of the preceding paragraph are the most important consequences of HPD but are some way from being the full statement as Kuznetsov wrote it. No doubt it is possible to prove the full statement directly in our situation, but instead we take a slight digression (Section \ref{sec:HPD}) so that we can bootstrap off some other work of the first author. If we look at \eqref{eq:WindowCompare} in the case $\dim L^\perp=1$ we get an embedding:
$$\Br(\widetilde{\cY})\;\into\; \Br(\cY^{ss}\tms_{\C^*} \C,\, W)  $$ 
Notice that $\cY^{ss}\tms_{\C^*} \C$ is the total space of a line bundle on $\cY^{ss}$, and that $W$ is essentially a choice of section of the dual line bundle.  We can do this in families where we let  $L^\perp$ vary in $\Wedge^2 V$ -- which means letting $W$ vary -- and the universal such family gives an embedding:
$$ \Br(\cY \tms_{\C^*} (\Wedge^2 V\setminus 0),\, W)  \; \into \; D^b\big( (\cY^{ss}\tms_{\C^*} \C) \tms_{\C^*}  (\Wedge^2 V\setminus 0), \, W \big) $$
The target space here is the total space of a  line bundle over $\cY^{ss}\tms \P(\Wedge^2 V)$; it is the setting for the `tautological' HP dual of $\Br(\cY^{ss})$ as described by the first author \cite{rennemo_homological}  (see Section \ref{sec:HPDualityBackground}). We show that the image of our embedding is exactly the HP dual to $\Br(\cY^{ss})$, and our previous results show that the category being embedded is equivalent to $\Br(\cX^{ss})$. This proves that $\Br(\cY^{ss})$ is HP dual to  $\Br(\cX^{ss})$, or in other words, $D^b(\Pf_q, B)$ is HP dual to $D^b(\Pf_s, A)$.

% It's shown in \cite{rennemo_homological} that given a stack $Z$ with a line bundle $\cL$, and an appropriate subcategory of $D^b(Z)$, one can construct a `tautological' HP dual to that subcategory. It lives in $D^b(\Tot\cL^\vee(-1),\, W) $, where $\cL^\vee(-1)$ is a line bundle on $Z\tms  \P (H^0(\cL))$ and $W$ is the tautological superpotential on this space (see Section \ref{sec:HPDualityBackground}).  We check that the image of our embedding is exactly the HP dual to $\Br(\cY^{ss})$. But our previous results show that the category being embedded is equivalent to $\Br(\cX^{ss})$, so $\Br(\cY^{ss})$ is HP dual to  $\Br(\cX^{ss})$. Or in other words, $D^b(\Pf_q, B)$ is HP dual to $D^b(\Pf_s, A)$. 

\section{Technical background}

\subsection{Matrix factorization categories}
As is clear from the introduction, our proofs and results involve (graded) matrix factorization categories.
We will here restrict ourselves to stating precisely what we mean by these categories -- for further background on definitions and tools, see \cite{ADS_pfaffian_2015, rennemo_homological_2015, ballard_homological_2013}.

By a \emph{Landau--Ginzburg B-model} we mean the data of
\begin{itemize}
\item a stack $\cX$ of the form $[X/(G \times \C^{*})]$, where $X$ is a smooth, quasi-projective variety, $G$ is a reductive group.
\item a function $W : X \to \C$, called the \emph{superpotential}. 
\end{itemize}
We denote the distinguished $\C^{*}$-factor of the group by $\C^{*}_{R}$, and refer to it as the \emph{$R$-charge}.

This set of data is subject to some restrictions:
\begin{itemize}
\item The function $W$ is $G$-invariant and has degree 2 with respect to the $\C^{*}_{R}$-action.
\item The element $-1 \in \C^{*}_{R}$ acts trivially on $[X/G]$, i.e.\ there exists a $g \in G$ such that $(g,-1)$ acts trivially on $X$.
\end{itemize}

Given this data, by work of Positselski and Orlov \cite{positselski_two_2011, efimov_coherent_2015, orlov_matrix_2012} one can define a category of matrix factorizations $D^{b}(\cX,W)$.
Let's say briefly what this category looks like.

There is a natural map $\cX \to [\pt/\C^{*}_{R}]$, we denote the pullback of the standard line bundle on $[\pt/\C^{*}_{R}]$ via this map by $\cO_{\cX}[1]$.
More generally, for any sheaf $\cE$ on $\cX$, we write $\cE[1]$ for $\cE \otimes \cO_{\cX}[1]$.
The objects of $D^{b}(\cX,W)$ can then be described as curved dg sheaves $(\cE,d_{\cE})$ on $\cX$, meaning the data of
\begin{itemize}
\item A coherent sheaf $\cE$ on $\cX$
\item A `twisted differential' $d_{\cE} : \cE \to \cE[1]$ such that $d_{\cE}^{2} = W \otimes \id_{\cE} : \cE \to \cE[2]$.
\end{itemize}
Given two curved dg sheaves $(\cE,d_{\cE})$ and $(\cF,d_{\cF})$, the sheaf $\hom(\cE,\cF)$ on $\cX$ inherits a differential, so becomes a dg sheaf, and one can take its cohomology to turn the set of curved dg sheaves into a category.
Just as for the ordinary derived category, this definition is too naive, and defining the morphism spaces and the triangulated category structure on $D^{b}(\cX,W)$ requires that we take the Verdier quotient by some subcategory of `acyclic' curved dg sheaves.

Without going into details of this, let's just mention that given two objects $(\cE,d_{\cE})$ and $(\cF,d_{\cF})$, the morphism spaces $\Hom(\cE,\cF)$ can be computed as follows.
We can find a curved dg sheaf $(\cE^{\prime},d_{\cE^{\prime}})$ such that $\cE^{\prime}$ is locally free, together with a quasi-isomorphism $\cE^{\prime} \to \cE$.
As mentioned above, $\hom(\cE^{\prime},\cF)$ is a complex, and we can compute
\[
  \RHom(\cE,\cF) \cong R\Gamma(\hom(\cE^{\prime},\cF)).
\]
Given two  LG models $(\cX, W_{\cX})$ and $(\cY,W_{\cY})$, a morphism between them is a map $f:\cX \to \cY$ such that $W_{\cX} = W_{\cY} \circ f$ and such that $f^{*}(\cO_{\cY}[1]) \cong \cO_{\cX}[1]$. 
The usual ensemble of (derived) functors exists, e.g. we have a pull-back functor $f^{*}: D^{b}(\cY,W_{\cY}) \to D^{b}(\cX,W_{\cX})$, and if $f$ is proper we have a push-forward functor $f_{*} : D^{b}(\cX,W_{\cX}) \to D^{b}(\cY,W_{\cY})$. Given an object $\cE \in D^{b}(\cX, W)$, we get a functor $- \otimes \cE : D^{b}(\cX,W^{\prime}) \to D^{b}(\cX, W^{\prime} + W)$.

Composing these functors, the formalism of Fourier--Mukai kernels can be used: if we have a kernel object $\cE \in D^{b}(\cX \times \cY,W_{\cY} - W_{\cX})$ (whose support is proper over $\cY$) then we get a functor $(\pi_{\cY})_{*}(\cE \otimes \pi_{\cX}^{*}(-))$.

\begin{rem}
  From this point on, we will abuse notation and write $D^{b}([X/G],W)$ for what is here denoted $D^{b}([X/G \times \C^{*}_{R}],W)$ -- i.e.\ we will leave the $\C^{*}_{R}$-action implicit.
  This notational choice reflects the idea that it is best to think of $[X/G]$ as the geometric object underlying these categories, as illustrated by the fact \cite[Prop. 2.1.6]{ballard_homological_2013} that if $W = 0$ and $\C^{*}_{R}$ acts trivially, then $D^{b}([X/G\times \C^{*}_{R}],0)$ is equivalent to the usual derived category $D^{b}([X/G])$. 
\end{rem}

\begin{rem}
  Since everything is derived, from now on we'll denote morphism spaces in $D^b(\cX, W)$ just by 
  $$\Hom(\cE, \cF) $$
  rather than $\RHom(\cE, \cF)$. Similarly when we write
  $$\hom(\cE, \cF)$$
  we'll always mean the derived sheaf homomorphisms, not the naive version appearing in the discussion above.
\end{rem}

\begin{rem}
  We should highlight one piece of our terminology which may differ from other authors: we reserve the term \emph{matrix factorization} for a curved dg-sheaf $(\cE, d_\cE)$ where $\cE$ is actually a finite-rank vector bundle. Every object in $D^b(\cX, W)$ is equivalent to a matrix factorization; for us this statement is part of the definition of the category (though really one should define $D^b(\cX, W)$ as some category of compact objects and then prove the statement).
\end{rem}

\subsection{Homological projective duality via LG models}
\label{sec:HPDualityBackground}
We recall the basic definitions and theorems of HP duality, phrased in terms of LG models as in \cite{rennemo_homological} or \cite{ballard_homological_2013}.
The original source is \cite{kuznetsov_homological_2007}; see also \cite{ballard_homological_2013, thomas_notes_2015, rennemo_homological_2015} for further background.

Let $\cS$ be an algebraic stack with a globally generated line bundle $\cL$, and set $U = H^0(\cS,\cL)$. We assume that $\cS$ is actually a quotient of a smooth quasi-projective variety by a reductive group,  which is the hypothesis used in \cite{ballard_variation_2012}.

Let $\cW \subseteq D^{b}(\cS)$ be a full admissible subcategory which is also saturated,\emph{i.e.}~every functor $\cW \to D^{b}(\C)^{\mathrm{op}}$ and $\cW^{\mathrm{op}} \to D^{b}(\C)^{\mathrm{op}}$ is representable.
We assume $\cW$ is closed under tensoring with $\cL$. Suppose further that $\cW$ is equipped with a Lefschetz decomposition with respect to $\cL$,  meaning that we have a semiorthogonal decomposition
\[
  \cW = \big\langle \cA_{0}, \;\cA_{1}(1), \;\ldots, \;\cA_{N}(N)\big\rangle
\]
% \[
%   \cW = \big\langle \cA_{0}, \;\cA_{1}\otimes \cL, \;\ldots, \;\cA_{N}\otimes \cL^N \big\rangle
% \]
where $\cA_{0} \supseteq \cA_{1} \supseteq \cdots \supseteq \cA_{N}$ are full admissible subcategories, and we're writing $\cA_i(i)$ as shorthand for $\cA_i\otimes \cL^i$.

The choice of $\cW$ and its Lefschetz decomposition are the input data for HPD. In its original formulation $\cS$ is a smooth variety and $\cW$ is the whole of $D^b(\cS)$, but we need this extra generality to handle our non-commutative resolutions.

The stack $\cS$ admits a map
$$\cS \to \P U^\vee$$
By assumption $\cW$ is also defined relative to this base, since it is preserved by $\cL$.  If we pick a linear subspace $L \subset U^\vee$ then the pre-image of $\P L$ is a linear section $\cS_{L}\subset \cS$, and roughly-speaking it's possible to base-change the category $\cW$ to give a subcategory $\cW_L \subset D^b(\cS_L)$ (we explain this more accurately below). 

Let $l'$ denote the codimension of $l'$.  It is not hard to show that each of the categories $\cA_{l'}(l'), \ldots, \cA_{N}(N)$ embeds fully faithfully into $\cW_{L}$ under the restriction functor $\cW \to \cW_L$, and also that these subcategories remain semi-orthogonal in $\cW_L$. If we denote their common orthogonal by $\cC_L$ then we have a semi-orthogonal decomposition:
$$ 
\cW_{L} = \big\langle \cC_{L},\; \cA_{l'}(l'),\; \ldots,\; \cA_{N}(N) \big\rangle
$$
We regard $\cC_L$ as the \qquotes{interesting part} of the category $\cW_L$; if the codimension of $L$ is high enough then it is the whole of $\cW_L$. 

The goal of HPD is to construct a `dual' category $\cW^\vee$, defined over the dual projective space $\P U$, and also equipped with Lefschetz decomposition. A choice of subspace $L$ determines an orthogonal subspace $L^\perp \subset U$, then we have a base-changed category $\cW^\vee_{L^\perp}$; as before this category has a semi-orthogonal decomposition into some pieces from $\cW^\vee$ and some remaining piece. The key required property of $\cW^\vee$ is that, for any choice of $L$,  this remaining piece must be equivalent to the category $\cC_L$. As a slogan: \emph{the interesting parts of $\cW_L$ and $\cW^\vee_{L^\perp}$ are equivalent}. 

In \cite{rennemo_homological} (following  \cite{ballard_homological_2013}) the first author gives a `tautological' construction of the HP dual in this level of generality.  We now describe the construction. 
\pgap

View $\cS \times U$ as a trivial vector bundle over $\cS$ and form the direct sum $ \cL^\vee \oplus U $. Now let $\C^*$ act fibre-wise on this vector bundle, with weight 1 on the first factor and weight ${-1}$ on the second factor, and take the quotient stack:
$$\cT = \Stack{ \cL^\vee \oplus U}{\C^*}$$
Any element $u \in U$ defines a section of $\cL$ and hence a function $W_{u} : \cL^\vee \to \C$.
Thus there is a canonical superpotential $W : \cT \to \C$ given by $W(x,u) = W_u(x)$. We add R-charge by declaring that $\C^*_R$ acts trivially on $\cL^\vee$ and with weight 2 on $U$, then $(\cT,W)$ is an LG model.

Let $\pi : \cT \to \Stack{U}{\C^{*}}$ be the projection, and let:
$$\cT^{*} = \pi^{-1}(\P U ) = \Stack{\cL^\vee\tms (U\setminus 0)}{\C^*}$$
We can view $(\cT^*, W)$ as a family of LG models over $\P U$; each fibre is a copy of the line bundle $\cL^\vee$, but equipped with a varying superpotential $W_u$. On each fibre Kn\"orrer periodity gives an equivalence
\begin{equation}\label{eq:FibreKP}D^b(\cL^\vee, W_u) \cong D^b(\{u=0\}) \end{equation}
to the derived category of the hypersurface cut out by $u$. 

Given a curved dg-sheaf $\cE$ on $\cT^*$ and a point $[u]\in \P U$ we can restrict $\cE$ to the fibre $\cL^\vee\tms[u]$ to get an object in $D^b(\cL^\vee, W_u)$, and we can further restrict this object to the zero section to get a complex in $D^b(\cS)$.

\begin{defn} \label{defn:tautologicalHPD} Let
  $$\cW^{\vee} \subset D^{b}(\cT^*,W)$$
  be the full subcategory of  objects $\cE$ such that for all $[u] \in \P U$, the restriction $\cE|_{\cS \tms [u]}$ is contained in the subcategory $\cA_{0} \subseteq D^b(\cS)$. We call the category $\cW^{\vee}$  the \emph{HP dual} of $\cW$.
\end{defn}

In Kuznetsov's original formulation \cite{kuznetsov_homological_2007} the term `HP dual' is reserved for varieties derived equivalent to a certain subcategory $\cC \subseteq D^{b}(\cH)$, where $\cH \subseteq \cS \times \P U$ is the universal hyperplane section. However, using the equivalence  \eqref{eq:FibreKP} one can show that if  if we restrict ourselves to Kuznetsov's situation then the two definitions agree \cite{rennemo_homological, ballard_homological_2013}.  Furthermore $\cW^\vee$ satisfies Proposition \ref{thm:HPDualHasLefschetzDecomposition} and Theorem \ref{thm:generalHPDTheorem} below, which are key properties of an HP dual. 
\pgap

Observe that $\cW^\vee$ is closed under tensoring by the line bundle $\cO(-1)$ pulled-up from $\P U$. Let $N'$ be the minimal integer such that $\cA_{N'} \not= \cA_{0}$, and let $M = \dim V - N' - 1$.
\begin{prop}[{\cite[Thm.\ 6.3]{kuznetsov_homological_2007}, \cite{rennemo_homological}}]
  \label{thm:HPDualHasLefschetzDecomposition}
  The category $\cW^{\vee}$ admits a Lefschetz decomposition
  \[
    \cW^{\vee} = \big\langle \cB_{-M}(-M), \; \ldots,\; \cB_{-1}(-1),\; \cB_{0}\big\rangle.
  \]
\end{prop}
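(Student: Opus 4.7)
The plan is to reduce the statement to Kuznetsov's original theorem. The excerpt already notes the equivalence $\cW^{\vee} \cong \cC$, where $\cC \subset D^{b}(\cH)$ is the HP-dual category living in the universal hyperplane section; this equivalence is established in \cite{rennemo_homological} using the techniques of \cite{ballard_homological_2013}. The crucial observation is that this equivalence intertwines the natural $\cO_{\P V}(1)$-twist functors on either side: on $\cW^{\vee}$ the twist is tensoring with the line bundle pulled back from $\P V$ along $\pi$, and on $\cC$ it is tensoring with $q^{*}\cO_{\P V}(1)$ for $q : \cH \to \P V$; the Koszul-type kernel implementing the equivalence is equivariant for these by construction. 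Transporting the dual Lefschetz decomposition of $\cC$ produced in \cite[Thm.\ 6.3]{kuznetsov_homological_2007} then gives the decomposition of $\cW^{\vee}$, and the length $M+1 = \dim V - N'$ matches Kuznetsov's formula.

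To describe how the blocks arise intrinsically on $\cW^{\vee}$, the natural candidates are
\[
\cW^{\vee}_{k} \;=\; \set{\cE \in \cW^{\vee} \;\big|\; \cE|_{Y \tms [v]} \in \cA_{k} \text{ for all } [v] \in \P V},
\]
with $\cW^{\vee}_{0} = \cW^{\vee}$ and the chain stabilising for $k \geq N'$. One then sets $\cB_{-k}$ to be the right orthogonal to $\cW^{\vee}_{k+1}$ inside $\cW^{\vee}_{k}$, and the twist $\cB_{-k}(-k)$ places the pieces in the decomposition. Semiorthogonality between the different $\cB_{-k}(-k)$ comes from the fibrewise containment $\cA_{k} \supseteq \cA_{k+1}$ combined with a weight-count for the $\C^{*}$-action defining $Z$: the relevant $\Hom$-spaces reduce, via the window theorem for the GIT problem $Z \to Z^{\ss}$, to sums of $\Hom$-groups in $D^{b}(Y)$ between objects of the $\cA_{i}$'s, and the original Lefschetz decomposition of $\cW$ kills the necessary terms. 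The admissibility of each $\cB_{-k}$ is inherited from the admissibility of the $\cA_{i}$.

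The main obstacle in a direct approach is controlling these $\Hom$-computations, which mix the LG-model twisted differential with the weight filtration arising from the $\C^{*}$-quotient, and checking that the filtration terminates at precisely the expected index. The Koszul-duality step in \cite{rennemo_homological} is what converts these mixed computations into the clean universal-hyperplane-section picture on which Kuznetsov's proof operates, so the shortest honest route to the proposition is to appeal to the equivalence $\cW^{\vee} \cong \cC$ and then quote \cite[Thm.\ 6.3]{kuznetsov_homological_2007}, rather than redo the combinatorial bookkeeping in the LG-model framework from scratch.
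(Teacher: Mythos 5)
Your route is the same as the paper's: the paper offers no independent argument for this proposition, but simply cites \cite[Thm.\ 6.3]{kuznetsov_homological_2007} together with \cite{rennemo_homological}, where the equivalence $\cW^{\vee} \cong \cC$ (via the techniques of \cite{ballard_homological_2013}) is used to transport Kuznetsov's dual Lefschetz decomposition — exactly the reduction you settle on in your final paragraph. Your intermediate sketch of an intrinsic construction of the blocks $\cB_{-k}$ is extra (and only heuristic), but since you ultimately appeal to the same citations, the proposal matches the paper's treatment.
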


Now fix a linear subspace $L \subset U^\vee$, with orthogonal $L^\perp\subset U$, and hence a linear section $\cS_L \subset \cS$.  We need to describe how to base-change the categories $\cW$ and $\cW^\vee$ to $L$ and $L^\perp$. 

The stack $\cT$ contains a substack:
$$\cT_{L^\perp} = \pi^{-1}(L^\perp) = \Stack{\cL^\vee \oplus L^\perp}{\C^*}$$
We can intersect this with $\cT^*$ to get 
$$\cT_{L^\perp}^* = \Stack{\cL^\vee \tms (  L^\perp \setminus 0)}{\C^*}$$
which is simply the restriction of $\cT^*$ to the linear subspace $\P L^\perp \subset \P U$. Obviously we can restrict $W$ to $\cT^*_{L^{\perp}}$, and it's $\C^*_R$-invariant, so $(\cT^*_{L^\perp}, W)$ is an LG model. The definition of $\cW^\vee_{L^\perp}$ is a trivial adaptation of the definition of $\cW^\vee$: we let   
$$\cW_{L^\perp}^{\vee} \;\subset\; D^{b}(\cT^*_{L^\perp}, W)$$
be the full subcategory of objects $\cE$ such that for each $[u] \in \P(L^\perp)$ the restriction of $\cE$ to $D^{b}(\cS \tms [u])$ lies in the subcategory  in $\cA_{0} \subseteq \cW$.

The definition of $\cW_L$ is less obvious. If take the stack $\cT_{L^\perp}$ and cut out the zero section $\cS\subset \cL^\vee$ we obtain
$$ \Stack{(\cL^\vee \setminus \cS)\tms L^\perp}{\C^*} \; \cong\; \cL^\vee\otimes L^\perp $$
which is just the total space of a vector bundle over $\cS$.  Kn\"orrer periodicity says that 
$$D^b( \cL^\vee\otimes L^\perp, W) \; \cong \; D^b(\cS_L) $$
provided that $\cS_L$ has the correct dimension. If $\cS_L$ doesn't have the correct dimension then we regard the left-hand-side as the correct replacement for $D^b(\cS_L)$, in particular we define $\cW_L$ as a subcategory of $D^b( \cL^\vee\otimes L^\perp, W)$ rather than as  a subcategory of $D^b(\cS_L)$.  Specifically, we let
$$\cW_L \; \subset\; D^b( \cL^\vee\otimes L^\perp, W) $$
be the full subcategory of objects $\cF$ such that the restriction of $\cF$ to the zero section lies in the subcategory $\cW\subset D^b(\cS)$. If $\cS_L$ has the correct dimension then $\cW_L$ is equivalent to the subcategory of $D^b(\cS_L)$ consisting of objects whose pushforward into $D^b(\cS)$ lie in $\cW$. 

% Let $\cW_{L} \subset D^{b}(\widetilde{Y}^{\ss} \tms_{\C^{*}} L^\perp, W)$ be the inverse image of $\cW$ under the restriction functor 
% $$D^{b}(\widetilde{Y}^{\ss} \tms_{\C^{*}}L^\perp,W) \To D^{b}(\widetilde{Y}^{\ss} \tms_{\C^{*}} 0) = D^{b}(Y).$$

% Let $\cW_{L^\perp}^{\vee} \subset D^{b}(\widetilde{Y} \tms_{\C^{*}} (L^\perp \setminus 0),W)$ be the full subcategory of objects $\cE$ such that for each $p \in \P(L^\perp)$, the restriction of $\cE$ to $D^{b}(Y \tms p,0)$ is contained in $\cA_{0} \subseteq \cW$.

Let  $l = \dim L$, and recall that $l' = \dim L^{\perp} = \dim U - l$.

\begin{thm}[\cite{kuznetsov_homological_2007, rennemo_homological}]
  \label{thm:generalHPDTheorem}
  There exist semiorthogonal decompositions 
  \[
    \cW_{L} = \big\langle \cC_{L},\; \cA_{l'}(l'),\; \ldots,\; \cA_{N}(N) \big\rangle
  \]
  and
  \[
    \cW^{\vee}_{L^\perp} = \big\langle \cB_{-M}(-M), \;\ldots,\; \cB_{-l}(-l), \;\cC_{L} \big\rangle
  \]
  for some category $\cC_L$. 
\end{thm}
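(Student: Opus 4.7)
The strategy I would take follows Kuznetsov's universal-hyperplane argument from \cite{kuznetsov_homological_2007}, reformulated in the LG-model language of \cite{rennemo_homological}. The key idea is to realize both $\cW_L$ and $\cW^\vee_{L^\perp}$ as \emph{windows} inside a common ambient matrix-factorization category, and then read off the desired semiorthogonal decompositions by comparing the two windows. The common ambient category is $D^b(\widetilde Y \tms_{\C^*} L^\perp, W)$, where no stability condition is imposed on the $\C^*$-action. Deleting the unstable locus where the $L^\perp$-coordinate vanishes recovers $\widetilde Y \tms_{\C^*} (L^\perp \setminus 0)$, the ambient space for $\cW^\vee_{L^\perp}$; deleting the unstable locus where the fibre coordinate of $\widetilde Y \to Y$ vanishes recovers $\widetilde Y^{\ss} \tms_{\C^*} L^\perp$, the ambient space for $\cW_L$. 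The windows machinery of \cite{segal_equivalence_2011, halpern-leistner_derived_2015, ballard_variation_2012} then identifies each side with a subcategory of the ambient matrix-factorization category cut out by a grade-restriction rule on the $\C^*$-weights.

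Next I would produce the Lefschetz pieces. The pullback of $\cA_i \subset \cW$ along $\widetilde Y \tms_{\C^*} L^\perp \to Y$, twisted by $\cL^i$, sits inside the window for $\cW_L$ precisely when $i \geq l' = \dim V - l$; this is a direct $\C^*$-weight calculation using the action weights on $L^\perp$. The pieces $\cB_j(j)$ for $j \leq -l$ embed into $\cW^\vee_{L^\perp}$ by the parallel argument, using the Lefschetz decomposition of $\cW^\vee$ from Proposition \ref{thm:HPDualHasLefschetzDecomposition}. Their mutual semiorthogonality is inherited from the ambient Lefschetz decompositions of $\cW$ and $\cW^\vee$ together with the weight counts. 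Finally, $\cC_L$ is defined intrinsically as the intersection of the two windows inside the common ambient category; it then tautologically appears as the remaining orthogonal complement on both sides, giving the agreement of the middle piece.

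The principal obstacle is the generation statement: one must verify that $\cC_L$ together with the listed Lefschetz pieces actually span the entirety of $\cW_L$ (respectively $\cW^\vee_{L^\perp}$), rather than some proper subcategory. This reduces to checking that the difference between the two grade-restriction windows is spanned precisely by the ``extra'' Lefschetz pieces on each side. Concretely, one chooses a $K$-theory class or an explicit resolution of an object of $\cW_L$ and decomposes it using the Lefschetz pieces of $\cW$ pulled back to the ambient LG model, tracking which summands survive the window constraint on the other chamber. The calculation is a delicate interplay between the weight filtration along $L^\perp$, the Lefschetz structure of $\cW$, and vanishing of certain Ext-groups for matrix factorizations supported on the critical locus of $W$; this is where the bulk of Kuznetsov's original proof, and its LG reformulation in \cite{rennemo_homological}, is concentrated.
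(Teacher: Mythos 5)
First, a point of orientation: the paper does not prove Theorem \ref{thm:generalHPDTheorem} at all. It is stated in Section \ref{sec:HPDualityBackground} as imported background, with the proof deferred to \cite{kuznetsov_homological_2007} and \cite{rennemo_homological}, and is only used as a black box later (Section \ref{sec:HPD}). So there is no in-paper argument to compare yours against; the relevant comparison is with the VGIT proof in \cite{rennemo_homological}, and your sketch is indeed an outline of that strategy: two GIT chambers of the ambient LG model $\widetilde{Y}\tms_{\C^*}L^\perp$, window subcategories, and a comparison of the two windows.

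That said, your sketch elides the step that carries essentially all of the weight, in a way that would fail if taken literally. The windows machinery of \cite{segal_equivalence_2011, halpern-leistner_derived_2015, ballard_variation_2012} identifies the derived category of a GIT chamber with a subcategory of the ambient category cut out by a band of $\C^*$-weights, but it does so for the \emph{full} derived category. Here $\cW$ is merely an admissible, saturated subcategory of $D^{b}(Y)$ equipped with a Lefschetz decomposition which in general is not rectangular ($\cA_0 \supseteq \cA_1 \supseteq \cdots \supseteq \cA_N$ with strict inclusions allowed). Consequently the lifts of $\cW_L$ and $\cW^{\vee}_{L^\perp}$ to the ambient category cannot be cut out by a bare weight band: the grade-restriction rule must be Lefschetz-adapted, requiring the weight-$j$ component of the restriction to the $\C^*$-fixed locus to lie in the appropriate piece $\cA_j$, and proving that restriction from such a subcategory-window to each chamber is fully faithful and essentially surjective does not follow from the quoted general theory --- it is the central technical content of \cite{rennemo_homological} (the analogous difficulty is exactly why, in the present paper's own situation, Proposition \ref{thm:ProjectiveWindows} needed bespoke arguments rather than an appeal to \cite{halpern-leistner_derived_2015, ballard_variation_2012}). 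Likewise, defining $\cC_L$ as the intersection of the two windows and asserting that it ``tautologically'' appears as the complement on both sides begs the question: the semiorthogonality of the listed Lefschetz pieces against $\cC_L$ and, above all, the generation statement are precisely what must be proved, and your final paragraph names this as the principal obstacle without indicating how to overcome it. So the proposal is a reasonable roadmap matching the cited proof, but the key lemmas (the Lefschetz-shaped window equivalences and the generation/semiorthogonality of the resulting decompositions) are missing rather than routine.
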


Of course the fact that $\cL^\vee\otimes L^\perp$ and $\cT^*_{L^\perp}$ are both open substacks of $\cT_{L^\perp}$ is a key part of the proof. 

\subsection{Minimal resolutions}\label{sec:windowsthesame}

\begin{comment}
  In the introduction we introduced a stack $\cY = [\Hom(V,Q),\GSp(Q)]$ and defined a subcategory $\Br(\cY) \subseteq D^{b}(\cY)$,  and a matrix factorization analogue $\Br(\cY\times L^\perp,W) \subseteq D^{b}(\cY,W)$ by specifying a set $Y_{q,s}$ of $\GSp(S)$-irreps, and declaring $\cE$ to lie in the brane subcategory if it can be represented by a complex (or matrix factorization), whose underlying sheaf is a shifted sum of vector bundles corresponding to $\GSp(S)$-irreps in $Y$.
  This definition is simple, but it can be hard to check if a given $\cE$ is of this form since it requires replacing $\cE$ with a locally free model.

  An alternative definition can be given by restricting $\cE$ to the origin in $\Hom(S,V)$, noting that $\cE|_{0}$ decomposes as a shifted sum of $\GSp(S)$-representations, and then declare $\cE$ to lie in the brane subcategory if all $\GSp(S)$-representations appearing in $\cE|_{0}$ lie in $Y$ (see Section \ref{sec:statement} below). It is easy to see that a brane object in the former sense is also a brane object in the latter; Lemma \ref{lem:minimalmodels} below implies that in fact the two definitions are equivalent.
\end{comment}

Let $G$ be a reductive group and $V$ a $G$-representation. Add a linear $R$-charge $\C^*_R$ action on $V$ and a $G$-invariant superpotential $W$, so $([V/ G], W)$ is an LG model. 

Now let $Y$ be a set of irreps of $G$, or the corresponding set of vector bundles on $[V/G]$.  Given an object $\cE\in D^b([V/G], W)$, we can ask whether $\cE$ can be represented by a matrix factorization whose underlying vector bundle is a direct sum of bundles from $Y$. 

This question will occur frequently in this paper, because our brane subcategories are specified by exactly this kind of condition.  To get a weaker condition consider the restriction $\cE|_0$ to the origin; this is a complex of $G$-representations and we can ask if its homology $h_\bullet(\cE|_0)$ contains only irreps from the set $Y$. Obviously the first condition implies the second condition, and the second condition is much easier to check. 

We will make frequent use of the following lemma, which says that under a particular hypothesis the two conditions are equivalent.

\begin{lem} \label{lem:minimalmodels} Assume that the combined group $G\tms \C^*_R$ acting on $V$ includes a central 1-parameter-subgroup $\sigma$ which acts as some non-zero power of the usual dilation action on $V$. Then any object $\cE\in D^b_G(V, W)$ is equivalent to a matrix factorization $(E, d_E)$ such that $d_E|_0=0$, and $E$ is the vector bundle associated to the $G$-representation $h_\bullet(\cE|_0)$. 
\end{lem}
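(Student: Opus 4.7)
The plan is to establish this via a standard minimal-model / Gaussian-elimination argument in matrix-factorisation theory, with the central 1-PS $\sigma$ supplying the grading that makes the iterative step converge. I would start by representing $\cE$ by some matrix factorisation $(E', d_{E'})$, which exists by definition, and then reduce $(E', d_{E'})$ to the desired minimal form in three steps.

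First, I would show that $E'$ is, as a $G \times \C^*_R$-equivariant bundle, isomorphic to $E'|_0 \otimes_\C \mathcal{O}_V$. Because $\sigma$ is central in $G \times \C^*_R$ and acts as a nonzero power of dilation on $V$, the flow of $\sigma$ contracts $V$ equivariantly to $0$, so any $\sigma$-equivariant bundle is pulled back from $\{0\}$; centrality makes this trivialisation compatible with the full $G \times \C^*_R$-action. Second, I would analyse the restriction $(E'|_0, d_0)$, where $d_0 = d_{E'}|_0$. Note that $W$ has positive $\sigma$-weight (since $\sigma_R$ acts on it with weight twice that of $\sigma_R$ on $\cO_\cX[1]$, and $\sigma$ is nontrivial on $V$), so $W(0) = 0$, giving $d_0^2 = 0$. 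Reductivity of $G \times \C^*_R$ lets me split $(E'|_0, d_0) = (H, 0) \oplus (C, d_C)$, where $H \cong h_\bullet(\cE|_0)$ and $(C, d_C)$ is acyclic (in fact contractible: $d_C$ restricts to an isomorphism between complementary summands of $C$).

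Third, using the trivialisation from step one, I would lift this splitting to a decomposition $E' = (H \otimes \mathcal{O}_V) \oplus (C \otimes \mathcal{O}_V)$ of $G \times \C^*_R$-equivariant bundles, and write $d_{E'}$ in block form. The blocks inside the $C \otimes \mathcal{O}_V$ summand restrict at $0$ to the invertible map $d_C$, and I would use an equivariant Gaussian elimination / homological perturbation argument to change gauge so that this block becomes a direct-summand matrix factorisation on $C \otimes \mathcal{O}_V$. The perturbation is constructed order-by-order in the $\mathfrak{m}$-adic filtration on $\mathcal{O}_V$: a contracting homotopy for $d_C$ gives the leading step, and successive corrections cancel the off-diagonal terms of increasing order. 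Once the $C \otimes \mathcal{O}_V$ summand is genuinely split off, it is a contractible matrix factorisation and is zero in $D^b_G(V,W)$, leaving the minimal model $(H \otimes \mathcal{O}_V, d_E)$ with $d_E|_0 = 0$ and $E = H \otimes \mathcal{O}_V$ as required.

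The main obstacle is the convergence/termination of the perturbation in step three while staying $(G \times \C^*_R)$-equivariant. The hypothesis on $\sigma$ is exactly what makes this work: because $\sigma$ acts nontrivially on $V$, every element of $\mathfrak{m} \subset \mathcal{O}_V$ carries a nonzero $\sigma$-weight, while the homotopies being constructed are forced by equivariance to have a fixed $\sigma$-weight. A given equivariant homogeneous piece of the differential can therefore only receive contributions from finitely many orders of the iteration, so the equivariant perturbation is a finite sum of terms at each $\sigma$-weight and defines an honest gauge transformation. Everything else is bookkeeping of weights under the $G \times \C^*_R$-action and invoking reductivity to split short exact sequences of representations.
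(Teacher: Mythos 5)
Your argument is correct, but it takes a different route from the paper's. You split off the entire contractible part of $\cE|_0$ in one step and then remove its thickening $C\otimes\cO_V$ by an equivariant homological-perturbation/Gaussian-elimination gauge transformation, with termination guaranteed because every element of $\mathfrak{m}$ carries a nonzero $\sigma$-weight while the weights on $\Hom(E'|_0,E'|_0)$ are bounded, so the correction terms $\delta(h\delta)^k$, having entries in $\mathfrak{m}^{k+1}$, are forced to vanish for $k$ large. The paper instead peels off one contractible piece at a time: it locates a single component $U\xrightarrow{\id_U}U[-1]$ of $d_E|_0$ with $U$ an irrep and uses centrality of $\sigma$ more pointedly -- $\sigma$ acts on $U$ by a scalar, so the only $\sigma$-invariant elements of $\Hom_V(U,U)$ are constants, whence the corresponding component of $d_E$ is literally $\id_U$ -- so the two-term contractible matrix factorization embeds as a subbundle with no perturbation series at all, and one recurses on the rank. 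Your version is the standard minimal-model transfer and handles everything uniformly, at the cost of having to verify convergence (which your grading argument does correctly, if somewhat loosely phrased) and of checking that the split-off summand is itself contractible (its relevant block is invertible globally by the same weight argument -- worth saying explicitly). The paper's version buys two things: no series to control, and an argument that adapts verbatim to the generalization in Remark 3.4, where $(\cO_V)_0$ is a local ring rather than $\C$; there the offending component is only $t\,\id_U$ for a unit $t$, and your termination argument, which relies on all of $\mathfrak{m}$ having nonzero $\sigma$-weight, would need genuine modification. One small point: your justification that $W$ vanishes at the origin is garbled as written; the clean reason is simply that $W$ has R-charge $2$ and the $\C^*_R$-action is linear, so $W(0)=t^2W(0)$ for all $t$.
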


If $W=0$ this is the standard theory of minimal resolutions, e.g. \cite[Prop.\ 5.4.2]{weyman_cohomology_2003}. Without the $G$-action the statement is essentially \cite[Prop.\ 7]{khovanov_matrix_2008}. The following proof works whether $W=0$ or not.

\begin{proof}
  Since $\cO_V$ is graded by $\sigma$ (either non-positively or non-negatively) with $(\cO_V)_0=\C$, it's elementary that any $(G\tms \C^*_R)$-equivariant vector bundle on $V$ must be the bundle associated to some representation; see e.g. \cite[p.\ 150]{weyman_cohomology_2003}. By definition, any object $\cE\in D^b_G(V, W)$ is equivalent to a matrix factorization $(E, d_E)$, and then $E$ is the vector bundle associated to the $(G\tms \C^*_R)$-representation $E|_0$. 
  If $d_E|_0 = 0$, then it is immediate that $E$ is the vector bundle associated to the $G$-representation $h_\bullet(\cE|_0)$.

  Suppose that $d_E|_0\neq 0$. We claim that then $E$ contains a contractible sub-object whose underlying sheaf is a subbundle of $E$. Quotienting by this subbundle produces an equivalent matrix factorization of smaller rank, so applying this recursively results in a model of the required form.

  To prove the claim we consider $(E|_0, d_E|_0)$. This is a bounded complex of $G$-representations, which we can decompose into graded irreps. If $d_E|_0\neq 0$ there must be some component of $d_E|_0$ of the form
  $$U \stackrel{\id_U}{\To} U[-1] $$
  where $U$ is a (shift of an) irrep.  Then $E$ contains two associated subbundles, and there is a component of $d_E$
  $$\delta: U \To U[-1]$$
  mapping between them. This $\delta$ is a $(G\tms \C^*_R)$-invariant element of $\Hom_V(U, U)$, reducing to $\id_U$ at the origin. In fact $\delta$ must be constant -- the 1-PS $\sigma$ is central so it acts on $U$ as scalar multiple of the identity, and then the only $\sigma$-invariant elements of $\Hom_V(U, U)$ are constant. So $\delta= \id_U$. 

  Now let $\iota: U\into E$ denote the inclusion of the first subbundle. The map $d_E\circ \iota: U[-1] \to E$ is $\C^*_R$-invariant, and one of its components is $\delta\circ\iota$ which is the inclusion of the second subbundle. Hence the map $(\iota, d_E\circ\iota): U\oplus U[-1] \to E$ has full rank at all points so it's the inclusion of a subbundle. This map embeds the contractible matrix factorization
  $$\begin{tikzcd} U \arrow[yshift=.4ex]{r}{\id_U}& U[-1] \ar[yshift=-.4ex]{l}{W\id_U} \end{tikzcd} $$
  as a subobject of $(E, d_E)$.
\end{proof}

\begin{rem} The proof still works if $V$ is just a cone instead of a vector space, \emph{i.e.} $\cO_V$ is a non-negatively graded (by $\sigma$) ring with $(\cO_V)_0=\C$. In fact it still works if $(\cO_V)_0$ is a local ring, and we replace `restrict to the origin' with `restrict to the unique closed point of $\Spec \,(\cO_V)_0$'. The only difference in the proof is that now the map $\delta$ need not be exactly $\id_U$, but it must be of the form $t\id_U$ for some unit $t\in (\cO_V)_0$ (the rest of the argument is identical). We will need this more general case in the proof of Lemma \ref{lem:formalnhdresult}.
\end{rem}

\begin{cor}
  \label{thm:ObjectVanishingAtOriginImpliesVanishing}
  In the situation of Lemma \ref{lem:minimalmodels}, suppose that $\cE \in D^{b}_{G}(V,W)$ is such that $\cE|_{0} \cong 0$.
  Then $\cE = 0$.
\end{cor}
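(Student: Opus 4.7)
The plan is to deduce the corollary as an immediate consequence of the minimal model produced by Lemma \ref{lem:minimalmodels}. First, apply the lemma to replace $\cE$ with an equivalent matrix factorization $(E, d_E)$ such that $d_E|_0 = 0$ and $E$ is the vector bundle associated to the $G$-representation $h_\bullet(\cE|_0)$.

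Next, interpret the hypothesis $\cE|_0 \cong 0$. Since $d_E|_0 = 0$, the restriction $\cE|_0$ is represented by the curved dg sheaf $(E|_0, 0)$, and its homology (which is a well-defined invariant in $D^b_G(\{0\}, 0)$) is simply the underlying graded $G$-representation $E|_0 = h_\bullet(\cE|_0)$. The assumption $\cE|_0 \cong 0$ therefore forces this graded representation to vanish.

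Finally, conclude: since $E$ is the vector bundle associated to the vanishing representation $h_\bullet(\cE|_0) = 0$, we have $E = 0$, and hence $\cE \cong (E, d_E) = 0$ in $D^b_G(V, W)$. There is no real obstacle here — the entire content of the corollary is packaged into the minimal model statement of Lemma \ref{lem:minimalmodels}, and the only thing to verify is that the restriction functor to the origin, when applied to a minimal model, literally reads off the underlying graded representation.
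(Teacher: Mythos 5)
Your proof is correct and is exactly the argument the paper intends: the corollary is stated as an immediate consequence of Lemma \ref{lem:minimalmodels}, since the minimal model's underlying bundle is associated to $h_\bullet(\cE|_0)$, which vanishes when $\cE|_0 \cong 0$. Nothing further is needed.
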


\subsection{Symplectic similitude groups}\label{sec:GSp}

Let $S$ be a symplectic vector space of dimension $2s$, with symplectic form $\omega_S\in \Wedge^2 S^\vee$ and Poisson bivector $\beta_S\in \Wedge^2 S$. We'll make constant use of the group
$$\GSp(S) \subset \GL(S) $$
which preserves $\omega_S$ up to scale. $\GSp(S)$ has a canonical character $\GSp(S) \to \C^*$ defined by its action on the line $\langle \beta_{S} \rangle\subset \Wedge^2 S$ spanned by the Poisson bivector. This character is an $s$th root of the determinant, it generates the character lattice, and fixes a canonical isomorphism:
$$ \GSp(S)/\Sp(S)\;\cong \;\C^* $$
Let $\Delta : \C^{*} \to \GSp(S)$ be the diagonal subgroup. A Young diagram $\gamma$ of height $\leq s$ determines an irrep $\Schpur{\gamma} S$ of $\Sp(S)$.  An irrep of $\GSp(S)$ can be described by a pair $(\gamma, k)$, where $k\in \Z$ is a weight of $\Delta$, subject to the requirement that  $\sum_i \gamma_i + k \equiv 0$ mod 2.  We'll donote the corresponding irrep by:
$$\Schpur{\gamma, k} S$$ 
For example the character $\langle \beta_S \rangle$ is $\Schpur{\phi,2} S$ where $\phi$ is the empty Young diagram. Note that if two  irreps $\Schpur{\gamma, k} S$ and $\Schpur{\gamma, k'} S$ restrict to give the same irrep $\Schpur{\gamma} S$ of $\Sp(S)$ then they differ by some power of the character  $\langle \beta_S \rangle$.

\section{The affine duality}\label{sec:affine}

\subsection{The duality statement}\label{sec:statement}

Let's recall our notation from the introduction. We fix a vector space $V$ of dimension $v$, and two symplectic vector spaces $S$ and $Q$ of dimensions $2s$ and $2q$ respectively, where $v=2s+2q+1$. We let $\cX$ and $\cY$ denote the following stacks:
$$\cX =  \Stack{\Hom(S,V)}{\GSp(S)} \aand  \cY = \Stack{\Hom(V,Q)}{\GSp(Q)}  $$

We let $\GSp(S)$ act on the vector space $\Wedge^2 V^\vee$ as the representation $\Wedge^2 V^\vee\otimes \langle \beta_S \rangle$, \emph{i.e.}~the subgroup $\Sp(S)$ acts trivially and the residual $\C^*$ acts with weight 1. We also let $\GSp(Q)$ act on the dual $\Wedge^2 V$ as the representation $\Wedge^2 V \otimes \langle \beta_Q \rangle^{\vee}$, so $\Sp(Q)$ acts trivially but here the residual $\C^*$ acts with weight $-1$. Since 
$$\GSp(S)/\Sp(S)\cong \GSp(Q)/\Sp(Q)\cong \C^*$$
canonically it's safe to think of this as a single $\C^*$ acting with weight $1$ on $\Wedge^2 V^\vee$, and its dual action on $\Wedge^2 V$. 

Now we pick a subspace $L\subset \Wedge^2 V^\vee$ and its annihilator $L^\perp\subset \Wedge^2 V$. We can form the products
$$\cX\tms_{\C^*} L := \Stack{\Hom(S, V)\tms L}{\GSp(S)} $$
and:
$$\cY\tms_{\C^*}L^\perp := \Stack{\Hom(V, Q)\tms L^\perp}{\GSp(Q)} $$
We let $W'$ and $W$ denote the tautological invariant functions (`superpotentials') on these two stacks from \eqref{eq:W'} and \eqref{eq:W}:
$$ W' = b ( \wedge^2 x (\beta_S))$$
$$ W = \omega_Q(\wedge^2 y (a)) $$
Under the $\C^*$ action the element $\wedge^2 x(\beta_S)\in \Wedge^2 V$ has weight $-1$, and the element $b\in \Wedge^2 V^\vee$ has weight 1, so $W'$ really is invariant under $\GSp(S)$ and not just under $\Sp(S)$. Similarly $a\in \Wedge^2 V$ has weight $-1$, but $\omega_Q\circ \wedge^2 y\in \Wedge^2 V^\vee$ has weight 1.

We must specify an R-charge on both our stacks (this is in addition to the $\C^*$ actions already discussed). We do this by declaring that $\C^*_R$ acts with weight 0 on $\Hom(S,V)$ and $\Wedge^2 V$, with weight 1 on $\Hom(V,Q)$, and with weight 2 on $\Wedge^2 V^\vee$. Then both $W'$ and $W$ have R-charge 2, and both pairs
$$(\cX\tms_{\C^*} L, \, W') \aand (\cY\tms_{\C^*}L^\perp,\,  W) $$
are Landau-Ginzburg B-models.

Both stacks can be mapped ($\C^*_R$-equivariantly) to the stack $[\Wedge^2 V / \C^*] $. For the first stack we do this by projecting to $\cX$ and then applying the  map from $\Hom(S,V)$ to $\Wedge^2 V$, and for the second stack we simply project onto $L^\perp$ and then include. Throughout Section \ref{sec:affine} it will be helpful to think of $[\Wedge^2 V / \C^*]$ as the base over which all our constructions live, though in Section \ref{sec:Projective} this will no longer be true.

\begin{rem}\label{rem:Rcharge}
  It would be more symmetric to define the R-charge on $\cY\tms_{\C^*} L^\perp$ to have weight zero on $\Hom(V, Q)$ and weight 2 on $L^\perp$. These two actions differ by the diagonal 1-parameter subgroup $\Delta: \C^* \into \GSp(Q)$, so switching from one to the other doesn't change the category $D^b(\cY\tms_{\C^*}L^\perp,\,  W)$ and it doesn't really matter which action we choose -  see Remark \ref{rem:Rcharge2} below for more details. However for moment  it is simplest if we keep the map to $[\Wedge^2 V/\C^*]$ explicitly $\C^*_R$-equivarant. 
\end{rem}

Now we define/recall our `B-brane' subcategories. Recall that $Y_{s,q}$ denotes the set of Young diagrams of height at most $s$ and width at most $q$, and we'll use the same notation for the corresponding set of irreps of $\Sp(S)$. Similarly $Y_{q,s}$ denotes the transposed set of Young diagrams, or the corresponding set of irreps of $\Sp(Q)$. Given any object
$$ \cE\in D^b(\cX\tms_{\C^*} L, \, W') $$
we can consider its restriction $\cE|_0$ to the origin, which is a chain-complex of representations of $\GSp(S)$. The homology $h_\bullet(\cE|_0)$  of this is a graded $\GSp(S)$-representation, which we may view just as a representation of $\Sp(S)$. We define
$$ \Br(\cX\tms_{\C^*} L, \, W') \; \subset \; D^b(\cX\tms_{\C^*} L, \, W') $$
to be the full subcategory of all objects $\cE$ satisying the following `grade-restriction-rule':
$$\mbox{all irreps of }\Sp(S)\mbox{ occuring in }h_\bullet(\cE|_0)\mbox{ lie in the set }Y_{s,q} $$
Similarly 
$$ \Br(\cY\tms_{\C^*} L^\perp, \, W) \; \subset \; D^b(\cY\tms_{\C^*} L^\perp, \, W) $$
is the full subcategory of all objects $\cF$ satisying the grade-restriction-rule:
$$\mbox{all irreps of }\Sp(Q)\mbox{ occuring in }h_\bullet(\cF|_0)\mbox{ lie in the set }Y_{q,s} $$
Note that both subcategories are obviously triangulated because the grade-restriction-rule is preserved under taking mapping cones.

Let's connect these definitions to the ones used in the introduction. Any irrep of $\Sp(S)$ corresponds to $\Z$-many irreps of $\GSp(S)$, differing by powers of the character $\langle\beta_S\rangle$,  so the set $Y_{s,q}$ determines an infinite set of irreps of $\GSp(S)$. To any such irrep there is an associated vector bundle on $\cX\tms_{\C^*} L$. By Lemma \ref{lem:minimalmodels}, an object $\cE\in  D^b(\cX\tms_{\C^*} L, \, W')$ lies in the subcategory $\Br(\cX\tms_{\C^*} L, \, W')$ if and only if it can be represented as a matrix factorization whose underlying vector bundle is a direct sum of the bundles coming from $Y_{s,q}$. In particular in the case $L=0$, the statement is that $\Br(\cX)\subset D^b(\cX)$ is the subcategory generated by this infinite set of vector bundles.  Both versions of the definition will be useful in the course of our proofs. 

Now we can state our interpretation of Hori's duality for B-branes.

\begin{thm}\label{thm:HoriDualityWithL}
  For any $L\subset \Wedge^2 V^\vee$, we have an equivalence
  $$ \Br(\cX\tms_{\C^*} L, \, W') \;\isoto \;  \Br(\cY\tms_{\C^*} L^\perp, \, W) $$
  of categories over $[\Wedge^2 V/\C^*]$. 
\end{thm}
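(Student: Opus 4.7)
The plan is to follow the strategy sketched in Section~\ref{sec:introSketchProof} of the introduction, reducing the general statement to the extremal case $L=0$ via Kn\"orrer periodicity, and then attacking the extremal case by constructing an explicit Fourier--Mukai kernel, verifying the equivalence generically, and then extending across the singular locus via a Cohen--Macaulay argument.

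First I would handle the reduction. Given a general $L \subseteq \Wedge^2 V^\vee$, the key observation is that on the $\cX$-side, $\cX \tms_{\C^*} L$ sits inside $\cX \tms_{\C^*} \Wedge^2 V^\vee$ via the inclusion $L \hookrightarrow \Wedge^2 V^\vee$, with the ambient superpotential restricting to $W'$. Dually, on the $\cY$-side, the passage from $\cY \tms_{\C^*} \Wedge^2 V$ (with its tautological superpotential) to $\cY \tms_{\C^*} L^\perp$ amounts to integrating out a non-degenerate quadratic term $b(a)$, which is precisely the setup for Kn\"orrer periodicity. So if the case $L=0$ (which in our convention means taking the full ambient space on each side) yields an equivalence
\[
\Br(\cX \tms_{\C^*} \Wedge^2 V,\, \widetilde{W}') \;\isoto\; \Br(\cY,\; 0),
\]
or the symmetric statement for general $L$, then Kn\"orrer periodicity on both sides will propagate the equivalence to all $L$, provided the kernel is suitably compatible with the base change over $[\Wedge^2 V /\C^*]$.

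The core case I would treat is
\[
\Br(\cX)\;\isoto\;\Br(\cY \tms_{\C^*} \Wedge^2 V,\, W),
\]
both sides being over the base $[\Wedge^2 V/\C^*]$. I would define a Fourier--Mukai kernel on the fibre product $\cX \tms_{[\Wedge^2 V /\C^*]} (\cY \tms_{\C^*} \Wedge^2 V)$, most naturally as the structure sheaf of the incidence locus $\{(x,y,b) : \omega_Q(\wedge^2 y) = $ symplectic form on $V$ pulled back via $x\}$ (or a variant involving the tautological $\Hom(S,Q)$-pairing), equipped with a Koszul-type twisted differential coming from $W$. The details of the kernel should make the induced functor $\cX \to \cY$ send the generators $\Schpur{\gamma}S$ $(\gamma \in Y_{s,q})$ to objects whose restriction to the origin has $\Sp(Q)$-irreps living in $Y_{q,s}$, realising the combinatorial bijection $\gamma \mapsto (\gamma^c)^\top$ of \eqref{eq:combinatorialduality}.

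Next, working relatively over $[\Wedge^2 V /\C^*]$, I would restrict to the open substack where $b \in \Wedge^2 V$ has maximal rank $2s$, i.e.\ the smooth locus of $\widetilde{\Pf_s}$. There both sides become essentially the derived category of the smooth Pfaffian (possibly with a gerbe or matrix-factorisation twist), and the methods of \cite{ADS_pfaffian_2015, ST_2014, rennemo_homological_2015} apply: one checks semiorthogonality of the image of the generators and that the exceptional collections span, giving an equivalence over the smooth locus. Because our kernel is defined globally, both the functor and the computation of $\Hom$-spaces between generators are globally defined; over the smooth locus they compute the endomorphism algebras $\widetilde A$ and a dual algebra $\widetilde A'$ on the $\cY$-side, and generically they are isomorphic.

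The main obstacle, and the genuinely new work, is extending this equivalence across the singular locus of $\widetilde{\Pf_s}$, where the generic methods break down. Here I would exploit the fact, stated in the introduction and following from \cite{spenko_non-commutative_2015}, that the endomorphism algebra $\widetilde{A} = \End_{\widetilde{\cX}}(\widetilde T)$ is Cohen--Macaulay over $\widetilde{\Pf_s}$. The plan is to identify, inside $\Br(\cY \tms_{\C^*} \Wedge^2 V, W)$, the `dual' set of generating objects obtained by applying the FM functor to our generators on the $\cX$-side, compute their endomorphism algebra $\widetilde{A}'$, and prove directly that $\widetilde{A}'$ is Cohen--Macaulay over $\widetilde{\Pf_s}$ as well; this will likely require an explicit resolution or Koszul calculation using the superpotential $W$. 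Granted both algebras are Cohen--Macaulay of the same dimension and isomorphic generically via an $\cO_{\widetilde{\Pf_s}}$-algebra map constructed from the FM kernel, a standard argument (a map between reflexive, even maximal Cohen--Macaulay, modules that is an isomorphism in codimension one is an isomorphism) promotes the generic isomorphism to a global one. This identifies the two tilting algebras, so the FM functor is an equivalence between $\Br(\cX)$ and $\Br(\cY \tms_{\C^*} \Wedge^2 V, W)$, and then Kn\"orrer periodicity delivers the theorem for general $L$.
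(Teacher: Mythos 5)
Your overall strategy is the one the paper follows (it is the sketch from the introduction): a Fourier--Mukai kernel relative to $[\Wedge^2 V/\C^*]$, an equivalence over the rank-$2s$ locus by the methods of the earlier papers, a Cohen--Macaulay extension argument over the singular locus, and Kn\"orrer periodicity plus a superpotential perturbation to pass from $L=0$ to general $L$. But the execution has genuine gaps exactly at the new step. First, your ``dual'' objects are the images $\Phi(\Schpur{\gamma}S)$ of the generators, and you propose to show ``directly'' that their endomorphism algebra is Cohen--Macaulay by some resolution or Koszul computation. There is no evident handle on these images over the singular locus, and this is precisely where the difficulty lies. The paper instead works with the objects $\cP_\delta$ of Section \ref{sec:DualObjects}: projections into the brane subcategory of the twisted skyscrapers $\cO_{0\times\wedge^2 V}\otimes\Schpur{\delta}Q$, i.e.\ the vertex-simple $B$-modules tensored with $\cO_{\wedge^2 V}$. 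For these one can bound the homological degrees of the endomorphism dga $A'$ via the bar resolution and the R-charge together with Serre duality (Lemma \ref{lem:DegreesOfA'}), and only then does the abstract criterion of Lemma \ref{lem:CohenMacaulayCriterion} show simultaneously that $A'$ is concentrated in degree $0$ and Cohen--Macaulay -- a point your plan skips entirely: a priori $A'$ is a dga, not an algebra, so ``prove it is CM'' is not even well posed until the degree bound is established.

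Second, even granting $A\cong A'$, your last sentence jumps from ``identifies the two tilting algebras'' to ``the FM functor is an equivalence''. That only gives full faithfulness on what the generators generate; you still need to know the dual objects generate the target, i.e.\ essential surjectivity. The paper supplies this with Lemma \ref{lem:Pdeltaspan} (no nonzero object of $\Br(\cY\tms_{\C^*}\Wedge^2 V, W)$ is left-orthogonal to all $\cP_\delta$, proved via the grade-restriction rule and the minimal-model Lemma \ref{lem:minimalmodels}), combined with right-admissibility of the brane subcategory; the same omission occurs in your generic step, where the paper notes the predecessor works also failed to prove essential surjectivity and fixes it by the formal-neighbourhood argument. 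Two smaller points: your candidate kernel (structure sheaf of an incidence locus with a Koszul differential) is not what works -- there is no $\Sp(S)\tms\Sp(Q)$-invariant Lagrangian in $\Hom(S,Q)$, and the paper's kernel is the pullback of the equivariant Kn\"orrer object $\cK$ of Lemma \ref{lem:eqKP}, whose existence (no Brauer obstruction) is imported from earlier work, with Proposition \ref{prop:IrrepsInKernel} realising the combinatorial bijection; and the passage to general $L$ requires showing the equivalence survives adding the perturbation $W'$ to both sides (Lemma \ref{lem:PhiPerturbed}, via a filtration in the $L$-directions) before Kn\"orrer periodicity (Lemma \ref{lem:KPandBranes}) applies -- this is routine but not automatic, and your sketch elides it.
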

When we say that our equivalence is `over $[\Wedge^2 V/\C^*]$' we mean simply that it is given by a Fourier--Mukai kernel on the fibre product over this base. It follows that we may restrict our categories and our kernel to open substacks in the base, and still get an equivalence.

Of course we could have chosen to work over $[\Wedge^2 V^\vee/\C^*]$ instead, in which case we'd get an equivalence relative to that base. This might suggest that we should really be able to define our kernel over $[\Wedge^2 V\tms \Wedge^2 V^\vee /\C^*]$, but this appears not to be the case.\footnote{This same phenomenon can be seen in the simplest examples of Kn\"orrer periodicity: the kernel for equivalence between $D^b(\A^2, xy)$ and $D^b(pt)$ can be constructed either relative to the base $\A^1_x$, or relative to $\A^1_y$, but not relative to $\A^2$.}
\pgap

If we forget the $\C^*$-action (\emph{i.e.}~pull-back along the map $\Wedge^2 V\to [\Wedge^2 V/\C^*]$)  we obtain an equivalence
$$ \Br(\widetilde{\cX}\tms L, \, W') \;\cong \;  \Br(\widetilde{\cY}\tms L^\perp, \, W) $$
which expresses the duality between symplectic GLSMs.  Here $\Br(\widetilde{\cX}\tms L, \, W')$ and $\Br(\widetilde{\cY}\tms L^\perp, \, W)$ are defined to be the images of $\Br(\cX\tms_{\C^*} L, \, W')$ and $\Br(\cY\tms_{\C^*} L^\perp, \, W)$ under the pull-back functor; they can also be defined to be the subcategories consisting of matrix factorizations built from the vector bundles  associated to the sets $Y_{s,q}$ and $Y_{q,s}$, as in Section \ref{sec:introNCRs} (they cannot be defined by a grade-restriction-rule at the origin, because now the group action has closed orbits other than the origin and Lemma \ref{lem:minimalmodels} doesn't apply). 
\pgap

The remainder of Section \ref{sec:affine} will be devoted to the proof of Theorem \ref{thm:HoriDualityWithL}.  As sketched in Section \ref{sec:introSketchProof} most of the work will be in proving the extreme case $L=0$, from there we will deduce the general case quite easily.

\subsection{Relating the brane subcategories to (curved) algebras}\label{sec:Br(X)}

We have a map 
$$\cX \to \Stack{\Wedge^2 V}{\C^*}$$
and for much of this paper we want to work relative to this base; to facilitate this we'll now introduce some notation for relative morphisms.  Given objects $\cE, \cF\in D^b(\cX)$ there is a dg-sheaf of (derived) local homomorphisms $\hom(\cE, \cF) \in D^b(\cX)$, this is a chain-complex of $\GSp(S)$-equivariant modules over the ring of functions on $\Hom(S, V)$. The global sections
$$\Hom(\cE, \cF) = \hom(\cE, \cF)^{\GSp(S)}$$
are the morphisms in $D^b(\cX)$. Instead we can push $\hom(\cE, \cF)$ down to the stack $[\Wedge^2 V / \C^*]$: this just takes $\Sp(S)$ invariants, and the result is a module over the ring of functions on the Pfaffian cone $\widetilde{\Pf}_s \subset \Wedge^2 V$.  Let us denote this push-down by:
$$\Hom(\cE, \cF)_{gr} =  \hom(\cE, \cF)^{\Sp(S)}\; \in D^b([\Pf_s / \C^*]) $$
The notation reflects the fact that $\Hom(\cE, \cF)_{gr}$ is a graded module (because of the $\C^*$ action), and note that $\Hom(\cE, \cF)$ is the degree zero part of $\Hom(\cE, \cF)_{gr}$. 
\pgap

On the stack $\widetilde{\cX}$ we have the subcategory $\Br(\widetilde{\cX})$, generated by the vector bundles associated to irreps in $Y_{s,q}$. We discussed in Section \ref{sec:introNCRs} how we may also view this as the derived category of an algebra: we take the tilting bundle 
$$\widetilde{T}= \bigoplus_{\gamma\in Y_{s,q}} \Schpur{\gamma} S$$
and consider its endomorphism algebra $\widetilde{A} = \End_{\widetilde{\cX}}(\widetilde{T})$, then $\Br(\widetilde{\cX})$ is equivalent to the derived category of $\widetilde{A}$. We'll now do a similar procedure for the category $\Br(\cX)$.

% Every $\Sp(S)$-irrep $\Schpur{\gamma}S$ has a standard lift to a $\GSp(S)$-irrep, determined by the property that the diagonal subgroup $\Delta : \C^{*} \to \GSp(S)$ acts with weight $\sum \gamma_{i}$.
% Recall that $\GSp(S)$ has a character $\langle \beta_{S} \rangle$ given by its action on the Poisson bivector; this character generates the character lattice of $\GSp(S)$. 
% The higher-dimensional irreps of $\GSp(S)$ are then given by $\Schpur{\gamma} S\otimes \langle \beta_S \rangle^p $, for $\gamma$ a dominant weight of $\Sp(S)$ and $p\in \Z$, so each $\Sp(S)$-irrep can be lifted to a $\GSp(S)$-irrep in $\Z$-many ways. 
% Correspondingly, the vector bundle $\Schpur{\gamma} S$ on $\widetilde{\cX}$ can be lifted to a vector bundle on $\cX$ in $\Z$-many ways, differing by powers of the line bundles $\langle \beta_S\rangle^p$.  If we choose an integer $p_\gamma$ for each $\gamma\in Y_{s,q}$, we get a lift
% \begin{equation}
%   \label{eqn:ObjectDefiningAlgebra}
%   T= \bigoplus_{\gamma \in Y_{s,q}} \Schpur{\gamma}S\otimes \langle \beta_S\rangle^{p_\gamma} \;\in D^b(\cX).
% \end{equation}
% of the vector bundle $\widetilde{T}$. Let $A$ be the algebra: 

We need to choose a vector bundle $T$ on $\cX$ which pulls-back to give the bundle $\widetilde{T}$ on $\widetilde{\cX}$. The summands of $\widetilde{T}$ correspond to irreps $\Schpur{\gamma} S$ of the group $\Sp(S)$, and for each summand we must make a choice of integer $k_\gamma$ to extend the irrep to an irrep $\Schpur{\gamma, k_\gamma} S$ of  $\GSp(S)$ (see Section \ref{sec:GSp}). Then 
$$T = \bigoplus_{\gamma\in Y_{s,q}} \Schpur{\gamma, k_\gamma} S $$
is a vector bundle on $\cX$, and we can define: 
$$A= \Hom(T, T)_{gr}$$
This is a graded algebra,  defined over the  ring of functions on $\widetilde{\Pf_s}$, \emph{i.e.} there is a map from the commutative graded ring $\cO_{\widetilde{\Pf_s}}$ to the centre of $A$. We can also think of it as a quiver algebra (with relations), where the underlying quiver has vertices indexed by the set $Y_{s,q}$.

\begin{rem}\label{rem:AisAmbiguous}
  This definition of $A$ is slightly ambiguous because it depends on the choice of integers $k_\gamma$ in the bundle $T$, but this ambiguity is harmless. Making a different choice just means tensoring our $\GSp(S)$-irreps by powers of the character $\langle \beta_S \rangle$, \emph{i.e.} changing the weight of the $\C^*$ action for each summand, this results in the same algebra but with a different grading. For another way to understand this: observe that $A$ splits as an $A$-module into `vertex projectives' $A= \oplus_{\gamma\in Y_{s,q}} P_{\gamma}$ and tautologically $A$ is the endomorphism algebra of these projective modules. If we shift the grading on each $P_\gamma$ and then form their endomorphism algebra we get the algebra $A$ but with a different grading; this exactly corresponds to changing our choice of $T$. In particular all choices lead to Morita equivalent algebras. 

  This ambiguity will be convenient in Section \ref{sec:Extending} as it saves us from calculating an irrelevant set of degree shifts, see Remarks \ref{rem:AmbiguityAgain} and \ref{rem:A'isAmbiguous}.
\end{rem}

\begin{lem}
  \label{lem:BranesToModules}
  We have an equivalence:
  \[
    \Hom(T, - )_{gr} : \Br(\cX) \isoto D^b(A)
  \]
\end{lem}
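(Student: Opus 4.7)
The proposal is to reduce to the already-known ungraded statement. \v{S}penko--Van den Bergh prove that $\widetilde T = \bigoplus_{\gamma \in Y_{s,q}} \Schpur{\gamma}S$ is tilting on $\widetilde{\cX}$, giving an equivalence $\Hom(\widetilde T, -) : \Br(\widetilde{\cX}) \isoto D^b(\widetilde A\mod)$. The stack $\cX$ is simply $\widetilde{\cX}$ modded out by the extra $\C^*$-factor $\GSp(S)/\Sp(S)$, so $D^b(\cX)$ is the $\C^*$-equivariant derived category of $\widetilde{\cX}$: an object is a coherent sheaf on $\widetilde{\cX}$ with a $\Z$-grading compatible with the $\C^*$-action on $\Hom(S,V)$. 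On the algebra side, since $A$ is defined by taking $\Sp(S)$-invariants (as explicitly noted after \eqref{eqn:ObjectDefiningAlgebra}), its underlying ungraded algebra is $\widetilde A$, and the residual $\C^*$-action endows it with a $\Z$-grading; thus $D^b(A)$, which in the paper's convention means complexes of graded $A$-modules, is the $\C^*$-equivariant version of $D^b(\widetilde A\mod)$.

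With this in hand, the functor $\Hom(T,-) : \Br(\cX) \to D^b(A)$ is the $\C^*$-equivariant refinement of $\Hom(\widetilde T, -)$, and it commutes with the two forgetful functors to the ungraded categories. I would verify directly that it is an equivalence by checking the tilting conditions in the graded setting. For the vanishing and computation of $\Ext$-groups: for each lifted generator $\Schpur{\gamma}S \otimes \langle \omega_S \rangle^p$ with $\gamma \in Y_{s,q}$ and $p \in \Z$, the graded vector space $\Ext^k_\cX(T, \Schpur{\gamma}S \otimes \langle \omega_S \rangle^p)$ is obtained by extracting the weight-$p$ part of the natural $\C^*$-grading on $\Ext^k_{\widetilde{\cX}}(\widetilde T, \Schpur{\gamma}S)$, which vanishes for $k > 0$ and yields the appropriate graded summand of $A$ for $k = 0$. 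For essential surjectivity, every graded projective $A$-module is a direct summand of a graded free module whose summands are of the form $\Hom(T, \Schpur{\gamma}S \otimes \langle \omega_S \rangle^p)$; since every object of $D^b(A)$ admits a bounded resolution by graded projectives, the image of $\Br(\cX)$ generates $D^b(A)$.

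The one step that needs genuine care, and the main place where the paper's setup enters, is matching the definition of $\Br(\cX)$ via the grade-restriction-rule at the origin with the subcategory generated by the bundles $\Schpur{\gamma}S \otimes \langle \omega_S \rangle^p$ for $\gamma \in Y_{s,q}$ and $p \in \Z$. This is precisely what Lemma \ref{lem:minimalmodels} delivers: applied with $G = \GSp(S)$, $V = \Hom(S,V)$, and $W = 0$ (using $\sigma$ the diagonal dilation), it replaces any $\cE \in \Br(\cX)$ by a complex whose underlying bundle is the bundle associated to the graded $\GSp(S)$-representation $h_\bullet(\cE|_0)$, and the grade-restriction-rule forces this to be a sum of the listed bundles. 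Once this identification is in place, the proof presents no genuine obstacle; it is essentially a repackaging of the \v{S}penko--Van den Bergh equivalence with its natural $\Z$-grading.
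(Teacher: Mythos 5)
Your argument is correct and uses the same essential ingredients as the paper's proof: \v{S}penko--Van den Bergh's finite global dimension result, the vanishing of higher Ext's between the associated vector bundles on an affine quotient stack, and Lemma \ref{lem:minimalmodels} to identify $\Br(\cX)$ with the thick subcategory generated by the lifts $\Schpur{\gamma}S \otimes \langle \omega_S \rangle^p$, $\gamma \in Y_{s,q}$, $p \in \Z$. The one assertion you state without justification -- that every object of $D^b(A)$ admits a bounded resolution by finitely generated graded projectives -- is exactly the finite global dimension of $A$ \emph{as a graded algebra}, and this does not come for free from observing that $D^b(A)$ is the $\C^*$-equivariant refinement of $D^b(\widetilde{A}\mod)$: one needs the (standard) fact that finite ungraded global dimension implies finite graded global dimension, which is where the paper cites \cite[Thm.\ 1.4.2]{spenko_non-commutative_2015} together with \cite[Cor.\ I.2.7]{nastasescu_graded_1982}. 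With that reference inserted your proof closes; the paper's own argument is the same tilting computation arranged in the opposite direction -- it shows $\ker \Hom(T,-) \cap \Br(\cX) = 0$ using minimal complexes and then deduces essential surjectivity of the adjoint $T \otimes_A -$ formally via \cite[Thm.\ 3.3]{kuznetsov_homological_2007}, rather than checking directly that the images of the generating bundles generate $D^b(A)$.
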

This equivalence takes the vector bundles which are the summands of $T$ to the projective $A$-modules $P_\gamma$. This lemma is basically a tautology: $\Br(\cX)$ is generated by $T$, and $D^b(A)$ is generated by $A$, and the endomorphism algebras of the generators are both $A$. The proof below is the just the preceding sentence written out carefully.
\begin{proof}
  The functors $F := \Hom(T,-)_{gr}$ and $F^{*} = T \otimes_{A}- $ are an adjoint pair of functors between $D(\cX)$ and $D(A)$ (the unbounded derived categories), and $F^{*}$ is fully faithful since $F\circ F^*$ is the identity. 

  Since $A$ has finite global dimension as an ungraded algebra by \cite[Thm.\ 1.4.2]{spenko_non-commutative_2015}, it has finite global dimension as a graded algebra by \cite[Cor.\ I.2.7]{nastasescu_graded_1982}. Hence $D^{b}(A)$ equals the smallest thick subcategory of $D(A)$ containing $\{A(i)\}_{i \in \Z}$. Now $F^{*}(A(i)) = T(i) \in \Br(\cX)$, and so since $\Br(\cX)$ is a thick subcategory of $D^{b}(\cX)$ and $F^{*}$ preserves direct sums, it follows that $F^{*}$ takes $D^{b}(A)$ into $\Br(\cX)$.

  Let $0 \not= \cE \in \Br(\cX)$.
  We may represent $\cE$ by a minimal complex
  \[
    0 \to \cE_{i} \to \cdots \to \cE_{j} \to 0
  \]
  as in Lemma \ref{lem:minimalmodels}.
  Let $U$ be an irreducible summand of $\cE_{j}$. The inclusion map $U \to \cE_{j}$ does not factor through $\cE_{j-1} \to \cE_{j}$ by minimality of the resolution, hence we get a non-zero map $U \to \cE[j]$.
  It follows that $\Hom(T,\cE[j])_{gr} \not= 0$, and so $F(\cE) \not= 0$.
  Thus $\ker F \cap \Br(\cX) = \{0\}$, and by \cite[Thm.\ 3.3]{kuznetsov_homological_2007}, $F^{*} : D^{b}(A) \to \Br(\cX)$ is then essentially surjective.
\end{proof}

The above equivalence is linear over our base $[\Wedge^2 V/\C^*]$, so it makes sense to base-change it to open subsets. For example, suppose we delete the origin in $\Wedge^2 V$. The restriction of $\cX$ to this locus is the open substack $\cX^{ss}\subset \cX$, and we define $\Br(\cX^{ss})$ to be the full subcategory of $D^b(\cX^{ss})$ generated by the image of $\Br(\cX)$ under restriction.  Equivalently, this is the subcategory generated by the (infinite) set of vector bundles corresponding to the set $Y_{s,q}$.

If we delete the origin from the stack $[\widetilde{\Pf_s}/\C^*]$ we get the projective variety $\Pf_s$. The algebra $A$ restricts to give a sheaf of algebras on $\Pf_s$, and have an associated abelian category of coherent modules over it, together with the various flavours of derived category.

Given objects $\cE, \cF\in D^b(\cX^{ss})$ we write 
$$\Hom(\cE, \cF)_{gr} \in D^b(\Pf_s)$$
to mean the push-down of $\hom(\cE, \cF)$ to the base $\Pf_s \subset \P(\Wedge^2 V)$. This is consistent with our previous notation: if $\cE$ and $\cF$ are the restrictions of objects $\widetilde{\cE}, \widetilde{\cF}\in D^b(\cX)$ then $\Hom(\cE, \cF)_{gr}$ is the complex of sheaves that we get from the complex of graded modules $\Hom(\widetilde{\cE}, \widetilde{\cF})_{gr}$. 

\begin{cor}
  \label{thm:BranesToModulesBaseChanged}
  We have an equivalence:
  \[
    \Hom(T|_{\cX^{ss}}, - )_{gr} : \Br(\cX^{ss}) \isoto D^b(\Pf_s  ,A)
  \]
\end{cor}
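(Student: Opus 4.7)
My plan is to deduce this corollary from Lemma \ref{lem:BranesToModules} by base-changing the equivalence along the open immersion $j : \Pf_s \into [\widetilde{\Pf_s}/\C^*]$ obtained by deleting the origin in $\Wedge^2 V$. As emphasized in the discussion just before the corollary, the equivalence $\Hom(T,-):\Br(\cX)\isoto D^b(A)$ is $\cO$-linear over the base $[\Wedge^2 V/\C^*]$, because the graded $\Hom$-modules on either side are obtained as pushforwards of internal morphism sheaves and therefore commute with restriction to open substacks of the base. So it suffices to identify the categories and functors in the statement as the base-changes of those in Lemma \ref{lem:BranesToModules}.

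First I would match the source. By construction, the restriction of $\cX$ to $\P(\Wedge^2 V)$ is exactly $\cX^{ss}$, since $\cX^{ss}$ is precisely the GIT semistable locus for the additional $\C^*$. The definition of $\Br(\cX^{ss})$ as the full subcategory of $D^b(\cX^{ss})$ generated by the restrictions of the generators of $\Br(\cX)$ then makes the restriction functor $\Br(\cX) \to \Br(\cX^{ss})$ a Verdier localization whose kernel consists of objects with cohomology supported at the origin.

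Next I would match the target. The graded algebra $A$ restricts to the coherent sheaf of algebras $A|_{\Pf_s}$, and I would argue that the restriction functor $D^b(A) \to D^b(\Pf_s, A|_{\Pf_s})$ is also a Verdier localization with the same shape of kernel. This is a graded form of Serre's theorem: the category of coherent $A|_{\Pf_s}$-modules is the quotient of the category of finitely generated graded $A$-modules by those with cohomology supported at the irrelevant ideal, and finite generation of $A$ over $\cO_{\wedge^2 V}$ together with $\cO(1)$-ampleness on $\Pf_s$ ensures that every coherent $A|_{\Pf_s}$-module lifts to a finitely generated graded $A$-module. This step — verifying the graded Serre correspondence for the (non-commutative) algebra $A$ — is the only non-formal ingredient and is the main technical obstacle.

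Finally I would check that the base-changed functor is $\Hom(T|_{\cX^{ss}},-)$. This is immediate since base-change sends $T$ to $T|_{\cX^{ss}}$ and the internal $\sEnd$ commutes with restriction, so $\End_{\cX^{ss}}(T|_{\cX^{ss}})$ pushed down to $\Pf_s$ and $\Sp(S)$-invariants taken gives exactly $A|_{\Pf_s}$, making $\Hom(T|_{\cX^{ss}},-)$ land in $A|_{\Pf_s}$-modules. Combining the three matchings, the base-changed equivalence is precisely the one claimed.
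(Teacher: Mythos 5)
Your strategy --- realise both sides as localizations of the equivalent categories of Lemma \ref{lem:BranesToModules} away from the origin of $\Wedge^2 V$ --- is not the paper's argument, and it has a genuine gap on the source side. You claim that, because $\Br(\cX^{ss})$ is \emph{defined} as the subcategory generated by restrictions from $\Br(\cX)$, the restriction functor $\Br(\cX)\to\Br(\cX^{ss})$ is a Verdier localization whose kernel is the objects supported over $0\in\Wedge^2 V$ (note: on $\cX$ this support condition means the whole unstable locus in $\Hom(S,V)$, not the origin of $\Hom(S,V)$). This is not a formal consequence of the definition. Generation only says that the image of $\Br(\cX)$ generates; it gives neither essential surjectivity of restriction onto $\Br(\cX^{ss})$ (a morphism over $\cX^{ss}$ between restricted objects need not extend over $\cX$, since global and punctured-cone sections differ by local cohomology at the origin, so cones of such morphisms are not obviously restrictions), nor full faithfulness of the induced functor $\Br(\cX)/(\Br(\cX)\cap\Ker)\to D^b(\cX^{ss})$ --- for a triangulated subcategory $\cB\subset\cT$ and a thick $\cS\subset\cT$, the comparison $\cB/(\cB\cap\cS)\to\cT/\cS$ fails to be fully faithful in general, and here one would need a genuine cofinality argument showing that roofs can be taken inside $\Br(\cX)$. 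In fact, once you grant Lemma \ref{lem:BranesToModules} and your graded Serre description of $D^b(\Pf_s,A)$, the source-side localization statement is essentially equivalent to the corollary you are trying to prove, so asserting it ``by definition'' is close to circular. (Your target-side step, the graded non-commutative Serre quotient at the level of \emph{bounded derived} categories, also needs more than the abelian-level statement, e.g.\ a Miyachi-type argument, though you at least flag it.)

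The paper sidesteps all of this. It works with the adjoint pair $F_{ss}=\Hom(T|_{\cX^{ss}},-)$ and its left adjoint $F^*_{ss}=-\otimes_A T|_{\cX^{ss}}$: the left adjoint is automatically fully faithful (since $\End_{\cX^{ss}}(T|_{\cX^{ss}})=A|_{\Pf_s}$), its essential image is closed under cones, shifts and summands, and by $\cO_{\wedge^2 V}$-linearity of $F=\Hom(T,-)$ it contains every restriction $\widetilde{\cE}|_{\cX^{ss}}$ of an object $\widetilde{\cE}\in\Br(\cX)$, namely $\widetilde{\cE}|_{\cX^{ss}}=F^*_{ss}\big(F(\widetilde{\cE})|_{\Pf_s}\big)$; hence the image contains $\Br(\cX^{ss})$, while it is contained in $\Br(\cX^{ss})$ because $D^b(\Pf_s,A)$ is generated by the twists of $A$, which go to the corresponding summands of twists of $T|_{\cX^{ss}}$. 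So no localization or Serre-quotient statement is needed. To salvage your route you would have to prove the source-side localization directly, which is substantially more work than the paper's argument and is the step your proposal currently treats as free.
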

\begin{proof}
  Let $F_{ss} = \Hom (T |_{\cX^{ss}},  -  )_{gr}: D(\cX^{ss}) \to D(\Pf_{s},A)$, and let $F^{*}_{\ss}$ be the left adjoint.
  The functor $F^{*}_{\ss}$ is automatically fully faithful.

  If $\cE \in \Br(\cX^{\ss})$ then there exists an $\widetilde{\cE}\in \Br(\cX)$ restricting to $\cE$.
  Since $F$ is linear over $[\Wedge^{2}V/\C^*]$ we have that $\cE = F^{*}_{\ss}(F(\widetilde{\cE})|_{\Pf_{s}})$ and so $F^{*}_{\ss} : D^{b}(\Pf_{s},A) \to \Br(\cX^{\ss})$ is essentially surjective.
\end{proof}

\begin{rem}\label{rem:BasechangingBranesToModules}
  This same proof works if we restrict to other ($\C^*$-invariant) open subsets in $\Wedge^2 V$. 
\end{rem}

Now fix $L\subset \Wedge^2 V^\vee$ and consider our Landau--Ginzburg B-model $(\cX\tms_{\C^*} L, \dualW)$. We can adapt the equivalence of $\Br(\cX)$ with $D^b(A)$ to give another description of the category $\Br(\cX\tms_{\C^*} L, \dualW)$. 

Let's start by forgetting $\dualW$, and just tensoring the previous lemma by $\cO_L$. The vector bundle $T$ on $\cX$ can be pulled up to give a vector bundle on $\cX\tms_{\C^*} L$ which we'll continue to denote by $T$. and we have a functor
$$\Hom(T, -)_{gr}: D^b(\cX\tms_{\C^*} L) \To D^b(A\otimes \cO_L) $$
defined as before by taking $\Sp(S)$-invariants in $\hom(T,-)$. 

The algebra $A\otimes \cO_L$ has an obvious set of projective modules, obtained by taking the `vertex projective' $A$-modules $P_\gamma$ and  tensoring them with $\cO_L$. The adjoint functor
$$ T\otimes - : D^b(A\otimes \cO_L)  \To  D^b(\cX\tms_{\C^*} L) $$
sends each of these projective modules to the  corresponding vector bundle. It is an embedding with image $\Br(\cX\tms_{\C^*} L)$.

Now we introduce the superpotential $\dualW$. We can view $\dualW$ as a central element of the algebra $A\otimes\cO_L$, since this an algebra over the ring of functions on $\Wedge^2 V\times L$. So the pair $(A\otimes\cO_L, \dualW)$ is a `curved algebra', and there is an  associated category 
$$D^b(A\otimes \cO_L, \dualW)$$
of curved dg-modules. By definition every object in this category is equivalent to a curved dg-module whose underlying module is projective.

\begin{lem}
  \label{lem:BranesToModules2}
  We have a functor
  $$\Hom(T, -)_{gr} : D^b(\cX\tms_{\C^*} L,\, \dualW) \;\To\; D^b(A\otimes \cO_L, \dualW) $$
  with a left adjoint $T\otimes - $. The adjoint is an embedding and gives an equivalence:
  $$T\otimes - : D^b(A\otimes \cO_L, \dualW)\; \isoto \; \Br(\cX\tms_{\C^*} L,\, \dualW)$$
  In particular the subcategory $\Br(\cX\tms_{\C^*} L,\, \dualW)$ is right-admissible.
\end{lem}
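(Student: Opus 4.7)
The plan is to mimic the proof of Lemma \ref{lem:BranesToModules} in the curved setting, using crucially that the superpotential $\dualW$ is pulled back from $L$ and therefore acts as a central element of $A\otimes \cO_L = \End_{\cX\tms_{\C^*}L}(T)$. First I would observe that $\Hom(T,-)$ extends from ordinary sheaves to the curved setting: given $(\cE,d_\cE) \in D^b(\cX\tms_{\C^*}L,\dualW)$, the complex $\Hom(T,\cE)$ is naturally an $A\otimes\cO_L$-module and inherits a differential (by post-composition with $d_\cE$) whose square is $\dualW\id$, hence defines an object of $D^b(A\otimes\cO_L,\dualW)$. Dually, given $(M,d_M) \in D^b(A\otimes\cO_L,\dualW)$, the tensor product $T\otimes_{A\otimes\cO_L}M$ with differential $\id_T\otimes d_M$ is a matrix factorization on $\cX\tms_{\C^*}L$ for $\dualW$, because $(\id_T\otimes d_M)^2 = \id_T\otimes (\dualW\id_M) = \dualW\id$. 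The standard adjunction at the level of ordinary sheaves/modules is $\cO_L$-linear, so it lifts to an adjunction of curved dg-functors.

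Next I would show the left adjoint $F^* = T\otimes_{A\otimes\cO_L}-$ is fully faithful. The uncurved analogue -- that $T\otimes_A-:D^b(A)\to D^b(\cX)$ is fully faithful -- is implicit in Lemma \ref{lem:BranesToModules} and follows from the tilting identity $\RHom_\cX(T,T) = A$ concentrated in degree zero. Base-changing to $L$ (using that $T$ is pulled back from $\cX$) yields $\RHom_{\cX\tms_{\C^*}L}(T,T) = A\otimes\cO_L$, again in degree zero, so the corresponding uncurved functor $D^b(A\otimes\cO_L)\to D^b(\cX\tms_{\C^*}L)$ is fully faithful. Since the curved structure on both sides is induced by the same central element $\dualW\in \cO_L$ acting identically through the adjunction, the unit $M \to \Hom(T,T\otimes_{A\otimes\cO_L}M)$ remains a quasi-isomorphism in the curved setting; this should be checkable by a $\dualW$-adic filtration whose associated graded reduces to the uncurved computation.

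For essential surjectivity onto $\Br(\cX\tms_{\C^*}L,\dualW)$ I would apply Lemma \ref{lem:minimalmodels}: any $\cE \in \Br(\cX\tms_{\C^*}L,\dualW)$ admits a representative $(E,d_E)$ whose underlying vector bundle $E$ is a direct sum of the bundles $\Schpur{\gamma}S\otimes\langle\omega_S\rangle^p$ with $\gamma\in Y_{s,q}$ and $p\in \Z$. Such an $E$ is visibly $T\otimes_{A\otimes\cO_L} M_0$ for $M_0$ the corresponding direct sum of vertex-projective modules $e_\gamma(A\otimes\cO_L)\otimes\langle\omega_S\rangle^p$. Full faithfulness from the previous step transports $d_E$ uniquely back to a curved dg-structure $d_{M_0}$ on $M_0$, and then $T\otimes_{A\otimes\cO_L}(M_0,d_{M_0}) \cong (E,d_E)$. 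Right-admissibility is then formal: the composite $T\otimes_{A\otimes\cO_L}\circ\Hom(T,-)$ lands in $\Br(\cX\tms_{\C^*}L,\dualW)$ and provides the right adjoint to the inclusion via the counit of the adjunction.

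The main obstacle is verifying full faithfulness cleanly in the curved setting: one has to argue that morphism spaces in the curved dg-category agree with those computed in the uncurved base-changed theory, despite the twisted differentials. Centrality of $\dualW$ is the crucial point, since any morphism then inherits a $\dualW$-adic filtration whose associated graded reduces to the uncurved tilting computation, and the latter being concentrated in degree zero leaves no room for extra classes to appear in the curved theory.
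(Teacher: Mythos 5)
Your overall strategy coincides with the paper's: use the tilting identity $\REnd(T)=A\otimes\cO_L$ (concentrated in degree zero), Lemma \ref{lem:minimalmodels}, and the identification of matrix factorizations built from summands of twists of $T$ with curved dg-modules over $(A\otimes\cO_L,\dualW)$ whose underlying module is projective. However, two steps that the paper treats explicitly are missing from your write-up. First, you assert that $\Hom(T,\cE)$ with its induced twisted differential ``defines an object of $D^b(A\otimes\cO_L,\dualW)$''. Since, by definition, objects of that category must be equivalent to curved dg-modules with (finitely generated) projective underlying module, this needs an argument: the paper uses that $A\otimes\cO_L$ has finite global dimension, so the finitely generated module $\Hom(T,\cE)$ has a finite projective resolution, and then the perturbation technique of \cite[Lemma 3.6]{segal_equivalence_2011} to replace the curved module by one built from projectives. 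Second, for your claim that the composite $(T\otimes_{A\otimes\cO_L}-)\circ\Hom(T,-)$ lands in $\Br(\cX\tms_{\C^*}L,\dualW)$ (right-admissibility), and more basically for $T\otimes_{A\otimes\cO_L}-$ to carry \emph{all} of $D^b(A\otimes\cO_L,\dualW)$ into $\Br(\cX\tms_{\C^*}L,\dualW)$, you must know that every projective graded $A\otimes\cO_L$-module is a direct sum of twisted vertex projectives. The paper proves this by collapsing the bigrading (diagonal $\Delta$-weight and R-charge) to a non-negative grading whose degree-zero part is the semisimple algebra $\C^{Y_{s,q}}$ and applying graded Nakayama. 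Your essential-surjectivity paragraph only handles objects already known to lie in $\Br$, so it does not supply this.

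Separately, your mechanism for full faithfulness is not well-posed as stated: ``the unit remains a quasi-isomorphism'' has no direct meaning for curved modules (the twisted differentials do not square to zero), and a ``$\dualW$-adic filtration'' does not compare the curved and uncurved situations, since the differentials on Hom-complexes do not involve $\dualW$ itself but only the twisted differentials $d_\cE$; the filtration that does this job (used in the paper's Lemma \ref{lem:PhiPerturbed}) is by the weight of the scaling action on the $L$-directions, which splits $d_\cE$ into its constant and positive parts. In the present lemma no such spectral-sequence argument is needed at all: once objects on both sides are represented by (twists of) vertex projectives, respectively summands of twists of $T$, the tilting isomorphism $\Hom_{\cX\tms_{\C^*}L}(T\otimes P_1,\,T\otimes P_2)\cong\Hom_{A\otimes\cO_L}(P_1,P_2)$ is an isomorphism of complexes compatible with the twisted differentials, so the identification is an equivalence already at the chain level, which is exactly how the paper concludes.
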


\begin{proof}
  If we have an object $(\cE, d_\cE)\in D^b(\cX\tms_{\C^*} L,\, \dualW)$ then applying the functor $\Hom(T,-)_{gr}$ to $\cE$ gives an (R-charge equivariant) $A\otimes \cO_L$-module.  Under this functor the endomorphism $d_\cE$ maps to a endomorphism of the module $\Hom(T, \cE)_{gr}$, which squares to $\dualW$ -- note that the functor $\Hom(T,-)_{gr}$ is exact so there are no `up-to-homotopy' complications here.\footnote{The functor is exact because $T$ is a vector bundle and taking $\Sp(S)$ invariants is exact.}  We claim that this defines a functor
  $$\Hom(T, -)_{gr}: D^b(\cX\tms_{\C^*} L,\, \dualW)\To D^b(A\otimes \cO_L, \dualW)$$
  The technical issue in this claim is to check that the functor  does indeed land in  $D^b(A\otimes \cO_L, \dualW)$ and not some larger category of curved dg-modules. To see this observe that $A\otimes \cO_L$ has finite global dimension (since $A$ does) so $\Hom(T, \cE)_{gr}$  has a finite projective resolution. Then the perturbation technique of \cite[Lemma 3.6]{segal_equivalence_2011} implies that the curved dg-module $\Hom(T, \cE)_{gr}$ does lie in $D^b(A\otimes \cO_L, \dualW)$.

  Now we claim that the only projective $A\otimes \cO_L$-modules are the  obvious ones corresponding to the vertex projective $A$-modules. Observe that $A\otimes \cO_L$ is bi-graded; one grading is by the diagonal $\Delta\subset \GSp(S)$  and the other is the R-charge. If we collapse to a single grading appropriately then it becomes non-negatively graded, with its degree-zero part the semi-simple algebra $\C^{Y_{s,q}}$. Then the claim follows by the graded Nakayama lemma.

  Therefore any object in $ D^b(A\otimes \cO_L, \dualW)$ is equivalent to a curved dg-module built from these projective modules. The adjoint functor $T\otimes -$ identifies this category of curved dg-modules  with the category of matrix factorizations on $\cX\tms_{\C^*} L$ built from the summands of $T$ (this is an equivalence even at the chain level). So $T\otimes -$ gives an equivalence between $ D^b(A\otimes \cO_L, \dualW)$ and $\Br(\cX\tms_{\C^*} L,\, \dualW)$. 

  The final statement of the lemma follows formally.
\end{proof}

We shall see later in Section \ref{sec:NCRslices} that if $L$ is generic and its dimension is not too big, then there is a third way to describe the category $\Br(\cX\tms_{\C^*} L,\, \dualW)$, by using Kn\"orrer periodicity to remove the $L$ directions entirely.
\pgap

The fact that $A$ (or $\Br(\cX)$) is a non-commutative \emph{crepant} resolution of $\widetilde{\Pf_{s}}$ is reflected in the following partial Serre duality statement.
\begin{prop}
  \label{thm:SerreDuality1}
  Let $\cE, \cF \in \Br(\cX)$ with $\cF|_{\cX^{\ss}} = 0$.
  Then
  \[
    \Hom(\cE,\cF) = \Hom(\cF, \cE \otimes (\det S)^v [\dim \widetilde{\Pf_{s}}])^{\vee}.
  \]
\end{prop}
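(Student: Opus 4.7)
My plan is to bootstrap from the ungraded non-compact Calabi--Yau property of the NCCR $\widetilde A$, which was recalled in the introduction, and then to reinstate the extra $\C^*$-grading that distinguishes $\cX$ from $\widetilde{\cX}$, keeping careful track of the canonical-bundle character that produces the twist $(\det S)^v$.

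To begin, I would pull $\cE$ and $\cF$ back along $\widetilde{\cX} \to \cX$ to obtain $\widetilde\cE, \widetilde\cF \in \Br(\widetilde{\cX})$. The hypothesis $\cF|_{\cX^{\ss}}=0$ descends to the condition that the corresponding $\widetilde A$-module (for $\widetilde A$ the ungraded analogue of $A$) is supported at $0 \in \widetilde{\Pf_s}$, \emph{i.e.}~of finite length over the centre. Van den Bergh's result \cite[Lem.\ 6.4.1]{van_den_bergh_non-commutative_2004}, quoted in the introduction, therefore yields
\[ \Hom_{\widetilde{\cX}}^\bullet(\widetilde\cE, \widetilde\cF) \cong \Hom_{\widetilde{\cX}}^{\dim \widetilde{\Pf_s} - \bullet}(\widetilde\cF, \widetilde\cE)^\vee. \]

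The next step is to upgrade this to a $\C^*$-equivariant (equivalently $\GSp(S)$-equivariant) statement, which is what working on $\cX$ amounts to. In the equivariant setting the trace map that underlies Serre duality is valued not in the trivial character but in the character by which the group acts on the canonical line bundle of the ambient smooth stack; correspondingly, the equivariant duality pairing is twisted by that character. I would then compute
\[ K_{\Hom(S,V)} \;=\; \det(\Hom(S,V)^\vee) \;\cong\; (\det S)^v \otimes (\det V^\vee)^{2s}, \]
and since $V$ carries no $\GSp(S)$-action the factor $(\det V^\vee)^{2s}$ is trivial, leaving $(\det S)^v$. Moreover $\det\mathfrak{gsp}(S)$ is the trivial character ($\mathfrak{sp}(S)$ is semisimple, and the remaining centre is one-dimensional and self-dual), so $K_\cX$ agrees with $K_{\Hom(S,V)}$ as an equivariant line bundle. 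Feeding this twist into the equivariant version of the previous display gives exactly the claimed isomorphism.

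The main obstacle I anticipate is rigorously setting up the equivariant enhancement of Van den Bergh's duality -- namely, articulating precisely why the trace, and hence the Serre duality pairing, picks up a twist by exactly $K_{\Hom(S,V)}$ as a $\GSp(S)$-character when we pass from $\widetilde{\cX}$ to $\cX = [\widetilde{\cX}/\C^*]$. Once that framework is in place the character computation itself is routine linear algebra, and the conclusion is immediate.
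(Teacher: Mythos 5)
Your skeleton is the same as the paper's: everything reduces to a graded (equivalently, $\GSp(S)/\Sp(S)$-equivariant) version of Van den Bergh's local Calabi--Yau lemma \cite[Lemma 6.4.1]{van_den_bergh_non-commutative_2004} for the algebra $A=\End_\cX(T)$, and the only real content is identifying the twist as $(\det S)^v$ and the shift as $\dim\widetilde{\Pf_s}$. The difference, and the one place where your plan is not yet a proof, is exactly the step you defer: why the extra grading enters through the character of $K_{\Hom(S,V)}$. The graded form of Van den Bergh's lemma comes from graded local duality over the centre, so its twist is intrinsically by the graded dualizing module $\omega_R$ of the Gorenstein ring $R=\C[\widetilde{\Pf_s}]$, not a priori by the equivariant canonical bundle of the ambient smooth stack; note the duality here is not Serre duality on $\cX$ (whose dimension is $\dim\widetilde{\Pf_s}-1$), since $\cF$ is not properly supported on $\cX$ --- only its support in $\widetilde{\Pf_s}$ is the vertex. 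Hence your assertion that the equivariant trace is valued in the character of $K_{\Hom(S,V)}$ is equivalent to identifying the pull-back of $\omega_{\widetilde{\Pf_s}}$ to $\cX$ with $K_\cX\cong(\det S)^v$, and that equality needs an independent input: either Knop's description of the canonical module of an invariant ring as the invariants of the ambient canonical module (applicable here since $\Sp(S)$ has no nontrivial characters), or the classical computation $\omega_{\widetilde{\Pf_s}}\cong\cO(-sv)$ for the Pfaffian cone. The paper takes the latter shortcut: it records that $\widetilde{\Pf_s}$ is Gorenstein, that $\omega_{\widetilde{\Pf_s}}$ pulls back to $(\det S)^v$ on $\cX$, and then quotes the graded lemma through the equivalence $\Br(\cX)\cong D^b(A)$. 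Once that single input is supplied, the rest of your outline --- reduction to the ungraded statement on $\widetilde{\cX}$, the finite-length observation for the module corresponding to $\cF$, and the character computation (triviality of $\det V$ and of $\det\mathfrak{gsp}(S)$ as $\GSp(S)$-characters) --- is correct and gives the stated formula.
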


\begin{proof}
  The singular variety $\widetilde{\Pf_s}$ is Gorenstein, and the pull-back of $\omega_{\widetilde{\Pf}_{s}}$ to $\cX$ is $(\det S)^v$. Applying the equivalence $\Br(\cX) \cong D^{b}(A)$, the statement is a graded version of \cite[Lemma 6.4.1]{van_den_bergh_non-commutative_2004}.
\end{proof}

We have precisely analogous results on the $\cY$ side.  We have a graded algebra $B$, defined as the endomorphisms of a vector bundle on $\cY$, and an equivalence:
$$\Br(\cY) \isoto D^b(B)$$
Then for any $L^\perp$ we have a curved algebra $(B\otimes \cO_{L^\perp}, W)$, and an equivalence:
\[ \Br(\cY\tms_{\C^*} L^\perp,\, W) \;\isoto\; D^b(B\otimes \cO_{L^\perp}, W)
\]
In particular the subcategory $\Br(\cY\tms_{\C^*} L^\perp,\, W)$ is right-admissible.

\begin{rem}\label{rem:Rcharge2}
  There is a subtlety here which we need to highlight. In Section \ref{sec:statement} we defined the R-charge to act non-trivially on $\cY$, though we claimed in Remark \ref{rem:Rcharge} that this didn't really affect the category $D^b(\cY)$. Let us now explain why. Suppose we write $\cY_1$ for the quotient stack
  $$\cY_1= \Stack{\Hom(V, Q) }{ \GSp(Q)\times \C^*_R }$$
  where the $\C^*_R$ acts by scaling with weight 1, as in Section \ref{sec:statement}, and write $\cY_2$ for the same quotient stack but with the $\C^*_R$ acting trivially.  Recall that $\Delta\subset \GSp(Q)$ denotes the diagonal 1-parameter subgroup. There then is an isomorphism $f:\cY_{2} \to \cY_{1}$ induced by the map of groups
  \[
    \begin{pmatrix}
      \id & \Delta \\
      0 & \id 
    \end{pmatrix} : \GSp(Q)\times \C^{*}_{R} \to \GSp(Q) \times \C^{*}_{R},
  \]
  and $f_{*}$ induces an equivalence $D^{b}(\cY_{2}) \isoto D^{b}(\cY_{1})$. The only difference between $\cY_1$ and $\cY_2$ is how the line bundles are labelled, because $f_*\langle \beta_Q \rangle = \langle \beta_Q\rangle[2]$. 
  
  To construct the algebra $B$ we need to use the map $\cY \to [\Wedge^2 V^\vee / \C^*]$. If we want the construction to be precisely analogous to the construction of $A$ then we need to make the $\C^*_R$-action on both $\cY$ and $\Wedge^2 V$ trivial, \emph{i.e.}~the result is really that $\Br(\cY_2)$ is equivalent to $D^b(B)$.  Alternatively we could put a non-trivial R-charge action on both $\cY$ and $\Wedge^2 V^\vee$, as we do in Section \ref{sec:statement}. Then we get a similar equivalence $\Br(\cY_1)\cong D^b(B)$, but here the algebra $B$ carries a non-trivial $\C^*_R$ action, \emph{i.e.} it is not concentrated in homological degree zero.

  There is only one point in our proof where this distinction will be important: Lemma \ref{lem:DegreesOfA'}. In particular we will need to use the Serre functor on $\Br(\cY_1)$, and the explicit expression for the Serre functor depends on how we label our line bundles.  
\end{rem}

\begin{prop}\label{thm:SerreDuality2}
  Equip $\cY$ with the non-trivial R-charge action discussed above. Let $\cE, \cF \in \Br(\cY)$ with $\cF|_{\cY^{\ss}} = 0$. Then:
  \[
    \Hom(\cE,\cF) = \Hom(\cF, \cE \otimes (\det Q)^{-v}[\dim \widetilde{\Pf_q}-2qv])^{\vee}
  \]
\end{prop}
Note that since $\widetilde{\Pf_q} = \Hom(V, Q) /\Sp(Q)$ the shift here is actually negative; it's:
$$\dim \widetilde{\Pf_q}-2qv= -\dim \Sp(Q) = -{2q+1\choose 2}$$

\begin{proof}
  Under the equivalence $f_*$ from Remark \ref{rem:Rcharge2} the line bundle $(\det Q)^{-v}[\dim \widetilde{\Pf_q}]$ on $\cY_{2}$ becomes the line bundle $(\det Q)^{-v}[\dim \widetilde{\Pf_q} - 2qv]$. on $\cY_1$. Now the result follows by the analogue of Proposition \ref{thm:SerreDuality1}.
\end{proof}

\subsection{The kernel}\label{sec:Kernel}

In this section we construct the Fourier--Mukai kernel for a functor:
$$ D^b(\cX) \To D^b(\cY\tms_{\C^*}\Wedge^2 V,\,W)$$
This functor will induce the equivalence of Theorem \ref{thm:HoriDualityWithL} in the case $L=0$. 

\subsubsection{The definition of the kernel}\label{sec:definingK}

As stated above, our kernel will be a matrix factorization living on the relative product of $\cX$ and $\cY\tms_{\C^*}\Wedge^2 V$ over the base $[\Wedge^2 V/ \C^*]$. 

Recall that $\langle\beta_S\rangle$ is the character of $\GSp(S)$ contained in $\Wedge^2 S$. 
We let $\GSp(S, Q) \subset \GSp(S) \tms \GSp(Q)$ denote the kernel of the character $\langle \beta_S \rangle^{-1} \langle \beta_Q \rangle$.
The relative product can then be described as 
$$ \cX \tms_{\C^*} \cY = \Stack {\Hom(S, V)\tms \Hom(V, Q)}{\GSp(S, Q)} $$

Notice that $\cX \tms_{\C^*} \cY$ admits a map to the stack
$$\cZ = \Stack{\Hom(S, Q)}{\GSp(S,Q)} $$
by composing the two factors. We denote this map by:
$$\psi: \cX \tms_{\C^*} \cY \To \cZ $$
There's an obvious superpotential $W\in \Gamma(\cO_{\cZ})$ which sends $z\in \Hom(S, Q)$ to 
$$W(z) = \omega_Q\big(\!\wedge^2 \! z (\beta_S) \big) $$
where as before $\omega_Q$ is the symplectic form on $Q$ and $\beta_S$ is the Poisson bivector on $S$. If we pull this up via $\psi$ we obtain the superpotential we already have, \emph{i.e.} it agrees with the pull-up of the  function $W$ on $\cY\tms_{\C^*}\Wedge^2 V$ \eqref{eq:W}, so it seems reasonable to denote all of them by $W$.

We also give $\cZ$ an R-charge by letting $\C^*_R$ act with weight 1 on the underlying vector space; this makes $(\cZ, W)$ a Landau--Ginzburg B-model, and the map $\psi$ R-charge equivariant.

Our kernel comes from the stack $\cZ$, that is it is the pull-up of an object $\cK \in D^b(\cZ,  W) $. The Landau--Ginzburg model $(\cZ, W)$ is very simple, it is just a vector space with a non-degenerate quadratic superpotential, plus a group action. The following form of Kn\"orrer periodicity applies:

\begin{lem}\label{lem:eqKP}
  There is an object $\cK \in D^b(\cZ,  W) $ such that the functor
  $$\hom(\cK, - ) : D^b(\cZ,  W) \To D^b\left(\Stack{\pt}{\GSp(S,Q)} \right) $$
  is an equivalence, sending $\cK$ to $\cO_{pt}$. 
\end{lem}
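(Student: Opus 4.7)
The plan is to recognise $(\cZ, W)$ as an instance of equivariant Kn\"orrer periodicity: $W$ is non-degenerate as a quadratic form on $\Hom(S,Q)$, so $\Crit(W) = \{0\}$ and one expects restriction to this critical locus to furnish the desired equivalence with $D^b(\Stack{\pt}{\GSp(S,Q)})$.

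First I would verify non-degeneracy by explicit computation. Choosing symplectic bases $\{e_i, f_i\}$ of $S$ (so $\beta_S = \sum_i e_i\wedge f_i$) and $\{e'_j, f'_j\}$ of $Q$, and parametrising $z \in \Hom(S,Q)$ by four $q\tms s$ matrices $A, B, C, D$ via $z(e_i) = \sum_j a_{ji}e'_j + b_{ji}f'_j$ and $z(f_i) = \sum_j c_{ji}e'_j + d_{ji}f'_j$, a direct calculation of $\omega_Q(\wedge^2 z(\beta_S))$ yields
\[
W(z) = \operatorname{tr}(A^{T}D) - \operatorname{tr}(B^{T}C),
\]
whose Hessian is manifestly non-degenerate.

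Next I would construct $\cK$ using a (non-invariant) Lagrangian. Identifying $\Hom(S,Q) \cong S\otimes Q$ via $\omega_S$ and fixing a Lagrangian decomposition $Q = Q_+\oplus Q_-$, the subspace $U := S\otimes Q_+$ is half-dimensional and isotropic for $W$: indeed for $z \in U$ the bivector $\wedge^2 z(\beta_S)$ lies in $\Wedge^2 Q_+$, on which $\omega_Q$ vanishes. Hence $U$ is Lagrangian. Take $\cK$ to be the Koszul-type matrix factorisation with underlying sheaf $\Wedge^\bullet U^\vee \otimes \cO_{\Hom(S,Q)}$ and the standard Koszul-plus-contraction twisted differential; non-equivariantly this is the classical Kn\"orrer kernel, $\cK|_0 \cong \C$, and $\hom(\cK, -)$ is an equivalence to $D^b(\pt)$.

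The hard part will be promoting $\cK$ to a $\GSp(S,Q)$-equivariant object and verifying the equivalence in the equivariant setting. The Lagrangian $U$ is stable only under $\Sp(S)\tms P_+$ (where $P_+ \subset \Sp(Q)$ fixes $Q_+$), together with the central $\C^*$ and $\C^*_R$, not under all of $\Sp(Q)$. Nonetheless the $\GSp(S,Q)$-action on $(\Hom(S,Q), W)$ factors through $SO(\Hom(S,Q),W)$, and since $\Sp(S) \tms \Sp(Q)$ is simply connected it lifts canonically to the spin cover; the remaining $\C^*$ (and $\C^*_R$) can be lifted by an explicit choice. The resulting spinor-module structure on $\Wedge^\bullet U^\vee$ endows $\cK$ with a $\GSp(S,Q)$-equivariant structure which is independent of the auxiliary choices up to canonical isomorphism. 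Finally one checks that $\hom(\cK,-)$ is an equivalence---this reduces to the non-equivariant Kn\"orrer statement fibrewise over $\Stack{\pt}{\GSp(S,Q)}$, using Corollary \ref{thm:ObjectVanishingAtOriginImpliesVanishing} to conclude that $\cK$ is a generator---and by construction $\cK|_0 \cong \C$ is the trivial representation, matching $\cO_\pt$.
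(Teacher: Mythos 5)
Your proposal is correct in outline, but it takes a genuinely different route from the paper's. The paper treats the existence of the equivariant kernel as the only real issue: non-equivariantly the statement is basic Kn\"orrer periodicity, the only obstruction to making this equivariant is a Brauer-class obstruction, and this is disposed of by citing \cite[Prop.\ 4.6]{ST_2014}, where an object of $D^b(\cZ,W)$ is constructed which, after forgetting the group action, becomes the skyscraper sheaf along a maximal isotropic subspace; the equivariant equivalence then follows formally. You instead rebuild that object from scratch: the Koszul factorization on the non-invariant Lagrangian $S\otimes Q_+$, together with the observations that $\GSp(S,Q)$ preserves $W$ (the two similitude factors cancel) and so maps to $SO(\Hom(S,Q),W)$, and that $\Sp(S)\tms \Sp(Q)$ is simply connected and hence lifts to the spin group, which makes the spinor module $\Wedge^\bullet U^\vee$ equivariant. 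This is essentially a proof of the cited proposition, and what it buys is self-containedness; what the paper's citation buys is brevity and the avoidance of exactly the delicate point that your phrase ``can be lifted by an explicit choice'' glosses over. Two places deserve to be spelled out. First, $\GSp(S,Q)\cong\big(\Sp(S)\tms\Sp(Q)\tms\C^*\big)/\langle(-\id_S,-\id_Q,-1)\rangle$ and the central $\C^*$ acts trivially on $\Hom(S,Q)$, so extending the $\mathrm{Spin}$-equivariant structure over $\GSp(S,Q)$ amounts to choosing a character $t^n$ of that $\C^*$ whose parity matches the sign by which the canonical lift of $(-\id_S,-\id_Q)$ acts on $\Wedge^\bullet U^\vee$; this compatibility is precisely where the Brauer obstruction lives, and it is satisfiable because characters of both parities exist, but it is the content of the step, not a formality. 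Second, $\C^*_R$ rescales $W$ (with weight $2$) rather than preserving it, so it is not covered by the $SO/\mathrm{Spin}$ argument at all and must be lifted separately (easy, but again an explicit check, and it is what forces the half-integral/character ambiguities seen elsewhere in the paper). With these points made precise, your final step --- fully faithfulness reduces to the non-equivariant computation after taking invariants, and generation follows from the non-equivariant statement or from Corollary \ref{thm:ObjectVanishingAtOriginImpliesVanishing} --- is the paper's ``it follows easily'', so the two arguments converge there.
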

If we forget the group action then this statement is basic Kn\"orrer periodicity.  \emph{A priori} the equivalence might not hold equivariantly because there could be a Brauer class obstruction (or Brauer--Wall class to be precise).\footnote{For example consider the LG model $(\A^2, xy)$ with a $\Z_2$ action that swaps $x$ and $y$ (for this to make sense we must   forget about R-charge, so all the categories are only $\Z_2$-graded). The category $D^b([\A^2/\Z_2], xy)$ is \emph{not} equivalent to $D^b(B\Z_2)$ because there is no $\Z_2$-equivariant object that can implement the equivalence.}  For comparison Kn\"orrer periodity does not hold for a general quadratic vector bundle over a scheme; here we are considering a quadratic vector bundle over the stack $B\GSp(S,Q)$. 

It may be possible to prove that the relevant cohomology group of $\GSp(S, Q)$ vanishes and hence there can be no obstruction; instead we shall appeal to an explicit construction from earlier work of the second author. 
\begin{proof}
  In \cite[Prop.\ 4.6]{ST_2014} it was shown that there is an object $\cK\in D^b(\cZ,  W)$ such that when we forget the group action $\cK$ becomes equivalent to the sky-scraper sheaf along a maximal-isotropic subspace in $\Hom(S, Q)$. Since this sky-scraper sheaf induces the non-equivariant equivalence, it follows easily that $\cK$ induces the equivariant equivalence.
\end{proof}

This lemma says that all objects of the category $D^b(\cZ, W)$ can be obtained by tensoring $\cK$ with (a shifted sum of) $\GSp(S,Q)$-representations.
So picking 
$$\psi^*\cK \;\; \in D^b(\cX \tms_{\C^*} \cY,\, W) $$
as our Fourier--Mukai kernel seems like a natural choice. It defines a functor:
$$\widehat{\Phi}: D^b(\cX) \to D^b(\cY\tms_{\C^*}\Wedge^2 V,\,W)$$

We shall see shortly that it in fact defines a functor:
$$\Phi: \Br(\cX) \to \Br(\cY\tms_{\C^*}\Wedge^2 V,\,W) $$

\subsubsection{An important property of $\cK$}

We will need the following fact about $\cK$, which begins to make the duality manifest. Recall that at the most basic combinatorial level, our duality is the bijection $\gamma \mapsto \gamma^{c\top}$ \eqref{eq:combinatorialduality} between Young diagrams in $Y_{s,q}$ and $Y_{q,s}$. 

If we restrict $\cK$ to the origin in $\Hom(S, Q)$ and take its homology we obtain a representation of $\GSp(S, Q)$, which has an underlying $\Sp(S)\tms \Sp(Q)$-representation.

\begin{prop}\label{prop:IrrepsInKernel}
  The vector space $h_\bullet(\cK|_0)$ is concentrated in degree zero, and as an $\Sp(S)\tms \Sp(Q)$ representation we have:
  $$h_\bullet(\cK|_0) = \bigoplus_{\gamma\in Y_{s,q}} \Schpur{\gamma} S \otimes \Schpur{\gamma^{c\top}} Q $$
\end{prop}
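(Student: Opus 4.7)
I would compute $h_\bullet(\cK|_0)$ by realising $\cK$ as an explicit $\Sp(S) \times \Sp(Q)$-equivariant global model, then restricting to the origin. After forgetting equivariance, $\cK$ is the Koszul matrix factorisation of $W$ supported on a maximal isotropic subspace $I \subset \Hom(S,Q)$ (cf.\ \cite[Prop.\ 4.6]{ST_2014}); a natural choice is $I = \Hom(S, Q_+)$ for a Lagrangian $Q_+ \subset Q$. Since this $I$ is only invariant under the parabolic $P \subset \Sp(Q)$ stabilising $Q_+$, to recover $\Sp(Q)$-equivariance I would pass to the Lagrangian Grassmannian $\LG(Q)$ with its tautological subbundle $\cQ_+ \subset Q_{\LG(Q)}$, and consider the rank-$2sq$ vector sub-bundle $\Hom(S,\cQ_+) \hookrightarrow \Hom(S,Q) \times \LG(Q)$, on which the pulled-back $W$ vanishes. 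Modulo a check that the pushforward along $\LG(Q) \to \pt$ captures $\cK$ correctly (via Kn\"orrer periodicity applied fibrewise, together with the characterisation of $\cK$ in Lemma \ref{lem:eqKP}), the restriction $\cK|_0$ is computed by base-changing to $\{0\} \times \LG(Q)$.

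Derived restriction of $\cO_{\Hom(S,\cQ_+)}$ to this zero-section produces the Koszul complex $\wedge^\bullet(S \otimes \cQ_+^\vee)$ on $\LG(Q)$ (using $S^\vee \cong S$ via $\omega_S$), with trivial differential since all defining coordinates vanish at the origin. Cauchy's formula then gives the decomposition
\[
\wedge^\bullet(S \otimes \cQ_+^\vee) \;=\; \bigoplus_\lambda \Schur{\lambda} S \otimes \Schur{\lambda^\top} \cQ_+^\vee,
\]
summed over Young diagrams $\lambda$ with $\ell(\lambda) \leq 2s$ and $\ell(\lambda^\top) \leq q$.

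Computing $H^\bullet(\LG(Q), \Schur{\lambda^\top} \cQ_+^\vee)$ via Borel--Weil--Bott on $\LG(Q) = \Sp(Q)/P$ then selects those $\lambda$ for which the associated weight is regular, each contributing a single irreducible $\Sp(Q)$-representation in a definite cohomological degree. In parallel, $\Schur{\lambda} S$ is a $\GL(S)$-representation which must be branched to $\Sp(S)$ via Littlewood's rule. The combined effect should be to kill all $\lambda$ except those corresponding to the bijection $\gamma \leftrightarrow \gamma^{c\top}$ between $Y_{s,q}$ and $Y_{q,s}$, producing the claimed decomposition, with all surviving contributions landing in the same cohomological degree (which we normalise to be zero via the R-charge).

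The main obstacle is precisely this combinatorial bookkeeping: organising the Bott--Borel--Weil cancellations together with the $\GL \to \Sp$ branching so that the answer collapses to the pairing $\gamma \mapsto \gamma^{c\top}$. A useful conceptual cross-check is that $\cK|_0$ is, up to R-charge shifts, the spinor representation of $\mathrm{Spin}(\Hom(S,Q), W)$ restricted via the natural embedding $\Sp(S) \times \Sp(Q) \hookrightarrow \mathrm{SO}(\Hom(S,Q))$; the claimed formula is then the classical branching of this spinor representation under the subgroup, and one can verify in small cases (e.g.\ $s=q=1$, where the exceptional isomorphism $\mathrm{Spin}(4) \cong \mathrm{SL}_2 \times \mathrm{SL}_2$ gives immediately $S \oplus Q$) that the total dimension $2^{2sq}$ and the decomposition match.
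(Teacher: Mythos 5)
There is a genuine gap, and it sits exactly at the step you defer. Your proposed equivariant model of $\cK$ --- the pushforward of $\cO_{\Hom(S,\cQ_+)}$ from $\Hom(S,Q)\tms\LG(Q)$ --- is not equivalent to $\cK$, so the ``check that the pushforward captures $\cK$ correctly'' is not a formality but fails as stated. Two concrete problems: first, the derived restriction of $\cO_{\Hom(S,\cQ_+)}$ to $\{0\}\tms\LG(Q)$ is the Koszul complex on the \emph{conormal} bundle, which is $\wedge^\bullet(S\otimes\cQ_+)$ (via $\omega_Q$ one has $(Q/\cQ_+)^\vee\cong\cQ_+$), not $\wedge^\bullet(S\otimes\cQ_+^\vee)$; second, and more seriously, already for $s=q=1$ neither version gives the right answer. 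There the expected $h_\bullet(\cK|_0)$ is $S\oplus Q$, of dimension $2^{2sq}=4$ (as your own spinor cross-check confirms), whereas $H^\bullet(\P^1,\wedge^\bullet(S\otimes\cO(-1)))\cong\C\oplus\C[-1]$ has dimension $2$ and is spread over two degrees, and the complex you wrote yields $\C\oplus(S\otimes Q)\oplus\Sym^2 Q$ of dimension $8$. A line-bundle twist on $\LG(Q)$ can repair the small case (twisting by $\cO(1)$ does return $S\oplus Q$ in degree $0$), but then you must identify the correct twist for all $s,q$ and prove the resulting object satisfies the characterisation of $\cK$ in Lemma \ref{lem:eqKP}; neither is done. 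On top of that, the Borel--Weil--Bott plus Littlewood-branching bookkeeping that you yourself flag as ``the main obstacle'' is the actual content of the proposition and is left entirely open, so even granting a correct model the argument is incomplete.

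For comparison, the paper's proof avoids any geometric model of $\cK$. Lemma \ref{lem:eqKP} gives $\cO_0\cong\cK\otimes R$ with $R=h_\bullet(\cK|_0)$, hence $R\otimes R^\vee=\End(\cO_0)=\Wedge^\bullet\big(\Hom(S,Q)[1]\big)$, which is concentrated in degree zero because $\Hom(S,Q)$ carries R-charge $1$; degree-concentration comes for free, and the problem reduces to extracting a square root of the character $\prod_{i,j}(x_i+x_i^{-1}+y_j+y_j^{-1})^2$. A highest-weight analysis shows each pair $(\gamma,\gamma^{c\top})$ occurs, and counting $\Sp(S)\tms\Sp(Q)$-invariants in $\Wedge^\bullet(S\otimes Q)$ via Littlewood's rule (Lemma \ref{lem:SpInvariants}) shows there is room for nothing else. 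If you want to salvage your route, the cleanest option is probably to promote the spinor remark from a cross-check to the proof: show that $\cK|_0$ is the spin module for the Clifford algebra of $(\Hom(S,Q),W)$ (this is essentially the paper's $\End(\cO_0)$ step) and then cite the classical multiplicity-free branching of the spin representation under the dual pair $\Sp(S)\tms\Sp(Q)\subset\mathrm{SO}(\Hom(S,Q))$; small-case evidence alone does not suffice.
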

Recall that it in our notation $\Schpur{\gamma}$ denotes a symplectic Schur functor, and $\Schur{\gamma}$ denotes an ordinary (GL) Schur functor. 
\begin{proof}
  The object $\cK$ was constructed to give an equivalence
  $$\hom(\cK, - ):  D^b(\cZ ,W)\isoto D^b(\Stack{\pt}{\GSp(S, Q)}) $$
  under which $\cK$ maps to the 1-dimensional trivial representation (in degree zero), see Lemma \ref{lem:eqKP}. Applying this functor to the sky-scraper sheaf $\cO_0$ of the origin gives some graded representation $R$, and then we must have $\cO_0\cong \cK\otimes R$. Hence $\hom(\cO_0, \cK) = R^\vee$  and  $\hom(\cO_0, \cO_0)=R\otimes R^\vee$. 

  However, $\cO_0$ is also equivalent to a matrix factorization given by taking the Koszul complex and perturbing it in the standard way. We can use this matrix factorization to compute $\hom(\cO_0, \cO_0)$ and we obtain:
  $$R\otimes R^\vee = \hom(\cO_0, \cO_0) = \bigoplus_{i} \big(\Wedge^i \Hom(S,Q)\big)[i]$$
  Since the vector space $\Hom(S, Q)$ already has R-charge 1 this vector space is actually concentrated in degree zero, so the same is true of $R$. We can also use the matrix factorization to compute $\hom(\cO_0, \cK)$, and we observe that
  $$R^\vee = \hom(\cO_0, \cK) = h_\bullet( \cK|_0) \otimes \det \Hom(S,Q)[4sq] $$
  because the perturbed Koszul complex is almost self dual. The determinant factor here is a non-trivial character of $\GSp(S,Q)$ but it is concentrated in degree zero. Therefore as a graded representation of $\Sp(S)\times \Sp(Q)$ we have that
  $$h_\bullet(\cK|_0) = R^\vee = R$$
  (all representations of this group are self-dual) for a representation $R$ satisying the equality:
  $$R^{\otimes 2} = \Wedge^{\bullet}(S \otimes Q)$$
  This equality determines $R$ uniquely, and the remainder of the argument is a computation in the representation ring of $\Sp(S)\times \Sp(Q)$.
  \pgap

  We start by computing the character of $R$. Let $x_{1}^{\pm 1}, \ldots, x_{s}^{\pm 1}$ and  $y_{1}^{\pm 1}, \ldots, y_{q}^{\pm 1}$ denote the standard characters of the maximal tori in $\Sp(S)$ and $\Sp(Q)$ respectively, so that the characters of the standard representations of $S$ and $Q$ are $\sum_{i=1}^{s} x_{i} + x_{i}^{-1}$ and  $\sum_{j = 1}^q y_j +y_j^{-1}$. The character of $S\otimes Q$ is then:
  $$\sum_{i,j} (x_i +x_i^{-1})(y_j + y_j^{-1}) $$
  We claim that the character of $\Wedge^\bullet(S\otimes Q)$ can be expressed as:
  $$\prod_{i,j} (x_{i}+x_{i}^{-1} + y_{j} + y_{j}^{-1})^2 $$
  This is an easy computation if $s = q = 1$, and then the general case follows from the fact that $\Wedge^{\bullet}$ converts sums to products. 

  Consequently, the character of $R$ is:
  $$
  \prod_{i,j} (x_{i}+x_{i}^{-1} + y_{j} + y_{j}^{-1})
  $$
  All monomials appearing in the the above expression have total degree $\le sq$. To get a monomial of degree exactly $sq$, we choose a subset $\Gamma$ of the rectangle $[1,s]\tms [1,q]$, then there is a corresponding monomial :
  $$\left(\prod_{(i,j)\in \Gamma} x_i \right)\left(\prod_{(i,j)\notin \Gamma} y_j\right)$$
  Now choose a partition $\gamma\in Y_{s,q}$, \emph{i.e.} a non-increasing sequence $(\gamma_1, \gamma_2, \ldots, \gamma_s)$ with $\gamma_1\leq q$. We can define an associated subset
  $$\Gamma = \set{ (i,j),\; q-\gamma_i <  j \leq q } \; \subset [1, s]\tms [1, q] $$
  and the corresponding monomial is
  \begin{equation*}
    \label{eqn:HighestWeightElement}
    m = \big( x_1^{\gamma_1}x_2^{\gamma_2}\cdots x_s^{\gamma_s}\big)\big(y_1^{\beta_1}y_2^{\beta_2}\cdots y_q^{\beta_q}\big) 
  \end{equation*}
  where $\beta=(\beta_1,\ldots, \beta_q)$ is the partition $\beta = \gamma^{c\top}$. 
  This is a dominant weight of $\Sp(S)\tms \Sp(Q)$, which we claim is the highest weight of a subrepresentation of $R$.

  To see this, we show that $m$ is a maximal element among the weights of $R$, in the standard partial ordering of the weight lattice $X(\Sp(S) \times \Sp(Q))$.
  Recall that the partial ordering on $X(\Sp(S))$ (and similarly on $X(\Sp(Q))$) is such that $\prod x_i^{\gamma_i} \le \prod x_i^{\gamma_i^{\prime}}$ if and only if $\sum_{i=1}^{k} \gamma_{i} \le \sum_{i=1}^{k} \gamma_{i}^{\prime}$ for all $k \in [1,s]$.

  We assume for a contradiction that there exists a monomial $\prod x_i^{\gamma^{\prime}_i} \prod y_i^{\beta^{\prime}_i}$ among the weights of $R$ such that
  \begin{equation}
    \label{eqn:AdAbsurdum}
    \prod x_{i}^{\gamma_{i}} \prod y_{i}^{\beta_{i}} < \prod x_i^{\gamma^{\prime}_i} \prod y_i^{\beta^{\prime}_i}
  \end{equation}
  We must then have 
  \[
    sq = \sum \gamma_{i} + \sum \beta_{i} \le \sum \gamma^{\prime}_{i} + \sum \beta^{\prime}_{i} \le sq,
  \]
  and so $\sum \gamma^{\prime}_{i} + \sum \beta^{\prime}_{i} = sq$.
  Then $\prod x_i^{\gamma^{\prime}_i} \prod y_i^{\beta^{\prime}_i}$ must arise from a set $\Gamma^{\prime} \subset [1,s]\times [1,q]$ as above.

  Using this description, it is easy to see that for a fixed $\gamma^{\prime}_{i}$ the monomial $\prod x_{i}^{\gamma_{i}^{\prime}} \prod y_{i}^{\beta_{i}^{\prime}}$ is maximal (i.e.\ any other choice of $\beta^{\prime}$ is strictly smaller) when $\beta^{\prime} = (\beta_{1}^{\prime},\ldots, \beta_{q}^{\prime}) = (\gamma^{\prime})^{c\top}$, so we may assume that this is the case.

  Let now $k \in [1,s]$ be the smallest integer such that $\gamma_{k} < \gamma_{k}^{\prime}$.
  Then we must have $\beta_{i} = \beta_{i}^{\prime}$ for $i = 1, \ldots, q-\gamma_{k}^{\prime}$, and $\beta_{q-\gamma_{k}^{\prime}+1}^{\prime} < \beta_{q-\gamma_{k}^{\prime}+1}$, which contradicts \eqref{eqn:AdAbsurdum}.
  This proves that $R$ contains the irrep $\Schpur{\gamma} S\otimes \Schpur{\gamma^{c\top}} Q $. 

  It remains to show that no other irreps occur in $R$. Let $N$ be the number of irreps appearing in $R$ (with multiplicities). We know that $N \ge |Y_{s,q}|$, and we want to show that this is an equality. Since $\Hom(R, R) = R^{\otimes 2}  = \Wedge^{\bullet}(S \otimes Q)$, we have $N = \dim \Wedge^{\bullet}(S\otimes Q)^{\Sp(S)\tms\Sp(Q)}$. A standard computation of  Littlewood--Richardson coefficients \cite[p.\ 80]{fulton_representation_1991} shows that as a $\GL(S) \tms \GL(Q)$-representation
  we have:
  \[
    \Wedge^{\bullet}(S \otimes Q) = \bigoplus_{\alpha} \Schur{\alpha}S \otimes \Schur{\alpha^\top}\!Q
  \]
  By Lemma \ref{lem:SpInvariants} below, and its analogue for $\Sp(Q)$, the dimension $N$ of the space of $\Sp(S)\tms \Sp(Q)$-invariants equals the number of partitions $\alpha=(\alpha_1, \ldots, \alpha_s)$ such that:
  \begin{itemize}
  \item Each $\alpha_{i}$ is even (so $(\Schur{\alpha^{\top}}\!Q)^{\Sp(Q)} = \C$).
  \item Each number appears an even number of times in $\alpha$ (so $\left(\Schur{\alpha}S\right)^{\Sp(S)} = \C$).
  \item $\alpha_{1} \le 2q$.
  \end{itemize}
  Mapping each such partition $(\alpha_i)$ to the partition $(\gamma_i) = (\onehalf\alpha_{2i})$ gives a bijection onto the set $Y_{s,q}$, so $N =  |Y_{s,q}|$.
\end{proof}

\begin{lem}\label{lem:SpInvariants}\cite[Cor. 12.5]{sundarem_combinatorics_1986}
  Let $\alpha$ be a partition of length $\leq 2s$. The space of $\Sp(S)$ invariants in $\Schur{\alpha} S$ is 1-dimensional if each entry in $\alpha$ occurs an even number of times -- equivalently, if each entry in $\alpha^\top$ is even -- and zero-dimensional otherwise.
\end{lem}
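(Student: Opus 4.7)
My plan is to reduce the lemma, via Schur--Weyl duality, to a classical computation of a symmetric-group induced representation. Setting $n = |\alpha|$, I would first decompose
$$S^{\otimes n} \;=\; \bigoplus_{\beta \vdash n,\; \ell(\beta)\leq 2s} \Schur{\beta} S \otimes M_\beta$$
as a $\GL(S) \tms S_n$-representation, where $M_\beta$ is the Specht module for $\beta$. Since the $\Sp(S)$-action commutes with $S_n$, taking $\Sp(S)$-invariants preserves the $S_n$-structure and gives
$$\bigl(S^{\otimes n}\bigr)^{\Sp(S)} \;=\; \bigoplus_{\beta} \bigl(\Schur{\beta} S\bigr)^{\Sp(S)} \otimes M_\beta,$$
so $\dim (\Schur{\alpha} S)^{\Sp(S)}$ equals the multiplicity of the irreducible $S_n$-module $M_\alpha$ inside $(S^{\otimes n})^{\Sp(S)}$.

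Next I would identify the $S_n$-module $(S^{\otimes n})^{\Sp(S)}$ via the classical Brauer diagram basis. It has a basis of signed perfect matchings of $\{1,\ldots,n\}$, the signs reflecting the antisymmetry of $\omega_S$. This space is zero for $n$ odd, and for $n = 2k$ the stabilizer of the standard matching $\{(1,2),(3,4),\ldots,(2k{-}1,2k)\}$ under the permutation action is the hyperoctahedral group $B_k = S_2 \wr S_k \subset S_{2k}$. One then identifies $(S^{\otimes 2k})^{\Sp(S)}$ with the induced representation $\mathrm{Ind}_{B_k}^{S_{2k}} \epsilon$, where $\epsilon$ is the character trivial on $S_k$ and equal to the sign character on each $S_2$ factor.

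I would then invoke the classical decomposition
$$\mathrm{Ind}_{B_k}^{S_{2k}} \epsilon \;=\; \bigoplus_{\lambda \vdash k} M_{(2\lambda)^\top},$$
from which $M_\alpha$ appears with multiplicity exactly one when $\alpha = (2\lambda)^\top$ for some partition $\lambda$, and with multiplicity zero otherwise. But $\alpha = (2\lambda)^\top$ is equivalent to $\alpha^\top = 2\lambda$ having all even parts, which matches the conclusion of the lemma.

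The main obstacle will be keeping the sign conventions straight: the analogous statement for $O(S)$ uses $\mathrm{Ind}_{B_k}^{S_{2k}} \id$ and produces partitions with all even rows, while the symplectic twist by $\epsilon$ conjugates this to partitions with all even columns. An essentially equivalent but more efficient route would be to appeal directly to Littlewood's branching formula from $\GL(2s)$ to $\Sp(2s)$, which expresses the multiplicity of $\Schpur{\mu} S$ in $\Schur{\alpha} S|_{\Sp(S)}$ as $\sum_\beta c^\alpha_{\mu\beta}$, with $\beta$ ranging over partitions whose transpose has only even parts; setting $\mu = 0$ and using $c^\alpha_{0\beta} = \delta_{\alpha\beta}$ then immediately yields the lemma.
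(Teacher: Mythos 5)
Your proposal contains a genuine gap, and it sits exactly at the point that makes this lemma nontrivial in the paper: the range of $\alpha$ is $\ell(\alpha)\leq 2s$, not $\ell(\alpha)\leq s$. In your main route, the identification of $(S^{\otimes 2k})^{\Sp(S)}$ with $\mathrm{Ind}_{B_k}^{S_{2k}}\epsilon$ is false outside the stable range $k\leq s$: the first fundamental theorem only gives that the signed perfect matchings \emph{span} the invariants, so you get a surjection $\mathrm{Ind}_{B_k}^{S_{2k}}\epsilon \twoheadrightarrow (S^{\otimes 2k})^{\Sp(S)}$, not an isomorphism (indeed, comparing dimensions, constituents $M_{(2\lambda)^\top}$ with $2\lambda_1>2s$ must die). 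This surjection does prove the easy half of the lemma --- the invariants vanish unless $\alpha^\top$ has all even parts, and the multiplicity is at most $1$ --- but it does not give the nonvanishing: you still must show that for $\alpha=(2\lambda)^\top$ with $s<\lambda_1\leq s$, i.e.\ with $s<\onehalf\ell(\alpha)\leq s$... more precisely with $\ell(\alpha)$ anywhere in $(s,2s]$, the constituent $M_\alpha$ survives the quotient, equivalently $(\Schur{\alpha}S)^{\Sp(S)}\neq 0$. That is precisely the unstable-range statement, and it is needed in the paper's application (the partitions there fill a $2s\times 2q$ box, so $|\alpha|$ can be far larger than $2s$).

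Your ``more efficient'' alternative has the same gap in disguise: Littlewood's branching formula $\sum_\beta c^\alpha_{\mu\beta}$ over $\beta$ with even columns is only valid for $\ell(\alpha)\leq s$; beyond that one needs modification rules (Littlewood--Newell, with signs and cancellations, as in Lemma \ref{thm:LittlewoodNewellComputation} of the paper) or Sundaram's refinement. The paper's proof is exactly the repair of this step: it cites Sundaram's theorems, valid for $\ell(\alpha)\leq 2s$, which compute the multiplicity of $\Schpur{\beta}S$ in $\Schur{\alpha}S$ by counting semistandard tableaux subject to an extra row-bound condition, and then observes that at $\beta=\varnothing$ the unique filling of $\alpha$ automatically satisfies that condition, so the naive Littlewood count happens to be correct for the trivial representation. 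To complete your argument you would either have to invoke such a non-stable branching result, or supply an independent proof that the even-column constituents with length in $(s,2s]$ carry a nonzero invariant (e.g.\ via the second fundamental theorem controlling the kernel of the Brauer-diagram surjection); as written, neither route closes this.
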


Applying Lemma \ref{lem:minimalmodels} we immediately get:

\begin{cor}\label{cor:resolutionofK}  $\cK$ is equivalent to a matrix factorization whose underlying vector bundle is
  $$\bigoplus_{\gamma \in Y_{s,q}} \Schpur{\gamma,\, n_\gamma} S\otimes \Schpur{\gamma^{c\top}\!\!,\, m_\gamma} Q $$
  for some set of integers $\{n_\gamma\}$ and $\{m_\gamma\}$. 
\end{cor}
We don't need to know the value of the integers $n_\gamma$ and $m_\gamma$, the point is that we know the vector bundle in this corollary up to twisting each summand by a line bundle (\emph{i.e.} tensoring the representation by a character of $\GSp(S,Q)$). Note however that there is no ambiguity about the R-charge; the summands may be twisted by line bundles, but not shifted.

\begin{rem}\label{rem:ExplicitDifferentials}
  It would be nice to have an explicit construction of the differentials in this matrix factorization, but we have not managed to find this.
\end{rem}

Now we take the object $\phi^*\cK \in D^b(\cX\tms_{\C^*} \cY, W)$, and consider the associated functor
$$\widehat{\Phi} = (\pi_{2})_{*}(\pi_{1}^{*}(-) \otimes \psi^*\cK): \;D^{b}(\cX) \;\To\; D^{b}(\cY \tms_{\C^*} \Wedge^{2} V, \,W) $$
where $\pi_1$ and $\pi_2$ denote the projection maps from $\cX\tms_{\C^*} \cY$ to the two factors $\cX$ and $\cY\tms_{\C^*}\Wedge^2 V$.

\begin{cor}\label{cor:PhitoBr}
  The image of $\widehat{\Phi}$ lies inside $\Br(\cY\tms\Wedge^2 V,\,W)$.
\end{cor}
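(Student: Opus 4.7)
The strategy is to verify the grade-restriction rule defining $\Br(\cY \tms_{\C^*} \Wedge^2 V, W)$: for any $\cE \in D^b(\cX)$, we must show that every $\Sp(Q)$-irrep appearing in $h_\bullet(\widehat\Phi(\cE)|_0)$ lies in $Y_{q,s}$, where $0$ denotes the origin of $\Hom(V,Q)\tms \Wedge^2 V$.

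The first step is to invoke Corollary \ref{cor:resolutionofK} to represent $\psi^*\cK$ by a matrix factorisation whose underlying $\GSp(S,Q)$-bundle is a direct sum of $\Schpur{\gamma}S \otimes \Schpur{\gamma^{c\top}}Q$ for $\gamma \in Y_{s,q}$, twisted by characters. Tensoring with $\pi_1^*\cE$ does not change the $\Sp(Q)$-content, since $\cE$ is pulled up from $\cX$ where $\Sp(Q)$ acts trivially. By the combinatorial bijection \eqref{eq:combinatorialduality}, the map $\gamma \mapsto \gamma^{c\top}$ sends $Y_{s,q}$ into $Y_{q,s}$, so the $\Sp(Q)$-content of the underlying vector bundle of $\pi_1^*\cE \otimes \psi^*\cK$ is already contained in $Y_{q,s}$.

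The second step is to compute $\widehat\Phi(\cE)|_0$ by base change along $\pi_2$. The fibre of $\pi_2$ over the origin is $F = \Stack{F_0}{\GSp(S,Q)}$ with
\[
F_0 = \{(x,0) \in \Hom(S,V) \tms \Hom(V,Q) : \wedge^2 x(\beta_S) = 0\}.
\]
The key observation is that the normal subgroup $\Sp(Q) \subset \GSp(S,Q)$ acts trivially on $F_0$, so $(\pi_1^*\cE \otimes \psi^*\cK)|_F$ splits -- curved differential included, by Schur's lemma and the triviality of the action on the base -- into $\Sp(Q)$-isotypical summands indexed by a subset of $\{\Schpur{\gamma^{c\top}}Q : \gamma \in Y_{s,q}\}$. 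The pushforward from $F$ to $\Stack{\pt}{\GSp(Q)}$ factors as derived global sections on $F_0$ (producing $\GSp(S,Q)$-representations) followed by $\Sp(S)$-invariants (implementing the last stage $\Stack{\pt}{\GSp(S,Q)} \to \Stack{\pt}{\GSp(Q)}$), and both of these commute with the $\Sp(Q)$-decomposition by reductivity of $\Sp(S)$. Hence $h_\bullet(\widehat\Phi(\cE)|_0)$ has $\Sp(Q)$-content contained in $Y_{q,s}$, as required.

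The main technical subtlety is the justification of base change for matrix-factorisation pushforward along the non-proper map $\pi_2$; since $(\pi_2)_*$ is already taken to be well-defined in the construction of $\widehat\Phi$, this is standard and can be handled by selecting an explicit model of $\psi^*\cK$ adapted simultaneously to the origin fibre and to the $\Sp(Q)$-isotypical decomposition.
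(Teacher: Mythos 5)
Your proof is correct in substance, and its engine is the same as the paper's: Corollary \ref{cor:resolutionofK} gives a matrix-factorization model of $\psi^*\cK$ whose $\Sp(Q)$-content lies in $Y_{q,s}$, and pushing forward along $\pi_2$ (which amounts to taking sections in the affine $\Hom(S,V)$-directions together with $\Sp(S)$-invariants) cannot introduce new $\Sp(Q)$-irreps. Where you diverge is in how you verify the grade-restriction rule: you restrict to the fibre of $\pi_2$ over the origin first and then push forward, which forces you to invoke a base-change statement for the non-proper, non-flat map $\pi_2$, and as written your $F_0$ is the classical fibre rather than the derived one (the map $\Hom(S,V)\to\Wedge^2 V$ is not flat, so $\widehat{\Phi}(\cE)|_0$ is computed on the derived fibre). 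Neither point is fatal --- base change does hold here because $(\pi_2)_*$ is exact and linear over the base, and $\Sp(Q)$ acts trivially on the derived fibre just as it does on $F_0$, so your isotypic-decomposition argument survives --- but the detour is unnecessary. The paper argues in the opposite order: push the explicit model forward first, observe that the result is a (possibly infinite-rank) matrix factorization whose underlying bundle is a sum of the $\Schpur{\delta}Q$ with $\delta\in Y_{q,s}$ up to characters of $\GSp(Q)$, and then the grade-restriction rule is immediate, since $h_\bullet$ of the restriction to the origin is a subquotient of a direct sum of allowed irreps. If you carry out the ``explicit model'' fix you defer to in your last paragraph, you will find your Step 2 collapses into exactly this one-line argument, which is the cleaner route.
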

\begin{proof}
  From Corollary \ref{cor:resolutionofK}, the image under $\widehat{\Phi}$ of any object is equivalent to an infinite-rank matrix factorization which satisfies the necessary grade-restriction-rule.
\end{proof}

If we let $\Phi$ denote the restriction of $\widehat{\Phi}$ to the subcategory $\Br(\cX)$, it follows that $\Phi$ defines a functor:
$$\Phi: \Br(\cX) \to \Br(\cY\tms\Wedge^2 V,\,W)$$

\subsubsection{Adjoints}\label{sec:adjoints} We'll also need to understand the adjoint to this functor $\Phi$. To get a functor from $D^b(\cY\tms_{\C^*} \Wedge^2 V,\, W)$ to $D^b(\cX)$ we need a kernel which is an object in $D^b(\cX\tms_{\C^*} \cY, \, -W)$. The obvious guess for the adjoint to $\Phi$ is to use the kernel $\psi^*\cK^\vee$, up to some line bundle and shift. This is correct, but the proof is a little involved.

Recall that $\pi_2$ is the projection from $\cX\tms_{\C^*} \cY$ to $\cY\tms_{\C^*} \Wedge^2 V$.
Define a subcategory $\Br(\cX\tms_{\C^*} \cY,\, W) \subset D^b(\cX\tms_{\C^*} \cY,\, W) $ by enforcing both the $\Sp(S)$ and $\Sp(Q)$ grade-restriction rules. We have a functor $(\pi_2)_*: \Br(\cX\tms_{\C^*} \cY,\, W) \to \Br(\cY\tms_{\C^*} \Wedge^2 V,\, W)$ and its left adjoint is $\pi_2^*$. We define:
\[
  \pi_{2}^{!} = \pi_{2}^{*}\left(sv-\binom{v}{2}\right)\left[-\binom{v-2s}{2}\right]
\]
\begin{lem}\label{lem:pi2adjunction}
  The functor
  $$\pi_{2}^{!} : \Br(\cY\tms_{\C^*} \Wedge^2 V,\, W)\;\To\; \Br(\cX\tms_{\C^*} \cY,\, W) $$
  is the right adjoint to $(\pi_2)_*$.
\end{lem}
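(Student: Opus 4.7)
The plan is to compute $\pi_2^!$ explicitly via Grothendieck duality after factorising $\pi_2$ as $\pi_2 = \alpha \circ i$, where $i: \cX\tms_{\C^*}\cY \hookrightarrow \cX\tms_{\C^*}\cY \tms_{\C^*}\wedge^2 V$ is the graph embedding $(x,y)\mapsto(x,y,\wedge^2 x(\beta_S))$ and $\alpha: \cX\tms_{\C^*}\cY \tms_{\C^*}\wedge^2 V \to \cY\tms_{\C^*}\wedge^2 V$ is the projection forgetting the $\Hom(S,V)$ factor. Using the natural splitting $\GSp(S,Q) \cong \Sp(S)\tms\GSp(Q)$, one checks that $\alpha$ is smooth as a morphism of Artin stacks: on atlases it is a rank-$2sv$ vector bundle projection, and the group homomorphism $\GSp(S,Q)\to\GSp(Q)$ is smooth surjective with kernel $\Sp(S)$. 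The embedding $i$ is a regular closed embedding of codimension $\binom{v}{2}$ whose normal bundle is the trivial rank-$\binom{v}{2}$ bundle carrying the character $\langle\omega_S\rangle^{-1}$ coming from the $\wedge^2 V$-direction. Note that the superpotential $W$ and the $\C^*_R$-action pull back consistently along both $i$ and $\alpha$, so each step is a morphism of LG models.

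Applying Grothendieck duality piece by piece, the closed embedding gives $i^!\cE = i^*\cE\otimes\det(N_i)[-\binom{v}{2}] = i^*\cE(-\binom{v}{2})[-\binom{v}{2}]$, since $\det(\wedge^2 V) = \langle\omega_S\rangle^{-\binom{v}{2}}$. The smooth morphism gives $\alpha^!\cE = \alpha^*\cE\otimes\omega_\alpha[\dim\alpha]$ with $\dim\alpha = 2sv - \dim\Sp(S)$. The relative cotangent complex of $\alpha$ is the two-term complex $[\mathfrak{sp}(S)^\vee \to \Omega^1_{\Hom(S,V)}]$, whose virtual determinant is $\det(\Hom(S,V))^{-1}\otimes\det(\mathfrak{sp}(S))$. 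Since $\Sp(S)$ is semisimple, $\det(\mathfrak{sp}(S))$ is the trivial character, while a direct character computation gives $\det(\Hom(S,V))^{-1} = \langle\omega_S\rangle^{sv} = (sv)$, so $\omega_\alpha = (sv)$ and $\alpha^!\cE = \alpha^*\cE(sv)[2sv-\dim\Sp(S)]$.

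Composing yields $\pi_2^!\cE = i^!\alpha^!\cE = \pi_2^*\cE\,(sv-\binom{v}{2})\,[2sv-\binom{v}{2}-\dim\Sp(S)]$. Substituting $v=2s+2q+1$ and $\dim\Sp(S) = s(2s+1)$ shows that the shift simplifies to $-q(2q+1) = -\binom{v-2s}{2}$, exactly the formula stated in the lemma. Since pullback, line-bundle twist, and cohomological shift all manifestly preserve the grade-restriction rule defining $\Br$, the functor $\pi_2^!$ restricts to a functor $\Br(\cY\tms_{\C^*}\wedge^2 V, W) \to \Br(\cX\tms_{\C^*}\cY, W)$, and the adjunction relation $\Hom((\pi_2)_*\cF,\cE) = \Hom(\cF,\pi_2^!\cE)$ follows by composing the adjunctions for $i^!$ and $\alpha^!$.

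The main technical obstacle is the correct computation of $\omega_\alpha$: one must carefully interpret the relative cotangent complex for a smooth non-representable morphism of Artin stacks, and use semisimplicity of $\Sp(S)$ to trivialise $\det(\mathfrak{sp}(S))$. A natural alternative factorisation $\pi_2 = \gamma \circ p \circ i$ through $B\Sp(S)$ is tempting but misleading, because for the $B\Sp(S)$-projection $\gamma$ one has $\gamma^! = \gamma^*$ without shift (by semisimplicity), which would give the wrong cohomological shift; it is essential to treat $\alpha$ as a single smooth morphism of stacks so that the Grothendieck formula correctly accounts for the $-\dim\Sp(S)$ contribution from the $\Sp(S)$-quotient on the fibres.
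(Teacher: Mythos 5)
Your factorisation and the resulting bookkeeping are fine as far as they go: $i$ is indeed a regular closed immersion, the Koszul formula for $i^!$ is unproblematic because $i$ is proper, and your arithmetic does reproduce the twist $(sv-\binom{v}{2})$ and the shift $[-\binom{v-2s}{2}]$. The gap is the final step, where you assert that $\alpha_*$ has right adjoint $\alpha^*\otimes\omega_\alpha[\dim\alpha]$ and then ``compose the adjunctions''. The morphism $\alpha$ is smooth but very far from proper: its fibres are the affine quotient stacks $[\Hom(S,V)/\Sp(S)]$. Grothendieck duality in the form you quote identifies $\alpha^*\otimes\omega_\alpha[\dim\alpha]$ with the right adjoint of the \emph{compactly supported} pushforward, and only for proper $\alpha$ does that coincide with $\alpha_*$. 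Here $(\pi_2)_*$ preserves coherence not because of any properness but because one takes $\Sp(S)$-invariants (modules of covariants are finite over $\cO_{\Wedge^2 V}$), and no off-the-shelf duality theorem hands you $\alpha_*\dashv \alpha^*\otimes\omega_\alpha[\dim\alpha]$ in this situation; that assertion is exactly what the lemma asks you to prove.

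You can see that the step must fail as stated because your argument uses the grade-restriction rule only to check that $\pi_2^!$ preserves it, so it would equally prove the adjunction between the full categories $D^b(\cX\tms_{\C^*}\cY,W)$ and $D^b(\cY\tms_{\C^*}\Wedge^2 V,W)$ --- and that statement is false. Already with $W=0$, test the would-be adjunction on $\cF=\cO\otimes U$ for an $\Sp(S)$-irrep $U$ and $\cE=\cO$: the left-hand side $\Hom((\pi_2)_*\cF,\cE)$ is (up to flat factors from the $\Hom(V,Q)$-directions) the derived $\cO_{\Wedge^2 V}$-dual of the module of covariants $M_U=(\cO_{\Hom(S,V)}\otimes U)^{\Sp(S)}$, while the right-hand side $\Hom(\cF,\pi_2^*\cE(sv-\binom{v}{2})[-\binom{v-2s}{2}])$ is a single module of covariants sitting in cohomological degree $\binom{v-2s}{2}$. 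Equality forces $M_U$ to be maximal Cohen--Macaulay with the prescribed self-duality; this is a theorem of \v{S}penko--Van den Bergh for $U$ in the set $Y_{s,q}$, but it fails for general $U$. So the adjunction is genuinely a property of the brane subcategories, and any proof must feed in the Cohen--Macaulayness/self-duality of $A\otimes B$. That is how the paper proceeds: it rewrites $(\pi_2)_*$ as restriction along the finite morphism of algebras $\cO_{\Wedge^2 V}\otimes B\to A\otimes B$, uses rigid dualizing complexes to identify $\phi^{\flat}=\hom(A\otimes B,-)$ with $\phi^*$ up to the stated twist and shift (this is where your numerics are secretly justified), and then still needs a separate argument --- a counit defined by a Fourier--Mukai kernel, together with degeneration to $W=0$ and semicontinuity --- to carry the adjunction to nonzero superpotential, a point your sketch also leaves untouched.
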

\begin{proof}
  To see this, we pass to the description of the brane categories in terms of algebras as in Section   \ref{sec:Br(X)}. The category $\Br(\cX\tms_{\C^*} \cY,\, W)$ is equivalent to the derived category of the curved algebra $(A\otimes B, W)$. The functor $(\pi_{2})_{*}$ becomes a functor:
  \begin{equation}\label{eq:pi2*}
    D^{b}(A \otimes B, \,W) \To D^{b}(\cO_{\wedge^{2} V} \otimes B,\, W),
  \end{equation}
  To understand this functor, observe that pushing down from $\widetilde{\cX}$ to $\Wedge^2 V$ induces the functor
  $$ \Hom(\widetilde{P}_\phi, - ): D^b(\widetilde{A}) \To D^b(\cO_{\wedge^2 V})$$
  where $\widetilde{P}_\phi$ is the projective $\widetilde{A}$-module corresponding to the trivial line-bundle on $\widetilde{\cX}$, \emph{i.e.} the vertex  projective module corresponding to the empty Young diagram $\phi\in Y_{s,q}$. Similarly pushing down from $\cX$ to $[\Wedge^2 V / \C^*]$ is the functor 
  $$\Hom(P_\phi, -): D^b(A) \To D^b(\cO_{\wedge^2 V})$$
  (here $\cO_{\wedge^2 V}$ is a graded ring) where again $P_\phi$ is a vertex projective. So if we write $P$ for the project $A\otimes B$-module $P:= P_\phi\otimes B$ then \eqref{eq:pi2*} is the functor $\Hom( P, - )$.

  The functor $\pi_{2}^{!}$ becomes $P \otimes_{\cO_{\wedge^{2} V} \otimes B} -$, up to twists.

  In order to apply results from the literature, let us instead consider the functors
  \[
    \phi^* := (A \otimes B) \otimes_{\cO_{\wedge^{2} V} \otimes B} - : \;D^{b}(\cO_{\wedge^{2} V} \otimes B,\, W) \to D^{b}(A \otimes B,\, W)
  \]
  and:
  \[
    \phi_{*} := \Hom(A \otimes B, -) : \;D^{b}(A \otimes B,\, W) \;\To \;D^{b}(\cO_{\wedge^{2} V} \otimes B,\, W)
  \]
  We claim that some twist of $\phi^*$ is the right adjoint to $\phi_*$. Since $P$ is a summand of $A \otimes B$, the claim that $\pi_{2}^{!}$ is the right adjoint to $(\pi_{2})_{*}$ follows from this.

  By \cite[Ex.\ 6.4]{yekutieli_dualizing_2006}, $A \otimes B$ admits a rigid dualising complex $\omega_{A \otimes B}$, and since $A \otimes B$ is a self-dual maximal Cohen--Macaulay module over a Gorenstein ring, it is easy to check that $\omega_{A \otimes B} \cong A \otimes B$ up to twist and shift.
  We define the dualising functor $\mathbb{D}_{A \otimes B} = \hom(-, \omega_{A \otimes B})$.

  Similarly, we get a dualising complex $\omega_{\cO_{\wedge^{2} V} \otimes B}$ for $\cO_{\wedge^{2} V} \otimes B$, and an induced duality functor $\mathbb{D}_{\cO_{\wedge^{2} V} \otimes B}$.
  We now define
  \[
    \phi^{!} = \mathbb{D_{A \otimes B}} \circ \phi^{*} \circ \mathbb{D}_{\cO_{\wedge^{2} V} \otimes B},
  \]
  and see that up to twist $\phi^{!} \cong \phi^{*}$.

  Let us first ignore the presence of the superpotential and prove that $\phi^{!}$ is right adjoint to $\phi_{*}$ when $W = 0$. In this case by \cite[Thm.\ 4.13]{yekutieli_rigid_2004} we have 
  \[
    \phi^{!} \cong \phi^{\flat} := \hom(A \otimes B, -) : D^{b}(\cO_{\wedge^{2} V} \otimes B) \to D^{b}(A \otimes B).
  \]
  It is shown in \cite{yekutieli_rigid_2004} that a nondegenerate trace map $\phi_{*}\phi^{\flat} \to \id$ exists, and arguing as in \cite[Sec.\ III.6]{hartshorne_residues_1966} the adjointness of $\phi^{!}$ and $\phi_{*}$ follows.

  We now turn to the case $W \not= 0$.
  Note first that the counit map $\phi_{*}\phi^{!} \to \id$ is induced by a map of Fourier--Mukai kernels supported along the diagonal in (the centre of) $(\cO_{\wedge^{2} V} \otimes B) \otimes (\cO_{\wedge^{2} V} \otimes B)^{op}$.
  Hence we get a counit map in the case with superpotential as well.

  Given $\cE \in D^{b}(A \otimes B,\,W)$ and $\cF \in D^{b}(\cO_{\wedge^{2} V} \otimes B,\,W)$, we have now shown that the induced map
  \[
    \Hom(\cE, \pi^{!}\cF) \to \Hom(\pi_{*}\cE, \pi_{*}\pi^{!} \cF) \to \Hom(\pi_{*}\cE, \cF)
  \]
  is an isomorphism if $W = 0$.
  For general $W$, we may degenerate both $\cE$ and $\cF$ to honest complexes, and then the upper semicontinuity of cohomology implies that the map is an isomorphism in the general case as well.
\end{proof}

\begin{prop}\label{prop:Phidagger}
  The composition
  $$(\pi_1)_*\hom\big(\psi^*\cK, \pi_2^!(-)\big) $$
  defines a functor
  $$\Phi^\dagger:  \Br(\cY\tms_{\C^*}\Wedge^2 V, \, W) \; \To\; \Br(\cX) $$
  and this is the right adjoint to $\Phi$.
\end{prop}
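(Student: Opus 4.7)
The plan is to derive the adjunction formula from a chain of known adjunctions, and then show that the resulting formula lands in the coherent subcategory $\Br(\cX)$ despite $\pi_1$ not being proper. For any $\cE \in \Br(\cX)$ and $\cF \in \Br(\cY\tms_{\C^*}\Wedge^2 V,\, W)$, a standard chain
\begin{align*}
\Hom(\Phi\cE,\cF)
&\cong \Hom\bigl((\pi_2)_*(\pi_1^*\cE \otimes \psi^*\cK),\,\cF\bigr) \\
&\cong \Hom\bigl(\pi_1^*\cE \otimes \psi^*\cK,\,\pi_2^!\cF\bigr) \\
&\cong \Hom\bigl(\pi_1^*\cE,\, \hom(\psi^*\cK,\,\pi_2^!\cF)\bigr) \\
&\cong \Hom\bigl(\cE,\,(\pi_1)_*\hom(\psi^*\cK,\,\pi_2^!\cF)\bigr)
\end{align*}
uses Lemma \ref{lem:pi2adjunction} for the second isomorphism, tensor-hom adjunction for the third, and $\pi_1^* \vdash (\pi_1)_*$ at the quasi-coherent level for the fourth. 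This is formal, and shows that $\Phi^\dagger$ is the desired right adjoint \emph{provided} the target lies in $\Br(\cX)$.

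The main obstacle is therefore to verify that $(\pi_1)_*\hom(\psi^*\cK,\,\pi_2^!\cF) \in \Br(\cX) \subset D^b(\cX)$; \emph{a priori} it sits only in $D^b(\mathrm{QCoh}(\cX))$ and could be unbounded with non-coherent cohomology. To handle this, we exploit the explicit model of $\cK$ from Corollary \ref{cor:resolutionofK}: as a matrix factorization, $\cK$ has underlying bundle a finite direct sum (up to $\GSp(S,Q)$-characters) of the external products $\pi_1^*\Schpur{\gamma}S \otimes \pi_2^*\Schpur{\gamma^{c\top}}Q$ with $\gamma \in Y_{s,q}$. Using the projection formula for $\pi_1$ together with the base-change identity $(\pi_1)_*\pi_2^* \cong p_1^* q_*$, which holds because the square formed by $\pi_1,\pi_2$ and the structure maps $p_1 : \cX \to [\Wedge^2 V/\C^*]$, $q : \cY\tms_{\C^*}\Wedge^2 V \to [\Wedge^2 V/\C^*]$ is Cartesian with $q$ affine, the formula reduces term-by-term to
\[
\Schpur{\gamma}S^\vee \otimes p_1^* q_*\bigl(\Schpur{\gamma^{c\top}}Q^\vee \otimes \cF\bigr),
\]
tensored with appropriate characters, a twist by $sv - \binom{v}{2}$, and a shift $[-\binom{v-2s}{2}]$ coming from $\pi_2^!$, and then totalized via the matrix factorization differential inherited from $\cK$.

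For the final step, observe that $q_*(\Schpur{\gamma^{c\top}}Q^\vee \otimes \cF)$ corresponds, under the equivalence $\Br(\cY\tms_{\C^*}\Wedge^2 V, W) \cong D^b(B\otimes \cO_{\wedge^2 V}, W)$ discussed in Section \ref{sec:Br(X)}, to the $\Hom$ from the $\Schpur{\gamma^{c\top}}Q$-summand of the tilting object (viewed as a curved module). Since $B\otimes \cO_{\wedge^2 V}$ has finite global dimension and $\cF$ is represented by a bounded curved module built from finitely many projectives, this pushforward is a bounded coherent $\cO_{\wedge^2 V}$-module supported on $\widetilde{\Pf_q}$. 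Pulling back by $p_1^*$ preserves boundedness and coherence, and tensoring with the finite-rank bundle $\Schpur{\gamma}S^\vee \cong \Schpur{\gamma}S$ keeps us inside $D^b(\cX)$. The grade-restriction rule at the origin is then automatic, because the underlying bundle of the resulting matrix factorization is assembled from the irreps $\Schpur{\gamma}S$ with $\gamma \in Y_{s,q}$ -- exactly the set indexing $\Br(\cX)$.
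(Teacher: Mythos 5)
Your formal adjunction chain is fine and matches the paper's, and you correctly identify that the only real issue is coherence of $(\pi_1)_*\hom(\psi^*\cK,\pi_2^!\cF)$. But the way you resolve it has a genuine gap: the term-by-term coherence claim is false. The object $q_*(\Schpur{\gamma^{c\top}}Q^\vee\otimes\cF)$ is (a summand of) $\Hom(T,\cF)$, which is a \emph{curved} module over $(B\otimes\cO_{\wedge^2 V},W)$; it is finitely generated over $B\otimes\cO_{\wedge^2 V}$, but $B$ is itself an infinite-rank $\cO_{\wedge^2 V}$-module (it is finite over $\cO_{\widetilde{\Pf_q}}$, the cone sitting in $\Wedge^2 V^\vee$, which involves a fresh set of variables independent of $\Wedge^2 V$). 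Already for the trivial summand the pushforward contains $\cO_{\widetilde{\Pf_q}}\otimes\cO_{\wedge^2 V}$, which is not finitely generated over $\cO_{\wedge^2 V}$, so ``bounded coherent $\cO_{\wedge^2 V}$-module'' fails, and the finite global dimension of $B\otimes\cO_{\wedge^2 V}$ is beside the point. Worse, since each fixed-$\gamma$ term has $d^2=W\neq 0$, it has no homology at all, so coherence cannot even be formulated termwise: it is only the totalization, with the differential of $\cK$ mixing the different $\gamma$-summands and coupling the $\Wedge^2 V^\vee$-type invariants to $\Wedge^2 V$, whose (square-zero) differential has coherent homology. Your reduction ``reduces term-by-term'' therefore cannot be repaired by a base-change/projection-formula bookkeeping argument (incidentally, $q$ is flat but not affine or even representable; what you actually need is exactness of its pushforward, which holds because $\Sp(Q)$ is reductive).

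The paper circumvents exactly this obstruction by never computing the pushforward directly: it first notes that $(\pi_1)_*$ respects the $\Sp(S)$ grade-restriction rule, so $M=(\pi_1)_*\hom(\psi^*\cK,\pi_2^!\cF)$ is at worst an infinitely generated $A$-module, and then tests it against the tilting bundle: $M\cong\Hom_\cX(T,M)\cong\Hom\big((\pi_2)_*(\psi^*\cK\otimes\pi_1^*T),\cF\big)$, which is a morphism space in $\Br(\cY\tms_{\C^*}\Wedge^2 V,W)$ taken relative to the base and hence a finitely generated graded $\cO_{\wedge^2 V}$-module; linearity over $\Wedge^2 V$ then gives finite generation of $M$ over $A$. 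If you want to salvage your approach, you would need an argument of this ``global'' kind (or some other mechanism, e.g. a Kn\"orrer/Koszul-duality statement) that sees the cancellation produced by the full differential, rather than a summand-by-summand coherence claim.
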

\begin{proof}
  Using Corollary \ref{cor:resolutionofK} it's clear that we have a functor
  \beq{eq:afunctor}
  \hom\big(\psi^*\cK, \pi_2^!(-)\big):  \Br(\cY\tms_{\C^*}\Wedge^2 V, \, W) \; \To\; \Br(\cX\tms_{\C^*}\cY) \eeq
  and Lemma \ref{lem:pi2adjunction} shows that this is the right adjoint to the functor:
  $$(\pi_2)_*(\cK\otimes -)  $$

  Now we want to compose this with push-down along the projection 
  $$\pi_1: \cX\times_{\C^*} \cY \to \cX$$
  but there is a subtlety here. Pushing down a coherent sheaf along $\pi_1$ means taking $\Sp(Q)$-invariants, and the result will be only be a a quasi-coherent sheaf on $\cX$ in general (the fibres are copies of the stack $\widetilde{\cY}$ and the ring of functions on $\widetilde{\cY}$ is infinite-dimensional). However, we claim that if we apply $(\pi_1)_*$ to something in the image of \eqref{eq:afunctor} then we always get a coherent sheaf. If we can prove this claim then the proof of the proposition is complete: since $(\pi_1)_*$ is right-adjoint to $\pi_1^*$ it follows immediately that $\Phi^\dagger$ is the right adjoint to $\Phi$.

  The functor $(\pi_1)_*$ respects the $\Sp(S)$ grade-restriction-rule, or in terms of algebras, it maps $A\otimes B$-modules to (perhaps infinitely-generated) $A$-modules. Now take $\cF \in \Br(\cY \tms_{\C^{*}} \wedge^{2} V,\, W)$, and consider the $A$-module $M = (\pi_1)_*\hom(\psi^*\cK, \pi_2^!\cF)$. We want to know that $M$ is in fact finitely-generated. By adjunction, we have
  \begin{align*}
    M = \Hom_A(A, M) &= \Hom\!\big(T, \,(\pi_1)_*\hom(\psi^*\cK, \pi_2^!\cF)\big)_{gr}\\
                     & = \Hom\!\big( (\pi_2)_*(\cK\otimes \pi_1^*T),\, \cF\big)_{gr}
  \end{align*}
  This last one is a finitely generated graded module over $\cO_{\wedge^{2} V}$. Since the functors are linear over $\Wedge^{2} V$, it follows that $M$ is finitely-generated module over $\cO_{\wedge^{2} V}$ and hence also over $A$. 
\end{proof}

\subsection{The equivalence over the smooth locus}\label{sec:generic}

Since we've constructed our kernel $\psi^*\cK$ relative to the base $[\Wedge^2 V / \C^*]$, we can restrict ourselves to open sets in this base and examine the functor there.

Specifically, we're going to look at the open subset

$$\OS \subset \Wedge^2 V $$
consisting of bivectors having rank at least $2s$. This gives a substack $[\OS / \C^*]\subset [\Wedge^2 V/\C^*]$, which is just a quasi-projective variety  $\P \OS\subset \P(\Wedge^2 V)$. The intersection of $\P \OS$  with the Pfaffian $\Pf_s$ is exactly the smooth locus $\Pf_s^{sm}$, so $\cX|_{\P \OS}$ is equivalent to  $\Pf_s^{sm}$. This brings us very close to the situation considered in \cite{ADS_pfaffian_2015,ST_2014}, so we'll now spend a little time making the connection to the point-of-view of those earlier papers.

On the dual side, restricting $\cY\tms_{\C^*}\Wedge^2 V$ to $\OS$ simply gives $\cY\tms_{\C^*} \OS$. We can think of this a bundle of stacks over the  variety $\P \OS$, with fibres $\widetilde{\cY} = \Stack{\Hom(V,Q)}{\Sp(Q)}$. Locally in $\P \OS$ it's just a vector bundle quotiented by a fibre-wise $\Sp(Q)$ action. This is not true globally, but it is a vector bundle over the stack $[\OS / \GSp(Q)]$, which is a bundle of stacks over  $\P \OS$ with fibres $B\Sp(Q)$. \

Our kernel lives on the  product of $\cX|_{\P \OS}$ and $\cY\tms_{\C^*} \OS$ relative to $\P \OS$, which is simply the restriction of the bundle of stacks $\cY\tms_{\C^*}\OS$ to the subvariety $\Pf_s^{sm}\subset \P \OS$. So we have a functor 
$$D^b(\Pf_s^{sm}) \to D^b(\cY\tms_{\C^*}\OS,\, W) $$
given by pulling up to $\cY\tms_{\C^*} \OS|_{\Pf_s^{sm}}$, tensoring with the object $\psi^*\cK|_{\P \OS}$, and then pushing-forward along the inclusion map.

Now, to any point $a\in \Pf_s^{sm}$ we can associate the image of $a$, this gives a rank $2s$ subbundle
$$ \Sigma \subset V(1) $$
which carries a symplectic form, determined up to scale. There's an associated bundle of stacks over $\Pf_s^{sm}$ whose fibres are $\Stack{\Hom(\Sigma, Q)}{\Sp(Q)}$. To be precise: take the smooth locus $(\widetilde{\Pf_s})^{sm}$ of the affine cone over $\Pf_s$; this is a $\C^*$-bundle over $\Pf_s^{sm}$. We have a vector bundle $\widetilde{\Sigma}$ over  $(\widetilde{\Pf_s})^{sm}$, which is a subbundle of the trivial bundle $V\tms (\widetilde{\Pf_s})^{sm}$. Then we form the stack:
$$\overline{\cZ} = \Stack{\Hom(\widetilde{\Sigma}, Q)}{\GSp(Q)} $$
This is a bundle of stacks over $\Pf_s^{sm}$, or a vector bundle over $[(\widetilde{\Pf_s})^{sm}/\GSp(Q)]$.  We have a quotient map
$$\overline{\psi}:  \cY\tms_{\C^*} \OS|_{\Pf_s^{sm}}   \To \overline{\cZ}$$
and  by construction our kernel is pulled-back from $\overline{\cZ}$. If this is not immediately obvious, observe that we can factor the map 
$$\psi:  \cX\tms_{\C^*}\cY   \To     \cZ =  \Stack{\Hom(S,Q)}{\GSp(S,Q)}$$
through an intermediate step:
$$ \cX\tms_{\C^*}\cY \To \cX\tms_{\GSp(S)}\cZ   =   \Stack{\Hom(S,V)\tms \Hom(S,Q)}{\GSp(S,Q)}$$
Restricting this to $\P \OS$, we obtain the stack $\overline{\cZ}$ and the map $\overline{\psi}$.  We can pull-back the superpotential $W$ to $\overline{\cZ}$, and we have an object
$$\overline{\cK} \in D^b(\overline{\cZ}, W) $$
given by pulling back $\cK$. So over $\P \OS$, our kernel is equal to $\overline{\psi}^*\overline{\cK}$.

The superpotential on $\overline{\cZ}$ gives a non-degenerate quadratic form on each fibre, so Kn\"orrer periodicity should apply unless there's a Brauer class obstruction. However the whole point of the object $\cK$ was that it gives a universal way to implement Kn\"orrer periodicity for this kind of bundle, so indeed there is no Brauer class obstruction, and the object $\overline{\cK}$ induces an equivalence between $D^b(\overline{\cZ}, W)$ and $D^b([(\widetilde{\Pf_s})^{sm}/\GSp(Q)])$. Another point to note is that  if we restrict to open neighbourhoods in $\Pf_s^{sm}$ then there are other ways to implement this equivalence. For example, we may pick  a Lagrangian subbundle $\Lambda\subset \Sigma$ (this is possible in a small-enough neighbourhood), which  induces an $\Sp(Q)$-invariant maximal isotropic subbundle $\Hom(S/\Lambda, Q)$ in $\overline{\cZ}$. The sky-scraper sheaf on this subbundle gives a second object in $D^b(\overline{\cZ}, W)$ which is equivalent to $\overline{\cK}$.
\pgap

Now let's take our subcategories $\Br(\cX)$ and $\Br(\cY\tms_{\C^*}\Wedge^2 V,\,W)$ and base-change them to the open set $\OS$, meaning (as in Section \ref{sec:Br(X)}) that we consider the full triangulated subcategories generated by their images under restriction.

When we do this to $\Br(\cX)$ we get the whole of $D^b(\Pf_s^{sm})$,  because $\Br(\cX) = D^b(A)$ is the derived category of a non-commutative resolution of $\Pf_s^{sm}$. Indeed by Corollary \ref{thm:BranesToModulesBaseChanged} (and Remark \ref{rem:BasechangingBranesToModules}) $\Br(\Pf_s^{sm})$ is the derived category of the sheaf of algebras $A=\End_\cX(T)$,  but the restriction of $T$ to $\cX|_\OS=\Pf_s^{sm}$ is still a vector bundle so this is a trivial Azumaya algebra. However, on the dual side we  get a proper subcategory:
$$\Br(\cY\tms_{\C^*}\OS,\, W) \;\subset \; D^b(\cY\tms_{\C^*} \OS,\, W)$$

Everything in this subcategory satisfies the grade-restriction-rule along the zero section $\P \OS$ --   this follows from Lemma \ref{lem:minimalmodels} and the fact that the grade-restriction rule is preserved under taking mapping cones. In fact this rule exactly characterizes $\Br(\cY\tms_{\C^*}\OS,\, W)$ (see Lemma \ref{lem:pointwise}) but we won't need to use this here. 

We've seen that over the whole of $[\Wedge^2 V/\C^*]$, our kernel induces adjoint functors $\Phi$ and $\Phi^\dagger$  between our $\Br$ subcategories (Corollary \ref{cor:PhitoBr}). If we restrict to $\P \OS$ it follows immediately that we get a  pair of adjoint functors between $D^b(\Pf_s^{sm})$ and the subcategory $\Br(\cY\tms_{\C^*}\OS,\,W)$. Let's denote these functors by $\Phi_\OS$ and 
$\Phi^\dagger_\OS$.

\begin{thm} \label{thm:genericequivalence} These functors give an equivalence
  $$\Phi_\OS: D^b(\Pf_s^{sm}) \isoto \Br(\cY\tms_{\C^*}\OS,\,W)$$
  and its inverse.
\end{thm}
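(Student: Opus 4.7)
My proof plan is to exploit the Kn\"orrer-periodicity structure already set up in this section: the kernel $\psi^{*}\cK$ restricts over $\P U$ to $\overline{\psi}^{*}\overline{\cK}$, and $\overline{\cK}$ induces an equivalence $D^{b}(\overline{\cZ},W) \isoto D^{b}([(\widetilde{\Pf_{s}})^{sm}/\GSp(Q)])$ by Lemma \ref{lem:eqKP} applied fibrewise.

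Denoting by $p : \overline{\cZ} \to \Pf_{s}^{sm}$ the natural projection and by $i : \cY\tms_{\C^{*}}U|_{\Pf_{s}^{sm}} \hookrightarrow \cY\tms_{\C^{*}}U$ the closed immersion, the Fourier--Mukai functor $\Phi_{U}$ factors as
\[
\cE \;\longmapsto\; i_{*}\,\overline{\psi}^{*}\bigl(p^{*}\cE \otimes \overline{\cK}\bigr).
\]
Under the Kn\"orrer equivalence the image of the composition $p^{*}(-) \otimes \overline{\cK}$ is controlled by Proposition \ref{prop:IrrepsInKernel}: the $\Sp(Q)$-content of $\overline{\cK}$ is exactly $\bigoplus_{\gamma \in Y_{s,q}} \Schpur{\gamma^{c\top}}Q$, so $p^{*}\cE \otimes \overline{\cK}$ lives in the subcategory of $D^{b}(\overline{\cZ},W)$ whose restriction to the zero section has $\Sp(Q)$-content in $Y_{q,s}$. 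Since $\overline{\psi}$ is a vector bundle along which $W$ is pulled back, and $i$ is a closed immersion, applying $i_{*}\overline{\psi}^{*}$ preserves the grade-restriction property at the zero section $\P U$, so the image lies in $\Br(\cY\tms_{\C^{*}}U, W)$.

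Full faithfulness I would deduce from the adjunction $\Phi_{U} \vdash \Phi_{U}^{\dagger}$ provided by Proposition \ref{prop:Phidagger}: it suffices to check that the unit $\cE \to \Phi_{U}^{\dagger}\Phi_{U}\cE$ is an isomorphism on a generating set such as line bundles on $\Pf_{s}^{sm}$, and this reduces via the factorisation above to a direct computation using $\overline{\cK}$ and its dual, where the symplectic-Schur decomposition from Proposition \ref{prop:IrrepsInKernel} gives an explicit cancellation.

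The main obstacle I anticipate is essential surjectivity onto $\Br(\cY\tms_{\C^{*}}U, W)$, since this category is defined abstractly as the full subcategory generated by the restriction of $\Br(\cY\tms_{\C^{*}}\Wedge^{2}V,W)$. Given $\cF$ in it, my plan is to replace $\cF$ by a minimal model via Lemma \ref{lem:minimalmodels}, whose underlying bundle is a direct sum of $\Schpur{\delta}Q$ with $\delta \in Y_{q,s}$, and then verify that the counit $\Phi_{U}\Phi_{U}^{\dagger}\cF \to \cF$ is an isomorphism. Concretely, $\Phi_{U}^{\dagger}\cF$ is computed via the Kn\"orrer side, and the bijection $\gamma \mapsto \gamma^{c\top}$ from Proposition \ref{prop:IrrepsInKernel} matches the summands of the minimal model of $\cF$ with the summands of $\Phi_{U}\Phi_{U}^{\dagger}\cF$, reducing the isomorphism to a check of the induced differentials. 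This last check, together with the careful handling of the non-proper morphism $\pi_{1}$ in the adjoint as in Proposition \ref{prop:Phidagger}, is the genuinely new input relative to the smooth-locus statements in \cite{ADS_pfaffian_2015, ST_2014}.
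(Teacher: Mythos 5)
Your plan correctly sets up the Kn\"orrer structure of the kernel over $\P U$ and correctly observes that the image of $\Phi_U$ satisfies the grade-restriction rule, but the heart of the theorem is missing. The category $\Br(\cY\tms_{\C^*}U,\,W)$ lives over the \emph{whole} of $\P U$, whereas every object in the image of $\Phi_U$ is pushed forward from the bundle over $\Pf_s^{sm}\subset \P U$. So the genuinely hard part of essential surjectivity is to show that nothing in $\Br(\cY\tms_{\C^*}U,\,W)$ can live over $\P U\setminus \Pf_s$: an object supported there is killed by $\Phi_U^\dagger$, so your counit check can never see it, and matching Schur summands via $\gamma\mapsto\gamma^{c\top}$ does not address it. This is exactly part (2) of Lemma \ref{lem:formalnhdresult} in the paper: at a point $a$ of rank $2t$ with $t>s$, after splitting $V$ by the image and kernel of $a$ and stripping off the quadratic part of $W$ by a local Kn\"orrer equivalence, the local kernel contributes $\Sp(Q)$-irreps $\Schpur{\delta}Q$ of width up to $t$, so tensoring any nonzero restriction with it produces irreps of width $>s$, violating the grade-restriction rule; hence the brane category vanishes in the formal neighbourhood $\widehat{\P U}_a$. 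Only with this vanishing in hand does ``$\Phi_U^\dagger\cE=0 \Rightarrow \cE=0$'' (and hence surjectivity) follow. Relatedly, your appeal to Lemma \ref{lem:minimalmodels} to produce a global minimal model on $\cY\tms_{\C^*}U$ is not legitimate: once the origin of $\Wedge^2 V$ is removed there is no central one-parameter subgroup contracting everything to a single fixed point, which is precisely why the paper works in formal neighbourhoods, where the local-ring variant of that lemma applies.

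The full-faithfulness step is also thinner than it looks. Reducing to the unit on $\cO_{\Pf_s^{sm}}$ locally is the right start, but the proposed ``explicit cancellation'' from Proposition \ref{prop:IrrepsInKernel} cannot be carried out as stated: Proposition \ref{prop:IrrepsInKernel} and Corollary \ref{cor:resolutionofK} only identify the underlying bundle of a matrix factorization representing $\cK$, not its differentials (the paper explicitly says it does not know these), and the unit involves $\overline{\psi}$, a vector-bundle projection whose pushforward is far from the identity, as well as the non-proper $\pi_1$; so no purely representation-theoretic bookkeeping of summands computes $\Phi_U^\dagger\Phi_U$. The paper's substitute is a second, formal-local application of Kn\"orrer periodicity in part (1) of Lemma \ref{lem:formalnhdresult}: after the first reduction the residual cubic superpotential becomes, upon observing that the $\Sp(Q)$-quotient of $K\otimes Q$ is all of $\Wedge^2 K$ (since $\dim K=2q+1$ at a point of $\Pf_s^{sm}$), a nondegenerate quadratic pairing on $\Wedge^2 K\tms\widehat{\P U}_a$, and Kn\"orrer periodicity identifies $\Phi$ locally with an equivalence. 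Some argument of this kind, rather than a cancellation of Schur factors, is what makes both the unit and counit checks go through; as written, your proposal would stall at both points.
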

Most of this theorem was proven in \cite{ADS_pfaffian_2015,ST_2014}. We'll give a complete proof here, partly for convenience, and also because those previous papers did not prove the essential-surjectivity of $\Phi_\OS$. We'll get the result as a corollary of the Lemma \ref{lem:formalnhdresult} below.

Fix a point $a\in \P \OS$, and let $\widehat{\P \OS}_a$ denote the formal neighbourhood of $a$; this is isomorphic to a formal neighbourhood of 0 in $\Wedge^2 V/\langle a \rangle$.  In the case that $a$  lies in $\Pf_s$, we write $\widehat{(\Pf_s)}_a$ for the formal neighbourhood of $\Pf_s$ at $a$. 

If we restrict $\cY\tms_{\C^*} \OS $ to the formal neighbourhood $\widehat{\P \OS}_a$ it becomes:
$$\widetilde{\cY}\tms \widehat{\P \OS}_a \;= \; \Stack{\Hom(V,Q)}{\Sp(Q)}\tms    \widehat{\P \OS}_a$$
We define a subcategory
$$\Br(\widetilde{\cY}\tms\widehat{\P \OS}_a,\, W) \;\subset \ D^b(\widetilde{\cY}\tms \widehat{\P \OS}_a, \,W) $$
as the full subcategory of objects $\cE$ such that all $\Sp(Q)$-irreps occuring in $h_\bullet(\cE|_{(0,a)})$ come from the set $Y_{q,s}$.  The restriction of an object in $\Br(\cY\tms_{\C^*}\OS,\,W)$ lands in this subcategory, and presumably these objects generate the category. If that is true then it follows formally that $\Phi$ and $\Phi^\dagger$ induce adjoint functors between $ D^b(\widehat{(\Pf_s)}_a)$ and $\Br(\widetilde{\cY}\tms \widehat{\P \OS}_a,\, W)$; however this can be proven directly using the arguments of Corollary \ref{cor:PhitoBr}.

\begin{lem}\label{lem:formalnhdresult}\noindent
  \begin{enumerate}
  \item If $a\in \Pf_s^{\sm}$ then $\Phi$ induces an equivalence:
    $$ D^b(\widehat{(\Pf_s)}_a) \isoto  \Br(\widetilde{\cY}\tms \widehat{\P \OS}_a,\, W)$$ 
  \item If $a\notin \Pf_s$ then $\Br(\widetilde{\cY}\tms \widehat{\P \OS}_a,\, W)$ is zero.
  \end{enumerate}
\end{lem}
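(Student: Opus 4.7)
The plan is to work in the formal neighborhood of $a_0$ and apply equivariant Kn\"orrer periodicity to reduce the Landau--Ginzburg model to a tractable form. Using the minimal-model argument (Lemma \ref{lem:minimalmodels} and the remark following it, applied with the local ring of $\widehat{\P U}_{a_0}$ as the base), every $\cF \in \Br(\widetilde{\cY}\tms\widehat{\P U}_{a_0},W)$ is represented by a matrix factorization $(F,d_F)$ with $d_F|_{(0,a_0)}=0$, the bundle $F$ being determined by its fiber $\Sp(Q)$-representation at $(0,a_0)$ (which by the grade-restriction rule is supported on $Y_{q,s}$).

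Let $\Sigma_0 = a_0(V^\vee) \subset V$, of even dimension $2k_0 \ge 2s$, and choose a complement $V = \Sigma_0 \oplus V'$. At $a = a_0$ the superpotential restricts to the non-degenerate $\Sp(Q)$-invariant quadratic form $\omega_Q(\wedge^2 y_{\Sigma_0}(a_0))$ on $\Hom(\Sigma_0,Q)$, with kernel $\Hom(V',Q)$. Since $\Sp(Q)$-invariant Lagrangian subbundles of $\Hom(\Sigma_0,Q)$ exist (for instance $\Hom(\Lambda,Q)$ for any Lagrangian $\Lambda \subset \Sigma_0$), an equivariant Kn\"orrer periodicity with parameters (in the style of Lemma \ref{lem:eqKP}) gives an equivalence
\[\Br(\widetilde{\cY}\tms\widehat{\P U}_{a_0},\,W) \isoto \Br\big([\Hom(V',Q)\tms\widehat{\P U}_{a_0}/\Sp(Q)],\,W_{\mathrm{res}}\big),\]
where a Schur-complement calculation in the $y_{\Sigma_0}$-directions yields $W_{\mathrm{res}} = \omega_Q(\wedge^2 y_{V'}(\tilde A))$, with $\tilde A \in \Wedge^2 V'$ the Schur complement of the antisymmetric matrix of $a_0+\delta a$ along $\Sigma_0$. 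The rank identity $\operatorname{rank}(a_0+\delta a) = 2k_0 + \operatorname{rank}(\tilde A)$ identifies $\{\tilde A = 0\}$ with $\widehat{(\Pf_s)}_{a_0}$ when $k_0 = s$, and shows this locus is empty when $k_0 > s$.

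For part (1) with $k_0 = s$ and $\dim V' = 2q+1$, the assignment $\delta a \mapsto \tilde A$ is submersive onto $\Wedge^2 V'$; changing coordinates so that $\tilde A$ becomes a standard linear coordinate on a transverse slice, we have $\widehat{\P U}_{a_0} \cong \widehat{(\Pf_s)}_{a_0} \tms \widehat{\Wedge^2 V'}$, with $W_{\mathrm{res}}$ depending only on the second factor. The reduced LG category then factors as $D^b(\widehat{(\Pf_s)}_{a_0})$ together with an auxiliary brane category on $[\Hom(V',Q)\tms\widehat{\Wedge^2 V'}/\Sp(Q)]$, which is a ``zero-$s$'' instance of the same LG setup (grade rule $Y_{q,0}$ consists of only the empty diagram); a direct application of Lemma \ref{lem:eqKP} shows this auxiliary category is equivalent to $D^b(\operatorname{pt})$. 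Tracking $\psi^*\cK$ through these Kn\"orrer equivalences, using the universality of $\cK$ in Lemma \ref{lem:eqKP}, identifies the composite equivalence with $\Phi$. For part (2) with $k_0 > s$, $\dim V' \le 2q-1$ is odd, so $\tilde A$ has rank at most $\dim V' - 1$; on the open locus of maximal rank, $W_{\mathrm{res}}(\cdot,\delta a)$ has corank exactly $2q$ on $\Hom(V',Q)$, and a second equivariant Kn\"orrer reduction eliminates most of $\Hom(V',Q)$. Iterating and stratifying by rank of $\tilde A$, together with the transforming grade-restriction rule, forces the brane category to vanish.

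The main obstacle is the iterated Kn\"orrer argument in part (2): one must handle the rank-stratification of $\tilde A$ on $\widehat{\P U}_{a_0}$ carefully, glue reductions across strata, and track how the grade-restriction rule evolves under each Kn\"orrer step. An alternative route bypasses stratification via a representation-theoretic argument: the first-order part of $d_F$ at $(0,a_0)$ furnishes an $\Sp(Q)$-equivariant Clifford module for the non-degenerate form $W_2 = \omega_Q(\wedge^2 y_{\Sigma_0}(a_0))$ whose underlying $\Sp(Q)$-representation is supported on $Y_{q,s}$; one then shows directly that no such Clifford module exists when $k_0 > s$, forcing $F|_{(0,a_0)} = 0$ and, by Corollary \ref{thm:ObjectVanishingAtOriginImpliesVanishing}, $\cF = 0$.
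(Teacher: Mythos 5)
Your overall skeleton --- work in the formal neighbourhood, use the minimal-model lemma over the complete local ring, and apply equivariant Kn\"orrer periodicity to strip off the nondegenerate quadratic part $\omega_Q(\wedge^2 y_{\Sigma_0}(a_0))$, leaving a residual model on $\Hom(V/\Sigma_0,Q)$ with the cubic potential --- is exactly the paper's strategy (the paper normalises the family by a gauge transformation and the splitting $\widehat{\P U}_a\cong \widehat{\Wedge^2 K^\vee}_0\tms\widehat{(\Pf_t)}_a$ rather than by your fibrewise Schur complement, but these are interchangeable). The difficulty is that the two steps you treat as routine are precisely the content of the lemma. First, the claim that after the Kn\"orrer step the grade-restriction rule becomes ``$Y_{q,0}$'' (part (1)), resp.\ is unsatisfiable (part (2)), is not definitional: one must compute how the rule transforms under the equivalence, i.e.\ compute $h_\bullet(\cL|_0)$ for the Kn\"orrer kernel and argue with widths of $\Sp(Q)$-irreps under tensor product. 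This is where Proposition \ref{prop:IrrepsInKernel} enters in the paper: $h_\bullet(\cL|_0)$ contains irreps of width exactly $t=k_0$, so for $t>s$ no nonzero object can satisfy the $Y_{q,s}$ rule after tensoring (which gives part (2) in one stroke, with no stratification), while for $t=s$ the rule forces the residual object to involve only the trivial representation. Your primary plan for (2) --- rank-stratifying $\tilde A$ and iterating Kn\"orrer across strata --- is both unnecessary and not viable as sketched (there is no gluing formalism for these matrix factorization categories along the rank strata of a formal neighbourhood of a rank-dropping point); your alternative Clifford-module route is the right mechanism, but the assertion that no equivariant Clifford module with $\Sp(Q)$-content in $Y_{q,s}$ exists when $k_0>s$ is exactly the representation-theoretic statement that requires the Proposition \ref{prop:IrrepsInKernel} computation, not a remark.

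Second, in part (1) the statement that the auxiliary category on $[\Hom(V',Q)\tms\widehat{\Wedge^2 V'}/\Sp(Q)]$ with the cubic potential is $D^b(\pt)$ ``by a direct application of Lemma \ref{lem:eqKP}'' misuses that lemma: Lemma \ref{lem:eqKP} treats a fixed nondegenerate quadratic superpotential, whereas here the quadratic form degenerates (indeed vanishes) at the centre of the formal disc. The actual argument, which is the second half of the paper's proof, is different in kind: once the grade rule forces only trivial $\Sp(Q)$-representations, matrix factorizations descend to the scheme-theoretic quotient of $K\otimes Q$ by $\Sp(Q)$, which is all of $\Wedge^2 K$ precisely because $\dim K=2q+1$; on this quotient the cubic potential becomes the nondegenerate pairing between $\Wedge^2 K$ and $\Wedge^2 V/\langle a\rangle$, and then ordinary (non-equivariant) Kn\"orrer periodicity relative to $\widehat{(\Pf_s)}_a$ yields $D^b(\widehat{(\Pf_s)}_a)$ and identifies the composite with $\Phi$. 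Without the grade-rule computation and this descent-to-quotient step, your proposal asserts the lemma rather than proving it.
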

\begin{proof}
  Let the rank of $a$ be $2t$, so $a$ lies in the smooth locus of the Pfaffian variety $\Pf_t$. Along $\Pf_t$ there is a rank $2t$ bundle $\Sigma\subset V(1)$ given by the images of the 2-forms at each point, and a rank $v-2t$ bundle $K\subset V^\vee$ given by the kernels. In the formal neighbourhood $\widehat{\P \OS}_a$ we can do a gauge transformation to trivialize the family of 2-forms on the bundle $V\tms\Pf_t$; then both $\Sigma$ and $K$ become trivial bundles with fibre the image and kernel of $a$. The fibre of normal bundle to $\Pf_t$ at $a$ is $\Wedge^2 K^\vee$, so after a formal change of co-ordinates:
  $$\widehat{\P \OS}_a \cong  \widehat{(\Wedge^2 K^\vee)}_0\tms \widehat{(\Pf_t)}_a$$
  If we choose a splitting $V^\vee = K\oplus \Sigma$,  we can write:
  $$\widetilde{\cY}\tms \widehat{\P \OS}_a = \Stack{\Hom(\Sigma, Q)}{\Sp(Q)}\tms \Stack{K\otimes Q}{\Sp(Q)}\tms  \widehat{(\Wedge^2 K^\vee)}_0\tms \widehat{(\Pf_t)}_a $$
  The superpotential is now a a sum of a quadratic $W_q$ and  a cubic term $W_c$. The quadratic term is the tautological superpotential on the first factor, and the cubic term  is the tautological superpotential on $\Stack{K\otimes Q}{\Sp(Q)}\tms  \widehat{(\Wedge^2 K^\vee)}_0 $. We can use  Kn\"orrer periodicity to remove the quadratic term, and get an equivalence:
  $$\Psi:  D^b(\Stack{K\otimes Q}{\Sp(Q)}\tms  \widehat{\P \OS}_a, \, W_c) \;\; \isoto\;\; D^b(\widetilde{\cY}\tms  \widehat{\P \OS}_a,\, W) $$
  We can do this using  Lemma \ref{lem:eqKP}, which provides (forgetting the $\GSp(S)$ action) an object $\cL\in D^b([\Hom(\Sigma, Q)/ \Sp(Q)],\, W_q)$; then the equivalence is given by pulling-up and tensoring with $\cL$. Alternatively we may pick a Lagrangian $\Lambda\subset \Sigma$ and use the sky-scraper sheaf along $\Hom(\Sigma/\Lambda, Q)$. 

  Let's examine this construction in the case that $a\in \Pf_s$. Within the formal neighbourhood the functor $\Phi$ is given by pulling-up to 
  $$[\Hom(V, Q) / \Sp(Q)]\tms\widehat{(\Pf_s)}_a$$
  followed by tensoring with the object $\overline{\psi}^*\overline{\cK}$, and then pushing-forward into:
  $$[\Hom(V, Q) / \Sp(Q)]\tms\widehat{\P \OS}_a$$
  We can form a commutative diagram:
  $$\begin{tikzcd} D^b\!\big(\Stack{\Hom(V, Q)}{ \Sp(Q)} \tms\widehat{(\Pf_s)}_a,\, W\big) \ar{r}  
    &  
    D^b\!\big(\Stack{\Hom(V, Q) }{\Sp(Q)} \tms\widehat{\P \OS}_a,\, W\big)  \\
    D^b\!\big(\Stack{ K\otimes Q }{ \Sp(Q)} \tms\widehat{(\Pf_s)}_a \big) \ar{r} \ar{u}{\Psi|_{\widehat{(\Pf_s)}_a}} & 
    D^b\!\big( \Stack{K\otimes Q }{ \Sp(Q)}\tms\widehat{\P \OS}_a, \, W_c\big) \ar{u}{\Psi} \\
    D^b\!\big(\widehat{(\Pf_s)}_a\big) \ar{u} & \,
  \end{tikzcd} $$
  Here the horizontal arrows are just push-foward along the inclusion maps. Note that the top row is natural, as is the composition of the two vertical arrows on the left; however the middle row depends on our choices of co-ordinates and splittings. Moving from the bottom left corner to the top right corner gives the functor $\Phi$, 
  because the object $\overline{\psi}^*\overline{\cK}$ is exactly the restriction of the object  $\cL$.
  \pgap

  Next we need to examine what happens to the subcategory $\Br(\widetilde{\cY}\tms \widehat{\P \OS}_a,\, W)$ under the equivalence $\Psi$, in both cases $a\in \Pf_s$ and $a\notin\Pf_s$. Given an object 
  $$\cE\in  D^b(\Stack{K\otimes Q}{\Sp(Q)}\tms  \widehat{\P \OS}_a, \, W_c)$$
  to find the representation  $h_\bullet \big(\Psi\cE |_{(0,a)}\big)$ we take $h_\bullet(\cE|_{(0,a)})$ and tensor it with  $h_\bullet (\cL|_0)$. By Proposition \ref{prop:IrrepsInKernel}, the $\Sp(Q)$-irreps that occur in $h_\bullet (\cL|_0)$ are exactly those of the form $\Schpur{\delta} Q$ where the width of $\delta$ is at most $t$ (one can also reach this conclusion by taking the Koszul resolution of $\cO_{\Hom(\Sigma/\Lambda, Q)}$).

  Now suppose that $a\notin \Pf_s$, so $t>s$.  If $\cE$ is such that  $h_\bullet(\cE|_{(0,a)})\neq 0$ then $\Psi \cE$ does not lie in the subcategory $\Br(\widetilde{\cY}\tms\widehat{\P \OS}_a,\, W)$, but $h_\bullet(\cE|_{(0,a)})= 0$ implies that $\cE\simeq 0$  by Lemma \ref{lem:minimalmodels}. This proves part (2) of the lemma.

  Suppose instead that $a\in \Pf_s$, so $t = s$. If $\Psi\cE \in \Br(\widetilde{\cY}\tms\widehat{\OS}_a,\, W)$ then we must have that $h_\bullet(\cE|_{(0,a)})$ is a trivial $\Sp(Q)$-representation, which means (by Lemma \ref{lem:minimalmodels} again) that $\cE$ is equivalent to a matrix factorization whose underlying vector bundle is equivariantly trivial.  The category of such matrix factorizations is exactly the same as the category of matrix factorizations on the underlying scheme, \emph{i.e.} the scheme-theoretic quotient of  $K\otimes Q$ by $\Sp(Q)$. This quotient is itself a Pfaffian, it's the rank $2q$ locus in  $\Wedge^2 K$. But since $a\in \Pf_s$ we have $\dim K = 2q+1$, so in fact the quotient is the whole of $\Wedge^2 K$. So in this case, $\Psi$ induces an equivalence:
  $$\Br(\widetilde{\cY}\tms\widehat{\P \OS}_a,\, W)\;\;\isoto\;\; D^b(\Wedge^2 K \tms \widehat{\OS}_a, \,  W_c) $$
  Note that once we've quotiented the superpotential $W_c$ becomes quadratic -   it's just the pairing between $\Wedge^2 K$ and $\Wedge^2 V/\langle a\rangle$. 

  Comparing this with our diagram above, we see that within our formal neighbourhood we can identify the functor $\Phi$ with the functor 
  $$D^b( \widehat{(\Pf_s)}_a) \To D^b( \Wedge^2 K \tms \widehat{\OS}_a,\, W_c ) $$
  given by `pull-up to $\Wedge^2 K \tms \widehat{(\Pf_s)}_a$, then push-forward'. By Kn\"orrer periodicity this functor is an equivalence.
\end{proof}

\begin{proof}[Proof of Theorem \ref{thm:genericequivalence}]
  Let's first prove that $\Phi_\OS$ is fully faithful. This question is local in $\P \OS$, and locally $D^b(\Pf_s^{sm})$ is generated by the structure sheaf, so it's sufficent to check that
  $$\Phi_\OS : \cO_{\Pf_s} \To \Hom\!\big(\Phi_\OS(\cO_{\Pf_s}), \Phi_\OS(\cO_{\Pf_s})\big) $$
  is an isomorphism. Part (1) of the previous lemma says that this map is an isomorphism in the formal neighbourhood of any point $a\in \Pf_s^{sm}$, so it's an isomorphism.

  Now we show that $\Phi_\OS$ is essentially surjective, which is is equivalent to the adjoint $\Phi_\OS^\dagger$ sending non-zero objects to non-zero objects.  Suppose that $\cE\in \Br(\cY\tms_{\C^*}\OS,\,W)$. By part (2) of the previous lemma, $\cE\equiv 0$ in the formal neighbourhood of any point $a\notin \Pf_s^{sm}$. If $\Phi_\OS^\dagger \cE =0$, then (by part (1)) this is also true when $a\in \Pf_s^{sm}$. Hence $\hom(\cE, \cE)$ is acyclic in the formal neighbourhood of any point $a\in \P \OS$, so it is acyclic, and $\cE$ is contractible.
\end{proof}

\subsection{Extending over the singular locus}\label{sec:Extending}

In Section \ref{sec:Br(X)} we saw that $\Br(\cX)$ is equivalent to $D^b(A)$, and it contains a finite generating set of objects 
\begin{equation}\label{eqn:Generators}
  \set{\Schpur{\gamma, k_\gamma} S,\; \gamma\in Y_{s,q}} 
\end{equation}
corresponding to the obvious projective $A$-modules. To prove our equivalence, we need to show that the category $\Br(\cY\tms_{\C^*}\Wedge^2 V,\,W)$ has exactly the same structure.

\subsubsection{Some objects on the dual side} \label{sec:DualObjects}

First we must identify the corresponding generating objects in $\Br(\cY\tms_{\C^*}\Wedge^2 V,\,W)$. The correct thing to do is to take the sky-scraper sheaf along the locus
$$0\times \Wedge^2 V \;\subset\; \Hom(V, Q)\times \Wedge^2 V $$
and twist it by vector bundles. 

For each Young diagram $\delta\in Y_{q, s}$, choose a corresponding irrep $\Schpur{\delta, l_\delta} Q$ of $\GSp(Q)$. Now define an object $\widetilde{\cP}_\delta\in D^b(\cY\tms_{\C^*}\Wedge^2 V,\,W)$ as:
$$ \widetilde{\cP}_\delta \,= \,\cO_{0\tms\wedge^2 V} \otimes \Schpur{\delta, l_\delta} Q $$
This is an object in $D^b(\cY\tms_{\C^*}\Wedge^2 V,\,W)$ because $W$ vanishes along the locus $0\times \wedge^2 V$, but it does not lie in the subcategory $\Br(\cY\tms_{\C^*}\Wedge^2 V,\,W)$. However this subcategory is right-admissible (Lemma \ref{lem:BranesToModules2})  so for each $\delta\in Y_{q,s}$ we can define
$$\cP_\delta \;\in \Br(\cY\tms_{\C^*}\Wedge^2 V,\,W) $$
be the image of $\widetilde{\cP}_\delta$  under the right-adjoint to the inclusion.

\begin{lem}\label{lem:imageofPdelta} The functor $\Phi^\dagger$ sends the object  $\cP_\delta$ to (a shift of) the vector bundle $\Schpur{\delta^{\top c}\!\!, \,j_\delta} S$  on $\cX$, for some integer $j_\delta$. 
\end{lem}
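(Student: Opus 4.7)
The plan is to reduce the computation to applying the Fourier--Mukai formula directly to the ambient object $\widetilde{\cP}_\delta$, and then read off the answer from Proposition \ref{prop:IrrepsInKernel}.

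First I would argue that $\Phi^\dagger(\cP_\delta) \cong \widehat{\Phi}^\dagger(\widetilde{\cP}_\delta)$, where $\widehat{\Phi}^\dagger$ denotes the functor $D^b(\cY\tms_{\C^*}\Wedge^2 V, W) \to D^b(\cX)$ given by the very same Fourier--Mukai formula $(\pi_1)_*\hom(\psi^*\cK, \pi_2^!(-))$. This is formal: by Lemma \ref{lem:BranesToModules2}, $\cP_\delta$ is the image of $\widetilde{\cP}_\delta$ under the right adjoint to the inclusion $\Br(\cY\tms_{\C^*}\Wedge^2 V, W) \hookrightarrow D^b(\cY\tms_{\C^*}\Wedge^2 V, W)$, while $\Phi^\dagger$ is right adjoint to $\Phi$, which factors through this inclusion, and composing the two adjunctions yields the identification.

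The computation of $\widehat{\Phi}^\dagger(\widetilde{\cP}_\delta)$ then reduces to a calculation on the $\{y=0\}$ locus. Indeed, $\widetilde{\cP}_\delta = i_*(\Schpur{\delta}Q\otimes \cO_{[\Wedge^2 V/\GSp(Q)]})$, where $i$ is the inclusion of $\{y=0\}$. Its $\pi_2$-preimage is $j: \cX\tms B\Sp(Q) \hookrightarrow \cX\tms_{\C^*}\cY$ (using $\GSp(S,Q)/\Sp(Q) \cong \GSp(S)$ and the triviality of the $\Sp(Q)$-action on $\Hom(S,V)$), and base change along this Cartesian Tor-independent square gives $\pi_2^!\widetilde{\cP}_\delta \cong j_*(\Schpur{\delta}Q\otimes \cO_\cX)$ up to shift and line bundle. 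Moreover, $W(x,y) = \omega_Q(\wedge^2(y\circ x)(\beta_S))$ vanishes when $y=0$, so on this preimage we work in an honest derived category; likewise $\psi(x,0) = 0 \in \Hom(S,Q)$, so $j^*\psi^*\cK \cong \cK|_0\otimes \cO_\cX$.

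Applying the internal-hom formula $\hom(\cF, j_*\cG) \cong j_*\hom(Lj^*\cF, \cG)$ for a closed immersion, and then pushing forward along $\pi_1\circ j$, which is simply the projection $\cX\tms B\Sp(Q)\to \cX$ (equivalently, taking $\Sp(Q)$-invariants on the $\GSp(S,Q)$-representation factor), the calculation collapses to
\[
(\pi_1)_*\hom\bigl(\psi^*\cK,\; \pi_2^!\widetilde{\cP}_\delta\bigr) \;\cong\; \bigl(\hom(\cK|_0, \Schpur{\delta}Q)\bigr)^{\Sp(Q)} \otimes \cO_\cX
\]
up to a $\GSp(S)$-character twist and a shift. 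By Proposition \ref{prop:IrrepsInKernel}, $\cK|_0 \cong \bigoplus_{\gamma \in Y_{s,q}} \Schpur{\gamma}S \otimes \Schpur{\gamma^{c\top}}Q$, and since the $\Schpur{\mu}Q$ are pairwise non-isomorphic irreducibles of $\Sp(Q)$, only the summand with $\gamma^{c\top} = \delta$, equivalently $\gamma = \delta^{\top c}$, survives; what remains is $\Schpur{\delta^{\top c}}S$, as desired.

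The main obstacle is purely bookkeeping: one must track the shifts and $\GSp$-character twists introduced by $\pi_2^!$, by Grothendieck duality along the closed immersion $j$, and by dualising $\Schpur{\gamma^{c\top}}Q$ (harmless, since symplectic irreps are self-dual). Since the statement only specifies the answer up to shift and line bundle, no precision is required; all the content lies in the combinatorial matching $\gamma^{c\top} = \delta$ supplied by Proposition \ref{prop:IrrepsInKernel}.
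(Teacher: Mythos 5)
Your proposal is correct and follows essentially the same route as the paper: reduce $\Phi^\dagger\cP_\delta$ to the same Fourier--Mukai formula applied to $\widetilde{\cP}_\delta$ by a formal adjunction/admissibility argument (the paper phrases this as the kernel $\psi^*\cK$ lying in $\Br(\cX\tms_{\C^*}\cY,W)$, so only the projection of $\pi_2^*\widetilde{\cP}_\delta$ matters), and then compute on the locus $\Hom(S,V)\tms 0$, where the kernel restricts to $\cK|_0$ and taking $\Sp(Q)$-invariants singles out the summand with $\gamma^{c\top}=\delta$ via Proposition \ref{prop:IrrepsInKernel} (Corollary \ref{cor:resolutionofK} in the paper). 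The extra bookkeeping you flag (shifts, characters, self-duality of symplectic irreps, the non-split $B\Sp(Q)$-fibration) is indeed absorbed by the ``up to shift and line bundle'' in the statement.
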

So  $\Phi^\dagger(\cP_\delta)$ is one of the generating vector bundles \eqref{eqn:Generators} for the category $\Br(\cX)$,  up to a shift and tensoring by a line bundle. The shift is constant (independent of $\delta$), the line bundle might not be.
\begin{proof}
  Recall (from Section \ref{sec:adjoints}) that $\Phi^\dagger$ is the functor
  $$\Phi^\dagger: \cE \mapsto (\pi_1)_*\hom(\psi^*\cK, \pi_2^* \cE ) $$
  up to some shift and line bundle. We claim that:
  $$\Phi^\dagger \cP_\delta = \Phi^\dagger \widetilde{\cP}_\delta $$
  This is because the kernel $\psi^*\cK$ lives in the admissible subcategory $\Br(\cX\tms_{\C^*}\cY,\, W)$, so $\Phi^\dagger \widetilde{\cP}_\delta$ only depends on the projection of $\pi_2^* \widetilde{\cP}_\delta$ into this subcategory, and this is the same as $\pi_2^* \cP_\delta$.

  Now we compute $\Phi^\dagger(\cP_\delta)$. The pull-up $\pi_2^*(\cO_{0\tms\wedge^2 V})$ is just the sky-scraper sheaf along the subspace $\Hom(S,V)\tms 0$ in $\cX\tms_{\C^*}\cY$, and now we can use the model for $\psi^*\cK$ provided by Corollary \ref{cor:resolutionofK} to see the result.
\end{proof}

\begin{rem}\label{rem:AmbiguityAgain} The value of the integer $j_\delta$ depends on the integers $(n_\gamma, m_\gamma)$ in Corollary \ref{cor:resolutionofK} where $\gamma=\delta^{\top c}$, and it also depends on the integer $l_\delta$ which was an arbitrary choice made in the definition of $\cP_\delta$. We didn't bother to calculate $(m_{\gamma}, n_{\gamma})$ so there is no advantage in specifying $l_\delta$. We shall see shortly (Remark \ref{rem:A'isAmbiguous}) why none of these integers matter.
\end{rem}

\begin{rem}\label{rem:VertexSimples}
  Another way to characterize these objects $\cP_\delta$ is to consider their images in the equivalent category $D^b(B\otimes\cO_{\wedge^2 V}, W)$. 

  To start with take just the stack $\cY$, and consider a twist of the sky-scraper sheaf at the origin $\cO_0\otimes \Schpur{\delta, l_\delta} Q$. To project this object into the subcategory $\Br(\cY)$ we  apply a functor 
  $$\Hom(T', -)_{gr}: D^b(\cY) \to D^b(B)\cong \Br(\cY) $$
  where $T'$ is the vector bundle chosen to construct the algebra $B = \Hom(T', T')_{gr}$. 

  The result is a one-dimensional $B$-module $M_\delta$;  it's the `vertex simple' at the vertex $\delta$ (possibly after a grading shift, depending on the choices made in the definitions of $\cP_\delta$ and $T'$). 

  Now simply cross everything with $\Wedge^2 V$: we see that $\cP_\delta$ corresponds to the module $M_\delta\otimes \cO_{\wedge^2 V}$. Note that this is indeed a module over the curved algebra $(B\otimes\cO_{\wedge^2 V}, W)$ because $W$ acts as zero on it.
\end{rem}

We will see in due course that  this set of objects $\set{\cP_\delta}$, together with their twists by line bundles,  generate  $\Br(\cY\tms_{\C^*}\Wedge^2 V,\,W)$. The essential point is the following: 

\begin{lem}\label{lem:Pdeltaspan}
  The set of objects 
  $$\set{\cP_\delta\otimes \langle \beta_Q\rangle^p,\, \delta\in Y_{q,s}, \,p \in \Z}\; \subset \Br(\cY\tms_{\C^*}\Wedge^2 V,\,W)$$
  has no left orthogonal.
\end{lem}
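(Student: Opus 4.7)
The plan is to prove the contrapositive: any $\cE \in \Br(\cY \tms_{\C^*} \Wedge^2 V, W)$ with $\Hom^\bullet(\cE, \cP_\delta) = 0$ for every $\delta \in Y_{q,s}$ must vanish. I will show such an $\cE$ restricts to zero at the origin of $\Hom(V,Q) \tms \Wedge^2 V$, then invoke Corollary~\ref{thm:ObjectVanishingAtOriginImpliesVanishing}. Its hypotheses apply here because $\GSp(Q)\tms \C^*_R$ contains a central one-parameter subgroup (built from the diagonal $\Delta \subset \GSp(Q)$ and $\C^*_R$) that dilates $\Hom(V,Q) \tms \Wedge^2 V$, so Lemma~\ref{lem:minimalmodels} applies.

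Represent $\cE$ by a minimal matrix factorization $(E, d_E)$ via Lemma~\ref{lem:minimalmodels}, so $E = \bigoplus_{\delta \in Y_{q,s}} \Schpur{\delta} Q \otimes E_\delta \otimes \cO$ where $E_\delta$ is the finite-dimensional bigraded multiplicity space of $\Schpur{\delta}Q$ in $\cE|_0$, and $d_E|_0 = 0$. Since $\Br$ is right-admissible (Lemma~\ref{lem:BranesToModules2}) and $\cP_\delta$ is the right-adjoint image of $\widetilde{\cP}_\delta$, we have $\Hom(\cE, \cP_\delta) = \Hom(\cE, \widetilde{\cP}_\delta)$. Writing $[j] : [\Wedge^2 V/\GSp(Q)] \hookrightarrow \cY \tms_{\C^*} \Wedge^2 V$ for the inclusion of the zero-section in the $\cY$-direction, so that $\widetilde{\cP}_\delta = [j]_*(\Schpur{\delta}Q\otimes \cO_{\Wedge^2 V})$, adjunction yields
\[ \Hom(\cE, \cP_\delta) \;\cong\; \Hom_{[\Wedge^2 V/\GSp(Q)]}\!\big([j]^* \cE, \,\Schpur{\delta} Q \otimes \cO_{\Wedge^2 V}\big). \]
The restriction $[j]^*\cE$ is the complex $\bigoplus_\delta \Schpur{\delta}Q \otimes E_\delta \otimes \cO_{\Wedge^2 V}$ with induced differential $d' = d_E|_{\{0\}\tms \Wedge^2 V}$, which vanishes at $0 \in \Wedge^2 V$ by minimality. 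Schur orthogonality (the $\Schpur{\delta}Q$ are pairwise non-isomorphic as $\Sp(Q)$-irreps) then collapses the Hom, computed over the base $[\Wedge^2 V/\C^*]$, to
\[ \Hom(\cE, \cP_\delta) \;\cong\; \big(E_\delta^\vee \otimes \cO_{\Wedge^2 V},\, d\big), \]
an honest $\C^*$-graded complex of $\cO_{\Wedge^2 V}$-modules whose differential vanishes at the origin.

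If this Hom is derived-zero for every $\delta$, then taking its derived fibre at $0 \in \Wedge^2 V$ must also vanish; because $d|_0 = 0$, that derived fibre is simply $E_\delta^\vee$, forcing $E_\delta = 0$. Hence $\cE|_0 = 0$, and Corollary~\ref{thm:ObjectVanishingAtOriginImpliesVanishing} yields $\cE = 0$. The main subtlety to check carefully is that the displayed Hom genuinely witnesses the full multiplicity space $E_\delta^\vee$ and not merely a single graded slice of it, given the three $\C^*$-gradings in play --- from $\GSp(Q)/\Sp(Q)$, from $\C^*_R$, and from the base $[\Wedge^2 V/\C^*]$; this should follow because $\cO_{\Wedge^2 V}$ contributes all non-negative weights under the base $\C^*$, so the graded $\cO_{\Wedge^2 V}$-module structure recovers the entire $\GSp(Q)$-isotype.
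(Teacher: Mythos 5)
Your proof is correct and follows essentially the same route as the paper's: replace $\cP_\delta$ by $\widetilde{\cP}_\delta$ via the adjunction for the right-admissible subcategory, pass to a minimal model of $\cE$ (Lemma \ref{lem:minimalmodels}), restrict to the zero-section $\{0\}\tms\Wedge^2 V$, and isolate the $\delta$-isotypic piece by taking $\Sp(Q)$-invariants relative to the base $[\Wedge^2 V/\C^*]$. The only divergence is the final step --- the paper argues directly that the top R-charge degree of this complex has non-zero homology (its top homology sheaf surjects onto the non-zero fibre at the origin), whereas you run the contrapositive using that an acyclic bounded complex of finite free graded modules remains acyclic after restriction to $0\in\Wedge^2 V$, where the differential vanishes, forcing $E_\delta=0$ and then $\cE=0$ --- and both versions of this Nakayama-type conclusion are sound.
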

\begin{proof}
  Tensoring by the line bundle/character $\langle \beta_Q\rangle$ is just a grading shift, so the claim is equivalent to the statement that for any  non-zero object $\cE$ of the category there is some $\delta\in Y_{q,s}$ such that:
  $$\Hom(\cE , \cP_\delta)_{gr} = \Hom(\cE, \widetilde{\cP}_\delta)_{gr}   \neq 0 $$
  By Lemma \ref{lem:minimalmodels} we can assume the sheaf underlying $\cE$ is a vector bundle  and the differential vanishes at the origin, so $\cE$ is the bundle associated to the representation $\cE|_0$. In particular, $\cE^\vee|_0$ is a non-zero representation, and by the grade-restriction rule it only contains $\Sp(Q)$-irreps from the set $Y_{q,s}$.

  Decompose $\cE^\vee|_0$ under R-charge, let $t$ be the highest weight occuring, and let $(\cE^\vee|_0)_t$ denote the highest weight space. Let $(\cE^\vee)_t$ denote the corresponding subbundle of $\cE^\vee$.  Now consider:
  $$\hom(\cE, \cO_{0\tms\wedge^2 V}) = \cE^\vee|_{0\tms\wedge^2 V} $$
  The R-charge action on $\Wedge^2 V$ is trivial, so this is a bounded complex of $\GSp(Q)$-equivariant vector bundles on $\Wedge^2 V$, whose final term is the restriction of $(\cE^\vee)_t$. The differential vanishes at the origin, so the final differential cannot be surjective, and there is a non-zero homology sheaf $h_t(\cE^\vee|_{0\tms\wedge^2 V})$ in the top degree. We have surjections:
  $$(\cE^\vee)_t |_{0\tms\wedge^2 V} \To h_t(\cE^\vee|_{0\tms\wedge^2 V})
  \To  h_t(\cE^\vee|_{0\tms\wedge^2 V})|_0 = (\cE^\vee|_0)_t $$
  By the grade-restriction-rule, there is some $\delta\in Y_{q,s}$ such that $(\cE^\vee|_0)_t\otimes \Schpur{\delta} Q$ contains non-zero $\Sp(Q)$-invariants, which  implies that $h_t(\cE^\vee|_{0\tms\wedge^2 V})\otimes \Schpur{\delta} Q$ doesn't vanish after taking $\Sp(Q)$-invariants. Since taking $\Sp(Q)$ invariants is exact, it follows that
  $$\Hom(\cE, \cP_\delta)_{gr} = \big( \cE^\vee|_{0\tms\wedge^2 V}\otimes\Schpur{\delta} Q\big)^{\Sp(Q)} $$
  has non-zero homology in degree $t$.
\end{proof}

\subsubsection{The algebra on the dual side}\label{sec:dualAlgebra}

Let $\cP$ denote the direct sum:
$$\cP = \bigoplus_{\delta\in Y_{q,s}} \cP_\delta $$
Now let  $A'$ denote the dg-algebra:
$$A' = \Hom(\cP, \cP)_{gr} $$
By definition, this is a $\C^*$-equivariant dga over the ring of functions on $\Wedge^2 V$. 

In Section \ref{sec:Br(X)} we saw that $\Br(\cX)$ is equivalent to the derived category of an algebra $A$. This algebra $A$ is also defined over $[\Wedge^2 V / \C^*]$, and it is Cohen--Macaulay. In this section we will prove that $A'$ is in fact quasi-isomorphic to $A$.  The results of the previous section show that $A'$ and $A$ are quasi-isomorphic over the open set $\OS\subset \Wedge^2 V$. We will show that $A'$ has homology only in degree 0 -- so it's really an algebra -- and that it's Cohen--Macaulay. Together these facts will imply that $A'\simeq A$ globally.
\pgap

\begin{rem}\label{rem:A'isAmbiguous} Recall from Remark \ref{rem:AisAmbiguous} that the grading on $A$ was ambiguous, because the definition of the vector bundle $T$ involved some choices of integers $k_\gamma$.  The definition of $A'$ has a similar ambiguity, because in the definition of $\cP_{\delta}$ we had to choose an integer $l_\delta$ to extend $\Schpur{\delta} Q$ to an irrep of $\GSp(Q)$. A related fact is that although we know that the object $\Phi^\dagger(\cP_\delta)$  is a shift of a vector bundle, we only know the vector bundle up to tensoring by a line bundle (see Remark \ref{rem:AmbiguityAgain}). 

  We can make all these ambiguities irrelevant by the following prescription: for each $\gamma\in Y_{s,q}$, choose the corresponding summand of $T$ to be the vector bundle $\Phi^\dagger(\cP_{\gamma^{\top c}})$. As long as we use this choice then  $A$ and $A'$ will have the same grading. 
\end{rem}

\begin{lem}\label{lem:AandA'areGenericallyIsomorphic}
  Over the open set $\OS\subset \Wedge^2 V$, we have a quasi-isomorphism:
  $$\Phi_\OS^\dagger: A'|_\OS \isoto A|_\OS $$
\end{lem}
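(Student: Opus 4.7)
The plan is straightforward once Theorem \ref{thm:genericequivalence} is in hand: I transport the endomorphism dg-algebra $A'|_U$ across the equivalence $\Phi_U^\dagger$, and observe that the image is the endomorphism algebra $A|_U$. The first task is to identify the image of the generator $\bigoplus_{\delta\in Y_{q,s}}\cP_\delta$ under $\Phi_U^\dagger$. By Lemma \ref{lem:imageofPdelta} each $\Phi^\dagger(\cP_\delta)$ agrees with $\Schpur{\delta^{\top c}}S$ up to a shift and a line-bundle twist, and the assignment $\delta \mapsto \delta^{\top c}$ is precisely the bijection $Y_{q,s}\isoto Y_{s,q}$ from \eqref{eq:combinatorialduality}. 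The paragraph immediately preceding the lemma fixes the a priori ambiguous lifts $p_\gamma$ in \eqref{eqn:ObjectDefiningAlgebra} so that $\Phi^\dagger(\cP_\delta)$ matches the summand $\Schpur{\delta^{\top c}}S\otimes \langle \omega_S\rangle^{p_{\delta^{\top c}}}$ of $T$, up to one common cohomological shift that is independent of $\delta$. Hence $\Phi_U^\dagger\big(\bigoplus_{\delta\in Y_{q,s}}\cP_\delta\big) \cong T|_U$ in $D^b(\cX|_{\P U})$, with a harmless overall shift that does not affect the dg-algebra structure.

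Next, since $\Phi_U^\dagger$ comes from the Fourier--Mukai kernel $\psi^*\cK$ on the relative product, it lifts to an equivalence of dg-enhancements and therefore induces a quasi-isomorphism on derived endomorphism dg-algebras of any object. Applied to $\bigoplus_\delta \cP_\delta$ this gives
\[
A'|_U \;=\; \REnd\!\Big(\bigoplus_{\delta\in Y_{q,s}}\cP_\delta\Big)\bigg|_U \;\isoto\; \REnd_{\cX|_{\P U}}(T|_U).
\]

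To finish I must identify $\REnd(T|_U)$ with $A|_U$ concentrated in degree zero. This is just the restriction to $U$ of the fact that $T$ is a tilting object on $\cX$: Lemma \ref{lem:BranesToModules} (applied over the base $U$ as in Remark \ref{rem:BasechangingBranesToModules}) provides an equivalence $\Br(\cX|_U)\isoto D^b(A|_U)$ sending $T|_U \mapsto A|_U$, which forces $\RHom(T|_U,T|_U) = \End(T|_U) = A|_U$. Chaining the identifications yields the asserted quasi-isomorphism $\Phi_U^\dagger\colon A'|_U \isoto A|_U$.

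I do not expect a real obstacle: the content is packaged into Theorem \ref{thm:genericequivalence} (generic equivalence) and Lemma \ref{lem:imageofPdelta} (object matching). The only point requiring care is the bookkeeping of shifts and line-bundle twists so that the comparison is an honest quasi-isomorphism of graded dg-algebras over $\cO_U$ rather than merely a Morita equivalence; this is exactly what the specific choice of lifts $p_\gamma$ is designed to arrange.
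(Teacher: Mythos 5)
Your proof is correct and follows essentially the same route as the paper, which simply observes that the statement is an immediate consequence of Theorem \ref{thm:genericequivalence}; your write-up just makes explicit the object-matching via Lemma \ref{lem:imageofPdelta}, the choice of lifts $p_\gamma$, and the tilting property of $T|_U$, all of which the paper leaves implicit.
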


\begin{proof} Using the remark above we have that $\Phi^\dagger$ sends $\cP$ to $T$ (up to a shift), and by Theorem \ref{thm:genericequivalence} $\Phi^\dagger$ becomes an equivalence over the open set $\OS$. 
\end{proof}

\begin{lem}\label{lem:DegreesOfA'}
  The dga $A'$ has homology concentrated in degrees $[-{2q+1\choose 2},0]$.
\end{lem}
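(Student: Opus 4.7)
The plan is to reduce the computation of $A'$ to an Ext-algebra in the curved module category, and then to bound the degrees using a combination of Serre duality and a one-sided vanishing coming from a minimal matrix factorization model.

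First, apply the $\cY$-analogue of Lemma \ref{lem:BranesToModules2} to translate: the equivalence $\Br(\cY \tms_{\C^*} \wedge^2 V, W) \simeq D^b(B \otimes \cO_{\wedge^2 V}, W)$ identifies $\bigoplus_\delta \cP_\delta$ with $\bigoplus_\delta M_\delta \otimes \cO_{\wedge^2 V}$, where $M_\delta$ is the vertex simple $B$-module (Remark \ref{rem:VertexSimples}). Since $M_\delta$ is annihilated by the augmentation ideal of $B$ and $W$ vanishes on the locus $y=0$, the curvature acts trivially on each $M_\delta \otimes \cO_{\wedge^2 V}$. Thus $A'$ is quasi-isomorphic to the Ext-algebra of this module in the curved setting, and the problem becomes one of controlling $\Ext^\bullet$ between these modules.

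Next, exploit Serre duality to obtain a symmetry of the non-vanishing range. Since each $\cP_\delta$ is supported over the origin of $\cY^{ss}$, Proposition \ref{thm:SerreDuality2} applies in a family over $\wedge^2 V$ (the extra $\wedge^2 V$-directions contribute nothing to the Serre shift, being flat parameters). Unwinding the formula gives
\[
\Ext^n(\cP_\delta, \cP_{\delta'}) \;\cong\; \Ext^{-n - \binom{2q+1}{2}}(\cP_{\delta'}, \cP_\delta \otimes (\det Q)^{-v})^\vee.
\]
Twisting by the line bundle $(\det Q)^{-v}$ preserves $\Br$ (it only changes the $\GSp(Q)$-character lift), so after relabeling the Young diagrams, this Serre duality is an internal symmetry of $A'$. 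It forces the non-vanishing range of Ext to be symmetric about $-\binom{2q+1}{2}/2$, so it suffices to establish the one-sided bound: $\Ext^n(\cP_\delta, \cP_{\delta'}) = 0$ for $n$ outside the interval bounded by $0$ on one end.

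Finally, establish this one-sided vanishing using the adjunction $\Hom(\cP_\delta, \cP_{\delta'}) = \Hom(\cP_\delta, \widetilde{\cP}_{\delta'})$ (which holds because $\cP_\delta = R_* \widetilde{\cP}_\delta$ under the right-adjoint to inclusion $\Br \hookrightarrow D^b$), combined with the closed-embedding adjunction along $i : 0 \tms \wedge^2 V \hookrightarrow \cY \tms_{\C^*} \wedge^2 V$:
\[
\Ext^\bullet(\cP_\delta, \widetilde{\cP}_{\delta'}) \;=\; \Ext^\bullet\!\bigl(Li^* \cP_\delta,\; \Schpur{\delta'}Q \otimes \cO_{\wedge^2 V}\bigr).
\]
By Lemma \ref{lem:minimalmodels}, $\cP_\delta$ has a minimal matrix factorization model $(E, d)$ with $E$ the bundle associated to $h_\bullet(\cP_\delta|_{(0,0)})$ and $d|_{(0,0)} = 0$. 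Consequently, $Li^* \cP_\delta$ is an honest complex of vector bundles on $[\wedge^2 V/\GSp(Q)]$ whose differential has coefficients in $\mathfrak{m}_0 \subset \cO_{\wedge^2 V}$ and whose underlying graded bundle is determined by the $\GSp(Q)$-representation $h_\bullet(\cP_\delta|_0)$. The Hom complex into $\Schpur{\delta'}Q \otimes \cO_{\wedge^2 V}$ then has R-charge range dictated by that of $h_\bullet(\cP_\delta|_0)$.

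The main obstacle is identifying the precise R-charge range of $h_\bullet(\cP_\delta|_0)$; this is controlled by the projection $\widetilde{\cP}_\delta \rightsquigarrow \cP_\delta$ onto $\Br$. To extract the bound $\binom{2q+1}{2}$, one resolves $\cO_{0 \tms \wedge^2 V}$ by the Koszul complex of $\Hom(V,Q)^\vee$, decomposes each term under $\Sp(Q)$-irreps, and keeps track of which pieces survive the projection into $\Br$ (i.e.\ pair non-trivially with an element of $Y_{q,s}$); the $\Sp(Q)$-invariant cut-off forces the relevant R-charge range, producing a minimal model of length $\binom{2q+1}{2}$. Combined with the Serre-duality symmetry above, this yields the claimed range $[-\binom{2q+1}{2},0]$.
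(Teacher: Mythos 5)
Your reduction to the curved module category and the adjunction bookkeeping ($\Hom(\cP_\delta,\cP_{\delta'})=\Hom(\cP_\delta,\widetilde{\cP}_{\delta'})$, pullback along $0\tms\Wedge^2V$, minimal models via Lemma \ref{lem:minimalmodels}) are fine, but the two steps that carry the actual content have genuine gaps. First, you invoke Proposition \ref{thm:SerreDuality2} ``in a family over $\Wedge^2 V$'' to claim the non-vanishing range of $\Ext^\bullet(\cP_\delta,\cP_{\delta'})$ is symmetric about $-\tfrac12\binom{2q+1}{2}$. That proposition is an \emph{absolute} duality (vector-space duals) and needs the target to be supported over the origin so that the Hom spaces are finite dimensional; here the Hom spaces are finitely generated graded $\cO_{\wedge^2 V}$-modules supported on $\widetilde{\Pf}_s$ (indeed the whole point is that they assemble into $A$). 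Any family version is a duality relative to $\cO_{\wedge^2 V}$, and dualizing a complex of $\cO_{\wedge^2 V}$-modules does not reflect the homological range unless the homology has controlled depth --- which is exactly the Cohen--Macaulayness established \emph{later} (Lemma \ref{lem:CohenMacaulayCriterion}, Proposition \ref{prop:A'isCM}) using the present lemma as input. So the claimed symmetry is unjustified, bordering on circular.

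Second, even granting symmetry, your one-sided bound rests on the assertion that $h_\bullet(\cP_\delta|_0)$ has R-charge spread $\binom{2q+1}{2}$, extracted by resolving $\cO_{0\tms\wedge^2 V}$ by a Koszul complex and ``keeping the summands that pair with $Y_{q,s}$''. This misdescribes the projection onto $\Br$: disallowed irreps are not discarded but replaced by complexes built from the \v{S}penko--Van den Bergh resolutions, which can a priori spread the R-charge further (controlling the analogous spread of $\Delta$-weights is the entire content of Lemma \ref{lem:usefulresolutions}, whose lower bound again needs Serre duality, not the Koszul complex); moreover the Koszul complex has length $2qv$, and nothing in your sketch produces the number $\binom{2q+1}{2}$, which in the paper arises as the Serre shift $\dim\widetilde{\Pf}_q-2qv=-\dim\Sp(Q)$. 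The paper sidesteps both problems by working first purely over $B$: the upper bound for $E=\End_B(\oplus_\delta M_\delta)$ is free from the bar resolution of $B/\overline{B}$ (the arrow ideal has R-charge $\geq 1$), the lower bound uses the absolute Serre duality of Proposition \ref{thm:SerreDuality2}, legitimately applicable since the $M_\delta$ are finite dimensional, and the bound is then transported to $A'$ by the perturbation spectral sequence of \cite[Lemma 3.6]{segal_equivalence_2011}. To repair your argument you would essentially have to reproduce these steps.
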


At this point (and this point only!) it's important to recall that the R-charge acts non-trivially on $\cY$ and hence the algebra $B$ is not concentrated in homological degree zero. See Remark \ref{rem:Rcharge2}.

\begin{proof}
  By Remark \ref{rem:VertexSimples}, the object $\cP_\delta$ corresponds to the module $M_\delta\otimes \cO_{\wedge^2 V}$ over the curved algebra $(B\otimes \cO_{\wedge^2 V}, W)$, where $M_\delta$ is the one-dimensional $B$-module corresponding to the vertex $\delta$. Consider the dga:
  $$E=\End_B\!\left(\bigoplus_{\delta\in Y_{q,s}} M_\delta\right)$$
  We first claim that the homology of $E$ is concentrated in degrees $[-{2q+1\choose 2},0]$.

  To see this, observe that the $B$-module $\oplus_{\delta\in Y_{q,s}} M_\delta$ is the quotient $B/\overline{B}$ where $\overline{B}$ is the ideal in $B$ generated by the arrows. Hence it has a bar resolution whose underlying module is:
  $$\bigoplus_{p\geq 0} B\otimes \overline{B}^{\otimes p}[p] $$
  Since the R-charge acts with weight 1 on $\Hom(V,Q)$, the ideal $\overline{B}$ is the subspace of $B$ where the R-charge is $\geq 1$, so every term in this bar resolution is concentrated in non-negative degrees. It follows that $E$ is concentrated in non-positive degrees. For the lower bound, we use the Serre functor on $D^b(B)=\Br(\cY)$ from Proposition \ref{thm:SerreDuality2}. 

  To compute $A'$ we can take a projective resolutions of each $M_\delta$, tensor them with $\cO_{\wedge^2 V}$, then perturb the differential  to get a module over $(B\otimes \cO_{\wedge^2 V}, W)$ which is equivalent to $M_\delta$ \cite[Lemma 3.6]{segal_equivalence_2011}. This means that $A'$ can be computed from a spectral sequence that starts with $E$. The result follows.
\end{proof}

To show that $A'$ in fact has homology only in degree zero, and is Cohen--Macaulay, we develop the following general criterion:

\begin{lem}\label{lem:CohenMacaulayCriterion}
  Let $\cF$ be a $\C^{*}$-equivariant complex on $\A^{n}$ and let $c$ be the codimension of $\supp(\cF)$. If $\cF|_{0}$ is a complex with homology concentrated in degrees $[-c,0]$ then the homology of $\cF$ is concentrated in degree zero, and is a Cohen--Macaulay sheaf. 
\end{lem}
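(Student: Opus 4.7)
The plan is to replace $\cF$ with a minimal free complex of length at most $c$, use a support-based vanishing of $\Ext$ to show that $\RHom(\cF, R)$ is concentrated in a single cohomological degree, and then combine Auslander--Buchsbaum with biduality to conclude. Write $R = \cO_{\A^n}$.

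First, I apply Lemma \ref{lem:minimalmodels} with $W = 0$ and $G = \C^{*}$ acting by the given scaling (which supplies the required central dilation 1-PS) to replace $\cF$ by a minimal $\C^{*}$-equivariant complex of finite free $R$-modules $(E_\bullet, d_\bullet)$ whose differential vanishes at the origin. Minimality then forces $\rk E_i = \dim h_i(\cF|_0)$, so the hypothesis gives $E_i = 0$ for $i \notin [-c, 0]$. Thus $\cF$ is perfect and admits a free resolution of length at most $c$.

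Second, I model $\RHom(\cF, R)$ by the dual complex $E_\bullet^\vee$, which lives in cohomological degrees $[0, c]$. Combining the hypercohomology spectral sequence
$$E_2^{p,q} = \Ext^p(h_{-q}(\cF), R) \;\Longrightarrow\; h_{p+q}(\RHom(\cF, R))$$
with the standard vanishing $\Ext^p(M, R) = 0$ for $p < \codim \supp M$ (valid because $R$ is regular, so grade equals codimension), and using that each $h_{-q}(\cF)$ is supported in $Z$, forces $E_2^{p,q} = 0$ for $p < c$. Hence $\RHom(\cF, R)$ is cohomologically concentrated in degree $c$: setting $N = \Ext^c(\cF, R)$, we have $\RHom(\cF, R) \simeq N[-c]$.

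Third, the complex $E_\bullet^\vee$ exhibits $N$ as the cokernel of its final differential, giving a free resolution of $N$ of length at most $c$, so $\pd(N) \le c$. On the other hand $\supp(N) \subseteq \supp(\cF) \subseteq Z$ has codimension at least $c$, and Auslander--Buchsbaum combined with $\depth(N) \le \dim(N)$ yields $\pd(N) \ge \codim \supp(N)$. So either $N = 0$ or $N$ is Cohen--Macaulay of codimension exactly $c$. If $N = 0$, then biduality for perfect complexes forces $\cF \simeq \RHom(\RHom(\cF, R), R) = 0$. Otherwise biduality gives $\cF \simeq \RHom(N, R)[c] \simeq \Ext^c(N, R)$, using that a CM module of codimension $c$ has only its top $\Ext^c$ non-vanishing and that $\Ext^c(N, R)$ is itself CM of the same codimension. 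Either way, $\cF$ has cohomology concentrated in degree zero and is a Cohen--Macaulay sheaf. The main delicate point is that the graded Auslander--Buchsbaum and $\Ext$-vanishing statements apply in the $\C^{*}$-equivariant setting; this causes no trouble because the scaling action makes $R$ positively graded with $R_0 = \C$, so the graded, complete local and ungraded versions all coincide.
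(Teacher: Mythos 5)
Your proof is correct, but it takes a genuinely different route from the paper's. The paper argues by induction on the ambient dimension: the base case $Z=\{0\}$ is handled by the spectral sequence $\Ext^p(\cH^{-q}(\cF),k)\Rightarrow\Ext^{p+q}(\cF,k)$ against the residue field (using that sheaves supported at the origin are automatically CM), and the general case is reduced to it by slicing with a generic hyperplane through the origin, killing the nonzero homology via the triangle $\cF(-1)\to\cF\to\cF|_{\A^{n-1}}$ together with graded Nakayama, and finally spreading CM-ness at $0$ to all of $\A^n$ by openness of the CM locus and $\C^*$-invariance. You instead use Lemma \ref{lem:minimalmodels} (with $W=0$) to get a minimal equivariant free complex of amplitude $[-c,0]$, dualize into $R$, use the grade--codimension vanishing to see that $\RHom(\cF,R)\simeq N[-c]$ for a single module $N$ of projective dimension $\le c$, apply Auslander--Buchsbaum to conclude $N$ is CM (or zero), and bidualize. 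This avoids the induction and the generic hyperplane (whose compatibility with the $\C^*$-action the paper leaves implicit), and it makes explicit the dilation-type hypothesis on the $\C^*$-action that both arguments secretly need; the paper's route, in exchange, stays with elementary computations against $k$ and never invokes duality facts such as ``$\Ext^c(N,R)$ of a codimension-$c$ CM module is CM.'' Two small points to watch in your write-up: the Auslander--Buchsbaum step should be read at every point of $\supp N$ (immediate, since $\pd_R N\le c$ localizes), so that CM-ness holds as a sheaf and the codimension of the support is exactly $c$ at each point, which is what the final duality fact requires; and that last fact can even be bypassed by noting that, once $\cF$ is known to be a module, your minimal complex is a length-$c$ free resolution of it, so the same Auslander--Buchsbaum argument applied to $\cF$ itself gives its CM-ness directly.
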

\begin{proof}
  First suppose $\cF$ is just a sheaf, situated in degree zero.  Then $\pd_{0}(\cF) \le c$ and 
  $$\depth_{0}(\cF) \le \dim \supp(\cF) = n-c$$
  so the Auslander--Buchsbaum formula implies that $\depth_{0}(\cF) = n-c$ and $\cF$ is Cohen--Macaulay at 0.
  Since the locus where $\cF$ is Cohen--Macaulay is open and $\C^{*}$-invariant, we see that $\cF$ is Cohen--Macaulay at every point of $\A^{n}$.

  It remains to show that $\cF$ is in fact concentrated in degree zero.
  Let $\cO_0$ denote the skyscraper sheaf at $0$.
  Note that the condition on $\cF|_0$ is equivalent to the requirement that $\Ext^i(\cF,\cO_0) = 0$ if $i \not\in[0,c]$.

  We will argue by induction on $d = n-c = \dim \supp(\cF)$, so assume first that $d = 0$.
  There is a spectral sequence converging to $\Ext^{p+q}(\cF,\cO_0)$ with second page  $E_{2}^{p,q} = \Ext^p(\cH^{-q}(\cF), \cO_0)$.   All the homology sheaves $\cH^j(\cF)$ are supported at the origin (since $d=0$) so they are automatically Cohen--Macaulay, hence we have $\Ext^i(\cH^j(\cF),\cO_0) \not= 0$ if and only if $i \in [0,n]$ and $\cH^j(\cF) \not= 0$.
  Let  $j_{\min}$ and $j_{\max}$ be the minimal and maximal values of $j$ such that $\cH^{j}(\cF) \not= 0$.
  Then $\Ext^n(\cH^{j_{\min}}(\cF),\cO_0) \not= 0$, and since this group lies in the upper right-hand corner of non-vanishing terms in the $E_2$ page, we find
  $$\Ext^{n-j_{\min}}(\cF,\cO_0) = \Ext^{n}(\cH^{j_{\min}}(\cF),\cO_0) \not= 0$$
  and hence $j_{\min} \ge 0$.  Similarly we find $\Ext^{-j_{\max}}(\cF, \cO_0)\neq 0$, so $j_{\max} \le 0$.
  Hence $j_{\min} = j_{\max} = 0$, so $\cF$ is concentrated in degree 0.

  Let now $d \ge 1$, and let $\A^{n-1} \subset \A^{n}$ be a generic hyperplane through the origin.
  Then $\cF|_{\A^{n-1}}$, as a complex on $\A^{n-1}$, satisfies the assumptions of the lemma, hence by induction must be concentrated in degree 0.

  Computing Tor groups by the short exact sequence
  \[
    0 \to \cO_{\A^{n}} \to \cO_{\A^{n}} \to \cO_{\A^{n-1}} \to 0
  \]
  shows that $\text{Tor}_{-i}(\cG, \cO_{\A^{n-1}}) = \cH^{i}(\cG|_{\A^{n-1}}) = 0$ for any sheaf $\cG$ and $i \not\in\{0,-1\}$.
  Therefore the spectral sequence $\cH^{i}(\cH^{j}(\cF)|_{\A^{n-1}}) \Rightarrow \cH^{i+j}(\cF|_{\A^{n-1}})$ degenerates at the $E_{2}$-page.
  Suppose $\cH^{i}(\cF) \not= 0$; the support of $\cH^{i}(\cF)$ is $\C^{*}$-invariant so it must intersect $\A^{n-1}$, hence $\cH^{0}(\cH^{i}(\cF)|_{\A^{n-1}}) \not= 0$. But this implies $\cH^{i}(\cF|_{\A^{n-1}}) \not= 0$, and by the induction hypothesis this only happens for  $i = 0$.
  So $\cF$ is concentrated in degree 0.
\end{proof}

\begin{prop}\label{prop:A'isCM} The dga $A'$ has homology only in degree zero, and its homology is a Cohen--Macaulay sheaf on $\Wedge^2 V$.
\end{prop}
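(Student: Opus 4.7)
The plan is to apply Lemma \ref{lem:CohenMacaulayCriterion} to $A'$, viewed as a $\C^{*}$-equivariant complex on the affine space $\Wedge^{2} V$. For this I need two ingredients: the codimension of $\supp(A')$, and the degree range of the derived restriction $A'|_{0}$.

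First I would pin down the support. By Lemma \ref{lem:AandA'areGenericallyIsomorphic}, $A'|_{U}\simeq A|_{U}$, whose support is $\widetilde{\Pf_{s}}\cap U$. The complement $\Wedge^{2} V \setminus U$ is the locus $\widetilde{\Pf_{s-1}}$, which is contained in $\widetilde{\Pf_{s}}$, so $\supp(A')\subseteq \widetilde{\Pf_{s}}$; since $\widetilde{\Pf_{s}}$ is irreducible and $A'|_{U}\neq 0$, equality holds. A direct calculation using $v=2s+2q+1$ gives
\[
\codim_{\Wedge^{2} V} \widetilde{\Pf_{s}} \;=\; \binom{v}{2}-2sv + s(2s+1) \;=\; \binom{2q+1}{2}.
\]

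Next I would compute the restriction $A'|_{0}$. By Remark \ref{rem:VertexSimples}, each $\cP_{\delta}$ corresponds to the curved $(B\otimes \cO_{\Wedge^{2} V},\,W)$-module $M_{\delta}\otimes \cO_{\Wedge^{2} V}$, where $M_{\delta}$ is the one-dimensional vertex simple at $\delta$. Because $W$ is linear in $a\in \Wedge^{2} V$, restricting to $a=0$ kills $W$ and reduces these modules to the plain $B$-modules $M_{\delta}$. Concretely, choosing bar resolutions of the $M_{\delta}$, tensoring with $\cO_{\Wedge^{2} V}$, and perturbing the differential by $W$ as in the proof of Lemma \ref{lem:DegreesOfA'}, the perturbation term vanishes at the origin, so $A'|_{0}\simeq E:=\End_{B}\bigl(\bigoplus_{\delta}M_{\delta}\bigr)$. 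By the estimate established in the proof of Lemma \ref{lem:DegreesOfA'} (non-positivity from the bar resolution together with Serre duality via Proposition \ref{thm:SerreDuality2} for the lower bound), the homology of $E$ is concentrated in degrees $\bigl[-\binom{2q+1}{2},\,0\bigr]$.

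With $c=\binom{2q+1}{2}$ matching both the codimension of $\supp(A')$ and the width of the degree range of $A'|_{0}$, Lemma \ref{lem:CohenMacaulayCriterion} applies and yields the conclusion: $A'$ has homology only in degree $0$, and this homology sheaf is Cohen--Macaulay on $\Wedge^{2} V$. The subtle point of the argument is verifying the identification $A'|_{0}\simeq E$ on the nose, i.e.\ that the perturbation-by-$W$ that relates $A'$ to the uncurved endomorphism dga truly disappears upon derived restriction; once this is granted, the numerical match between codimension and degree range is what makes the whole strategy work.
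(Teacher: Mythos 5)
Your proof is correct and takes essentially the same route as the paper: establish that $A'$ is supported on $\widetilde{\Pf_s}$ of codimension $\binom{2q+1}{2}$ via Lemma \ref{lem:AandA'areGenericallyIsomorphic}, bound the homological degrees at the origin using the computation behind Lemma \ref{lem:DegreesOfA'}, and conclude with Lemma \ref{lem:CohenMacaulayCriterion}. The only difference is that you spell out the identification $A'|_0 \simeq E = \End_B\bigl(\bigoplus_\delta M_\delta\bigr)$ (the perturbation by $W$ lies in the ideal of the origin and the underlying module is free over $\cO_{\wedge^2 V}$), a step the paper leaves implicit when it cites Lemma \ref{lem:DegreesOfA'}.
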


\begin{proof}
  The algebra $A$ is, by construction, supported on the locus $\widetilde{\Pf_s}\subset \Wedge^2 V$.  The codimension of this locus is ${v-2s\choose 2}={2q+1\choose 2}$. By Lemma \ref{lem:AandA'areGenericallyIsomorphic} the dga $A'$ is supported on this same locus, since the complement of $\OS$ is contained in $\widetilde{\Pf_s}$. Lemma \ref{lem:DegreesOfA'} implies that $\cA'|_0$ has homology concentrated in $[-{2q+1\choose 2}, 0]$, and so Lemma \ref{lem:CohenMacaulayCriterion} applies.
\end{proof}

So we may  replace $A'$ with its homology, and declare that $A'$ is a Cohen--Macaulay algebra. 

Finally we want to use the Cohen--Macaulay property to deduce that the isomorphism $A'|_\OS\cong A|_\OS$ extends to the whole of $\Wedge^2 V$. This is essentially standard.

\begin{comment}Recall that for a scheme $X$ with an irreducible closed subset $Z$, the codimension of $Z$ in $X$, written $\codim(Z,X)$, is the maximal length $l$ of a chain of irreducible closed subsets $Z = Z_{0} \subsetneq Z_{1} \subsetneq \cdots \subsetneq Z_{l} \subset X$.
  This is also the Krull dimension of the local ring $\cO_{X,z}$, where $z$ is the generic point of $Z$.
  If $Z$ is closed, but not irreducible, we let $\text{Irr}(Z)$ be its set of irreducible components, and let $\codim(Z, X) = \min_{Z' \in \text{Irr}(Z)} \codim(Z',X)$.
\end{comment}

\begin{lem}
  Let $\cE$ be a Cohen--Macaulay sheaf on a regular variety $X$ and let $Z \subset \supp(\cE)$ be a closed subset of codimension at least 2.   Let $j \colon (X \setminus Z) \into X$ be the inclusion.
  Then $\cE = R^{0}j_{*}(j^{*}\cE)$.
\end{lem}
\begin{proof}
  Let $x \in Z$ be a not-necessarily closed point, and let $\cE_{x}$ be the restriction of $\cE$ to the local ring $\cO_{X,x}$.
  Then as $\cE$ is Cohen--Macaulay at $x$ we have
  \[
    \depth (\cE_{x}) = \dim \supp(\cE_{x}) = \codim(\overline{x}, \supp(\cE)) \ge \codim(Z, \supp(\cE)) \ge 2.
  \]
  Applying \cite[Prop. 3.7]{hartshorne_local} then shows that $\depth_{Z}(\cE) \ge 2$, which by \cite[Thm 3.8]{hartshorne_local} shows $\cH^{1}_{Z}(\cE) = 0$, and so by \cite[Cor. 1.9]{hartshorne_local} we get that $\cE = R^{0}j_{*}(j^{*}\cE)$.
\end{proof}

\begin{cor}\label{cor:extendingIsomorphismCodimension2}
  Let $\cE$ and $\cF$ be Cohen--Macaulay modules on a regular variety $X$, and let $\phi \colon \cE \to \cF$ be a homomorphism which is an isomorphism away from a locus $Z$ such that:
  \[
    \codim(Z, \supp(\cE))\ge 2\aand  \codim(Z, \supp(\cF)) \ge 2
  \]
  Then $\phi$ is an isomorphism.
\end{cor}

\begin{comment}
  \begin{lem}

    If $\cE$ and $\cF$ are Cohen--Macaulay modules on a regular variety $X$ with $\supp(\cE) = \supp(\cF) = Z$ for an irreducible $Z \subset X$, and $\phi: \cE \to \cF$ is a homomorphism which is an isomorphism away from a locus $Z^{\pr} \subset Z$ of codimension 2, then $\phi$ is an isomorphism.
  \end{lem}
  \begin{proof}
    Let $\cK = \ker(\phi)$ and $\cC = \cok(\phi)$.
    If $\cK \not=0$, then for any $x \in \supp(\cK)$, we would have $\depth{\cE_{x}} \le \dim \supp \cK_{x} \le \dim Z^{\pr}$.
    This contradicts the CM-ness of $\cE$, hence $\cK = 0$. It follows that $\pd(\cC) \le \max(\pd(\cF) + 1, \pd(\cE)) = c+1$, where $c$ is the codimension of $Z$ in $X$.
    On the other hand, if $\cC \not= 0$, then 
    \[
      \pd(\cC) \ge \dim X - \max_{x \in X}\depth(\cC_{x}) \ge \codim_{X} \supp(\cC) \ge c + 2,
    \]
    using the Auslander--Buchsbaum formula. Hence we must have $\cC = 0$, and $\phi$ must be an isomorphism.
  \end{proof}
\end{comment}

Setting $\cE=A'$ and $\cF=A$ and $\phi = \Phi^\dagger$, and noting that the complement of $\OS$ is $\widetilde{\Pf}_{s-1}$ which has codimension $2(v-2s)+1$ in $\widetilde{\Pf_s}$, we immediately obtain:

\begin{prop}\label{prop:AandA'areEquivalent} We have an isomorphism:
  $$\Phi^\dagger: A'  \isoto A $$
\end{prop}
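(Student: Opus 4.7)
The plan is to apply Lemma \ref{lem:extendingIsomorphismCodimension2} directly, with $X = \Wedge^2 V$, $Z = \widetilde{\Pf_s}$, $\cE = A'$, $\cF = A$, and $\phi = \Phi^\dagger$. All the hard work has already been done: the only remaining task is to verify that the hypotheses of the lemma are met, which is essentially bookkeeping.

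First I would note that $\Wedge^2 V$ is a smooth affine variety, that $A$ is Cohen--Macaulay by the \v{S}penko--Van den Bergh construction (it is a non-commutative crepant resolution, in particular a maximal Cohen--Macaulay module over its Gorenstein centre), and that $A'$ is Cohen--Macaulay by Proposition \ref{prop:A'isCM}. Both are supported on the irreducible variety $\widetilde{\Pf_s}$: for $A$ this is built into its definition, and for $A'$ it follows from Lemma \ref{lem:AandA'areGenericallyIsomorphic}, which shows that $A'|_U\simeq A|_U$, combined with the fact that the complement of $U$ already lies inside $\widetilde{\Pf_s}$ (so nothing outside $\widetilde{\Pf_s}$ can carry any homology).

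Next I would compute the codimension of the bad locus $\widetilde{\Pf_s}\setminus U = \widetilde{\Pf}_{s-1}$ inside $\widetilde{\Pf_s}$. Using the standard formulas, $\codim_{\wedge^2 V}\widetilde{\Pf_s}=\binom{2q+1}{2}$ while $\codim_{\wedge^2 V}\widetilde{\Pf_{s-1}}=\binom{2q+3}{2}$, so the complement of $U$ has codimension $2(v-2s)+1 = 4q+3 \ge 3$ inside $\widetilde{\Pf_s}$. This is strictly greater than the value $2$ required by Lemma \ref{lem:extendingIsomorphismCodimension2}.

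Finally, Lemma \ref{lem:AandA'areGenericallyIsomorphic} provides a morphism $\Phi^\dagger : A' \to A$ (defined globally because $\Phi^\dagger$ and the tilting bundle are defined globally, as in Section \ref{sec:adjoints}) which is an isomorphism over $U$. Applying Lemma \ref{lem:extendingIsomorphismCodimension2} gives that $\Phi^\dagger$ is an isomorphism on all of $\Wedge^2 V$. There is no real obstacle here: the substantive content is already in Proposition \ref{prop:A'isCM} (Cohen--Macaulayness of $A'$) and Theorem \ref{thm:genericequivalence} (the generic equivalence), and the present proposition is their formal consequence.
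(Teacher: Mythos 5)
Your proposal is correct and follows exactly the paper's own argument: the paper deduces the proposition by applying Lemma \ref{lem:extendingIsomorphismCodimension2} with $\cE=A'$, $\cF=A$, $\phi=\Phi^\dagger$, using Proposition \ref{prop:A'isCM} and Lemma \ref{lem:AandA'areGenericallyIsomorphic}, and the same observation that the complement of $U$ is $\widetilde{\Pf}_{s-1}$, of codimension $2(v-2s)+1\ge 2$ in $\widetilde{\Pf_s}$. Your codimension computation and the verification of the Cohen--Macaulay and support hypotheses match the paper's (largely implicit) bookkeeping.
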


This could perhaps be viewed as a form of Koszul duality between the algebra $A$ and the curved algebra $(B\otimes \cO_{\wedge^2 V}, W)$.

\subsubsection{Completing the proof}\label{sec:completingProof}

We can now complete the proof our `Hori duality' statement, Theorem \ref{thm:HoriDualityWithL}. The preceding proposition (Proposition \ref{prop:AandA'areEquivalent}) essentially proves the special case $L=0$.

\begin{prop}\label{prop:HoriDualityWithoutL}
  We have an equivalence
  $$ \Phi: \Br(\cX) \;\isoto\;  \Br(\cY\tms_{\C^*} \Wedge^2 V, \, W) $$
  of categories over $[\Wedge^2 V/\C^*]$. 
\end{prop}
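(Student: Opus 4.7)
The plan is to promote the algebra isomorphism of Proposition \ref{prop:AandA'areEquivalent} to an equivalence of categories via tilting. Set $T = \bigoplus_{\gamma \in Y_{s,q}} \Schpur{\gamma}S$ and $T' = \bigoplus_{\delta \in Y_{q,s}} \cP_{\delta}$. Lemma \ref{lem:BranesToModules} gives $\Br(\cX) \simeq D^{b}(A)$ via $\Hom(T,-)$; Lemma \ref{lem:imageofPdelta} and Proposition \ref{prop:AandA'areEquivalent} together say that $\Phi^{\dagger}$ carries the summands of $T'$ to (twists of) the summands of $T$ and that the induced map $A' = \End(T') \to A = \End(T)$ is an isomorphism. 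Running the tilting argument of Lemma \ref{lem:BranesToModules} with $T'$ in place of $T$ shows the thick subcategory $\cD \subseteq \Br(\cY \tms_{\C^{*}} \Wedge^{2}V, W)$ generated by the summands of $T'$ is equivalent to $D^{b}(A')$; composing these equivalences with $A' \cong A$ yields an equivalence $\Phi^{\dagger}|_{\cD}\colon \cD \isoto \Br(\cX)$. Since $\Phi$ is the left adjoint of $\Phi^\dagger$, it is therefore enough to prove $\cD = \Br(\cY \tms_{\C^{*}} \Wedge^{2}V, W)$.

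To do this, for any object $\cE$ of the ambient category I would form the triangle $\Phi \Phi^{\dagger} \cE \xrightarrow{\epsilon_\cE} \cE \to C_\cE$ coming from the counit and aim to show $C_\cE = 0$, from which it follows (using the triangle identity $\Phi^{\dagger}(\epsilon_\cE)\circ \eta_{\Phi^{\dagger}\cE} = \id$) that $\cE \cong \Phi \Phi^{\dagger}\cE$ lies in $\cD$. By Lemma \ref{lem:Pdeltaspan} it suffices to check $\Hom(C_\cE, \cP_{\delta}[n]) = 0$ for all $\delta \in Y_{q,s}$ and $n \in \Z$. Applying $\Hom(-, \cP_{\delta}[n])$ to the defining triangle and using the adjunction identity $\Hom(\Phi \Phi^{\dagger}\cE, \cP_{\delta}[n]) = \Hom(\Phi^{\dagger}\cE, \Phi^{\dagger}\cP_{\delta}[n])$, the vanishing reduces to the claim that the natural map
$$\Phi^{\dagger}\colon \Hom(\cE, \cP_{\delta}[n]) \To \Hom(\Phi^{\dagger}\cE, \Phi^{\dagger}\cP_{\delta}[n])$$
is an isomorphism; here one checks via the triangle identity that the map induced by $\epsilon_\cE$, after adjunction, is precisely this $\Phi^\dagger$ on Hom-spaces.

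For $\cE \in \cD$ the iso is immediate from the equivalence $\Phi^{\dagger}|_{\cD}$, so the main obstacle will be extending it to arbitrary $\cE$. The plan is to handle this with a Serre-duality argument of the type established in Proposition \ref{thm:SerreDuality2}: since $\cP_{\delta}$ is supported on the singular locus $0 \tms \Wedge^{2}V$ and the underlying algebra $A' \cong A$ is Cohen--Macaulay by Proposition \ref{prop:A'isCM}, one should obtain an identification $\Hom(\cE, \cP_{\delta}[n]) \cong \Hom(\cP_{\delta}, \cE \otimes L[m])^\vee$ for suitable $L, m$, and similarly on the $\Br(\cX)$ side; comparing the two identifications via adjunction and the equivalence $\Phi^{\dagger}|_{\cD}$ then yields the isomorphism. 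Once $\cD = \Br(\cY \tms_{\C^{*}} \Wedge^{2}V, W)$, the functor $\Phi^{\dagger}$ is an equivalence globally, and hence so is $\Phi$.
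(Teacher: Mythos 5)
Your steps (a)--(b) — tilting on $T'=\oplus_\delta \cP_\delta$, finite global dimension of $A'\cong A$, and the identification of the resulting equivalence $\cD\simeq\Br(\cX)$ with $\Phi^\dagger|_{\cD}$ — match the paper's argument. The gap is in your final step, where you must show $\Hom(C_\cE,\cP_\delta[n])=0$ for \emph{arbitrary} $\cE$, i.e.\ that $\Phi^\dagger$ induces an isomorphism $\Hom(\cE,\cP_\delta[n])\to\Hom(\Phi^\dagger\cE,\Phi^\dagger\cP_\delta[n])$. For $\cE\notin\cD$ this is essentially the full faithfulness statement you are trying to establish, and the Serre-duality identification you propose to prove it with is not available: Propositions \ref{thm:SerreDuality1} and \ref{thm:SerreDuality2} are statements about $\Br(\cX)$ and $\Br(\cY)$ (no superpotential, $L=0$) with one argument supported over the origin, so that the Hom-spaces are finite-dimensional; there is no analogous duality established for the curved category $\Br(\cY\tms_{\C^*}\Wedge^2 V,\,W)$, and producing its Serre functor (a relative duality over $\Wedge^2 V$) is tantamount to knowing this category is the derived category of a Cohen--Macaulay algebra — which is the content of the proposition. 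Moreover, even granting dualities on both sides, your "comparison via adjunction" needs $\Phi^\dagger$ to intertwine the two Serre functors, a property one normally only gets \emph{after} knowing the functor is an equivalence; as written the step is circular. Note also that the triangle identity only gives you $\Phi^\dagger(\epsilon_\cE)\circ\eta_{\Phi^\dagger\cE}=\id$; since invertibility of the unit $\eta$ is exactly full faithfulness of $\Phi$, it cannot be used at this stage.

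The repair is to run your orthogonality argument with the adjunction attached to $T'$ itself rather than with $(\Phi,\Phi^\dagger)$, which is what the paper does: the functor $\Hom(-,T')$ into $D^b(\rmod A')$ has adjoint $-\otimes_{A'}T'$, whose full faithfulness is tautological because $\End(T')=A'$ by definition, and whose image is $\cD$ once one knows $D^b(\rmod A')$ is generated by the projectives (finite global dimension of $A'\cong A$). Then Lemma \ref{lem:Pdeltaspan} says precisely that no nonzero object of $\Br(\cY\tms_{\C^*}\Wedge^2 V,\,W)$ is killed by $\Hom(-,T')$, and the standard argument of Lemma \ref{lem:BranesToModules} (via \cite[Thm.\ 3.3]{kuznetsov_homological_2007}) yields essential surjectivity, i.e.\ $\cD$ is the whole category — no Serre duality is needed. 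After that, your concluding step (the left adjoint $\Phi$ of the equivalence $\Phi^\dagger$ is its quasi-inverse) goes through.
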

\begin{proof}
  The algebra $A$ was defined as the endomorphism algebra $\Hom(T,T)_{gr}$ of a vector bundle. The bundle $T$ is self-dual (up to line bundles), so $A$ is isomorphic to $A^{op}$ (up to grading), and the functor $\Hom(-,T)_{gr}$ gives an equivalence between $\Br(\cX)^{op}$ and the derived category $D^b(\rmod A)$ of right $A$-modules. Recall that $\cP=\oplus_{\delta\in Y^{q,s}} \cP_\delta$, so the functor $\Phi^\dagger$ maps $\cP$ to $T$ and induces an isomorphism between $A'=\Hom(\cP, \cP)_{gr}$ and $A$ (Proposition \ref{prop:AandA'areEquivalent}). It follows that the category $D^b(\rmod A')$ has finite global dimension, so we have a well-defined functor
  $$\Hom( -, \cP)_{gr} : \Br(\cY\tms_{\C^*} \Wedge^2 V, \, W)^{op} \;\To\; D^b(\rmod A') $$
  with adjoint $\otimes_{A'} \cP$, and this is an equivalence by Lemma \ref{lem:Pdeltaspan}. The result follows.
\end{proof}

From here we prove the case of general $L$ with another application of Kn\"orrer periodicity.

Choose any $L\subset\Wedge^2 V^\vee$, and equip it with a weight 1 action of the group $\C^*=\GSp(S)/\Sp(S)=\GSp(Q)/\Sp(Q)$ and a weight 2 R-charge (as in Section \ref{sec:statement}). Form the stacks $\cX\tms_{\C^*} L$ and:
$$\cY\tms_{\C^*} \Wedge^2 V \tms_{\C^*} L = \Stack{\Hom(V, Q)\tms \Wedge^2 V\tms L }{\GSp(Q)} $$
Equip the former with the zero superpotential, and the latter with the superpotential $W$ (pulled-up from $\cY\tms_{\C^*} \Wedge^2 V$), then they are both Landau--Ginzburg B-models.  We can define a category $\Br(\cX\tms_{\C^*} L)$ by our usual grade-restriction-rule at the origin, this is generated by vector bundles and agrees with the triangulated closure of the pull-up of $\Br(\cX)$. On $\cY\tms_{\C^*} \Wedge^2 V \tms_{\C^*} L$ it doesn't make sense to apply a grade-restriction rule at the origin because the hypothesis of Lemma \ref{lem:minimalmodels} doesn't hold, so instead we define
$$\Br(\cY\tms_{\C^*} \Wedge^2 V \tms_{\C^*} L,\, W) $$
as the triangulated closure of the pull-up of $\Br(\cY\tms_{\C^*} \Wedge^2 V,\, W)$ (or as the matrix factorizations on our usual set of vector bundles). Then Proposition \ref{prop:HoriDualityWithoutL} immediately implies that we have an equivalence
$$\Phi: \Br(\cX\tms_{\C^*} L)\; \isoto\; \Br(\cY\tms_{\C^*} \Wedge^2 V \tms_{\C^*} L,\, W)$$
of categories relative to the base $[\Wedge^2 V\tms L\; /\;\C^*]$. This base carries a canonical quadratic superpotential which we'll call $\dualW$, if we pull this up to $\cX\tms_{\C^*} L$ it becomes the $\dualW$ already defined. If we pull it up to the other side we can add it on to the existing $W$, getting a `perturbed' superpotential $W+W'$. It's immediate that $\Phi$ induces a functor between the categories with this extra $W'$ added in, we claim that in fact:

\begin{lem}\label{lem:PhiPerturbed}
  The functor
  $$\Phi:  \Br(\cX\tms_{\C^*} L,\, \dualW)\; \To\; \Br(\cY\tms_{\C^*} \Wedge^2 V \tms_{\C^*} L,\, W+\dualW)$$
  is an equivalence.
\end{lem}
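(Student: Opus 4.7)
The functor $\Phi$ (without the perturbation $\dualW$) is implemented by the Fourier--Mukai kernel $\psi^*\cK$, which, after crossing with $\cO_L$, lives naturally on the relative product of $(\cX\tms_{\C^*} L)$ and $(\cY\tms_{\C^*}\Wedge^2 V \tms_{\C^*} L)$ over the base $\cB := [\Wedge^2 V \tms L/\C^*]$. In particular $\Phi$ (and its adjoint $\Phi^\dagger$) is $\cB$-linear. Since $\dualW$ is the pullback of the tautological pairing function on $\cB$, its two pullbacks to the relative product coincide, so the curvature constraint on the kernel is unaltered when we add $\dualW$ to the source potential (presently $0$) and to the target potential (presently $W$). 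Consequently the same kernel defines a functor
\[
\Phi: D^b(\cX\tms_{\C^*} L,\,\dualW) \To D^b(\cY\tms_{\C^*}\Wedge^2 V\tms_{\C^*} L,\, W+\dualW).
\]

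I would next check that this $\Phi$ still takes $\Br$ to $\Br$. This is the same argument as Corollary \ref{cor:PhitoBr}: by Corollary \ref{cor:resolutionofK}, $\psi^*\cK$ has a matrix-factorisation model with underlying bundle built from $\Schpur\gamma S \otimes \Schpur{\gamma^{c\top}}Q$ with $\gamma \in Y_{s,q}$, so tensoring with any source $\Br$-object and pushing forward along $\pi_2$ produces a matrix factorisation built from the bundles $\Schpur\delta Q$, $\delta \in Y_{q,s}$. By the same reasoning the adjoint $\Phi^\dagger$ of Proposition \ref{prop:Phidagger} extends $\cB$-linearly to the curved setting and lands in the source $\Br$; properness issues are handled exactly as before, via the algebraic model of Lemma \ref{lem:BranesToModules2} applied with the potential $W + \dualW$ in place of $W$.

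To conclude that the extended $\Phi$ is an equivalence, I would argue that the unit $\id \to \Phi^\dagger\Phi$ and counit $\Phi\Phi^\dagger \to \id$ carry over. Both are natural transformations given by morphisms between compositions of FM kernels over $\cB$. Such kernel compositions (and morphisms between them) are computed entirely over $\cB$, and are therefore unchanged by adding the base-pulled-back potential $\dualW$ to the source and target. Proposition \ref{prop:HoriDualityWithoutL} (combined with the argument at the start of Lemma \ref{lem:PhiPerturbed} that tensors it with $\cO_L$) shows that in the unperturbed setting these kernel morphisms are isomorphisms; the same morphisms, now regarded as morphisms in the curved categories, are therefore also isomorphisms, so $\Phi$ is an equivalence. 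The main conceptual point to verify carefully is the unaltered-curvature claim above, which is the standard mechanism by which Fourier--Mukai functors linear over a base extend to the curved setting when the potential is pulled back from that base; once that is in hand everything else is formal.
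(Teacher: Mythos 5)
The setup of your argument is fine and matches the paper: since $\dualW$ is pulled back from the base $\cB=[\Wedge^2 V\tms L/\C^*]$ and the kernel lives on the fibre product over $\cB$, the two pullbacks of $\dualW$ cancel, so the same kernel defines a functor between the curved categories, and the argument of Corollary \ref{cor:PhitoBr} shows it preserves the $\Br$ subcategories. The gap is in your final step, where you deduce the perturbed equivalence from the unperturbed one. You claim that Proposition \ref{prop:HoriDualityWithoutL} shows the unit and counit "kernel morphisms" are isomorphisms, and that therefore the induced transformations in the curved setting are isomorphisms too. But Proposition \ref{prop:HoriDualityWithoutL} only says the functor is an equivalence \emph{between the $\Br$ subcategories}; it does not say the unit/counit are isomorphisms at the level of kernels on $\cX\tms_{\C^*}\cX$ (resp.\ the $\cY$ side). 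Indeed they almost certainly are not: the composite kernels do not act as the identity on objects outside $\Br$ (e.g.\ bundles $\Schpur{\gamma}S$ with $\gamma\notin Y_{s,q}$), and a kernel morphism whose cone merely annihilates a proper subcategory need not be an isomorphism. Moreover, the adjunction $\Phi\dashv\Phi^\dagger$ is not even constructed kernel-theoretically in the paper: Lemma \ref{lem:pi2adjunction} and Proposition \ref{prop:Phidagger} establish it only as an adjunction between the $\Br$ subcategories, with a coherence argument for $(\pi_1)_*$ that is specific to $\Br$, so the premise that the unit/counit are "morphisms of FM kernels over $\cB$" would itself need proof.

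Even if one grants kernel-level unit/counit maps, knowing they induce isomorphisms on uncurved objects does not let you "regard the same morphisms" as isomorphisms on curved objects: a curved object $(E,d_E)$ is a genuinely new object, and the component of the natural transformation at it must be checked afresh. This is exactly the point the paper's ad hoc proof addresses: introduce the auxiliary $\C^*$ rescaling the $L$ directions, split $d_E=(d_E)_0+(d_E)_{>0}$ so that $(E,(d_E)_0)$ is an uncurved (i.e.\ $\dualW$-free) brane with $\Phi\widehat{E}=\widehat{\Phi E}$, and then compare $\Hom(E_1,E_2)\to\Hom(\Phi E_1,\Phi E_2)$ via the spectral sequence of this extra grading, whose first page is the unperturbed map; full faithfulness of both $\Phi$ and $\Phi^\dagger$ then gives the equivalence. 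Some filtration/deformation argument of this kind (or an honest kernel-level adjunction with unit/counit cones killed on the relevant objects) is needed to bridge the uncurved-to-curved step; as written, your proof asserts the bridge rather than building it.
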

Morally the reason this is true is that $W'$ defines an (unobstructed) class in Hochschild cohomology for both sides, and we are deforming both categories along this class. Since the original categories are equivalent, the deformed categories must also be equivalent. Since we are ignorant of the necessary foundations to state this argument precisely we present an ad-hoc proof instead.
\begin{proof}
  Suppose we have objects  
  $$(E, d_E)\in\Br(\cX\tms_{\C^*} L,\, \dualW)\aand (F, d_F)\in \Br(\cY\tms_{\C^*} \Wedge^2 V \tms_{\C^*} L,\, W+\dualW)$$
  where $E$ and $F$ are (possibly infinite) direct sums of the usual vector bundles. Let's introduce an additional $\C^*$ action coming from rescaling just the $L$ directions. Since $E$ and $F$ are non-equivariantly trivial we can canonically lift this extra action to them, using the trivial action. Then we can decompose $d_E\in \End(E)$ and $d_F\in \End(F)$ with respect to this extra grading as
  $$d_E = (d_E)_0 + (d_E)_{>0} \aand  d_F = (d_F)_0 + (d_F)_{>0}$$
  (they cannot have negative terms), and it follows that $((d_E)_0)^2=0$ and $((d_F)_0)^{2} = W\id_F$. So the pair $(E, (d_E)_0)$ defines an object in $\Br(\cX)$, and the pair $(F, (d_F)_0)$ defines an object in $\Br(\cY\tms_{\C^*} \Wedge^2 V \tms_{\C^*} L,\, W)$; we'll denote these objects by $\widehat{E}$ and $\widehat{F}$. Moreover, it's evident that
  $$\Phi \widehat{E}  = \widehat{\Phi E } \aand \Phi^{\dagger} \widehat{F}=\widehat{\Phi^\dagger F} $$
  since the kernel is trivial in the $L$ directions.

  Now take two objects $E_1, E_2\in \Br(\cX\tms_{\C^*} L,\, \dualW)$. We have a chain map:
  \beq{eq:PhiSS}\Phi: \Hom(E_1, E_2) \To \Hom(\Phi E_1, \Phi E_2)\eeq
  These are complexes of modules over the ring of functions on $\Wedge^2 V\times L$, but we will instead view them as double-complexes of vector spaces, using our extra grading. If we look at the first pages of the associated spectral sequences we see the map
  $$\Phi: \Hom(\widehat{E_1}, \widehat{E_2}) \To \Hom(\Phi \widehat{E_1}, \Phi \widehat{E_2}) $$
  and this is a quasi-isomorphism. Therefore \eqref{eq:PhiSS} is also a quasi-isomorphism, so $\Phi$ is fully-faithful. A similar argument shows that the adjoint $\Phi^\dagger$ is also fully-faithful, hence $\Phi$ is an equivalence.
\end{proof}

Since here $\dualW$ is just the pairing between $L$ and $\Wedge^2 V/L^\perp$, Kn\"orrer periodicity gives an equivalence:
$$D^b(\cY\tms_{\C^*} \Wedge^2 V \tms_{\C^*} L,\, W+\dualW) \isoto D^b(\cY\tms_{\C^*} L^\perp, W) $$
This can  be chosen to be linear over $[\Wedge^2 V/\C^*]$; indeed the relative product over this base is $\cY\tms_{\C^*} L^\perp\tms_{\C^*} L $ which is a maximal isotropic subbundle for $\dualW$. 

\begin{lem}\label{lem:KPandBranes} Kn\"orrer periodicity induces an equivalence
  $$\Br(\cY\tms_{\C^*} \Wedge^2 V \tms_{\C^*} L,\, W+\dualW) \isoto \Br(\cY\tms_{\C^*} L^\perp, W)  $$
  of categories over $[\Wedge^2 V/\C^*]$.
\end{lem}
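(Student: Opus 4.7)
The plan is to invoke the standard Knörrer periodicity equivalence on the full matrix-factorisation categories $D^b$, and then check that it restricts to the $\Br$ subcategories. This reduces to tracking what the equivalence and its inverse do to generators.

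I would first identify the geometric form of the Knörrer kernel. The potential $\dualW(a,l) = l(a)$, viewed as a quadratic form on the bundle $\wedge^2 V \oplus L$, has radical $L^\perp \oplus 0$ and descends to a non-degenerate hyperbolic form on $(\wedge^2 V / L^\perp) \oplus L$. Choosing a splitting $\wedge^2 V \cong L^\perp \oplus M$, the inclusion
$$\iota \colon \cY \tms_{\C^*} L^\perp \tms_{\C^*} L \hookrightarrow \cY \tms_{\C^*} \wedge^2 V \tms_{\C^*} L$$
identifies its source with a maximal isotropic subbundle for $\dualW$. Knörrer periodicity on the full $D^b$'s is then realised by the pair of mutually inverse functors $\Psi = \iota_* \pi^*$ and (up to a canonical shift and $\GSp(Q)$-character twist) $\Psi^{-1} = \pi_* \iota^*$, where $\pi \colon \cY \tms_{\C^*} L^\perp \tms_{\C^*} L \to \cY \tms_{\C^*} L^\perp$ is the projection. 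Both functors are visibly linear over $[\wedge^2 V / \C^*]$, since the kernel $\iota_*\cO$ is defined over this base.

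Next I would verify that the equivalence restricts. By the description given just before the lemma, $\Br(\cY \tms_{\C^*} \wedge^2 V \tms_{\C^*} L, W+\dualW)$ is generated by matrix factorisations whose underlying vector bundles are direct sums of $\Schpur{\delta}Q$ (for $\delta \in Y_{q,s}$, up to $\GSp(Q)$-character twists), and by Lemma \ref{lem:minimalmodels} the same description holds for $\Br(\cY \tms_{\C^*} L^\perp, W)$. Starting with such a generator on the $L^\perp$-side, one computes
$$\Psi(\Schpur{\delta}Q) \simeq \Schpur{\delta}Q \otimes \wedge^\bullet M^\vee$$
by resolving $\iota_*\cO$ via the Koszul complex on the conormal bundle $N_\iota^\vee \cong M^\vee$, which carries trivial $\Sp(Q)$-action. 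The underlying bundle is then a direct sum of $\Schpur{\delta}Q$'s, so the image lies in $\Br$. Conversely, $\Psi^{-1}$ applied to a generator $\Schpur{\delta}Q$ on the ambient side restricts it along $\iota$ to the maximal isotropic (still a $\Schpur{\delta}Q$-bundle) and then collapses the $L$-directions via the Knörrer identification for the non-degenerate part of $\dualW$; the $\Sp(Q)$-content is preserved throughout, so the result lies in $\Br(\cY \tms_{\C^*} L^\perp, W)$.

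Since both $\Psi$ and $\Psi^{-1}$ send the respective $\Br$-subcategories into each other, the restriction of $\Psi^{-1}$ is the desired equivalence over $[\wedge^2 V/\C^*]$. The main technical obstacle is the bookkeeping of R-charges and $\GSp(Q)$-character shifts introduced by the Koszul resolution of $\iota_*\cO$ and by the inverse functor. However, since the conormal bundle $M^\vee$ is trivial as an $\Sp(Q)$-representation, none of these shifts alter the $\Sp(Q)$-content on which the grade restriction is imposed, so this verification is routine.
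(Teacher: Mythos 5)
Your proposal is correct and matches the paper's own argument: both directions of Kn\"orrer periodicity are implemented through the maximal isotropic subbundle $\cY\tms_{\C^*}L^\perp\tms_{\C^*}L$ (restrict-and-push-down one way, pull-up-and-tensor with the Koszul-type factorization of $\iota_*\cO$ the other), and the $\Br$ conditions are preserved precisely because $L$ (equivalently the conormal bundle) is a trivial $\Sp(Q)$-representation, so no new $\Sp(Q)$-weights appear. The paper's proof is the same check, phrased with the Koszul matrix factorization in place of your $\iota_*\pi^*$/$\pi_*\iota^*$ bookkeeping.
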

\begin{proof}
  An object $\cE\in \Br(\cY\tms_{\C^*} \Wedge^2 V \tms_{\C^*} L,\, W+\dualW)$ can be represented as a matrix factorization built from the infinite set of vector bundles corresponding to $Y_{q,s}$. If we restrict to $\cY\tms_{\C^*} L^\perp\tms_{\C^*} L$ and push down we get an (infinite-rank) matrix factorization which satisfies the grade-restriction rule at the origin, so it lies in $\Br(\cY\tms_{\C^*} L^\perp, W)$.

  Going in the other direction, choose a splitting $\Wedge^2 V=L^\perp\oplus L^\vee$, and correspondingly write $W=W_1+W_2$. Then we have a pull-up functor:
  $$ D^b(\cY\tms_{\C^*} L^\perp, W) \To D^b(\cY\tms_{\C^*} \Wedge^2 V \tms_{\C^*} L,\, W_1)$$
  The sky-scraper sheaf along  $\cY\tms_{\C^*} L^\perp\tms_{\C^*} L $ can be viewed as a curved dg-sheaf for the superpotential $W_2 + \dualW$, and it's equivalent to a Koszul-type matrix factorization whose underlying vector bundle is the exterior algebra on $L$. The inverse to our Kn\"orrer periodicity functor can be described as `pull up, then tensor with this matrix factorization'. Since $L$ is trivial as an $\Sp(Q)$-representation this functor sends objects in $\Br(\cY\tms_{\C^*} L^\perp, W)$ to objects in $\Br(\cY\tms_{\C^*} \Wedge^2 V \tms_{\C^*} L,\, W+\dualW)$.
\end{proof}

This completes the proof of Theorem \ref{thm:HoriDualityWithL}.

\section{The projective duality}
\label{sec:Projective}

Recall (from Section \ref{sec:Br(X)}) that we have  subcategories
$$\Br(\cX^{ss})\subset D^b(\cX^{ss}) \aand \Br(\cY^{ss})\subset D^b(\cY^{ss})$$
defined as the triangulated subcategories generated by the images of $\Br(\cX)$ and $\Br(\cY)$; or equivalently as the subcategories generated by the vector bundles corresponding to the sets $Y_{s,q}$ and $Y_{q,s}$. In this section we examine these categories in more detail.

In Section \ref{sec:NCRslices} we show that various natural notions of base change for these categories from $\cX^{\ss}$ to a slice $\cX^{\ss}|_{\P L^{\perp}}$ agree, under a genericity assumption.

In Section \ref{sec:Windows} we find explicit descriptions of the categories $\Br( \cY\tms_{\C^{*}} (L^{\perp} \setminus 0) ,\, W)$ and $\Br( \cY^{\ss} \tms_{\C^*}L^{\perp}  , W)$ as `window' subcategories of $\Br(\cY \tms_{\C^{*}}  L^{\perp},W)$. The first case  is handled by the general results of \cite{halpern-leistner_derived_2015, ballard_variation_2012}, while the second case requires more work.

Finally, Section \ref{sec:HPD} shows how these window results together with Hori duality imply that $\Br(\cX^{\ss})$ is HP dual to $\Br(\cY^{\ss})$.

\subsection{Slicing the non-commutative resolution}
\label{sec:NCRslices}
We saw in Section \ref{sec:Br(X)} that $\Br(\cX)$ is equivalent to the derived category of the graded algebra $A$, which is a ($\C^*$-equivariant) non-commutative resolution of the cone $\widetilde{\Pf}_s$.  If we restrict to the complement of the origin then $A$ becomes a sheaf of algebras on the projective variety $\Pf_s$, and by Cor.\ \ref{thm:BranesToModulesBaseChanged} we have an equivalence
$$\Br(\cX^{ss}) \isoto D^b(\Pf_s, A). $$
So the category $\Br(\cX^{ss})$ is a non-commutative resolution of $\Pf_s$. 

If we pick a subspace $L\subset \Wedge^2 V$, then we have our category $\Br(\cX\tms_{\C^*} L, \, \dualW)$, which after deleting the fibre over the origin in $\Wedge^{2} V$ gives a category:
$$\Br(\cX^{ss}\tms_{\C^*} L, \, \dualW)$$
In this section we'll show how this relates to a non-commutative resolution of the slice $\Pf_s\cap \P L^\perp$.

Let's delete all the singularities in $\widetilde{\Pf}_s$ (\emph{i.e.}~restrict to the open set $\OS\cap \widetilde{\Pf}_s$), so that $\cX$ becomes equivalent to the quasi-projective variety $\Pf_s^{\sm}$. Here the subcategory $\Br(\cX)$ becomes the whole of the derived category $D^b(\Pf_s^{\sm})$, since $A$ is a trivial Azumaya algebra on this subset.
The stack $\cX\tms_{\C^*} L$ becomes the total space of the vector bundle $L(-1)$ over  $\Pf_s^{\sm}$. The embedding $\Pf_s^{\sm}\into \P(\Wedge^2 V)$ gives a canonical section of the dual bundle $L^\vee(1)$, and the superpotential $\dualW$ is just the pairing of this section with the fibre co-ordinate. If we assume that $L$ is generic, then the section is transverse, and the critical locus of $\dualW$ is the zero locus of the section, which is the slice $\Pf_s\cap \P L^\perp$. This is a standard setup for `global Kn\"orrer periodicity', and we have an equivalence
$$ D^b(\Pf_s^{\sm}\cap \P L^\perp) \isoto D^b(\mathrm{Tot}(L(-1)), \,\dualW) $$
(see for example \cite{shipman_geometric_2012, hirano_derived_2016} -- note that the R-charge is acting fibre-wise on this vector bundle as required). We want to extend this fact over the singular locus in $\Pf_s$. 

The fibre product $\cX|_{L^\perp}$ is a quotient of a cone inside $\Hom(S, V)$, and if we intersect this with $\cX^{ss}$ we get a stack $\cX^{ss}|_{\P L^\perp}$ mapping to the singular variety $\Pf_s\cap \P L^\perp$. 

We define $\Br(\cX^{ss}|_{\P L^\perp})$ to be the subcategory of $D^b(\cX^{ss}|_{\P L^\perp})$ consisting of those objects which land in $\Br(\cX^{ss})$ under the pushforward functor.

\begin{lem}\label{lem:KPwithBranes} Assume that $\cX^{ss}|_{\P L^\perp}$ has the expected dimension. Then we have an equivalence
  \[
    D^b(\cX^{ss}|_{\P L^\perp}) \isoto   D^b( \cX^{ss} \tms_{\C^*} L,\, \dualW)
  \]
  inducing an equivalence:
  \[
    \Br(\cX^{ss}|_{\P L^\perp}) \isoto   \Br( \cX^{ss} \tms_{\C^*} L,\, \dualW)
  \]
\end{lem}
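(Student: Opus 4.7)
The plan is to realize the equivalence as global Kn\"orrer periodicity (in the form of Shipman \cite{shipman_geometric_2012} and Hirano \cite{hirano_derived_2016}) and then verify compatibility with the $\Br$ subcategories. The setup is that $\cX^{\ss}\tms_{\C^*}L$ is the total space of the bundle $L(-1)$ on $\cX^{\ss}$, and $\dualW$ pairs the fibre coordinate with the natural section $\sigma \in H^0(L^\vee(1))$ obtained from the tautological map $\cX^{\ss}\to\P(\Wedge^2 V)$, $x\mapsto [\wedge^2 x(\beta_S)]$, composed with the projection $(\Wedge^2 V)^\vee\twoheadrightarrow L^\vee$. By construction the zero locus $Z(\sigma)$ is $\cX^{\ss}|_{\P L^\perp}$, and the expected-dimension hypothesis together with the stacky smoothness of $\cX^{\ss}$ forces $\sigma$ to be a regular section. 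The R-charge acts with weight $2$ on the fibres of $L(-1)$, as required.

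With this setup the global Kn\"orrer theorem gives an equivalence
\[
\Psi : D^b(\cX^{\ss}|_{\P L^\perp}) \isoto D^b(\cX^{\ss}\tms_{\C^*}L,\,\dualW),
\]
realized as $\Psi(F) = \widetilde\iota_* F$, where $\widetilde\iota = s\circ\iota$ is the composition of the inclusion $\iota : \cX^{\ss}|_{\P L^\perp}\into \cX^{\ss}$ with the zero section $s: \cX^{\ss}\into \cX^{\ss}\tms_{\C^*}L$. This handles the first claimed equivalence.

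For the restriction to $\Br$, the forward direction is explicit: given $F\in\Br(\cX^{\ss}|_{\P L^\perp})$, so $G := \iota_*F \in \Br(\cX^{\ss})$, the object $\Psi(F) = s_*G$ is equivalent to a Koszul-type matrix factorisation whose underlying bundle is $\bigoplus_k p^*(G\otimes\wedge^k L^\vee(1))$. Since $\Br(\cX^{\ss})$ is closed under twists by powers of $\langle\omega_S\rangle$ and $L$ is trivial as an $\Sp(S)$-representation, each summand lies in $\Br(\cX^{\ss})$, so $\Psi(F)\in\Br(\cX^{\ss}\tms_{\C^*}L,\,\dualW)$. For the reverse direction, I would combine the tilting equivalence of Corollary \ref{thm:BranesToModulesBaseChanged} with the non-commutative Bertini theorem \cite{RSVdB} to obtain $\Br(\cX^{\ss}|_{\P L^\perp}) \cong D^b(\Pf_s\cap \P L^\perp,\,A|_{\P L^\perp})$, and the base-changed form of Lemma \ref{lem:BranesToModules2} to obtain $\Br(\cX^{\ss}\tms_{\C^*}L,\,\dualW) \cong D^b(\Pf_s, A|_{\Pf_s}\otimes\cO_L,\,\dualW)$ (a category of curved dg-modules). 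Under these identifications, $\Psi$ corresponds to a non-commutative global Kn\"orrer equivalence for the sheaf of curved algebras $(A|_{\Pf_s}\otimes\cO_L,\,\dualW)$ on $\Pf_s$, which is an equivalence by the algebraic version of the same theorem, using the maximal Cohen-Macaulay property of $A$.

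The main technical obstacle is establishing the tilting-plus-Bertini chain over the singular variety $\Pf_s$: showing that $T|_{\cX^{\ss}|_{\P L^\perp}}$ is still a tilting bundle giving $\Br(\cX^{\ss}|_{\P L^\perp}) \cong D^b(\Pf_s\cap\P L^\perp, A|_{\P L^\perp})$, and that the algebraic Kn\"orrer argument goes through for the non-commutative curved algebra $A|_{\Pf_s}\otimes\cO_L$. Both rest on the expected-dimension hypothesis, via the non-commutative Bertini theorem and control of $\Ext$-groups across the singular locus of $\Pf_s\cap\P L^\perp$.
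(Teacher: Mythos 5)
Your identification of the Kn\"orrer functor is wrong, and this matters. The functor $F\mapsto (s\circ\iota)_*F$, pushforward along the zero section over the zero locus, is not the equivalence of Shipman--Hirano and is not even fully faithful: in the model case $W=xt$ on $\A^2$ your formula sends $\C_{pt}$ to the skyscraper sheaf $\cO_0$ at the origin, whose endomorphism algebra in the matrix factorisation category is $\Wedge^\bullet(\C^2)$ rather than $\C$, whereas the genuine Kn\"orrer image of $\C_{pt}$ is $\cO_{\{x=0\}}$. The correct functor is pull-back along $\Tot(L(-1))|_{\cX^{ss}|_{\P L^\perp}}\to \cX^{ss}|_{\P L^\perp}$ followed by pushforward; the kernel is the structure sheaf of the whole restricted bundle (on which $\dualW$ vanishes identically), equivalently a Koszul-type matrix factorisation, exactly as on the $\cY$-side in Lemma \ref{lem:KPandBranes}. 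Since your forward inclusion $\Psi(\Br(\cX^{ss}|_{\P L^\perp}))\subset\Br(\cX^{ss}\tms_{\C^*}L,\dualW)$ is computed from the zero-section formula, it does not establish the claim as written; and if you repair the formula, the analogous explicit computation would require a finite resolution of $F$ itself by the allowed bundles on the generally singular stack $\cX^{ss}|_{\P L^\perp}$, which is not automatic.

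The reverse inclusion is a genuine gap: you defer it to a ``non-commutative global Kn\"orrer periodicity'' for the sheaf of curved algebras $(A|_{\Pf_s}\otimes\cO_L,\dualW)$, which is not available in the sources you cite and which you yourself flag as unresolved; moreover the input $\Br(\cX^{ss}|_{\P L^\perp})\cong D^b(\Pf_s\cap\P L^\perp, A|_{\P L^\perp})$ is precisely Lemma \ref{thm:BaseChangeForModuleCategoriesIsBaseChangeFactorisationCategories}, i.e.\ at least as hard as the statement you are proving (and the non-commutative Bertini theorem plays no role in that equivalence -- it is only needed to know that $A|_{\P L^\perp}$ is a resolution). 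None of this machinery is needed. With the correct kernel, a Tor-independent base-change computation shows that restricting $\Psi(F)$ to the zero section $\cX^{ss}\subset\cX^{ss}\tms_{\C^*}L$ recovers exactly the pushforward of $F$ to $\cX^{ss}$; since $\Br(\cX^{ss}|_{\P L^\perp})$ is \emph{defined} as the preimage of $\Br(\cX^{ss})$ under pushforward, while membership of $\Br(\cX^{ss}\tms_{\C^*}L,\dualW)$ is likewise controlled by the restriction to the zero section, the two brane subcategories correspond under $\Psi$ with no further work -- this one-line compatibility is how the paper argues, and it bypasses both the explicit Koszul resolution and any curved-algebra Kn\"orrer statement.
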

\begin{proof}
  This is very similar to Lemma \ref{lem:KPandBranes} but we can be a bit slicker in this case. Under Kn\"orrer periodicity, the push-forward functor from  $D^b(\cX^{ss}|_{\P L^\perp})$ to $D^b(\cX^{ss})$ corresponds to the restriction functor from $ D^b( \cX^{ss} \tms_{\C^*} L,\, \dualW)$ to $D^b(\cX^{ss})$. It follows immediately that the brane subcategories are mapped to each other.
\end{proof}

We can also restrict the algebra $A$ to the subspace $L^\perp$, giving an algebra defined over $\widetilde{\Pf_s}\cap L^\perp$. If we further delete the origin, we get a sheaf of algebras $A|_{\P L^\perp}$ on the projective variety $\Pf_s\cap \P L^\perp$. The non-commutative analogue of the Bertini theorem \cite{RSVdB} ensures that for generic $L^\perp$ this sheaf of algebras is a non-commutative resolution.

\begin{lem}
  \label{thm:BaseChangeForModuleCategoriesIsBaseChangeFactorisationCategories}
  Assume that $\cX^{ss}|_{\P L^\perp}$ has the expected dimension. Then we have an equivalence:
  $$\Br(\cX^{ss}|_{\P L^\perp}) \isoto D^b\!\big(\Pf_s\cap \P L^\perp,\, A|_{\P L^\perp} \big)$$
\end{lem}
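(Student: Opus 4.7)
The plan is to imitate the proof of Corollary \ref{thm:BranesToModulesBaseChanged} with an extra base change along the linear embedding $\P L^{\perp} \hookrightarrow \P(\Wedge^{2} V)$. Write $\iota : \cX^{\ss}|_{\P L^{\perp}} \hookrightarrow \cX^{\ss}$ and $j : \Pf_{s}\cap \P L^{\perp} \hookrightarrow \Pf_{s}$ for the induced closed immersions, and set $T_{L} = \iota^{*}T$, a vector bundle on $\cX^{\ss}|_{\P L^{\perp}}$. Form the adjoint pair $F_{L}^{*} \dashv F_{L}$ with $F_{L} = \Hom(T_{L},-)$ and $F_{L}^{*} = T_{L} \otimes^{L}_{A|_{\P L^{\perp}}} (-)$; by $\iota^{*} \dashv \iota_{*}$ adjunction, $F_{L}$ agrees with $F_{s} \circ \iota_{*}$ as a plain functor of sheaves, acquiring its $A|_{\P L^{\perp}}$-module structure from the pullback $A \to j_{*}(A|_{\P L^{\perp}})$.

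The key computation is $\RHom_{\cX^{\ss}|_{\P L^{\perp}}}(T_{L}, T_{L}) \simeq A|_{\P L^{\perp}}$ concentrated in degree zero. I would establish this by rewriting via adjunction as $\RHom_{\cX^{\ss}}(T, T \otimes^{L} \iota_{*}\cO_{\cX^{\ss}|_{\P L^{\perp}}})$. The expected-dimension hypothesis makes $\iota$ a regular embedding of the expected codimension, so pulling back the Koszul resolution of $\cO_{\P L^{\perp}}$ on $\P(\Wedge^{2}V)$ yields a finite locally free resolution of $\iota_{*}\cO_{\cX^{\ss}|_{\P L^{\perp}}}$. Combining this with the tilting property $\Ext_{\cX^{\ss}}^{>0}(T,T) = 0$ (established by Corollary \ref{thm:BranesToModulesBaseChanged}) and the identification $\Hom_{\cX^{\ss}}(T,T) = A$ over $\Pf_{s}$, the result collapses to $A|_{\P L^{\perp}}$ in degree zero. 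This is the main obstacle: without the expected-dimension hypothesis the pulled-back Koszul complex need not be exact, and $T_{L}$ could fail to be tilting.

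Granted this key computation, $F_{L}^{*}$ sends the vertex-projective $A|_{\P L^{\perp}}$-modules to the corresponding summands of $T_{L}$ with the correct morphism spaces, and is therefore fully faithful on the thick subcategory they generate; this thick subcategory is all of $D^{b}(\Pf_{s} \cap \P L^{\perp}, A|_{\P L^{\perp}})$ because $A|_{\P L^{\perp}}$ has finite global dimension by the non-commutative Bertini theorem \cite{RSVdB}. Moreover the image of $F_{L}^{*}$ lies in $\Br(\cX^{\ss}|_{\P L^{\perp}})$, because the pushforward under $\iota$ of each summand of $T_{L}$ is a summand of $T \otimes \iota_{*}\cO_{\cX^{\ss}|_{\P L^{\perp}}}$, which by the resolution above lies in $\Br(\cX^{\ss})$. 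For essential surjectivity, take $\cE \in \Br(\cX^{\ss}|_{\P L^{\perp}})$, set $M' = F_{L}(\cE)$, and consider the unit $\cE \to F_{L}^{*}(M')$. Applying $\iota_{*}$ and using the projection-formula compatibility $\iota_{*} \circ F_{L}^{*} \simeq F_{s}^{*} \circ j_{*}$, this becomes the unit $\iota_{*}\cE \to F_{s}^{*} F_{s}(\iota_{*}\cE)$, which is an isomorphism by Corollary \ref{thm:BranesToModulesBaseChanged} since $\iota_{*}\cE \in \Br(\cX^{\ss})$. Full faithfulness of $\iota_{*}$ along the closed immersion $\iota$ then forces the original unit to be an isomorphism, completing the proof.
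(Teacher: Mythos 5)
Your overall strategy is the same as the paper's (restrict $T$ to get $T_L$, use the expected-dimension hypothesis to compute $\REnd(T_L)=A|_{\P L^\perp}$, and reduce to the $L=0$ case of Corollary \ref{thm:BranesToModulesBaseChanged} via the compatibility $\iota_*\circ F_L^* \simeq F_0^*\circ j_*$), but there is one genuine gap: you invoke the non-commutative Bertini theorem to claim that $A|_{\P L^\perp}$ has finite global dimension, and you use this twice -- to promote full faithfulness from the thick subcategory generated by the vertex projectives to all of $D^b(\Pf_s\cap\P L^\perp, A|_{\P L^\perp})$, and to show that the image of $F_L^*$ lands in $\Br(\cX^{ss}|_{\P L^\perp})$ (indeed, even in $D^b$). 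The Bertini theorem only applies for \emph{generic} $L$, whereas the lemma assumes only that $\cX^{ss}|_{\P L^\perp}$ has the expected dimension; expected dimension does not imply smoothness of the slice (already in the commutative case a hyperplane section of a smooth variety can have the expected dimension and still be singular), so under the stated hypotheses $A|_{\P L^\perp}$ may have infinite global dimension and the vertex projectives need not generate $D^b$. The paper avoids this entirely: full faithfulness of $G_L = -\otimes_A T_L$ is checked on the generator $A|_{\P L^\perp}$ using only the key computation $\pi_*\hom(T_L,T_L)=A|_{\P L^\perp}$; the fact that $G_L$ sends all of $D^b(\Pf_s\cap\P L^\perp, A|_{\P L^\perp})$ into $\Br(\cX^{ss}|_{\P L^\perp})$ is deduced from the identity $j_*\circ G_L = G_0\circ i_*$ together with Corollary \ref{thm:BranesToModulesBaseChanged} (so only the finite global dimension of $A$ itself, known from \cite{spenko_non-commutative_2015}, is ever used); and essential surjectivity follows from $\ker F_L\cap \Br(\cX^{ss}|_{\P L^\perp})=0$ plus \cite[Thm.\ 3.3]{kuznetsov_homological_2007}, again with no smoothness of the slice.

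Two smaller points. First, your final step appeals to ``full faithfulness of $\iota_*$'' along the closed immersion; pushforward along a closed immersion is not fully faithful, but it is conservative, which is all you need to conclude that the (co)unit is an isomorphism upstairs once it is one after applying $\iota_*$. Second, with $F_L^*\dashv F_L$ the canonical map is the counit $F_L^*F_L\cE\to\cE$, not a map $\cE\to F_L^*F_L\cE$; the argument goes through with the counit, using that $\iota_*\cE$ lies in the essential image of the fully faithful $F_0^*$. These are fixable slips, but the reliance on genericity via the Bertini theorem is a real restriction of the statement and needs to be replaced, e.g.\ by the paper's routing through the $L=0$ case.
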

\begin{proof}
  Let $\pi : \cX^{\ss} \tms_{\C^{*}} L \to \P(\Wedge^{2}V)$ be the projection.
  Let $T_{L} \in \Br(\cX^{\ss}|_{\P L^{\perp}})$ be the restriction of the vector bundle $T$ from Section \ref{sec:Br(X)}.
  Using the assumption that $\cX^{\ss}|_{\P L^{\perp}}$ has the expected dimension, we find that $\pi_{*}\hom(T_{L},T_{L}) = A|_{\P L^{\perp}}$ (where homomorphisms are derived).

  We now have an adjoint pair of functors $F_{L} = \pi_{*}\hom(T_{L},-)$ and $G_{L} = - \otimes_{A} T_{L}$ going between $D(\cX^{\ss}|_{\P L^{\perp}})$ and $D(\Pf_{s} \cap \P L^{\perp}, A|_{\P L^{\perp}})$.
  A standard computation shows that $G_{L}$ is fully faithful, and we must show that it sends $D^{b}(\Pf_{s} \cap \P L^{\perp}, A|_{\P L^{\perp}})$ surjectively onto $\Br(\cX^{\ss}|_{\P L^{\perp}})$.

  Consider first the case $L = 0$.
  Then Corollary \ref{thm:BranesToModulesBaseChanged} shows that $G_{0}$ gives an equivalence $D^{b}(\Pf_{s},A) \isoto \Br(\cX^{\ss})$.

  Let now $L$ be any subspace, let $i: \Pf_{s}\cap \P L^{\perp} \to \Pf_{s}$ and $j : \cX^{\ss}|_{\P L^{\perp}} \to \cX^{\ss}$ denote the inclusions, and let $i_{*} : D(\Pf_{s} \cap \P L^{\perp},A|_{\P L^{\perp}}) \to D(\Pf_{s}, A)$ be the pushforward functor. 
  There is an equality of functors 
  \[
    j_{*} \circ G_{L} = G_{0} \circ i_{*} : D(\Pf_{s} \cap \P L^{\perp}, A|_{\P L^{\perp}}) \to D(\cX^{ss}).
  \]
  Now as $G_{0} \circ i_{*}$ sends $D^{b}(\Pf_{s} \cap \P L^{\perp}, A|_{\P L^{\perp}})$ to $\Br(\cX^{\ss})$, and $\Br(\cX^{\ss}|_{P L^{\perp}}) = (j_{*})^{-1}(\Br(\cX^{\ss}))$, it follows that $G_{L}$ sends $D^{b}(\Pf_{s} \cap \P L^{\perp}, A|_{P L^{\perp}})$ to $\Br(\cX^{\ss}|_{\P L^{\perp}})$.

  If $\cE \in \Br(\cX^{\ss}|_{\P L^{\perp}})$, then $F_{L}(\cE) = 0$ implies that 
  \[
    i_{*}\pi_{*}\hom(T_{L},\cE) = \pi_{*}\hom(T_{0},j_{*}\cE) = F_{0}(j_{*}(\cE)) = 0,
  \]
  hence $j_{*}(\cE) = 0$, and so $\cE = 0$.
  Thus we find that $\ker F_{L} \cap \Br(\cX^{\ss}|_{\P L^{\perp}}) = 0$, which by \cite[Thm.\ 3.3]{kuznetsov_homological_2007} means that $G_{L}(D^{b}(\Pf_{s} \cap \P L^{\perp}, A|_{P L^{\perp}})) = \Br(\cX^{\ss}|_{\P L^{\perp}})$.
\end{proof}

\begin{cor}\label{cor:NCRslices}
  Assume that $\cX^{ss}|_{\P L^\perp}$ has the expected dimension. Then 
  \[
    \Br( \cX^{ss} \tms_{\C^*} L,\, \dualW) \cong D^b\!\big(\Pf_s\cap \P L^\perp,\, A|_{\P L^\perp} \big).
  \]
\end{cor}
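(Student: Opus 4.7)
The corollary is essentially a formal consequence of the two immediately preceding lemmas, so the plan is simply to compose their equivalences. Both lemmas are stated under the same hypothesis — that $\cX^{ss}|_{\P L^\perp}$ has the expected dimension — which is precisely what we are given.

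Concretely, the plan is as follows. First, invoke Lemma \ref{lem:KPwithBranes}, which says that global Knörrer periodicity along the fibres of $\cX^{ss} \tms_{\C^*} L \to \cX^{ss}$ with superpotential $\dualW$ identifies the full derived category $D^b(\cX^{ss}|_{\P L^\perp})$ with $D^b(\cX^{ss} \tms_{\C^*} L, \dualW)$, and moreover restricts to an equivalence
\[
\Br(\cX^{ss}|_{\P L^\perp}) \;\isoto\; \Br(\cX^{ss} \tms_{\C^*} L,\, \dualW)
\]
between the brane subcategories. Second, apply Lemma \ref{thm:BaseChangeForModuleCategoriesIsBaseChangeFactorisationCategories}, which provides the equivalence
\[
\Br(\cX^{ss}|_{\P L^\perp}) \;\isoto\; D^b\big(\Pf_s \cap \P L^\perp,\, A|_{\P L^\perp}\big)
\]
induced by the tilting object $T_L$. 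Composing the inverse of the first equivalence with the second yields the required equivalence.

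There is really no obstacle here: the content has already been packaged into the two lemmas, and the compatibility of hypotheses is automatic. The only thing one might want to record in the write-up is that both equivalences are canonical enough that the resulting functor can be described in Fourier–Mukai terms — on one side by the matrix factorisation kernel implementing Knörrer periodicity (the Koszul-type resolution of the sky-scraper along the zero-section of $L(-1)$), and on the other side by $T_L$ regarded as a bimodule. This will be useful in Section \ref{sec:HPD} when we need the equivalence to be linear over $\P L^\perp$, but for the bare statement of the corollary nothing further is required.
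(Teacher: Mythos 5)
Your proposal is correct and matches the paper's (implicit) argument: the corollary is stated with no separate proof precisely because it is the composition of Lemma \ref{lem:KPwithBranes} and Lemma \ref{thm:BaseChangeForModuleCategoriesIsBaseChangeFactorisationCategories}, both under the same expected-dimension hypothesis. Nothing further is needed.
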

If $\cX^{ss}|_{\P L^\perp}$ does not have the expected dimension then it seems appropriate to view  $ \Br(\cX^{ss} \tms_{\C^*} L,\, \dualW)$ as the correct base-change of the category $\Br(\cX^{\ss})$ to $\P L^{\perp}$ (as we did in Section \ref{sec:HPDualityBackground}). 

We remark that these results also hold `in the affine case', \emph{i.e.} if we don't restrict to $\cX^{ss}$ then it's still true that
$$\Br(\cX\tms_{\C^*} L,\, W) \cong D^b\!\big(\Stack{\widetilde{\Pf_s}\cap L^\perp}{\C^*},\, A|_{L^\perp}\big), $$
under the stronger assumption that $\cX|_{L^\perp}$ has the expected dimension.

Now we examine the Serre functor on these categories.

\begin{prop}\label{thm:SerreOnTheProjectiveNCR} Suppose that  $\Pf_s \cap \P L^\perp$ has the expected dimension, and that $A|_{\P L^\perp}$ is a non-commutative resolution of it. Then there exists a Serre functor on $D^b(\Pf_s \cap \P L^\perp, A|_{\P L^\perp})$ given by:
  \[
    \cE \mapsto \cE \otimes \cO_{\P (\wedge^2 V)}(sv - \dim L)[\dim \Pf_s \cap \P L^\perp]
  \]
\end{prop}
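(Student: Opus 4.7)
The plan is to derive the Serre functor from the affine Serre duality of Proposition \ref{thm:SerreDuality1} by projectivising and taking a linear section, using that $A|_{\P L^\perp}$ is a crepant non-commutative resolution of the Gorenstein variety $\Pf_s\cap \P L^\perp$.

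First I would compute $\omega_{\Pf_s}$. Proposition \ref{thm:SerreDuality1} says that $\omega_{\widetilde{\Pf_s}}$ pulls back to $(\det S)^v$ on $\cX$. The map $\cX\to [\wedge^2 V/\C^*]$ is equivariant for $\GSp(S)\to\C^*=\GSp(S)/\Sp(S)$ via the character $\langle\omega_S\rangle$, which generates the character lattice, so $(\det S)^v=\langle\omega_S\rangle^{sv}$ is pulled back from the line bundle $\cO(sv)$ on $[\wedge^2 V/\C^*]$ (in the sign convention in which $\cO(1)$ restricts to the tautological line bundle on $\P(\wedge^2 V)$). Passing to the projective cone via Corollary \ref{thm:BranesToModulesBaseChanged} identifies $\omega_{\Pf_s}=\cO_{\Pf_s}(sv)$, and gives the Serre functor on $D^b(\Pf_s,A)\simeq\Br(\cX^{ss})$ as $\cE\mapsto\cE\otimes\cO_{\Pf_s}(sv)[\dim\Pf_s]$.

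Next I would apply standard adjunction. By the non-commutative Bertini theorem \cite{RSVdB} together with the hypotheses of the proposition, $A|_{\P L^\perp}$ is a crepant NCR of $\Pf_s\cap \P L^\perp$, so the Serre functor on $D^b(\Pf_s\cap \P L^\perp,A|_{\P L^\perp})$ is $\otimes\,\omega_{\Pf_s\cap \P L^\perp}[\dim \Pf_s\cap \P L^\perp]$. The slice is a complete intersection of $\Pf_s$ with a generic $(\dim L)$-dimensional family of hyperplane sections, so the normal bundle satisfies $\det N_{\Pf_s\cap \P L^\perp/\Pf_s}=\cO(-\dim L)$ in this sign convention, and adjunction yields
\[
\omega_{\Pf_s\cap \P L^\perp}\;=\;\omega_{\Pf_s}|_{\Pf_s\cap \P L^\perp}\otimes\cO(-\dim L)\;=\;\cO(sv-\dim L),
\]
giving the claimed formula.

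The main technical obstacle is to verify rigorously that the non-commutative Serre functor on the slice obeys the commutative adjunction formula, i.e.\ that the dualizing complex of $A$ as a CM module on $\widetilde{\Pf_s}$, when restricted to the slice, becomes the dualizing complex of $A|_{\P L^\perp}$ as a CM module on $\Pf_s\cap \P L^\perp$. A cleaner alternative route is to use Corollary \ref{cor:NCRslices} to identify the category with the matrix-factorisation category $\Br(\cX^{ss}\tms_{\C^*}L,\dualW)$ and compute the Serre functor there via Kn\"orrer periodicity combined with Proposition \ref{thm:SerreDuality1}: adding the $L$ directions together with the tautological superpotential contributes an explicit shift by $-\dim L$ and twist by $\cO(-\dim L)$ to the Serre functor of $\Br(\cX^{ss})$, producing the claimed formula without invoking any non-commutative adjunction theorem beyond what is already used to set up $A|_{\P L^\perp}$ as an NCR.
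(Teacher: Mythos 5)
Your computation of the canonical bundle of the slice (pulling $\omega_{\widetilde{\Pf}_s}$ back to $(\det S)^v=\langle\omega_S\rangle^{sv}$, matching this with the twist convention on $\P(\wedge^2 V)$, and then applying adjunction for the $\dim L$ hyperplane sections) is fine, and it reproduces the one-line observation with which the paper's proof begins. But that observation is not the content of the proposition. The real content is that the category $D^b(\Pf_s\cap\P L^\perp,\,A|_{\P L^\perp})$ of modules over a coherent sheaf of \emph{non-commutative} algebras admits a Serre functor at all, and that it has the asserted form; saying \qquotes{$A|_{\P L^\perp}$ is a crepant NCR of a Gorenstein variety, so the Serre functor is $\otimes\,\omega[\dim]$} is a restatement of the claim, not a proof. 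The paper supplies exactly this missing ingredient by citing the non-commutative Grothendieck--Serre duality machinery of Yekutieli--Zhang \cite{yekutieli_dualizing_2006}: Example 6.4 there, together with the fact that $A$ is a maximal Cohen--Macaulay sheaf (hence self-dual up to twist), identifies a rigid dualising complex for the pair $(\Pf_s\cap\P L^\perp, A|_{\P L^\perp})$ with a line-bundle twist of $A|_{\P L^\perp}$ itself, and Proposition 6.14 of the same paper then converts this into the stated Serre functor. You flag part of this as \qquotes{the main technical obstacle}, but your proposal never discharges it, and in fact the obstacle is larger than the compatibility of dualising complexes with restriction that you name: even the existence of a Serre functor on $D^b(\Pf_s,A)\cong\Br(\cX^{ss})$ (the unsliced case) is part of what is being proved.

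Your proposed \qquotes{cleaner alternative route} does not close this gap. Proposition \ref{thm:SerreDuality1} is only a \emph{local} duality statement for the affine cone: it requires one of the two objects to restrict to zero on $\cX^{ss}$, i.e.\ to be supported over the vertex of $\widetilde{\Pf}_s$ (it is a graded version of Van den Bergh's local duality \cite[Lemma 6.4.1]{van_den_bergh_non-commutative_2004}). It therefore gives no information about $\Hom$'s between objects living on the semistable locus, and combining it with Corollary \ref{cor:NCRslices} and Kn\"orrer periodicity cannot produce a Serre functor on $\Br(\cX^{ss}\tms_{\C^*}L,\dualW)$; the claim that adding the $L$ directions with the tautological superpotential \qquotes{contributes an explicit shift and twist to the Serre functor of $\Br(\cX^{ss})$} presupposes a global duality statement for these categories of precisely the kind the proposition asserts, so the route is circular. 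To repair the proof you need either the rigid-dualising-complex argument the paper uses, or some other global duality theorem for sheaves of algebras of finite global dimension on projective varieties (e.g.\ via Ext-finiteness, a classical generator and Bondal--Van den Bergh representability, followed by an identification of the resulting dualising bimodule using maximal Cohen--Macaulayness) -- in either case the input is substantially more than Proposition \ref{thm:SerreDuality1} plus Kn\"orrer periodicity.
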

Note that for generic choice of $L$ both conditions hold. In particular, for generic $L$ with $\dim L= sv$, the category $D^b(\Pf_s \cap \P L^\perp, A|_{\P L^\perp})$ is Calabi--Yau.
\begin{proof}
  Note that $\cO_{\P(\wedge^2 V)}(sv-\dim L)$ is the canonical bundle on $\Pf_s \cap \P L^\perp$.
  By \cite[Example 6.4]{yekutieli_dualizing_2006}, and using the fact that $A$ is a maximal CM sheaf, we find that $A(sv - \dim L^\perp)|_{\P L^\perp}$ is a rigid dualising complex for $(\Pf_s \cap \P L^\perp, A|_{\P L^\perp})$.
  The claim then follows from \cite[Prop.\ 6.14]{yekutieli_dualizing_2006}.
\end{proof}

There are of course exactly analogous results for $\cY$: we have an equivalence
$$\Br(\cY^{ss}\tms_{\C^*} L^\perp, W) \isoto D^b(\Pf_q\cap \P L,\, B|_{\P L}) $$
under the assumption that $\cY^{ss}|_L$ has the expected dimension. 

\begin{comment}
  \begin{rem}
    Recall that we defined the R-charge to act non-trivially on $\cY^{ss}$ and trivially on $L^\perp$, but we can use the $\GSp(Q)$ action to reverse this choice (see Remark \ref{rem:Rcharge} and the proof of Proposition \ref{thm:SerreDuality2}).                  

    then the algebra $B$ will be concentrated in homological degree zero.  If 
  \end{rem}
\end{comment}

\subsection{Windows}\label{sec:Windows}

Since the equivalence of Theorem \ref{thm:HoriDualityWithL} is defined relative to the base $[\Wedge^2 V/\C^*]$, we can restrict it to the complement of the origin and get:
\begin{cor}\label{thm:HoriDualityWithLOverSemistableLocus}
  We have an equivalence:
  $$ \Br(\cX^{\ss}\tms_{\C^*} L, \, W') \;\isoto \;  \Br(\cY\tms_{\C^*} (L^\perp \setminus 0), \, W) $$
\end{cor}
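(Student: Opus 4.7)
The plan is to obtain the corollary by base change of Theorem \ref{thm:HoriDualityWithL} along the open immersion
\[
[(\Wedge^2 V \setminus 0)/\C^*] \; \hookrightarrow \; [\Wedge^2 V/\C^*].
\]
The equivalence in Theorem \ref{thm:HoriDualityWithL} is implemented by a Fourier--Mukai kernel on the relative product over $[\Wedge^2 V/\C^*]$, and the adjoint $\Phi^{\dagger}$ (constructed in Proposition \ref{prop:Phidagger}) is likewise defined relative to this base. Hence restriction of both kernels to the pre-image of the open substack yields an adjoint pair between the restricted ambient categories, and the unit and counit isomorphisms witnessing the equivalence, being natural transformations of functors linear over $[\Wedge^2 V/\C^*]$, restrict to isomorphisms over the open substack. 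So the restricted pair is still an equivalence.

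It remains to identify the base-changed stacks and categories. On the $\cX$-side, the structure map $\cX \tms_{\C^*} L \to [\Wedge^2 V/\C^*]$ is the composite of the projection to $\cX$ with the pull-back of the symplectic form, and by definition $\cX^{\ss}$ is the open substack on which this map lands outside the origin; therefore the pre-image of $[(\Wedge^2 V\setminus 0)/\C^*]$ is exactly $\cX^{\ss} \tms_{\C^*} L$. On the $\cY$-side, the structure map is projection onto $L^{\perp}$ followed by inclusion into $\Wedge^2 V$, so the pre-image of $[(\Wedge^2 V\setminus 0)/\C^*]$ is $\cY\tms_{\C^*}(L^{\perp}\setminus 0)$. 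The corresponding brane subcategories $\Br(\cX^{\ss}\tms_{\C^*} L, W')$ and $\Br(\cY\tms_{\C^*}(L^{\perp}\setminus 0), W)$ are, by definition (extending the pattern set in Section \ref{sec:Br(X)}), the triangulated subcategories generated by the restrictions of $\Br(\cX\tms_{\C^*} L, W')$ and $\Br(\cY\tms_{\C^*} L^{\perp}, W)$ respectively. Since the restricted Fourier--Mukai functor sends a generating object of one side to the restriction of its image under $\Phi$, which lies in the brane subcategory on the other side, the equivalence at the ambient level descends to the desired equivalence of brane subcategories.

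There is essentially no obstacle to overcome: the corollary is tautological once one unwinds the phrase \emph{relative to the base $[\Wedge^2 V/\C^*]$}, which is why a single sentence of justification was given in the text. The only mild point requiring attention is the compatibility of the grade-restriction description of the brane subcategories with the generation-from-restriction description used on the semistable open loci, but this was already settled in Section \ref{sec:Br(X)} and Corollary \ref{thm:BranesToModulesBaseChanged}.
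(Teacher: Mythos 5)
Your proposal is correct and follows essentially the same route as the paper: the paper's proof is precisely the formal observation that the equivalence of Theorem \ref{thm:HoriDualityWithL} is given by a kernel relative to $[\Wedge^2 V/\C^*]$, so it restricts over the open substack $[(\Wedge^2 V\setminus 0)/\C^*]$, and the two categories in the corollary are by definition generated by the images of the corresponding brane subcategories under restriction. Your extra remarks about restricting the adjoint and the unit/counit just spell out what the paper summarises as ``follows formally''.
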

\begin{proof} This follows formally from Theorem \ref{thm:HoriDualityWithL}, because these categories are by definition the subcategories generated by the images of the corresponding brane subcategories on $\cX\tms_{\C^*} L$ and $\cY\tms_{\C^*} L^\perp$.
\end{proof}

If $L$ is sufficiently generic then by Corollary \ref{cor:NCRslices} the first category is equivalent to $D^b(\Pf_s\cap \P L^\perp,\, A|_{\P L^\perp})$, our non-commutative resolution of the variety $\Pf_s\cap \P L^\perp$. However, our non-commutative resolution for the dual slice $\Pf_q\cap \P L$ is equivalent to the category $\Br(\cY^{ss}\tms_{\C^*} L^\perp,\, W) $. So we need to understand how the categories
$\Br(\cY\tms_{\C^*} (L^\perp\setminus 0),\, W)$ and $\Br(\cY^{ss}\tms_{\C^*} L^\perp,\, W) $ are related. 

The stacks $\cY\tms_{\C^*} (L^\perp\setminus 0)$ and $\cY^{ss}\tms_{\C^*} L^\perp$ are related by variation of GIT stability; they are the two possible semi-stable loci in the ambient stack $\cY\tms_{\C^*} L^\perp$. We can use the technique of `windows' \cite{segal_equivalence_2011, halpern-leistner_derived_2015, ballard_variation_2012} to compare them, by lifting their associated categories to subcategories defined on the ambient stack.

The existence of such lifts has been worked out in large generality in the mentioned literature, but our situation is complicated by two factors: we really care about the B-brane subcategories of each stack rather than the full derived category, and we want the window categories to be of a specific form, in order that they be comparable as subcategories of $D^{b}(\cY \tms_{\C^{*}} L^{\perp}, W)$.
It will therefore take us some work to construct these lifts.

Recall (from Section \ref{sec:GSp}) that irreps $\Schpur{\delta, k} Q$ of $\GSp(Q)$ are determined by a weight $\delta$ of $\Sp(Q)$ and a weight $k$ of the diagonal subgroup $\Delta\subset \GSp(Q)$. Our previous `brane' subcategory $\Br(\cY\tms_{\C^*} L^\perp,\, W)$ was defined by a rule which restricted the allowed $\Sp(Q)$-representations occuring at the origin $0$; if we express it in terms of $\GSp(Q)$-irreps, then we allowed $\Schpur{\delta, k} Q$ if $\delta$ was in $Y_{q,s}$, but we placed no restriction on $k$.

We now define some smaller subcategories by placing a restriction on $k$ as well. For an interval $I\subset \Z$, we define
$$\Br(\cY\tms_{\C^*} L^\perp,\, W)_I \; \subset \; \Br(\cY\tms_{\C^*} L^\perp,\, W) $$
to be the full subcategory of objects $\cE$ such that the $\Delta$-weights of $h_\bullet(\cE|_0)$ all lie in $I$. By Lemma \ref{lem:minimalmodels}, this is equivalent to requiring that $\cE$ can be represented by a matrix factorization built only from the vector bundles $\Schpur{\delta, k} Q$, where $\delta\in Y_{q,s}$ and $k\in I$.

Note that these subcategories are not defined relative to any base and are not preserved by tensoring with line bundles.

General techniques give the first window result:

\begin{prop} \label{prop:EasyWindow}Set $l'=\dim L^\perp$. For any $n\in \Z$, the restriction functor
  $$ \Br(\cY\tms_{\C^*} L^\perp,\, W)_{[n, n+2l')} \;\To\; \Br(\cY\tms_{\C^*} (L^\perp\setminus 0),\, W) $$
  is an equivalence.
\end{prop}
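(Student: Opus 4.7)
The plan is to apply the general window equivalence theorem for VGIT in the Landau--Ginzburg setting \cite{halpern-leistner_derived_2015, ballard_variation_2012}, and then check that the brane subcategories correspond under restriction.

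In the ambient LG model $(\cY\tms_{\C^*} L^\perp, W)$, the passage to $\cY\tms_{\C^*}(L^\perp\setminus 0)$ deletes the single unstable stratum $S = [\Hom(V,Q)\tms \{0\}\,/\,\GSp(Q)]$. I would take the destabilising one-parameter subgroup to be the diagonal $\Delta : \C^{*}\to\GSp(Q)$, $t\mapsto t\cdot\id_{Q}$. This acts with weight $+1$ on $\Hom(V,Q)$, so its fixed locus inside $S$ is the origin. Its image in the outer quotient $\GSp(Q)/\Sp(Q)=\C^{*}$ is a degree-$2$ cover, since $\Sp(Q)\cap\Delta(\C^{*})=\{\pm 1\}$; because the outer $\C^{*}$ acts on $L^\perp$ with weight $-1$, the subgroup $\Delta$ acts on $L^\perp$ with weight $-2$. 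Hence the $\Delta$-weight of $\det N^\vee_{S/\cY\tms_{\C^*} L^\perp}$ at the centre is $+2l'$, which is the advertised window width.

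The window theorem then asserts that for every $n\in\Z$ the full subcategory $\cW_n \subset D^b(\cY\tms_{\C^*} L^\perp, W)$ of objects whose restriction to the origin has $\Delta$-weights in $[n, n+2l')$ is equivalent, via restriction, to $D^b(\cY\tms_{\C^*}(L^\perp\setminus 0), W)$. Since any $\GSp(Q)$-irrep has the form $\Schpur{\delta,k}Q$ for a dominant $\Sp(Q)$-weight $\delta$ and a $\Delta$-weight $k$, combining the $\Sp(Q)$ grade-restriction defining $\Br$ with the $\Delta$-weight window condition at the origin picks out exactly the vector bundles $\Schpur{\delta,k}Q$ with $\delta\in Y_{q,s}$ and $k\in[n, n+2l')$. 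By Lemma \ref{lem:minimalmodels} this intersection is the category $\Br(\cY\tms_{\C^*} L^\perp, W)_{[n, n+2l')}$. Fully-faithfulness of the restriction functor then follows from the window theorem, while essential surjectivity follows from the fact that any object of $\Br(\cY\tms_{\C^*}(L^\perp\setminus 0), W)$ lifts (by definition of the brane subcategory on the semistable side) to $\Br(\cY\tms_{\C^*} L^\perp, W)$, and such a lift can be shifted by $\Delta$-characters to lie in $\cW_n$ without leaving the brane condition.

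The main obstacle I anticipate is this last bookkeeping step: confirming that the window condition (on $\Delta$-weights) and the brane condition (on $\Sp(Q)$-isotypic components) at the origin interact cleanly. The fact that $\GSp(Q) = \Sp(Q)\cdot \Delta$ up to a finite kernel makes the two conditions genuinely independent, which is what allows the intersection to be described so simply; this is the key geometric input that makes the easy window work, and it is specific to the ``affine'' destabilising direction coming from $L^\perp$ rather than from $\Hom(V,Q)$.
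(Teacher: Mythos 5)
Your overall strategy coincides with the paper's: invoke the general variation-of-GIT window theorem of \cite{segal_equivalence_2011, halpern-leistner_derived_2015, ballard_variation_2012} for the wall-crossing that deletes $\cY\tms\{0\}$, with destabilising one-parameter subgroup $\Delta$ and window width $2l'$ (your weight computation is correct), and then check that the window matches up the brane subcategories. The identification of the intersection of the two grade-restriction rules via Lemma \ref{lem:minimalmodels}, and the fully faithful direction, are fine. The gap is in your essential surjectivity step. Having lifted $\cE \in \Br(\cY\tms_{\C^*} (L^\perp\setminus 0),\, W)$ to some $\widetilde{\cE}\in \Br(\cY\tms_{\C^*} L^\perp,\, W)$, you propose to bring $\widetilde{\cE}$ into the window by ``shifting by $\Delta$-characters''. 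This fails for two reasons. First, tensoring by a power of $\langle\omega_Q\rangle$ translates \emph{all} the $\Delta$-weights of $h_\bullet(\widetilde{\cE}|_0)$ by the same amount; a general lift has weights spread over an interval wider than $2l'$, and no uniform translation places them all in $[n, n+2l')$. Second, such a character is a nontrivial line bundle on $\cY\tms_{\C^*}(L^\perp\setminus 0)$ (a pullback of a twist of $\cO_{\P L^\perp}(1)$), so the twisted object no longer restricts to $\cE$ but to a line-bundle twist of it; at best you would get surjectivity up to twists, which is not the statement.

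The missing ingredient, and what the paper actually uses, is the weight-truncation (``stripping'') procedure from the window literature: one modifies $\widetilde{\cE}$ by taking cones over objects supported on the unstable locus $\cY\tms\{0\}$, producing a new object $\widetilde{\cE}'$ which still restricts to $\cE$ but whose $\Delta$-weights at the origin lie in $[n, n+2l')$, i.e.\ $\widetilde{\cE}' \in D^b(\cY\tms_{\C^*} L^\perp,\, W)_{[n, n+2l')}$. The only brane-specific point that then needs checking is that this truncation does not violate the $\Sp(Q)$ grade-restriction rule. This holds because the objects being coned off are pushforwards from $\cY\tms\{0\}$ built out of the $\Sp(Q)$-isotypic pieces already appearing in $\widetilde{\cE}$, tensored only with exterior powers of $L^\perp$, which is a trivial $\Sp(Q)$-representation; hence no new $\Sp(Q)$-weights appear and $\widetilde{\cE}'$ lies in $\Br(\cY\tms_{\C^*} L^\perp,\, W)_{[n, n+2l')}$. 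Replacing your character-shifting step with this argument closes the gap; everything else in your proposal is in line with the paper's proof.
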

\begin{proof}
  Define a subcategory 
  $$D^b(\cY\tms_{\C^*} L^\perp,\, W)_{[n, n+2l')} \;\subset \; D^b(\cY\tms_{\C^*} L^\perp,\, W)$$
  by taking only objects whose $\Delta$-weights at the origin lie in the interval $[n, n+2l')$. By general theory \cite{segal_equivalence_2011, halpern-leistner_derived_2015, ballard_variation_2012}, the restriction functor 
  $$D^b(\cY\tms_{\C^*} L^\perp,\, W)_{[n, n+2l')}\;\To\;D^b(\cY\tms_{\C^*} (L^\perp\setminus 0),\, W)$$
  is an equivalence --  note that the width of this interval here is $2l'$ (instead of $l'$) because $\Delta$ acts with weight 2 on $L^\perp$. So we just need to argue that this equivalence matches up the brane subcategories.

  If an object lies in $\Br(\cY\tms_{\C^*} L^\perp,\, W)_{[n, n+2l')}$ then (by definition) it restricts to give an object in $\Br(\cY\tms_{\C^*} (L^\perp\setminus 0),\, W) $.
  This shows that restriction gives an embedding:
  $$\Br(\cY\tms_{\C^*} L^\perp,\, W)_{[n, n+2l')} \;\into\; \Br(\cY\tms_{\C^*} (L^\perp\setminus 0),\, W)$$
  Given an object $\cE\in \Br(\cY\tms_{\C^*} (L^\perp\setminus 0),\, W)$, it is the restriction of some object $\tilde{\cE}\in \Br(\cY\tms_{\C^*} L^\perp,\, W)$. 
  The general recipe of \cite{segal_equivalence_2011, halpern-leistner_derived_2015, ballard_variation_2012} gives a way to modify $\widetilde{\cE}$ to a new object $\widetilde{\cE}^{\pr}$, which still restricts to $\cE$, but which lies in the subcategory $D^b(\cY\tms_{\C^*} L^\perp,\, W)_{[n, n+2l')}$.
  The object $\widetilde{\cE}^{\pr}$ is constructed by taking cones over objects supported at $\cY \times 0$.
  It is easy to see that this process of taking cones will not introduce new $\Sp(Q)$-weights, and so since all the $\Sp(Q)$-weights of $\widetilde{\cE}$ lie in $Y_{q,s}$, the same will be true of $\widetilde{\cE}^{\pr}$.
  Hence $\widetilde{\cE}^{\pr} \in \Br(\cY\tms_{\C^*} L^\perp,\, W)_{[n, n+2l')}$, which proves that the functor $\Br(\cY\tms_{\C^*} L^\perp,\, W)_{[n, n+2l')} \to \Br(\cY\tms_{\C^*} (L^\perp \setminus 0),\, W)$ is essentially surjective.
\end{proof}

For the other GIT quotient, we have:
\begin{thm}
  \label{thm:ProjectiveWindows}
  The restriction functor induces an equivalence:
  \[
    \Br(\cY \tms_{\C^{*}} L^\perp,\, W)_{[-qv,qv)} \isoto \Br(\cY^{ss} \tms_{\C^{*}} L^\perp,\, W)
  \]
\end{thm}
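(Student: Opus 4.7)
The plan is to prove the restriction functor is both fully faithful and essentially surjective, exploiting a numerical coincidence: the window width $2qv$ matches the magnitude of the $\Delta$-weight shift $-2qv$ that $(\det Q)^{-v}$ contributes to the Serre functor on $\Br(\cY)$ computed in Proposition \ref{thm:SerreDuality2}. Well-definedness of the restriction into $\Br(\cY^{\ss}\tms_{\C^{*}} L^{\perp}, W)$ is automatic, since the latter was defined as the triangulated closure of restrictions from $\Br(\cY\tms_{\C^{*}} L^{\perp}, W)$.

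For full faithfulness, take $\cE, \cF \in \Br(\cY\tms_{\C^{*}} L^{\perp}, W)_{[-qv, qv)}$. The cone of the restriction map on $\Hom$-spaces is the local cohomology $R\Gamma_{\cU}\hom(\cE, \cF)$, where $\cU = \{y^{*}\omega_{Q} = 0\}\tms L^{\perp}$ is the unstable locus. I would establish a relative version of Proposition \ref{thm:SerreDuality2} identifying this (up to a homological shift) with $\Hom(\cF, \cE\otimes(\det Q)^{-v})^{\vee}$. Tensoring by $(\det Q)^{-v}$ shifts $\Delta$-weights by $-2qv$, placing $\cE\otimes(\det Q)^{-v}$ in the range $[-3qv, -qv)$, which is disjoint from the range $[-qv, qv)$ containing $\cF$. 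The $\Delta$-equivariance of the morphism spaces forces the pairing to vanish, proving full faithfulness.

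For essential surjectivity, take $\cG \in \Br(\cY^{\ss}\tms_{\C^{*}} L^{\perp}, W)$ and choose any lift $\widetilde\cG \in \Br(\cY\tms_{\C^{*}} L^{\perp}, W)$, which exists by definition. The plan is to correct $\widetilde\cG$ by iterated mapping cones over objects that are supported on $\cU$ and satisfy the $\Sp(Q)$-grade restriction to $Y_{q,s}$, so as to bring every $\Delta$-weight at the origin into $[-qv, qv)$ while preserving the restriction to $\cY^{\ss}\tms_{\C^{*}} L^{\perp}$. The required $\cU$-supported building blocks can be constructed from the objects $\cP_{\delta}\otimes\cO_{L^{\perp}}$ of Section \ref{sec:DualObjects}, perturbed by the superpotential and twisted by $\GSp(Q)$-characters, so that the $\Sp(Q)$-irreps appearing in them remain in $Y_{q,s}$ and their $\Delta$-weights can be chosen freely.

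The main obstacle is essential surjectivity: the general window theorems of \cite{halpern-leistner_derived_2015, ballard_variation_2012} provide a window using the destabilizer $\mu = \lambda_{\Lambda}\Delta^{-1}$ (with $\lambda_{\Lambda}\in\Sp(Q)$ a 1-PS contracting a Lagrangian $\Lambda \subset Q$), of width $2l' = 2\dim L^{\perp}$. This is a window of a different shape from our $\Delta$-symmetric window $[-qv, qv)$ of width $2qv$. Showing that our narrower custom window still permits lifting every object requires explicit syzygy computations on $\cU$ that respect the $\Sp(Q)$-grade restriction to $Y_{q,s}$ --- these are the ``new calculations'' alluded to in Section \ref{sec:introSketchProof}.
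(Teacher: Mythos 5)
Your overall mechanism is the right one (the window width $2qv$ matching the $\Delta$-weight of $(\det Q)^{v}$, duality to kill local cohomology along the unstable locus, and correction of lifts by objects killed on $\cY^{\ss}$), but both halves have genuine gaps. In the fully faithful half, the duality statement you propose is not correct, and the vanishing argument you base on it fails. The graded dual of $R\Gamma_{\cU}\hom(\cE,\cF)$ is not the global morphism space $\Hom(\cF,\cE\otimes(\det Q)^{-v})^{\vee}$: already for $\cE=\cF=\cO$ the local cohomology (invariants) vanishes while $\Hom(\cO,\cO\otimes(\det Q)^{-v})$ is the space of covariants of type $\langle\omega_Q\rangle^{qv}$, which is nonzero. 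Moreover, disjointness of the $\Delta$-weight ranges at the origin does not force global $\Hom$'s to vanish on these stacks: invariant sections can be manufactured from the positive-$\Delta$-weight functions on $\Hom(V,Q)$ (and negative-weight functions on $L^{\perp}$), so $w_1>w_2$ does not give vanishing. What is actually needed is a local duality statement \emph{at the cone point}, identifying the relevant local cohomology with (a twist of) the \emph{fibre at the vertex} of the dual module of covariants, as in Lemma \ref{thm:CohenMacaulayDuality} and Lemma \ref{prop:FullyfaithfulSimpleCase}; there the weight bound $k_1-k_2>-2qv$ coming from the window does conclude. Crucially, this single-degree duality only holds because the modules $\pi_{*}(\Schpur{\alpha_1,k_1}Q^{\vee}\otimes\Schpur{\alpha_2,k_2}Q)$ are maximal Cohen--Macaulay, which is the crepancy input from \cite{spenko_non-commutative_2015}; your sketch never invokes this, and without it the dual spreads over many degrees and the weight count proves nothing.

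In the essential surjectivity half, your plan (correct a lift by iterated cones over $\cU$-supported objects built from the $\cP_{\delta}$, twisted by characters) is close in spirit to what is actually done, but the step you defer is precisely the mathematical content. Twisting by characters lets you translate the weight support of a correction object, but what matters is its \emph{spread}: the procedure only terminates inside $[-qv,qv)$ because the projected skyscrapers $P_{\alpha,k}$ have all their weights in $\{(\alpha,k)\}\cup\big(Y_{q,s}\tms[k-2qv,k)\big)$ (Lemma \ref{lem:usefulresolutions}), a bound of width exactly $2qv$ obtained from the \v{S}penko--Van den Bergh resolutions together with the Serre functor of Proposition \ref{thm:SerreDuality2}. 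Without that bound the induction of Lemma \ref{thm:BundlesGenerateProjectiveWindow} (every $\Schpur{\alpha,k}Q$ on $\cY^{\ss}$ is resolved by window bundles) is unavailable, and your cone-correction has no reason to land in the window. There is also a smaller point: once the superpotential is present one cannot just take cones of complexes; one resolves the underlying bundle by window bundles and then rebuilds the twisted differential by the perturbation argument, which itself uses the Ext-vanishing among window bundles supplied by the fully faithful half (Proposition \ref{prop:Fullyfaithful}). So as written the proposal is an outline whose two key inputs --- the Cohen--Macaulayness of the modules of covariants and the weight bound on the correction objects --- are missing.
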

This is the main technical result of Section \ref{sec:Projective}, we will prove it as Propositions \ref{prop:Fullyfaithful} and \ref{prop:EssentiallySurjective} in the next two sections. 

\begin{rem}  If we set $L^\perp=0$ then as a special case we get the equivalence:
  \begin{align}\label{eq:BrForY} \Br(\cY )_{[-qv,qv)} \isoto \Br(\cY^{ss}) \end{align}
  This is the real content of the theorem; notice that $l'=\dim(L^\perp)$ doesn't enter into the definition of the category $\Br(\cY \tms_{\C^{*}} L^\perp,\, W)_{[-qv,qv)}$, so morally speaking we get the general case from the special case by crossing with $L^\perp$ and then `peturbing by $W$'. We have chosen to prove the general case directly, but still most of the work takes place on $\cY$. 
\end{rem}

\begin{rem}
  The category $\Br(\cY )_{[-qv,qv)}$ has a full strong exceptional collection of vector bundles, essentially by definition, and \eqref{eq:BrForY} is equivalent to saying that these same vector bundles form a full exceptional collection for $\Br(\cY^{ss})$  (see Lemmas \ref{thm:BundlesExceptionalAndSemiorthogonal} and \ref{thm:LefschetzDecompositionExists} below).  In the case $q=1$ the stack $\cY^{ss}$ is actually the variety $\Gr(V,2)$, and the inclusion $\Br(\cY^{ss})\into D^b(\cY^{ss})$ is an equivalence, so we get a full exceptional collection on $\Gr(V,2)$. This is exactly the collection discovered by Kuznetsov \cite{KuznetsovECs}; our equivalence \eqref{eq:BrForY} is a generalization of Kuznetsov's work to $q>1$.
\end{rem}

\begin{rem}
  Note that \eqref{eq:BrForY} does not follow from the general theory of \cite{halpern-leistner_derived_2015, ballard_variation_2012}, which would tell us to consider a Kempf--Ness stratification of $\cY\setminus\cY^{ss}$. 
\end{rem}

Set $n=-qv$ in Proposition \ref{prop:EasyWindow}.  Our two subcategories of $\Br(\cY \tms_{\C^{*}} L^{\perp}, W)$ are then contained one inside the other; we have
$$ \Br(\cY\tms_{\C^*} L^\perp,\, W)_{[{-qv},\, 2l'-qv)} \;\subset \; \Br(\cY\tms_{\C^*} L^\perp,\, W)_{[{-qv}, qv)} $$
if $l'\leq qv$, and vice-versa if $l'\geq qv$. Combining this with our other results, we get the following `HPD lite' statement:

\begin{cor}\label{cor:HPDlite}
  If $l'\leq qv$ we have a fully faithful functor
  $$\Br(\cX^{ss}\tms_{\C^*} L,\, \dualW) \into \Br(\cY^{ss}\tms_{\C^*} L^\perp,\, W) $$
  and if $l'\geq qv$ we have a fully faithful functor:
  $$\Br(\cY^{ss}\tms_{\C^*} L^\perp,\, W)\into \Br(\cX^{ss}\tms_{\C^*} L,\, \dualW) $$
  If $l'=qv$ the two categories are equivalent.
\end{cor}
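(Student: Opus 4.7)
The strategy is to realise both sides of the desired functor as window subcategories of the common ambient category $\Br(\cY\tms_{\C^*} L^\perp,\, W)$, and then reduce the statement to an elementary containment of intervals in $\Z$.

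On the left, Corollary \ref{thm:HoriDualityWithLOverSemistableLocus} gives an equivalence
$$\Br(\cX^{ss}\tms_{\C^*} L,\, W') \;\isoto\; \Br(\cY\tms_{\C^*} (L^\perp\setminus 0),\, W),$$
and applying Proposition \ref{prop:EasyWindow} with the particular choice $n = -qv$ lifts the target to the window
$$\Br(\cY\tms_{\C^*} L^\perp,\, W)_{[-qv,\, 2l'-qv)} \;\isoto\; \Br(\cY\tms_{\C^*} (L^\perp\setminus 0),\, W).$$
On the right, Theorem \ref{thm:ProjectiveWindows} identifies
$$\Br(\cY\tms_{\C^*} L^\perp,\, W)_{[-qv,\, qv)} \;\isoto\; \Br(\cY^{ss}\tms_{\C^*} L^\perp,\, W).$$
The key observation is that our choice of $n = -qv$ was dictated precisely by the need to match the left endpoint of the window in Theorem \ref{thm:ProjectiveWindows}.

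Now both window subcategories sit inside the common ambient category $\Br(\cY\tms_{\C^*} L^\perp,\, W)$, and by their definition (via allowed $\Delta$-weights of $h_\bullet(\cE|_0)$) they are manifestly monotone in the defining interval. If $l' \leq qv$ then $2l' - qv \leq qv$, so $[-qv,\, 2l'-qv) \subseteq [-qv,\, qv)$, giving a fully faithful inclusion of the first window into the second; composing with the previous equivalences yields the required fully faithful functor $\Br(\cX^{ss}\tms_{\C^*} L,\, W') \into \Br(\cY^{ss}\tms_{\C^*} L^\perp,\, W)$. The case $l' \geq qv$ is symmetric with the inclusion reversed, and when $l' = qv$ the two intervals coincide, so the inclusion is an equivalence and the composition produces the claimed equivalence of categories.

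There is no real obstacle here: all the substantive content is carried by the three input results. The only subtlety is the careful bookkeeping to ensure that the windows coming from Proposition \ref{prop:EasyWindow} and Theorem \ref{thm:ProjectiveWindows} share a common left endpoint and hence can be meaningfully compared as nested subcategories.
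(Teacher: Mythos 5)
Your proposal is correct and follows essentially the same route as the paper: the text preceding Corollary \ref{cor:HPDlite} sets $n=-qv$ in Proposition \ref{prop:EasyWindow}, notes the containment of the window $[-qv,\,2l'-qv)$ in $[-qv,\,qv)$ (or vice versa) inside the common ambient category, and combines this with Theorem \ref{thm:ProjectiveWindows} and Corollary \ref{thm:HoriDualityWithLOverSemistableLocus} exactly as you do.
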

If $L$ and $L^\perp$ are sufficiently generic then (by Corollary \ref{cor:NCRslices}) we may replace the two categories by $D^b(\Pf_s\cap \P L^\perp,\, A|_{\P L^\perp})$ and $D^b(\Pf_q\cap \P L,\, B|_{\P L})$.

\subsubsection{Fully faithfulness}
In this section we prove half of Theorem \ref{thm:ProjectiveWindows}: that the restriction functor from $\Br(\cY \tms_{\C^{*}} L^\perp,\, W)_{[-qv,qv)}$ to $\Br(\cY^{ss} \tms_{\C^{*}} L^\perp,\, W)$ is fully faithful.

\begin{lem}
  \label{thm:CohenMacaulayDuality}
  Let $R$ be a graded local Gorenstein ring of dimension $n$ with $\omega_{R} \cong R(-d)$, and write $k=R/\mathfrak{m}$ for the unique 1-dimensional $R$-module. If $M$ is a maximal Cohen--Macaulay module on $R$ then
  \[
    \Ext_{R}^{i}(k,M) = 0
  \]
  for $i \not= n$ and:
  \[
    \Ext_{R}^{n}(k,M)^\vee = M^{\vee}(-d)\otimes k
  \]
  as graded $k$-vector spaces.
\end{lem}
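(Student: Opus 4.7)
The plan is to reduce the Ext computation to a Tor computation via graded Grothendieck local duality. Since $k$ is supported at $\mathfrak{m}$, the cofibre of the natural map $R\Gamma_{\mathfrak{m}}(M) \to M$ is a complex on which $\mathfrak{m}$ acts invertibly, hence is annihilated by $\RHom(k,-)$; so $\RHom_R(k, M) \simeq \RHom_R(k, R\Gamma_{\mathfrak{m}}(M))$. The MCM hypothesis collapses $R\Gamma_{\mathfrak{m}}(M)$ to a single cohomology placed in degree $n$: $R\Gamma_{\mathfrak{m}}(M) \simeq H^{n}_{\mathfrak{m}}(M)[-n]$. Graded local duality for the Gorenstein ring $R$, applied using $\omega_{R} \cong R(-d)$ together with the identification $\Hom(M, \omega_{R}) = M^{\vee}(-d)$ for MCM $M$, identifies $H^{n}_{\mathfrak{m}}(M)$ with $(M^{\vee}(-d))^{\ast}$, the graded Matlis dual of $M^{\vee}(-d)$.

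Having rewritten $M$ (up to a shift) as a Matlis dual, I would then invoke the derived tensor--Hom adjunction
\[
\RHom_{R}\bigl(N,\; \Hom_{k}(P, k)\bigr) \;\simeq\; \Hom_{k}\bigl(N \otimes^{L}_{R} P,\; k\bigr),
\]
applied with $N = k$ and $P = M^{\vee}(-d)$, to convert Ext into dualised Tor:
\[
\RHom_{R}(k, M) \;\simeq\; \bigl(k \otimes^{L}_{R} M^{\vee}(-d)\bigr)^{\ast}[-n].
\]
Reading off cohomology yields $\Ext^{i}_{R}(k, M)^{\vee} \simeq \operatorname{Tor}_{i-n}(k, M^{\vee}(-d))$, which vanishes for $i < n$ and, for $i = n$, gives $\Ext^{n}_{R}(k, M)^{\vee} \simeq M^{\vee}(-d) \otimes_{R} k$, which is the advertised identification.

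The step requiring the most care is the graded bookkeeping in the local duality isomorphism: pinning down the precise degree shift in $H^{n}_{\mathfrak{m}}(M) \cong (M^{\vee}(-d))^{\ast}$ is what produces the $(-d)$ rather than $(+d)$ in the final formula. The vanishing $\Ext^{i}_{R}(k, M) = 0$ for $i > n$ needs the additional input $\operatorname{Tor}_{>0}(k, M^{\vee}) = 0$; in the intended application $R$ is a polynomial ring (hence regular), so $M$ and $M^{\vee}$ are free by the Auslander--Buchsbaum formula and this vanishing is automatic.
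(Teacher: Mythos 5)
Your derivation is essentially the paper's own argument made explicit. The paper applies duality for the closed immersion $i\colon \Spec k \to \Spec R$ in the form $i^{!}M \cong (Li^{*}\RHom_R(M,\omega_R[n]))^{\vee}$, together with $\RHom_R(M,\omega_R[n]) \cong M^{\vee}(-d)[n]$ for maximal Cohen--Macaulay $M$; unwinding the derived pullback gives exactly the identity you reach via $\RHom(k,M)\simeq \RHom(k,R\Gamma_{\mathfrak m}M)$, graded local duality and tensor--Hom adjunction, namely
\[
\Ext^{i}_{R}(k,M)^{\vee} \;\cong\; \operatorname{Tor}^{R}_{i-n}\bigl(k,\,M^{\vee}(-d)\bigr),
\]
so your route correctly establishes the vanishing for $i<n$ and the degree-$n$ identification, with the same grading bookkeeping.

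The gap is your final step. As you say, the vanishing for $i>n$ needs $\operatorname{Tor}^{R}_{>0}(k,M^{\vee})=0$, i.e.\ essentially that $M^{\vee}$ is free; but your claim that in the intended application $R$ is a polynomial ring is wrong. The lemma is invoked in Lemma \ref{prop:FullyfaithfulSimpleCase} with $R=\cO(\widetilde{\Pf_q})$ --- Gorenstein but singular, with $\omega_R\cong R(-2qv)$ --- and with $M$ a module of covariants, which is maximal Cohen--Macaulay but in general not free over $R$, so the needed Tor-vanishing fails exactly where the lemma is used. In fact no argument can supply it in the stated generality: for $R=k[x,y]/(xy)$ (graded Gorenstein, $n=1$, $\omega_R\cong R$) and the MCM module $M=R/(x)$ one has $\Ext^{2}_{R}(k,M)\cong k\neq 0$; over a Gorenstein ring a non-free MCM module always has infinite injective dimension. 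So the $i>n$ part of the statement is itself too strong, and the honest output of the duality is your Tor formula for all $i\ge n$ (the paper's short proof tacitly reads the pullback $i^{*}$ as underived at this point). This does not ultimately damage the application: there only the internal-degree-zero ($\C^{*}$-invariant) part of $\Ext^{\ge n}(k,N(p))$ matters, and since $N^{\vee}(-p-2qv)$ is generated in strictly positive degrees, each $\operatorname{Tor}_{j}(k,N^{\vee}(-p-2qv))$ sits in positive degrees and its degree-zero piece vanishes --- but that is a different repair from the one you propose.
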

\begin{proof}
  Let $i : \Spec k \to \Spec R$ be the inclusion.
  We have 
  $$i^{!}(M) =(i^{*}(\RHom(M, \omega_{R}[n])))^{\vee},$$
  e.g.\ by \cite[\href{http://stacks.math.columbia.edu/tag/0AU2}{Tag 0AU2}]{stacks-project}.
  Since $M$ is maximal Cohen--Macaulay, we have 
  \[
    \RHom(M,\omega_{R}[n]) \cong M^{\vee} \otimes \omega_{R}[n],
  \]
  e.g.\ by \cite[Prop.\ 21.12]{eisenbud_commutative_1995}.
  The claim follows.
\end{proof}

Let $\pi$ denote the map:
$$\pi : \cY \to  \Stack{\widetilde{\Pf_q}}{\C^*}$$

\begin{lem}
  \label{thm:SchpurDuals}
  If $\cE$ is a locally free sheaf on $\cY$, then $\pi_{*}(\cE^{\vee}) \cong \pi_{*}(\cE)^{\vee}$.
\end{lem}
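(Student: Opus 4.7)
Plan: The approach is to apply Grothendieck duality for the morphism $\pi$ after establishing that $\pi$ is crepant, i.e., that its relative dualizing sheaf is trivial.

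Since $\Sp(Q)$ is reductive, the pushforward $\pi_*$ is exact and satisfies $\pi_*\cO_\cY \cong \cO_{[\widetilde{\Pf_q}/\C^*]}$; in particular, $\pi$ is cohomologically affine and admits a right adjoint $\pi^!$. Because $\cY$ is a smooth quotient stack and $\widetilde{\Pf_q}$ is Gorenstein (by \v{S}penko--Van den Bergh), the relative dualizing sheaf $\omega_\pi := \pi^!\cO$ is a $\GSp(Q)$-equivariant line bundle on $\cY$. Adjunction and the projection formula then give, for any locally free $\cE$,
\[
(\pi_*\cE)^\vee \;=\; \pi_*\hom_\cY(\cE, \omega_\pi) \;=\; \pi_*(\cE^\vee \otimes \omega_\pi).
\]
The lemma therefore reduces to verifying the crepancy statement $\omega_\pi \cong \cO_\cY$, equivalently $\omega_\cY \cong \pi^*\omega_{[\widetilde{\Pf_q}/\C^*]}$.

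To verify this, I would compute the $\GSp(Q)$-characters of $\omega_\cY$ and $\pi^*\omega_{[\widetilde{\Pf_q}/\C^*]}$ and check they agree. On the source, $\omega_\cY = \det(\Hom(V,Q))^{-1}$, which unwinds to $\det(V)^{2q} \otimes \det(Q)^{-v}$; since $\det V$ is trivial and $\det Q = \langle \omega_Q \rangle^q$ on the quotient $\GSp(Q)/\Sp(Q) = \C^*$, this is $\cO_\cY \otimes \langle \omega_Q \rangle^{-qv}$. On the target, because $\Sp(Q)$ is semisimple and preserves the volume form on $\Hom(V,Q)$, the dualizing sheaf of $\widetilde{\Pf_q}$ is computed by taking $\Sp(Q)$-invariants of $\omega_X$, and hence inherits the same $\C^*$-character $\langle \omega_Q \rangle^{-qv}$. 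Thus $\omega_\pi \cong \cO_\cY$, and the lemma follows.

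The hard part will be rigorously identifying $\omega_{[\widetilde{\Pf_q}/\C^*]}$ as a $\C^*$-equivariant sheaf with the claimed character, rather than with some a priori shift coming from the non-freeness of the $\Sp(Q)$-action along the deeper Pfaffian strata. This is essentially the crepancy of the non-commutative resolution $\widetilde{B}$, a cornerstone of the \v{S}penko--Van den Bergh framework underlying this paper; one should invoke their result rather than re-derive it. A low-tech alternative, in case a direct Grothendieck duality statement is awkward to cite for this stacky morphism, is to check $\pi_*(\cE^\vee) = (\pi_*\cE)^\vee$ on the open locus $[U^{st}/\GSp(Q)]$ where $\Sp(Q)$ acts freely (the locus of surjective $V\onto Q$, whose complement has codimension $\geq 2$), and then extend globally using that both sides are reflexive sheaves on the normal scheme $\widetilde{\Pf_q}$.
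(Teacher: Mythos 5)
Your main route has a genuine gap at its first displayed equality. The formula $(\pi_*\cE)^\vee \cong \pi_*\hom_\cY(\cE,\omega_\pi)$ is not a consequence of ``adjunction and the projection formula'': $\pi$ is neither proper nor flat nor representable, and while an abstract right adjoint to $R\pi_*$ exists, there is no off-the-shelf sheaf-level Grothendieck duality for this good-quotient morphism identifying $\pi^!$ with a line-bundle twist of $\pi^*$. Moreover the statement you would need is too strong: since $\pi_*$ is exact, your formula (with $\omega_\pi\cong\cO_\cY$ concentrated in degree zero, and read derivedly, which is what adjunction would produce) says that $R\hom(\pi_*\cE,\cO_{\widetilde{\Pf_q}})$ is a sheaf, i.e.\ that $\Ext^{i}(\pi_*\cE,\cO_{\widetilde{\Pf_q}})=0$ for all $i>0$ and \emph{every} equivariant bundle $\cE$ — that is, that every module of covariants is maximal Cohen--Macaulay over the Gorenstein ring of $\widetilde{\Pf_q}$. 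Cohen--Macaulayness of modules of covariants is exactly the delicate issue here (it is the subject of van den Bergh's covariants paper and the reason \v{S}penko--Van den Bergh single out the set $Y_{q,s}$), and it is not available for arbitrary $\cE$; note the lemma gets applied to bundles like $\Schpur{\alpha_1,k_1}Q^{\vee}\otimes\Schpur{\alpha_2,k_2}Q$, whose irreducible summands leave that range. Relatedly, ``crepancy of the NCCR $\widetilde{B}$'' is a statement about $\End(\widetilde{T})$ being Cohen--Macaulay, not about $\pi$ admitting a trivial relative dualizing sheaf together with a duality isomorphism for all $\cE$. Your equivariant computation of $\omega_\cY\cong\langle\omega_Q\rangle^{-qv}$ is correct, but it does not produce the duality formula; producing it is essentially the content of the lemma.

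By contrast, the ``low-tech alternative'' in your last sentence is the paper's actual proof: the two sheaves agree over the locus of surjective maps $V\onto Q$ (equivalently over $\widetilde{\Pf}_q^{\sm}$, where $\pi$ is an equivalence), the relevant complements have codimension at least $2$, and coherent reflexive sheaves on the normal variety $\widetilde{\Pf_q}$ are determined by their restriction to such an open set. The one input you assume rather than justify is the reflexivity of $\pi_*(\cE^\vee)$: the dual $(\pi_*\cE)^\vee$ is automatically reflexive, but for the pushforward the paper invokes Brion's theorem that invariants preserve reflexivity because $\pi$ sends no divisor into a subset of codimension $\ge 2$. With that input added, your fallback argument is complete and coincides with the intended one.
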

\begin{proof}
  Since $\pi$ does not send any divisor to a codimension $\ge 2$ subset, the pushforward functor preserves reflexive sheaves by \cite[Prop.\ 1.3]{brion_sur_1993}, and so $\pi_{*}(\cE^{\vee})$ is reflexive.

  Over the smooth locus $\widetilde{\Pf}_q^{\sm}$ the map $\pi$ is an equivalence of stacks, so within this locus $\pi_{*}(\cE^{\vee}) \cong \pi_{*}(\cE)^{\vee}$. 
  Let $j : \widetilde{\Pf}_{q}^{\sm} \into \widetilde{\Pf}_{q}$ be the inclusion.
  Since the singular locus has codimension $\ge 2$, and both $\pi_{*}(\cE^{\vee})$ and $\pi_{*}(\cE)^{\vee}$ are reflexive, we have
  \[
    \pi_{*}(\cE^{\vee}) \cong j_{*}(\pi_{*}(\cE^{\vee})|_{\widetilde{\Pf}_{q}^{\sm}}) \cong j_{*}(\pi_{*}(\cE)^{\vee})|_{\widetilde{\Pf}_{q}^{\sm}}) \cong \pi_{*}(\cE)^{\vee},
  \]
  using \cite[Prop.\ 1.6]{hartshorne_stable_1980}.
\end{proof}

Let $Z=\set{0}\subset \widetilde{\Pf}_q$ be the vertex of the cone, and let $H_Z^\bullet(-) : D^{b}(\Stack{\widetilde{\Pf}_q}{\C^*}) \to D^{b}(\Vect)$ denote the functor of taking local cohomology at $Z$ (and $\C^{*}$-invariants).

\begin{lem}
  \label{prop:FullyfaithfulSimpleCase}
  Take two irreps of $\GSp(Q)$ with highest weights  $(\alpha_{1}, k_{1})$ and $(\alpha_{2},k_{2})$ lying in the set $Y_{q,s} \tms [-qv, qv)$. We have associated vector bundles $\Schpur{\alpha_{1}, k_{1}} Q$ and $\Schpur{\alpha_{2}, k_{2}} Q$ on $\cY$. Let:
  $$N = \pi_{*}\!\big(\Schpur{\alpha_{1},k_{1}}Q^{\vee} \otimes \Schpur{\alpha_{2},k_{2}}Q\big) \quad \in \Coh\!\big(\Stack{\widetilde{\Pf_q}}{\C^*}\big)   $$
  Then  $H^{*}_{Z}(N(i)) = 0$ for all $i \ge 0$.
\end{lem}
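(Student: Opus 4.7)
I would reduce the local cohomology vanishing to a statement about the degrees of the $R$-dual $N^\vee$, where $R := \cO_{\widetilde{\Pf_q}}$. The first key input is that $N$ is a maximal Cohen--Macaulay $R$-module: the \v{S}penko--Van den Bergh NCCR $\widetilde{B} = \End_\cY(\bigoplus_{\delta \in Y_{q,s}}\Schpur{\delta}Q)$ is MCM over $R$, so every summand $\pi_*(\Schpur{\delta_1} Q^\vee \otimes \Schpur{\delta_2}Q)$ is MCM, and $N$ is such a summand twisted by a power of the line bundle $\langle\omega_Q\rangle$.

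Next, since $R$ is Gorenstein with $\omega_R \cong R(-d)$ where $d = qv$ (as determined by $\omega_R$ pulling back to $(\det Q)^{-v}$ on $\cY$, which has $\C^*$-weight $-qv$), graded local duality gives $H^j_Z(N(i)) = 0$ for $j < n := \dim \widetilde{\Pf_q}$, and expresses $H^n_Z(N(i))$ as the $k$-linear dual of the graded piece $(N^\vee)_{-i-d}$. Hence $H^*_Z(N(i)) = 0$ for every $i \geq 0$ reduces to showing that $N^\vee$ has no components in degrees $\leq -d = -qv$.

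Finally, I would compute $N^\vee$ using Lemma \ref{thm:SchpurDuals} together with the self-duality of symplectic Schur functors, obtaining $N^\vee \cong \pi_*\bigl(\Schpur{\alpha_1, k_1}Q \otimes \Schpur{\alpha_2, -k_2}Q\bigr)$. The $\Sp(Q)$-invariant sections of this bundle in polynomial degree $j$ on $\Hom(V,Q)$ have total $\Delta$-weight $k_1 - k_2 - j$, translating to degree $(j + k_2 - k_1)/2$ on $\widetilde{\Pf_q}$ (using that $\GSp(Q)/\Sp(Q) \to \C^*$ doubles weights). Thus $N^\vee$ is concentrated in degrees $\geq (k_2 - k_1)/2$, and the grade-restriction rule $k_1, k_2 \in [-qv, qv)$ gives $(k_2 - k_1)/2 > -qv$ as required. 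The hardest step is the first, which relies on the \v{S}penko--Van den Bergh theory; the numerical bound at the end is tight, with the half-open interval $[-qv, qv)$ being exactly what produces the strict inequality $(k_2-k_1)/2 > -d$.
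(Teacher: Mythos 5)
Your proposal is correct and follows essentially the same route as the paper: maximal Cohen--Macaulayness of $N$ via \v{S}penko--Van den Bergh, Gorenstein/local duality to convert the vanishing into a degree bound on $N^{\vee}$, the identification $N^{\vee} \cong \pi_{*}\!\big(\Schpur{\alpha_{1},k_{1}}Q \otimes \Schpur{\alpha_{2},k_{2}}Q^{\vee}\big)$ via Lemma \ref{thm:SchpurDuals}, and the strict inequality supplied by the half-open window $[-qv,qv)$. The only differences are cosmetic: the paper reduces $H^{\bullet}_{Z}$ to $\Ext^{\bullet}(\cO_{Z},N(p))$ for $p\ge 0$ by a colimit over infinitesimal neighbourhoods of $Z$ and then applies its Lemma \ref{thm:CohenMacaulayDuality}, rather than quoting graded local duality wholesale, and it measures degrees in $\Delta$-weights (canonical twist $2qv$) where you use the similitude grading (twist $qv$).
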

\begin{proof}
  Let us restrict to the case $i = 0$, the general case is shown the same way.
  Decomposing the representation $\Schpur{\alpha_{1},k_{1}}Q^{\vee} \otimes \Schpur{\alpha_{2},k_{2}}Q$ into irreps, we obtain a decomposition of $N$. 
  We may write $H^\bullet_{Z}(N) = \lim_{\rightarrow}\Hom(\cO/I_{Z}^{p},N)$, where $\lim_{\rightarrow}$ denotes a homotopy colimit \cite[Prop.\ 1.5.3]{lipman_lectures_2002}.
  Using the short exact sequence $I_{Z}^{p}/I_{Z}^{p+1} \into \cO/I_{Z}^{p+1} \onto \cO/I_{Z}^{p}$, the required vanishing then follows if $\Hom(I_{Z }^{p}/I_{Z }^{p+1}, N) = 0$ for all $p \ge 0$.
  As $I_{Z}^{p}/I_{Z}^{p+1}$ is isomorphic to a direct sum of copies of $\cO_{Z}(-p)$, it finally suffices to show that $\RHom(\cO_{Z},N(p)) = 0$ vanishes when $p \ge 0$.

  Write $n$ for the dimension of $\widetilde{\Pf}_q$, and recall that its canonical bundle is $\cO(-2qv)$.  By \cite[Thm.\ 1.6.4]{spenko_non-commutative_2015} the module $N(p)$ is Cohen--Macaulay.
  By Lemma \ref{thm:CohenMacaulayDuality} we have that $\Ext^{j}(\cO_{Z},N(p)) = 0$ if $j\neq n$, and $\Ext^{n}(\cO_{Z},N(p))^{\C^{*}}$ is the dual of the invariants in $N^{\vee}(-p-2qv)|_{Z}$. 
  Now by Lemma \ref{thm:SchpurDuals}, we have $N^{\vee} = \pi_{*}(\Schpur{\alpha_{1},k_{1}}Q \otimes \Schpur{\alpha_{2},k_{2}}Q^{\vee})$, and this module is generated in degrees $\ge k_{1} - k_{2} > -2qv$.
  So for $p\geq 0$ the module $N^{\vee}(-p-2qv)$ is generated in positive degree, hence its restriction to $Z$ has no $\C^{*}$-invariants.
\end{proof}

\begin{lem}
  \label{lem:SectionsVanishing}
  Let $\Stack{\A^n}{\C^*}$ be a vector space with a diagonal $\C^*$-action of weight $-1$. Suppose that $\cE\in D^b(\Stack{\A^n}{\C^*})$ has the property that $(\cE(p)|_0)^{\C^*}=0$ for all $p\geq 0$. 
  Then $\cE$ has no global sections.
\end{lem}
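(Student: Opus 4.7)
My plan is to pass to a minimal model for $\cE$, translate the hypothesis into a positivity condition on the $\C^{*}$-weights of the representations appearing there, and then compute global sections termwise using the fact that $\cO_{\A^{n}}$ is non-negatively graded.

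First, I would apply Lemma \ref{lem:minimalmodels}, taking the group $G$ to be the acting $\C^{*}$ itself (with trivial $R$-charge and $W = 0$); the given weight-$-1$ action of $\C^*$ on $\A^{n}$ is itself the central $1$-parameter dilation demanded by the hypothesis. This represents $\cE$ by a bounded complex $(E^{\bullet}, d)$ in which each $E^{i} = \cO_{\A^{n}} \otimes V_{i}$ is the trivial bundle associated to a finite-dimensional $\C^{*}$-representation $V_{i} = E^{i}|_{0}$, and all differentials vanish at the origin.

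Because the differentials vanish at $0$, the restriction $\cE(p)|_{0}$ is the complex $\bigoplus_{i}(V_{i} \otimes \langle p \rangle)[-i]$ with zero differentials, where $\langle p \rangle$ denotes the weight-$p$ character of $\C^{*}$ (which is the fiber $\cO(p)|_{0}$ in the conventions used here). Taking $\C^{*}$-invariants term by term picks out the weight-$(-p)$ subspace of each $V_{i}$. The hypothesis $(\cE(p)|_{0})^{\C^{*}} = 0$ for every $p \ge 0$ is therefore equivalent to the assertion that every $V_{i}$ has only strictly positive $\C^{*}$-weights.

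Since $\C^{*}$ acts on $\A^{n}$ with weight $-1$, the ring $\cO_{\A^{n}}$ is non-negatively graded under the induced $\C^{*}$-action on functions. For each $i$, the global sections
\[
\Gamma(E^{i}) = (\cO_{\A^{n}} \otimes V_{i})^{\C^{*}}
\]
form the weight-zero subspace of a tensor product whose first factor has weights $\ge 0$ and whose second has weights $>0$; so this subspace vanishes. Because $\C^{*}$ is reductive, $R\Gamma$ is exact on $\C^{*}$-equivariant coherent sheaves, hence agrees with $\Gamma$ termwise on $E^{\bullet}$. Thus $R\Gamma(\cE) = 0$, and in particular $\cE$ has no global sections. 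I do not expect any substantive obstacle; the only care needed is with sign conventions when translating the hypothesis into a weight-positivity statement.
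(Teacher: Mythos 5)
Your proof is correct and is exactly the argument the paper has in mind: the paper's proof of this lemma is the single line ``this follows easily from Lemma \ref{lem:minimalmodels}'', and your write-up spells that out -- pass to a minimal model via Lemma \ref{lem:minimalmodels}, translate the hypothesis into strict positivity of the $\C^*$-weights of $h_\bullet(\cE|_0)$, and kill $\Gamma$ termwise using the non-negative grading of $\cO_{\A^n}$. Your sign conventions (the fiber of $\cO(p)$ at the origin being the weight-$p$ character, functions on $\A^n$ having weights $\ge 0$) agree with the paper's usage, e.g.\ in the proof of Lemma \ref{prop:FullyfaithfulSimpleCase}, and indeed the statement would fail with the opposite twist convention, so flagging this was the right instinct.
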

\begin{proof}
  This follows easily from Lemma \ref{lem:minimalmodels}.
\end{proof}

Let's abuse notation and continue to use $\pi$ for the map:
$$\pi: \cY\tms_{\C^*} L^\perp \To \Stack{\widetilde{\Pf_q}\tms L^\perp}{\C^*} $$

\begin{prop}
  \label{prop:Fullyfaithful}
  The restriction functor
  \[
    \Br(\cY \tms_{\C^{*}} L^\perp,\,W)_{[-qv,qv)} \to \Br(\cY^{\ss} \tms_{\C^{*}} L^\perp,\,W)
  \]
  is fully faithful.
\end{prop}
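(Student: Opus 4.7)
The plan is to identify the cone of the restriction map with local cohomology along the unstable locus, then reduce its vanishing to Lemma \ref{prop:FullyfaithfulSimpleCase} via affine pushforward and K\"unneth. Fix $\cE_1, \cE_2 \in \Br(\cY \tms_{\C^{*}} L^\perp, W)_{[-qv,qv)}$. By Lemma \ref{lem:minimalmodels} we may choose minimal matrix-factorisation models whose underlying vector bundles are direct sums of $\Schpur{\alpha,k}Q \boxtimes \cO_{L^\perp}$ with $\alpha \in Y_{q,s}$ and $k \in [-qv, qv)$. The internal hom sheaf $\hom(\cE_1, \cE_2)$ is then a curved dg-sheaf whose underlying vector bundle is a finite direct sum of terms $\Schpur{\alpha_1,k_1}Q^\vee \otimes \Schpur{\alpha_2,k_2}Q \boxtimes \cO_{L^\perp}$ with each $k_i \in [-qv, qv)$.

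The standard open--closed triangle realises the cone of
$$\RHom_{\cY \tms_{\C^{*}} L^\perp}(\cE_1, \cE_2) \to \RHom_{\cY^{\ss} \tms_{\C^{*}} L^\perp}(\cE_1|_{\ss}, \cE_2|_{\ss})$$
as the local cohomology $R\Gamma_Z(\hom(\cE_1, \cE_2))$, where $Z = \cY^{\us} \tms_{\C^*} L^\perp$ is the unstable locus; it therefore suffices to prove this local cohomology vanishes. Since the map $\pi : \cY \to \Stack{\widetilde{\Pf_q}}{\C^*}$ satisfies $\pi^{-1}(0) = \cY^{\us}$ and has exact pushforward (as an $\Sp(Q)$-quotient followed by an affine morphism), the extended map $\pi' := \pi \tms \id : \cY \tms_{\C^*} L^\perp \to \Stack{\widetilde{\Pf_q} \tms L^\perp}{\C^*}$ inherits these properties and commutes with local cohomology. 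We thus reduce to showing that the stacky local cohomology $R\Gamma_{\{0\} \tms L^\perp/\C^*}(R\pi'_*\hom(\cE_1, \cE_2))$ vanishes.

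The underlying sheaf of $R\pi'_*\hom(\cE_1, \cE_2)$ decomposes as a finite direct sum $\bigoplus N_{\alpha_1, k_1, \alpha_2, k_2} \boxtimes \cO_{L^\perp}$ over pairs $(\alpha_i, k_i) \in Y_{q,s} \tms [-qv, qv)$, where $N_{\alpha_1, k_1, \alpha_2, k_2} := \pi_*(\Schpur{\alpha_1, k_1}Q^\vee \otimes \Schpur{\alpha_2, k_2}Q)$ is exactly the sheaf studied in Lemma \ref{prop:FullyfaithfulSimpleCase}. Since local cohomology is additive and exact on the derived category, vanishing on each summand of the underlying sheaf implies vanishing for the full curved dg-sheaf. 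For a single summand, K\"unneth combined with $\cO_{L^\perp} = \Sym^\bullet L^{\perp,\vee}$ expresses the stacky local cohomology as a sum of tensor products of the form $H^*_{\{0\}}(N_{\alpha_1, k_1, \alpha_2, k_2}(i))^{\C^*} \otimes \Sym^i L^{\perp,\vee}$ ranging over the $\C^*$-weights $i \geq 0$ contributed by $\Sym^\bullet L^{\perp,\vee}$. Every such factor vanishes by Lemma \ref{prop:FullyfaithfulSimpleCase}.

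The hard work is Lemma \ref{prop:FullyfaithfulSimpleCase} itself, where the window bound $[-qv, qv)$ is essential: it is precisely what forces the module $N^\vee(-p-2qv)$ to be generated in strictly positive degree for every $p \geq 0$, killing the one potentially non-trivial local Ext. The remaining ingredients -- the identification of the restriction cone with local cohomology in the matrix-factorisation category, the exact commutation of local cohomology with $R\pi'_*$, and the passage from underlying-sheaf vanishing to curved dg-sheaf vanishing -- are formal consequences of the standard formalism once it is available for matrix factorisations, and introduce no new difficulty.
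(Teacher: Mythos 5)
Your proposal is correct and follows essentially the same route as the paper: reduce to the underlying bundles (which are finite sums of the window bundles), push down along $\pi$ to $\bigl[\widetilde{\Pf_q}\tms L^\perp/\C^*\bigr]$, and kill the local cohomology along the vertex times $L^\perp$ summand-by-summand using Lemma \ref{prop:FullyfaithfulSimpleCase}. The only (cosmetic) difference is in the last bookkeeping step: you handle the $L^\perp$ factor by flat base change/K\"unneth with $\cO_{L^\perp}=\Sym^\bullet L^{\perp\vee}$, whereas the paper runs the $I_Z$-adic reduction to $\RHom(\cO_{Z\tms L^\perp},M(p))$ and then invokes Lemma \ref{lem:SectionsVanishing}; the two amount to the same weight computation.
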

\begin{proof}
  Let $\cE, \cF \in \Br(\cY \tms_{\C^{*}} L^\perp,W)_{[-qv,qv)}$. We can assume $\cE$ and $\cF$ are matrix factorizations whose underlying vector bundles are direct sums of the bundles $\set{\Schpur{\alpha,k} Q,\, (\alpha, k)\in Y_{q,s}\tms[-qv,qv)}$. The morphisms between $\cE$ and $\cF$ are just the homology of the chain-complex $\Gamma(\cE^\vee\otimes \cF) = (\cE^\vee\otimes \cF)^{\GSp(Q)}$. If we restrict to $\cY^{\ss} \tms_{\C^{*}} L^\perp$ then the morphism are \emph{a priori} more complicated, because we must take the derived global sections of $\cE^\vee\otimes \cF$. However, we claim that in fact the vector bundle $\cE^\vee\otimes \cF$ has the property that
  $$R\Gamma(\cY^{\ss} \tms_{\C^{*}} L^\perp,\, \cE^\vee\otimes \cF) = \Gamma(\cY \tms_{\C^{*}} L^\perp, \cE^\vee\otimes \cF) $$
  so the morphisms between $\cE$ and $\cF$ do not change upon restriction. This is the statement of the proposition.

  Now we prove the claim. Obviously it's sufficient to prove it for the summands of $\cE$ and $\cF$, so pick two vector bundles $\Schpur{\alpha_1,k_1} Q$ and $\Schpur{\alpha_2,k_2} Q$ with $(\alpha_1, k_1)$ and $(\alpha_2, k_2)$ both in $Y_{q,s}\tms[-qv, qv)$. Let $M = \pi_{*}( \Schpur{\alpha_{1},k_{1}}Q^{\vee} \otimes \Schpur{\alpha_{2},k_{2}}Q)$. We are claiming that the map
  $$\Gamma(M) \to R\Gamma( (\widetilde{\Pf_q}\setminus Z)\tms L^\perp,  M) $$
  is an isomorphism, \emph{i.e.} that the local cohomology $H^\bullet_{Z \tms L^\perp}(M)$ vanishes. Arguing as in the proof of Lemma \ref{prop:FullyfaithfulSimpleCase}, this reduces to checking that $\RHom(\cO_{Z \tms L^\perp}, M(p)) = 0$ for all $p \ge 0$. In other words, if we let $i : Z \tms L \into \widetilde{\Pf_q}\tms L$ be the inclusion, we want to see that $i^{!}M(p)$ has no global sections when $p\geq 0$. 

  Everything here is flat over $L^\perp$, so we can use Lemma \ref{prop:FullyfaithfulSimpleCase} to understand the restriction of $i^{!}M(p)$ to the origin in $L^\perp$. In particular, $i^{!}M(p)|_{0}$ has no invariants for $p\geq 0$. Now apply Lemma \ref{lem:SectionsVanishing}.
\end{proof}

\subsubsection{Essential surjectivity}

In this section we prove that the restriction functor from $\Br(\cY \tms_{\C^{*}} L^\perp,\, W)_{[-qv,qv)}$ to $\Br(\cY^{ss} \tms_{\C^{*}} L^\perp,\, W)$ is essentially surjective.

Pick an irrep $\Schpur{\alpha,k}Q$ with highest weight $(\alpha,k)$ lying in $Y_{q,s}\tms \Z$. On $\cY$ we have a corresponding twist  $\cO_0\otimes \Schpur{\alpha, k} Q$ of the skyscraper sheaf at the origin, and by Lemma \ref{lem:BranesToModules2} we can project this into $\Br(\cY)$ to get an object:
$$P_{\alpha, k} \in \Br(\cY)$$
Up to a grading shift this object corresponds to the `vertex simple' $B$-module at the vertex $\alpha$. It also agrees with the restriction of the object $\cP_\alpha$ from Section \ref{sec:DualObjects} to the slice $\cY\tms 0$, again up to grading; c.f.~Remark \ref{rem:VertexSimples}. 

For any object $\cE\in \Br(\cY)$, if we restrict to the origin we get a $\GSp(Q)$-representation $h_\bullet(\cE|_0)$, and by definition of $\Br(\cY)$ this is a direct sum of irreps with highest weight in $Y_{q,s}\tms \Z$. 
We also know (by Lemma \ref{lem:minimalmodels}) that $\cE$ is equivalent to a matrix factorization on the associated vector bundle. For brevity, let's refer to the set of irreps that occur in $h_\bullet(\cE|_0)$ as the \emph{weights} of $\cE$. 

Our first task is to determine the weights of the object $P_{\alpha, k}$. 

\begin{lem}\label{lem:WeightsByHom}
  For any  $\cE \in \Br(\cY)$, we have that $(\alpha, k)$ is a weight of $\cE$ if and only if $\Hom_\cY(\cE ,P_{\alpha,k})\neq 0$.
\end{lem}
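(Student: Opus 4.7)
The plan is to identify $\Hom_\cY(\cE, P_{\alpha,k})$ with the $\GSp(Q)$-invariants of $h_\bullet(\cE|_0)^\vee \otimes \Schpur{\alpha,k}Q$ and then read off the answer from Schur's lemma. The two ingredients are the adjunction that defines $P_{\alpha,k}$ and the minimal-model statement (Lemma~\ref{lem:minimalmodels}).

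First, $\Br(\cY) \subset D^b(\cY)$ is right-admissible (the $\cY$-analogue of Lemma~\ref{lem:BranesToModules2}), and $P_{\alpha,k}$ is by construction the image of $\cO_0 \otimes \Schpur{\alpha,k}Q$ under the right adjoint to the inclusion. Adjunction therefore gives
$$
\Hom_\cY(\cE, P_{\alpha,k}) \;\cong\; \Hom_{D^b(\cY)}\bigl(\cE,\, \cO_0 \otimes \Schpur{\alpha,k}Q\bigr)
$$
for every $\cE \in \Br(\cY)$. Second, the central 1-parameter subgroup $\Delta \subset \GSp(Q)$ acts by dilation on $\Hom(V,Q)$, so Lemma~\ref{lem:minimalmodels} applies: we may replace $\cE$ by a minimal model $(E, d_E)$ where $E$ is the vector bundle associated to the $\GSp(Q)$-representation $h_\bullet(\cE|_0)$ and $d_E|_{0}=0$.

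Since $E$ is locally free and $\cO_0 \otimes \Schpur{\alpha,k}Q$ is a skyscraper at the origin, $R\hom(E, \cO_0 \otimes \Schpur{\alpha,k}Q)$ is a skyscraper at $0$ with fibre $h_\bullet(\cE|_0)^\vee \otimes \Schpur{\alpha,k}Q$. Pushing forward to the point and taking $\GSp(Q)$-invariants (exact by reductivity) identifies $\RHom_\cY(\cE, \cO_0 \otimes \Schpur{\alpha,k}Q)$ with the chain complex
$$
\bigl(h_\bullet(\cE|_0)^\vee \otimes \Schpur{\alpha,k}Q\bigr)^{\GSp(Q)},
$$
whose differential is induced by $d_E$. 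The vanishing $d_E|_{0}=0$ kills this induced differential, so the $\Hom$ space is precisely this invariant space, sitting in a single cohomological degree.

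Finally, by Schur's lemma these $\GSp(Q)$-invariants are nonzero if and only if the irrep $\Schpur{\alpha,k}Q$ occurs in $h_\bullet(\cE|_0)$, i.e.\ exactly when $(\alpha,k)$ is a weight of $\cE$. No substantial obstacle is anticipated: the argument is a bookkeeping exercise combining the adjoint definition of $P_{\alpha,k}$ with the minimal-model representation of $\cE$; the only point requiring care is the observation that $d_E|_{0}=0$ forces the induced differential on the Hom complex to vanish, so that the computation reduces cleanly to representation theory at the origin.
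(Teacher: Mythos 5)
Your argument is correct and follows the paper's proof in essence: the key step is the same identification $\Hom_\cY(\cE,P_{\alpha,k})\cong\Hom_{D^b(\cY)}(\cE,\cO_0\otimes\Schpur{\alpha,k}Q)$ coming from the projection defining $P_{\alpha,k}$, followed by a representation-theoretic computation at the origin. The paper gets the second step more directly, via the adjunction $\Hom_\cY(\cE,\cO_0\otimes\Schpur{\alpha,k}Q)\cong\Hom(\cE|_0,\Schpur{\alpha,k}Q)$ and semisimplicity of the category of $\GSp(Q)$-representations, so your appeal to Lemma \ref{lem:minimalmodels} and the vanishing $d_E|_0=0$ is sound but not strictly needed.
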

\begin{proof}
  By adjunction:
  $$\Hom\!\big(\cE|_0, \Schpur{\alpha, k} Q\big) = \Hom_\cY\!\big(\cE, \cO_0\otimes \Schpur{\alpha, k}Q\big) = \Hom_\cY\!\big(\cE, \cP_{\alpha, k}\big) $$
\end{proof}

\begin{lem}
  \label{lem:usefulresolutions}
  The set of weights of $P_{\alpha,k}$ contains one copy of $(\alpha,k)$,  and all the remaining weights are contained in the set $Y_{q,s} \times [k-2qv, k )$.
\end{lem}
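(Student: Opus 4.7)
My plan is to model $P_{\alpha, k}$ via the Koszul resolution of $\cO_0 \otimes \Schpur{\alpha, k}Q$ on $\cY$. The Koszul complex $K_\bullet = \wedge^\bullet \Hom(V, Q)^\vee \otimes \cO_\cY$ resolves $\cO_0$, with $K_p$ in cohomological degree $-p$; tensoring with $\Schpur{\alpha, k}Q$, the $p$-th term $\wedge^p \Hom(V, Q)^\vee \otimes \Schpur{\alpha, k}Q$ is a constant $\GSp(Q)$-bundle of $\Delta$-weight $k - p$, so the range of $\Delta$-weights appearing across all $p \in [0, 2qv]$ is exactly $[k - 2qv, k]$. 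The Koszul differential (contraction with coordinate functions) vanishes at the origin, and since $\alpha \in Y_{q,s}$, the $p = 0$ term $\Schpur{\alpha, k}Q$ already lies in $\Br(\cY)$ and is the unique Koszul summand at $\Delta$-weight $k$.

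To connect this Koszul model to $P_{\alpha, k}$, recall from Remark \ref{rem:VertexSimples} that $P_{\alpha, k}$ corresponds under $\Br(\cY) \cong D^b(B)$ to the vertex simple $B$-module $M_\alpha$ (with appropriate grading). By Lemma \ref{lem:minimalmodels}, there is a minimal chain-complex model $(E, d_E)$ of $P_{\alpha, k}$ with $d_E|_0 = 0$ and $E|_0 = P_{\alpha, k}|_0$, and the weights are exactly the $\GSp(Q)$-irreducibles in this graded representation. The content of the lemma is that this minimal model's underlying bundle has $\Delta$-weights confined to $[k - 2qv, k]$, matching the Koszul range; morally, this is because the Koszul model already realizes $\cO_0 \otimes \Schpur{\alpha, k}Q$ with weights in this range, and passing to the right-adjoint projection can only remove $\Sp(Q)$-irreducibles outside $Y_{q, s}$, not shift their $\Delta$-weights.

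The multiplicity-1 statement for $(\alpha, k)$ then follows from Lemma \ref{lem:WeightsByHom} applied to $\cE = P_{\alpha, k}$: the multiplicity equals $\dim \Hom_\cY(P_{\alpha, k}, P_{\alpha, k}) = \dim \End_B(M_\alpha) = 1$ by Schur's lemma. Since only the $p = 0$ Koszul term $\Schpur{\alpha, k}Q$ contributes at $\Delta$-weight $k$, no other weights occur at this level, giving the remaining weights in $Y_{q,s} \tms [k - 2qv, k)$.

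The main obstacle is establishing rigorously that the right-adjoint projection to $\Br(\cY)$ respects the $\Delta$-weight bound $[k - 2qv, k]$ — the Koszul model is a chain complex in $D^b(\cY)$, not in $\Br(\cY)$, so one must check that passing to $P_{\alpha, k}$ does not introduce new weights outside this range. I expect to address this by computing $\RHom(T, \cO_0 \otimes \Schpur{\alpha, k}Q)$ explicitly using the Koszul resolution, verifying via a spectral-sequence argument that its cohomology is the simple $M_\alpha$ (as predicted by Remark \ref{rem:VertexSimples}), and then reading off $P_{\alpha, k}|_0$ from the minimal projective resolution of $M_\alpha$ in $D^b(B)$ — whose grading shifts are then automatically bounded by the $\Delta$-weight range of the Koszul complex that computed $M_\alpha$ in the first place.
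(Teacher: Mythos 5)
Your proposal breaks down at exactly the point you flag as the ``main obstacle,'' and the fix you sketch does not work. The right-adjoint projection into $\Br(\cY)$ does \emph{not} simply delete the $\Sp(Q)$-irreps outside $Y_{q,s}$: each bad Koszul summand $\Schpur{\alpha',k'}Q$ gets replaced by a nontrivial complex built from allowed bundles $\Schpur{\alpha'',k''}Q$, and this replacement introduces new $\Delta$-weights. The paper controls this step using the \v{S}penko--Van den Bergh complexes $C_{\lambda,\chi}$, whose key property (via \cite[Lemma 11.2.1]{spenko_non-commutative_2015} and positivity of the $\Delta$-weights on $\Hom(V,Q)$) is only that the new weights satisfy $k''\leq k'$ --- weights can move \emph{down}, with no a priori floor. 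So the Koszul range $[k-2qv,k]$ gives the upper bound $k'<k$ and the single copy of $(\alpha,k)$, but it does not give the lower bound $k'\geq k-2qv$; your claim that the minimal projective resolution of $M_\alpha$ has grading shifts ``automatically bounded by the $\Delta$-weight range of the Koszul complex'' is precisely what fails. The paper obtains the lower bound by a separate argument: Serre duality on $\Br(\cY)$ (Proposition \ref{thm:SerreDuality2}, i.e.\ crepancy of the noncommutative resolution) shows via Lemma \ref{lem:WeightsByHom} that if $(\alpha',k')$ is a weight of $P_{\alpha,k}$ then $(\alpha,k-2qv)$ is a weight of $P_{\alpha',k'}$, whence $k-2qv\leq k'$ by the already-established upper bound. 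That this input is genuinely needed is confirmed by the even-dimensional case (Proposition \ref{thm:weightsOfPDelta}), where crepancy fails, the bound changes shape, and proving it requires the local-cohomology computations of Section \ref{sec:vanDenBergh} rather than any formal Koszul-range argument.

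A smaller issue: your multiplicity-one count via ``$\dim\End_B(M_\alpha)=1$ by Schur's lemma'' conflates abelian-category endomorphisms with derived endomorphisms; $\Ext^\bullet_B(M_\alpha,M_\alpha)$ is far from one-dimensional, and what you actually need is that no higher self-extension occurs at the \emph{same} $\Delta$-weight $k$. This can be repaired (the arrows of $B$ have strictly positive $\Delta$-weight, so higher terms of a minimal resolution shift the weight strictly down), or avoided entirely as in the paper, where uniqueness at level $k$ falls out of the observation that the Koszul complex has a single summand of weight $k$ and the projection only produces weights $\leq k'$.
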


\begin{proof}
  To compute the weights of $P_{\alpha, k}$ we need to resolve it by vector bundles associated to the set $Y_{q,s}\tms \Z$. As a first step, replace $\Schpur{\alpha,k}Q \otimes \cO_{0}$ by a twist of the usual Koszul resolution on $\Hom(V, Q)$. This complex has a single copy of $\Schpur{\alpha, k} Q$, and the remaining summands are of the form $\Schpur{\alpha' , k'} Q$ with $k'<k$. 

  Now we need to project into the subcategory $\Br(\cY)$. If $\alpha'\in Y_{q,s}$ then $\Schpur{\alpha', k'} Q$ is unaffected by this projection, however most summands will not be of this form and will become something more complicated. To understand what they become, we use the results of \v{S}penko--Van den Bergh. They construct a large set of objects in $D^b(\cY)$ which are orthogonal to $\Br(\cY)$, and also find locally-free resolutions of them. Projecting the resolutions into $\Br(\cY)$ gives exact sequences, which allows us to express the projections of vector bundles in terms of projections of other vector bundles. Using these exact sequences repeatedly, we can eventually express the projection of any vector bundle in terms of vector bundles associated to the set $Y_{q,s}\tms \Z$ (this is how they prove that $\Br(\cY)$ has finite global dimension). 

  To be precise, we consider the complexes which are denoted $C_{\lambda, \chi}$ in \cite[\S 11.2]{spenko_non-commutative_2015} -- taking $\chi$ as a $\GSp(Q)$-weight and $\lambda$ a 1-parameter subgroup of $\Sp(Q)$ we obtain a complex on $\cY$.
  We only need to know one fact: the resulting exact sequences replace a bundle $\Schpur{\alpha', k'} Q$ with bundles $\Schpur{\alpha'', k''}Q$ where $k''\leq k'$. This follows from \cite[Lemma 11.2.1]{spenko_non-commutative_2015} and the fact that $\Delta$ acts with positive weights on $\Hom(V,Q)$. This proves that the set of weights of $P_{\alpha,k}$ contains one copy of $(\alpha,k)$ and the other weights satisfy $k^{\pr} < k$.

  Now we apply Serre duality for the category $\Br(\cY)$ (Proposition \ref{thm:SerreDuality2}), recalling that the Serre functor is given (up to a shift) by tensoring by $(\det Q)^{-v}=\Schpur{\textbf{0}, -2qv} Q$. By Lemma \ref{lem:WeightsByHom}, 
  if $(\alpha^{\pr},k^{\pr})$ is a weight of $P_{\alpha,k}$ then
  $$\Hom_\cY(P_{\alpha,k},P_{\alpha^{\pr},k^{\pr}}) \neq 0
  \; \implies \;   \Hom_\cY(P_{\alpha^{\pr},k^{\pr}},P_{\alpha,k-2qv})\neq 0
  $$
  so $(\alpha, k-2qv)$ is a weight of $P_{\alpha', k'}$. Hence (by the first part of the proof) 
  we must have $k-2qv\leq k'$.
\end{proof}

In fact the Serre duality argument proves slightly more: if $(\alpha', k')$ is a weight of $P_{\alpha, k}$ with $k'$ minimal, then $k'$ is exactly $k-2qv$, and $\alpha'=\alpha$. 

\begin{lem}\label{lem:RestrictToZero}
  The objects $P_{\alpha, k}\in \Br(\cY)$ all restrict to $0$ in $\Br(\cY^{ss})$. 
\end{lem}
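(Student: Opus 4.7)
The plan is to identify $P_{\alpha,k}$ explicitly as a $B$-module under the equivalence $\Br(\cY) \cong D^{b}(B)$, show that this module is supported at the origin of $\widetilde{\Pf_q}$, and then observe that since $\cY^{ss}$ lies over $\Pf_{q} = (\widetilde{\Pf_q}\setminus\{0\})/\C^{*}$, any such module becomes zero upon restriction.

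First I would compute $\Hom_{\cY}(T, P_{\alpha,k})$, where $T = \bigoplus_{\delta \in Y_{q,s}} \Schpur{\delta, p_\delta}Q$ is the tilting bundle on $\cY$ analogous to that of Section \ref{sec:Br(X)}. Since $T \in \Br(\cY)$ and $P_{\alpha,k}$ is the image of $\cO_{0} \otimes \Schpur{\alpha,k}Q$ under the right adjoint to the inclusion $\Br(\cY) \hookrightarrow D^{b}(\cY)$, right-adjunction gives
\[
\Hom_{\cY}(T, P_{\alpha,k}) \;=\; \Hom_{\cY}(T, \cO_{0} \otimes \Schpur{\alpha,k}Q) \;=\; \bigoplus_{\delta \in Y_{q,s}} \bigl(\Schpur{\delta,p_\delta}Q^{\vee} \otimes \Schpur{\alpha,k}Q\bigr)^{\Sp(Q)}.
\]
Using self-duality of the irrep $\Schpur{\delta}Q$ as an $\Sp(Q)$-representation, only the summand with $\delta = \alpha$ contributes, and it contributes a single copy of $\C$ carrying $\Delta$-weight $k - p_{\alpha}$. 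Thus $\Hom_{\cY}(T,P_{\alpha,k})$ is the one-dimensional vertex-simple $B$-module $M_{\alpha,k}$ at vertex $\alpha$, exactly as in Remark \ref{rem:VertexSimples}.

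Next, as a module over the center $\cO_{\widetilde{\Pf_q}} \subset B$, the module $M_{\alpha,k}$ has finite length and is set-theoretically supported at the origin $\{0\} \subset \widetilde{\Pf_q}$. Now the $\cY$-analogue of Corollary \ref{thm:BranesToModulesBaseChanged} (which applies by Remark \ref{rem:BasechangingBranesToModules}) provides an equivalence $\Br(\cY^{ss}) \cong D^{b}(\Pf_{q}, B|_{\Pf_q})$ that is compatible with restriction, \emph{i.e.}~the square
\[
\begin{tikzcd}
\Br(\cY) \ar{r}{\sim} \ar{d} & D^{b}(B) \ar{d}\\
\Br(\cY^{ss}) \ar{r}{\sim} & D^{b}(\Pf_{q}, B|_{\Pf_q})
\end{tikzcd}
\]
commutes, where the right vertical arrow is restriction of $B$-modules. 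Since the open substack $\Pf_{q}$ misses the support $\{0\}$ of $M_{\alpha,k}$, we conclude $P_{\alpha,k}|_{\cY^{ss}} = 0$. No real obstacle is anticipated; everything follows from the identification in the first step plus the base-change compatibility.
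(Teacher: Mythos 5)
Your proof is correct. In fact the explicit identification you make in the first step -- that under $\Hom_{\cY}(T,-)$ the object $P_{\alpha,k}$ becomes (a grading shift of) the vertex simple $B$-module at the vertex $\alpha$ -- is exactly what the paper records in Remark \ref{rem:VertexSimples} and in the sentence preceding Lemma \ref{lem:WeightsByHom}, and your adjunction computation establishing it is sound. The paper's own proof of Lemma \ref{lem:RestrictToZero} is shorter and avoids passing to the module side: it simply notes that $\cO_{0}\otimes\Schpur{\alpha,k}Q$ is supported at the origin of $\widetilde{\Pf_q}$ and that the projection $D^{b}(\cY)\to\Br(\cY)$ is linear over $\widetilde{\Pf_q}$, so $P_{\alpha,k}$ is again supported at the origin and dies on $\cY^{ss}$. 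Both arguments rest on the same principle (linearity of the relevant functor over the base $\widetilde{\Pf_q}$, which also underlies your commuting square, as in Corollary \ref{thm:BranesToModulesBaseChanged}); what your longer route buys is the concrete description of $P_{\alpha,k}$ as a $B$-module, which the paper obtains separately anyway, while the paper's route gets the support statement without any computation.
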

\begin{proof}
  Obviously the object $\cO_0\otimes \Schpur{\alpha, k}Q$ is supported at the origin in $\widetilde{\Pf_q}$. Now observe that the projection $D^b(\cY)\to \Br(\cY)$ is linear over $\widetilde{\Pf_q}$. 
\end{proof}

\begin{lem}
  \label{thm:BundlesGenerateProjectiveWindow}
  For any $(\alpha,k)\in Y_{q,s} \tms \Z$, the vector bundle $\Schpur{\alpha, k}Q$ on $\cY^{ss}$ has a finite resolution by vector bundles associated to the set $Y_{q,s}\tms [-qv, qv)$.
\end{lem}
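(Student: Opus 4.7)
The plan is to use the vertex-simple objects $P_{\alpha,k}\in\Br(\cY)$ from Section \ref{sec:DualObjects} as trading tools, exploiting that $P_{\alpha,k}|_{\cY^{\ss}}\cong 0$ by Lemma \ref{lem:RestrictToZero}. Combining Lemmas \ref{lem:minimalmodels} and \ref{lem:usefulresolutions}, each $P_{\alpha,k}$ is equivalent to a finite minimal complex of vector bundles whose summands lie in $\{\Schpur{\alpha',k'}Q:(\alpha',k')\in Y_{q,s}\tms[k-2qv,k]\}$, with $\Schpur{\alpha,k}Q$ appearing exactly once at the top. The Serre-duality observation flagged after Lemma \ref{lem:usefulresolutions}, sharpened by the fact that $\End_{\Br(\cY)}(P_{\alpha,k-2qv})\cong\C$, also pins down the multiplicity of $\Schpur{\alpha,k-2qv}Q$ at the bottom to be exactly one: the dimension of $\Hom(P_{\alpha,k},P_{\alpha,k-2qv})$ counts this multiplicity (compare the proof of Lemma \ref{lem:WeightsByHom}) and equals $1$ via Proposition \ref{thm:SerreDuality2}.

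Restricting the minimal complex of $P_{\alpha,k}$ to $\cY^{\ss}$ yields an acyclic complex that I can read in two directions. The \emph{upper trade} isolates the multiplicity-one top term and exhibits $\Schpur{\alpha,k}Q|_{\cY^{\ss}}$ as a finite iterated cone of bundles with weights in $Y_{q,s}\tms[k-2qv,k)$. Symmetrically, the \emph{lower trade} isolates the multiplicity-one bottom term and exhibits $\Schpur{\alpha,k-2qv}Q|_{\cY^{\ss}}$ as a finite iterated cone of bundles with weights in $Y_{q,s}\tms(k-2qv,k]$.

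The proof then proceeds by iteration. For $k\geq qv$, I apply the upper trade coming from $P_{\alpha,k}$: the new weights lie in $[k-2qv,k-1]\subseteq[-qv,k-1]$, since $k\geq qv$ forces $k-2qv\geq -qv$; iterating on any summand still $\geq qv$ strictly decreases the maximum weight without dropping the lower bound. For $k<-qv$, I apply the lower trade from $P_{\alpha,k+2qv}$: the new weights lie in $(k,k+2qv]\subseteq[k+1,qv-1]$, since $k<-qv$ forces $k+2qv<qv$; iteration strictly raises the minimum weight. Because weights are integers and the window $[-qv,qv)$ is fixed, both procedures terminate after finitely many steps, producing the required finite resolution of $\Schpur{\alpha,k}Q|_{\cY^{\ss}}$ by bundles in $Y_{q,s}\tms[-qv,qv)$.

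The main obstacle is the multiplicity-one claim for $\Schpur{\alpha,k-2qv}Q$ at the bottom of $P_{\alpha,k}$'s minimal complex; without it, the lower trade would only resolve a direct sum $\Schpur{\alpha,k-2qv}Q^{\oplus m}$ and would require an additional summand-extraction argument to handle the $k<-qv$ case. Once this Serre-duality input is verified, all remaining steps are elementary bookkeeping driven by the two-sided weight bound supplied by Lemma \ref{lem:usefulresolutions}.
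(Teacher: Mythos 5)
Your proposal is correct in substance, and for weights above the window it coincides with the paper's argument (the triangle $\cE\to\Schpur{\alpha,k}Q\to P_{\alpha,k}$, Lemmas \ref{lem:usefulresolutions} and \ref{lem:RestrictToZero}, induction on $|k|$). Where you genuinely diverge is the case $k<-qv$: the paper never looks at the bottom of a minimal complex. Instead it dualises the triangle from the first case, applying $(-)^\vee$ to $\cE\to\Schpur{\alpha,-k}Q\to P_{\alpha,-k}$ to get $P_{\alpha,-k}^\vee\to\Schpur{\alpha,k}Q\to\cE^\vee$; since symplectic irreps are self-dual, the weights of $\cE^\vee$ are the negatives of those of $\cE$, hence lie in $(k,k+2qv]$, and $P_{\alpha,-k}^\vee$ still restricts to zero on $\cY^{\ss}$ — so only the multiplicity-one statement at the \emph{top} of Lemma \ref{lem:usefulresolutions} is needed. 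Your route, working with $P_{\alpha,k+2qv}$ directly, needs the extra input you flag, and in fact slightly more than you state: you need that the bottom $\Delta$-level of its minimal model consists of \emph{exactly one summand}, namely one copy of $\Schpur{\alpha,k}Q$ — if some other irrep $\Schpur{\alpha',k}Q$ occurred at the same level, your claim that the remaining weights lie in $(k,k+2qv]$ would fail and the induction could stall at level $k$. Both facts do follow by the mechanism you indicate: by the proof of Lemma \ref{lem:WeightsByHom}, the multiplicity of $(\alpha',k)$ among the weights of $P_{\alpha,k+2qv}$ is $\dim\Hom^\bullet(P_{\alpha,k+2qv},P_{\alpha',k})$; by Proposition \ref{thm:SerreDuality2} (both objects restrict to zero on $\cY^{\ss}$, and twisting by the character $(\det Q)^{-v}$ commutes with the projection onto $\Br(\cY)$, so $P_{\alpha,k+2qv}\otimes(\det Q)^{-v}\cong P_{\alpha,k}$) this equals $\dim\Hom^\bullet(P_{\alpha',k},P_{\alpha,k})$, i.e.\ the multiplicity of $(\alpha,k)$ among the weights of $P_{\alpha',k}$, which by Lemma \ref{lem:usefulresolutions} is $1$ if $\alpha'=\alpha$ and $0$ otherwise. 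In particular your asserted fact $\End^\bullet_{\Br(\cY)}(P_{\alpha,k})\cong\C$ is genuinely a consequence of those two lemmas rather than an independent hypothesis. So the trade-off is: the paper's dualisation makes the lower case formally symmetric to the upper one at no extra cost, while your version stays with a single family of trading objects at the price of this short Serre-duality computation; the remaining cone/summand-extraction bookkeeping (splitting the restricted acyclic complex and peeling off the distinguished summand by a split triangle) is common to both and is at the same level of rigour as the paper's own wording.
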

\begin{proof}
  Using Lemma \ref{lem:minimalmodels}, the claim is equivalent to $\Schpur{\alpha,k}Q$ being in the image of the restriction functor $F : \Br(\cY)_{[-qv,qv)} \to D^{b}(\cY^{\ss})$.

  We argue by induction on $|k|$.
  The case when $|k| < qv$ and $k = -qv$ are obvious.
  If $k > qv$, we consider the following exact triangle in $D^b(\cY)$:
  \[
    \cE \to \Schpur{\alpha,k}Q \to P_{\alpha,k}.
  \]
  The weights of $\cE$ are precisely the weights of $P_{\alpha,k}$, minus $(k,\alpha)$.
  Hence any $(\alpha^{\pr}, k^{\pr})$ appearing as a weight of $\cE$ are such that $(\alpha^{\pr}, k^{\pr}) \in Y_{q,s} \times [k-2qv, k)$, and hence $\cE$ admits a resolution in terms of vector bundles $\Schpur{\alpha^{\pr},k^{\pr}}Q$.
  By induction these vector bundles are in the in the image of $F$, and since $F(P_{k,\alpha}) = 0$, it follows that $\Schpur{\alpha,k}Q$ is in the image of $F$.

  If $k < -qv$, we argue similarly starting from the exact triangle
  \[
    P_{\alpha,-k}^{\vee} \to \Schpur{\alpha,k}Q \to \cF
  \]
  where the weights of $\cF$ are contained in $Y_{q,s} \times (k, k + 2qv]$.
\end{proof}

\begin{prop}
  \label{prop:EssentiallySurjective}
  The restriction functor
  \[
    \Br(\cY \tms_{\C^{*}} L^\perp,\, W)_{[-qv,qv)} \to \Br(\cY^{\ss} \tms_{\C^{*}} L^\perp,\, W)
  \]
  is essentially surjective.
\end{prop}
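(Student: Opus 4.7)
The plan is to mimic Proposition \ref{prop:EasyWindow} in the matrix-factorisation setting. Start with $\cE \in \Br(\cY^{\ss} \tms_{\C^{*}} L^\perp, W)$ and lift it to some $\widetilde{\cE} \in \Br(\cY \tms_{\C^{*}} L^\perp, W)$---possible since $\Br(\cY^{\ss} \tms_{\C^{*}} L^\perp, W)$ is defined as the triangulated closure of the image of the restriction functor. We then iteratively modify $\widetilde{\cE}$ by cones with objects supported on $\{0\} \tms_{\C^{*}} L^\perp$ to push its $\Delta$-weights into the window $[-qv, qv)$; since the modifying objects restrict to zero on $\cY^{\ss} \tms_{\C^{*}} L^\perp$, the restriction is preserved throughout.

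For the modifying objects, define $P^{L^\perp}_{\alpha, k} \in \Br(\cY \tms_{\C^{*}} L^\perp, W)$ as the image of $\cO_{0 \tms L^\perp} \otimes \Schpur{\alpha, k} Q$ under the right adjoint to the inclusion from Lemma \ref{lem:BranesToModules2}. Since it is supported on $\{0\} \tms_{\C^{*}} L^\perp$, it restricts to zero on $\cY^{\ss} \tms_{\C^{*}} L^\perp$. The argument of Lemma \ref{lem:usefulresolutions} relativises over $L^\perp$: the Koszul-type matrix factorisation computing $\cO_{0 \tms L^\perp} \otimes \Schpur{\alpha, k} Q$ is pulled back from the Koszul resolution of $\cO_{0}$ on $\cY$ (perturbed into a matrix factorisation for $W$, which is possible since $W$ vanishes on its support), and the \v{S}penko--Van den Bergh exact sequences used to project into $\Br$ pull back unchanged. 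We obtain that $P^{L^\perp}_{\alpha, k}$ has one copy of weight $(\alpha, k)$ and all other weights in $Y_{q,s} \tms [k - 2qv, k)$, and similarly for $(P^{L^\perp}_{\alpha, k})^\vee$.

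Represent $\widetilde{\cE}$ via Lemma \ref{lem:minimalmodels} as a minimal matrix factorisation with underlying bundle $\bigoplus_{i} \Schpur{\alpha_{i}, k_{i}} Q$. If $k^{*} := \max_{i} k_{i} \geq qv$, pick a summand of weight $(\alpha, k^{*})$ and use the adjunction
\[
\Hom\big(\widetilde{\cE}, P^{L^\perp}_{\alpha, k^{*}}\big) \;\cong\; \Hom\big(\widetilde{\cE}|_{0}, \Schpur{\alpha, k^{*}} Q\big)
\]
(as in Lemma \ref{lem:WeightsByHom}) to obtain a map $f : \widetilde{\cE} \to P^{L^\perp}_{\alpha, k^{*}}$ that restricts at the origin to the projection onto the chosen summand. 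Replace $\widetilde{\cE}$ with the fibre of $f$: in its minimal model the $(\alpha, k^{*})$-summand of $P^{L^\perp}_{\alpha, k^{*}}$ cancels one copy of $\Schpur{\alpha, k^{*}} Q$ in $\widetilde{\cE}$, and the new summands introduced all have $\Delta$-weight strictly less than $k^{*}$. Hence the lexicographic pair $\big(\max_{i} k_{i},\ \#\{i : k_{i} = \max_{j} k_{j}\}\big)$ strictly decreases; the case $\min_{i} k_{i} < -qv$ is handled symmetrically using the dual object $(P^{L^\perp}_{\alpha, -\min_{i} k_{i}})^\vee$. Finite induction yields an object of $\Br(\cY \tms_{\C^{*}} L^\perp, W)_{[-qv, qv)}$ still restricting to $\cE$.

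The main obstacle is likely the cancellation step in the minimal model of the fibre of $f$: showing that $f$ can be chosen so that the $(\alpha, k^{*})$-to-$(\alpha, k^{*})$ component of the cone's twisted differential at the origin is an isomorphism, allowing a pair of summands to be removed. This requires working with fixed minimal matrix-factorisation representatives on both sides and tracking explicitly how the adjunction isomorphism is realised at the level of morphisms of those models, rather than just at the level of the derived category.
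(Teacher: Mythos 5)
Your overall strategy---stripping $\Delta$-weights off a lift $\widetilde{\cE}$ by coning with relative vertex-simple projections $P^{L^\perp}_{\alpha,k}$---is genuinely different from the paper's proof, which never manipulates a general matrix factorization upstairs at all: it resolves only the \emph{underlying vector bundle} on $\cY^{\ss}$ by window bundles (Lemma \ref{thm:BundlesGenerateProjectiveWindow}, where the weight-stripping is done for single bundles on $\cY$, with no superpotential and no $L^\perp$ in play), rebuilds the twisted differential by the perturbation process using the Ext-vanishing supplied by Proposition \ref{prop:Fullyfaithful}, and then extends the twisted differential across the unstable locus by fully faithfulness. The preparatory parts of your plan are sound: $\cO_{0\tms L^\perp}\otimes\Schpur{\alpha,k}Q$ is a legitimate object since $W$ vanishes on $0\tms L^\perp$, its projection restricts to zero on the semistable locus, and the weight bound of Lemma \ref{lem:usefulresolutions} does relativise (take a projective $B$-resolution of the vertex simple, tensor with $\cO_{L^\perp}$ and perturb, as in the proof of Lemma \ref{lem:DegreesOfA'}).

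However, the step you flag is a genuine gap, not a deferred verification, and as stated it fails. The adjunction you invoke does not produce $\Hom(\widetilde{\cE}|_0,\Schpur{\alpha,k^*}Q)$: since $P^{L^\perp}_{\alpha,k^*}$ is the projection of the pushforward of $\Schpur{\alpha,k^*}Q\otimes\cO_{L^\perp}$ from $0\tms L^\perp$, the adjunction computes $\Hom$ against the restriction $\widetilde{\cE}|_{0\tms L^\perp}$, a complex over $L^\perp$ whose differential need \emph{not} vanish---minimality (Lemma \ref{lem:minimalmodels}) only kills $d$ at the single point $0$. Concretely, a component of $d|_{0\tms L^\perp}$ from a summand $\Schpur{\alpha,k'}Q$ with $k'<k^*$ into the top summand $\Schpur{\alpha,k^*}Q$ is an invariant function on $L^\perp$ of $\Delta$-weight $k'-k^*$; as $\Delta$ acts with weight $2$ on $L^\perp$, such components are monomials of degree $(k^*-k')/2$, which vanish at the origin and are therefore not excluded by minimality, nor by the R-charge. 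Hence the naive projection onto the chosen top-weight summand is not a closed morphism of matrix factorizations, and you have not shown that it can be corrected to a morphism $f$ whose restriction still hits the $(\alpha,k^*)$-part of $h_\bullet(P^{L^\perp}_{\alpha,k^*}|_0)$ isomorphically, which is what the cancellation in the cone requires. This is precisely where the superpotential and the $L^\perp$-directions interact, and it is the difficulty that the paper's argument is organized to avoid: there the top-weight cancellation is only ever performed for a single bundle $\Schpur{\alpha,k}Q$ on $\cY$ with $W=0$, and general objects over $L^\perp$ are handled afterwards by resolution plus perturbation.
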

\begin{proof}
  Take $\cE\in \Br(\cY^{\ss} \tms_{\C^{*}} L^\perp,\, W)$. By definition $\cE$ is equivalent to the image of an object in $\Br(\cY\tms_{\C^{*}} L^\perp,\, W)$, so we can assume that $\cE$ is a vector bundle built from the set $Y_{q,s}\tms \Z$. This vector bundle (although not the twisted differential) is pulled up from $\cY^{ss}$, so it has a resolution by bundles from the set $Y_{q,s}\tms [-qv, qv)$. By Proposition \ref{prop:Fullyfaithful} this set of bundles has no higher Ext groups between them, so we can use the perturbation process of \cite[Lemma 4.10]{ADS_pfaffian_2015}, \cite[Lemma 3.6]{segal_equivalence_2011}  to conclude that $\cE$ is equivalent to a matrix factorization built from this set. Then such a matrix factorization is automatically the restriction of a matrix factorization in $\Br(\cY \tms_{\C^{*}} L^\perp,\, W)_{[-qv,qv)}$ (because the twisted differential must extend).
\end{proof}

\subsection{The homological projective duality statement}\label{sec:HPD}

In this section we produce a Lefschetz decomposition of $\Br(\cY^{\ss})$, and show that $\Br(\cX^{\ss})$ is equivalent to the tautological HP dual defined in Section \ref{sec:HPDualityBackground}.

\begin{lem}\noindent
  \label{thm:BundlesExceptionalAndSemiorthogonal}\begin{enumerate}\item
  Let $(\alpha,k) \in Y_{q,s} \times \Z$.
  The vector bundle $\Schpur{\alpha, k}Q$ on $\cY^{\ss}$ is exceptional, \emph{i.e.}\ $\REnd(\Schpur{\alpha, k}Q) = \C$.
\item   Let $(\alpha_{1},k_{1}), (\alpha_{2},k_{2}) \in Y_{q,s} \tms [-qv, qv)$, with $k_{1} > k_{2}$.
  Then:
  \[
    \RHom_{\cY^{\ss}}(\Schpur{\alpha_{1}, k_{1}}Q, \Schpur{\alpha_{2}, k_{2}}Q) = 0
  \]
\end{enumerate}
\end{lem}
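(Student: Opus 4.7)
The plan is to reduce both computations on $\cY^{\ss}$ to computations on the ambient stack $\cY = [\Hom(V,Q)/\GSp(Q)]$, where Hom-spaces are simply $\GSp(Q)$-invariants in coordinate rings tensored with representations. The key input for this reduction is Lemma \ref{prop:FullyfaithfulSimpleCase}: for any $(\alpha_1,k_1),(\alpha_2,k_2)\in Y_{q,s}\tms [-qv,qv)$ it shows the local cohomology at the vertex $Z$ of $\pi_{*}\bigl(\Schpur{\alpha_1,k_1}Q^{\vee}\otimes\Schpur{\alpha_2,k_2}Q\bigr)$ vanishes, which (combined with $\widetilde{\Pf_q}$ being affine) implies
\[
R\Gamma\bigl(\cY^{\ss},\,\Schpur{\alpha_1,k_1}Q^{\vee}\otimes\Schpur{\alpha_2,k_2}Q\bigr) \;=\; \Gamma\bigl(\cY,\,\Schpur{\alpha_1,k_1}Q^{\vee}\otimes\Schpur{\alpha_2,k_2}Q\bigr).
\]
Since $\cY$ is a quotient of an affine variety by a reductive group, there are no higher Exts between equivariant vector bundles on $\cY$, so $\RHom_{\cY} = \Hom_{\cY}$.

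For the exceptional statement, tensoring by a power of the line bundle $\langle \omega_Q\rangle$ is an autoequivalence of $D^b(\cY^{\ss})$ and shifts the $\Delta$-weight $k$ by $2$; hence we may assume $k \in [-qv,qv)$. Applying the reduction above,
\[
\REnd_{\cY^{\ss}}(\Schpur{\alpha,k}Q) \;=\; \Hom_{\cY}(\Schpur{\alpha,k}Q,\Schpur{\alpha,k}Q) \;=\; \bigl(\cO(\Hom(V,Q)) \otimes \End(\Schpur{\alpha,k}Q)\bigr)^{\GSp(Q)}.
\]
The central subgroup $\Delta \subset \GSp(Q)$ acts with weight $+1$ on $\Hom(V,Q)$, hence with non-positive weights on $\cO(\Hom(V,Q))$, and trivially on $\End(\Schpur{\alpha,k}Q)$. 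So the only contribution to $\Delta$-invariants comes from the degree-0 part of $\cO(\Hom(V,Q))$, leaving $\End_{\GSp(Q)}(\Schpur{\alpha,k}Q) = \C$ by Schur's lemma.

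For the semiorthogonality statement, both weights already lie in $Y_{q,s}\tms[-qv,qv)$, so the reduction applies directly and gives
\[
\RHom_{\cY^{\ss}}(\Schpur{\alpha_1,k_1}Q,\Schpur{\alpha_2,k_2}Q) \;=\; \bigl(\cO(\Hom(V,Q)) \otimes \Schpur{\alpha_1,k_1}Q^{\vee}\otimes \Schpur{\alpha_2,k_2}Q\bigr)^{\GSp(Q)}.
\]
The $\Delta$-weights appearing in the right-hand side are $\{\,k_2-k_1-n : n \ge 0\,\}$, so $0$ is attained only if $k_2 \ge k_1$. Under the hypothesis $k_1 > k_2$ there are no $\Delta$-invariants, hence no $\GSp(Q)$-invariants.

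The main work has already been absorbed into Lemma \ref{prop:FullyfaithfulSimpleCase}; granted that input, the remainder is a direct $\Delta$-weight bookkeeping plus Schur's lemma, and I don't anticipate any real obstacle.
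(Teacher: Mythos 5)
Your proof is correct and takes essentially the same route as the paper: the paper invokes Proposition \ref{prop:Fullyfaithful} (whose content in the relevant case is exactly the local-cohomology vanishing of Lemma \ref{prop:FullyfaithfulSimpleCase} that you use) to replace the Hom computations on $\cY^{\ss}$ by computations on the affine quotient stack $\cY$, and then the claims follow from the same $\Delta$-weight bookkeeping and Schur's lemma that you spell out. The only cosmetic difference is that you cite the local-cohomology lemma directly, handle the twist putting $k$ into $[-qv,qv)$ explicitly, and write out the ``easy to verify'' invariant-theory step.
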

\begin{proof}
  By the fully faithfulness result of Proposition \ref{prop:Fullyfaithful}, we may replace the bundles on $\cY^{\ss}$ with the corresponding bundles on $\cY$ and compute Hom spaces for these.
  The claims are then easy to verify.
\end{proof}

Let $\cA_{0} = \Br(\cY)_{[-qv,-qv+1]}$, and let $\cA_{0}(i) = \cA_{0} \otimes \cO_{\P(\wedge^{2}V^{\vee})}(i)$.

\begin{lem}\label{thm:LefschetzDecompositionExists}
  There is a Lefschetz decomposition
  \[
    \Br(\cY^{\ss}) = \langle\cA_{0}, \cA_{0}(1), \ldots, \cA_{0}(qv-1)\rangle.
  \]
\end{lem}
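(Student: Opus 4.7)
The plan is to read off the desired decomposition from the window equivalence of Theorem \ref{thm:ProjectiveWindows}. Specialising that theorem to $L^{\perp}=0$ yields an equivalence $\Br(\cY)_{[-qv,qv)} \isoto \Br(\cY^{\ss})$, and via it I would view $\cA_{0} = \Br(\cY)_{[-qv,-qv+1]}$ as a full subcategory of $\Br(\cY^{\ss})$.

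The crucial normalisation to pin down first is that tensoring by $\cO_{\P(\wedge^{2}V^{\vee})}(1)$ shifts the $\Delta$-weight by exactly $+2$. This is because the projection $\cY \to [\wedge^{2}V^{\vee}/\C^{*}]$ sending $y$ to $y^{*}\omega_{Q}$ is $\Delta$-equivariant with $\Delta(t)$ acting on $y^{*}\omega_{Q}\in \wedge^{2}V^{\vee}$ by $t^{2}$; dually, $\cO(1)$ pulls back to a $\GSp(Q)$-character which is trivial on $\Sp(Q)$ and has $\Delta$-weight $+2$. Hence $\cA_{0}(i) \subseteq \Br(\cY)_{[-qv+2i,-qv+2i+1]}$ for each $i$, and the range $0 \le i \le qv-1$ is exactly what fits inside the window $\Br(\cY)_{[-qv,qv)}$.

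With this in hand, both conditions for a semiorthogonal decomposition follow. For generation, Lemma \ref{lem:minimalmodels} tells me the window is generated by bundles $\Schpur{\alpha,k}Q$ with $\alpha\in Y_{q,s}$ and $k\in[-qv,qv)$, and each integer $k$ in this range lies in exactly one of the $qv$ disjoint pairs $\{-qv+2i,\,-qv+2i+1\}$, so every such generator sits in a unique $\cA_{0}(i)$. For semiorthogonality, if $i>j$ then generators $\Schpur{\alpha_{1},k_{1}}Q\in \cA_{0}(i)$ and $\Schpur{\alpha_{2},k_{2}}Q\in \cA_{0}(j)$ satisfy $k_{1}\ge -qv+2i > -qv+2j+1 \ge k_{2}$, so Lemma \ref{thm:BundlesExceptionalAndSemiorthogonal} gives $\RHom_{\cY^{\ss}}(\Schpur{\alpha_{1},k_{1}}Q,\Schpur{\alpha_{2},k_{2}}Q)=0$, and this extends to the thick closures because Ext-vanishing is preserved under cones. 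Since all the Lefschetz pieces coincide with $\cA_{0}$, the nesting condition $\cA_{0}\supseteq\cA_{1}\supseteq\cdots$ is automatic, and admissibility of each block is formal from the resulting finite semiorthogonal decomposition.

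I do not expect any real obstacle: the heavy lifting is already done by the window equivalence and the Hom-vanishing of Lemma \ref{thm:BundlesExceptionalAndSemiorthogonal}, and this statement is essentially a combinatorial repackaging obtained by partitioning the window weights into $qv$ pairs of consecutive integers.
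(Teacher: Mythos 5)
Your proposal is correct and follows essentially the same route as the paper: the window equivalence of Theorem \ref{thm:ProjectiveWindows} (with $L^{\perp}=0$), generation of the window by the bundles $\Schpur{\alpha,k}Q$ with $k\in[-qv,qv)$, and the Hom-vanishing of Lemma \ref{thm:BundlesExceptionalAndSemiorthogonal}. Your explicit bookkeeping that $\cO_{\P(\wedge^{2}V^{\vee})}(1)$ shifts the $\Delta$-weight by $+2$ and your appeal to Lemma \ref{lem:minimalmodels} in place of citing Lemma \ref{thm:BundlesGenerateProjectiveWindow} are just a more spelled-out version of the paper's two-line argument.
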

\begin{proof}
  Applying Theorem \ref{thm:ProjectiveWindows} in the special case $L^\perp=0$ gives us  $\Br(\cY^{\ss}) \cong \Br(\cY)_{[-qv, qv)}$. Using this equivalence, the claim follows from Lemmas \ref{thm:BundlesGenerateProjectiveWindow} and \ref{thm:BundlesExceptionalAndSemiorthogonal}.
\end{proof}

\begin{lem} The subcategory $\Br(\cY^{ss})\subset D^b(\cY^{ss})$ is admissible and saturated.
\end{lem}
\begin{proof}
  Since $\Br(\cY^{\ss})$ admits a full exceptional collection, it has a strong generator given by the direct sum of the objects in the exceptional collection. Hence by the main result of \cite{bondal_generators_2003} every functor $\Br(\cY^{\ss}) \to D^{b}(\C)^{op}$ is representable. Since $\Br(\cY^{\ss})$ has a Serre functor (Prop.~\ref{thm:SerreOnTheProjectiveNCR}) it folows that every functor $\Br(\cY^{\ss})^{\mathrm{op}} \to D^{b}(\C)^{\mathrm{op}}$ is representable. Hence $\Br(\cY^{\ss})$ is saturated.

  As $D^{b}(\cY^{\ss})$ is ext finite, $\Br(\cY^{\ss}) \subseteq D^{b}(\cY^{\ss})$ is admissible.
\end{proof}

We can now apply the theory described in Section \ref{sec:HPDualityBackground} to the stack $\cS := \cY^{ss}$ with the subcategory $\cW:=\Br(\cY^{ss})$. The construction involves the total space of a line bundle $\cL^\vee$, in our example this is the stack
$$\cL^\vee = \Stack{ \Hom(V,Q)^{\ss}\times \C }{\GSp(Q) } $$
where $\GSp(Q)$ acts on $\C$ as the character $\langle\beta_Q\rangle^{-1}$. Then we consider the stack
$$ \cT^{ss} := \Stack{\Wedge^2 V \times \Hom(V,Q)^{\ss} \times \C  }{(\GSp(Q) \tms \C^{*})}$$
and its open substack:
\[
\cT^{ss, *} := \Stack{(\Wedge^2 V \setminus 0) \times \Hom(V,Q)^{\ss} \times \C  }{(\GSp(Q) \tms \C^{*})}
\]
We're adding the ``ss'' superscript to emphasise that these stacks are constructed from $\cY^{ss}$; we'll shortly want to consider the analogous stacks constructed from $\cY$ instead, and we'll denote those by $\cT$ and $\cT^*$. 

The stack $\cT^{ss, *}$  is a family of copies of $\cL^\vee$ over the base $\P(\Wedge^2 V)$, and it carries a superpotential $W$ and R-charge as in Section \ref{sec:HPDualityBackground}. The `tautological' HP dual of $\cW$ is the subcategory
\[
\cW^{\vee} \subset D^{b}(\cT^{ss,*}, W)
\]
consisting of those objects $\cE$ such that for each point $p \in \P(\Wedge^2 V)$, the object $\cE|_{p \times \cY^{\ss} \times 0 }$ lies in the subcategory $\cA_0\subset D^b(\cY^{ss})$. 

Also recall that for each subspace $L\subset \Wedge^2 V^\vee$ we write $\cT^{ss,*}_{L^\perp}$ for the restriction of $\cT^{ss,*}$ to $\P L^\perp$, and we define a base-changed category $\cW^\vee_{L^\perp}\subset D^b(\cT^{ss,*}_{L^\perp}, W)$ by the same point-wise condition.

\begin{prop}
\label{thm:GITHPDualEqualsTautologicalHPDual}
For any linear subspace $L \subseteq \wedge^{2}V^{\vee}$ there is an equivalence:
$$\Br( \cY\tms_{\C^*} (L^{\perp} \setminus 0) ,\, W) \;\cong\; \cW^{\vee}_{L^{\perp}}$$
\end{prop}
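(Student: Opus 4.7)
The plan is to exhibit both $\Br((L^{\perp}\setminus 0)\tms_{\C^{*}}\cY, W)$ and $\cW^{\vee}_{L^{\perp}}$ as subcategories of a common ambient LG model, and then verify that their defining conditions coincide. First I would identify the two ambient models $(\widetilde{Y} \tms_{\C^{*}}(L^{\perp}\setminus 0), W)$ and $((L^{\perp}\setminus 0) \tms_{\C^{*}} \cY, W)$. Writing $\widetilde{Y} = \Tot(\cL^{\vee}) = [\Hom(V,Q)^{\ss} \tms \A^{1}/\GSp(Q)]$ with $\GSp(Q)$ acting on $\A^{1}$ via $\langle\omega_{Q}\rangle^{-1}$, one absorbs the extra HPD $\C^{*}$-quotient into $\GSp(Q)$ via the character $\langle\omega_{Q}\rangle$, obtaining a presentation matching $[\Hom(V,Q) \tms (L^{\perp}\setminus 0)/\GSp(Q)]$. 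The two superpotentials and R-charges match under this identification, using the freedom of Remark \ref{rem:Rcharge}. The zero section $Y \tms \P L^{\perp}$ of the HPD side corresponds to the semistable locus $\cY^{\ss} \tms_{\C^*}(L^\perp\setminus 0)$.

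Next, I would give a concrete description of $\cW^{\vee}_{L^{\perp}}$. By Lemma \ref{thm:LefschetzDecompositionExists} one has $\cA_{0} = \Br(\cY)_{[-qv, -qv+1]}$, which via Theorem \ref{thm:ProjectiveWindows} and Lemma \ref{lem:minimalmodels} consists of objects of $\Br(\cY^{\ss})$ admitting a minimal model built from bundles $\Schpur{\delta,k}Q$ with $\delta \in Y_{q,s}$ and $k \in \{-qv, -qv+1\}$. Hence $\cE \in \cW^{\vee}_{L^{\perp}}$ iff for every $p \in \P L^{\perp}$ the restriction $\cE|_{\cY^{\ss} \tms \{p\}}$ admits such a minimal model.

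Finally, I would show this pointwise condition agrees with the $\Sp(Q)$-grade restriction rule defining $\Br((L^{\perp}\setminus 0) \tms_{\C^{*}} \cY, W)$. Any brane object is representable by a matrix factorization on bundles $\Schpur{\delta,k}Q$ with $\delta \in Y_{q,s}$; upon fiberwise restriction at each $p$ one can, by Lemma \ref{lem:minimalmodels} applied to $\cY$, select a minimal model whose $\Delta$-weights lie in the required window $\{-qv, -qv+1\}$, using the window bound of Proposition \ref{prop:EasyWindow} together with the Serre-duality adjustment underlying Theorem \ref{thm:ProjectiveWindows}. Conversely, any $\cW^{\vee}_{L^{\perp}}$-object has pointwise restrictions in $\cA_{0}$, whose generators have $\Sp(Q)$-weights in $Y_{q,s}$, so the ambient object satisfies the brane rule. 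The main obstacle will be verifying this last equivalence precisely, ensuring that the fiberwise $\Delta$-weight condition matches the grade restriction uniformly in $p$; this amounts to careful bookkeeping with the window equivalences of Section \ref{sec:Windows} and with the $\Delta$-weight shifts coming from the stack identification in Step~1.
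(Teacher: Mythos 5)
There is a genuine gap, and it is in your Step~1. The two LG models you propose to ``identify'' are not the same stack: the home of $\cW^{\vee}_{L^{\perp}}$ is $\widetilde{Y}\tms_{\C^{*}}(L^{\perp}\setminus 0)$, i.e.\ the total space of a line bundle over $\cY^{\ss}\tms \P L^{\perp}$ -- it involves only the \emph{semistable} locus $\Hom(V,Q)^{\ss}$, carries an extra fibre coordinate $x$, and its superpotential is $x\cdot W_{std}$ -- whereas $(L^{\perp}\setminus 0)\tms_{\C^{*}}\cY = \Stack{\Hom(V,Q)\tms(L^{\perp}\setminus 0)}{\GSp(Q)}$ contains \emph{all} of $\Hom(V,Q)$ (and the interesting objects of its brane category, e.g.\ the restrictions of the $\cP_{\delta}$, are supported exactly on the unstable locus you have discarded), has no extra $\A^{1}$ direction, and its superpotential is $W_{std}$ itself. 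Absorbing the HPD $\C^{*}$ into $\GSp(Q)$ via $\langle\omega_{Q}\rangle$ can re-present the quotient, but it can neither remove the $\A^{1}$ factor nor restore the unstable locus; the two models are different GIT chambers, not two presentations of one stack. Because of this, your Steps~2--3 never actually produce a functor: you are comparing grade-restriction conditions on objects of categories living over genuinely different spaces, and ``matching the conditions'' does not by itself define an equivalence.

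What is missing is precisely the mechanism the paper uses: a common ambient LG model
$Z = \Stack{(L^{\perp}\setminus 0)\tms\C\tms\Hom(V,Q)}{\GSp(Q)\tms\C^{*}}$ with $W = x\cdot W_{std}$, of which both sides are open substacks ($x\neq 0$ on one side, $\Hom(V,Q)^{\ss}$ on the other). One then embeds $D^{b}((L^{\perp}\setminus 0)\tms_{\C^{*}}\cY,\,W)$ into $D^{b}(Z,W)$ as the window of objects whose restriction to $\Stack{p\tms 0\tms 0}{\GSp(Q)}$ has $\Delta$-weights in $[-qv,-qv+1]$ (this wall is the easy one, handled by the general VGIT results -- it is the family version of Proposition~\ref{prop:EasyWindow} with $l'=1$, which is roughly what you gesture at), and embeds $D^{b}(X,W)\supset\cW^{\vee}_{L^{\perp}}$ as the window with $\Delta$-weights in $[-qv,qv)$; the second wall is the nontrivial one and rests on the results of Section~\ref{sec:Windows}, not on general theory. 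Only after both categories sit inside $D^{b}(Z,W)$ does the pointwise comparison you envisage make sense, and there one checks that both are cut out by the same condition $Y_{q,s}\tms[-qv,-qv+1]$ at each $p\in\P L^{\perp}$. So your fibrewise matching of weights is the right endgame, but without the master space $Z$ (or some substitute kernel relating the two different stacks) the argument cannot get off the ground.
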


The proof will follow after some discussion and a couple of lemmas.
\pgap

Proposition \ref{thm:GITHPDualEqualsTautologicalHPDual} is closely related to Proposition \ref{prop:EasyWindow} and Theorem \ref{thm:ProjectiveWindows} but the role of $L^\perp$ is a bit different. Take those two results and specialize them to the case when $L^\perp$ is one-dimensional, so $L=\C$ with a weight $-1$ action of $\GSp(Q)$. Then we can compare the categories:
$$\Br(\cY\tms_{\C^*} \C^*, W) \aand \Br(\cY^{ss}\tms_{\C^*} \C, W) $$
Note that $\cY^{ss}\tms_{\C^*} \C$ is exactly the line-bundle $\cL^\vee$; also $\cY\tms_{\C^*} \C^*$ is just $\widetilde{\cY}$. We compare the two categories by lifting them to `window' subcategories defined on the ambient stack $\cY\tms_{\C^*} \C$. They are equivalent to the subcategories
$$\Br(\cY\tms_{\C^*} \C, W)_{[-qv, -qv+2)} \aand \Br(\cY\tms_{\C^*} \C, W)_{[-qv, qv)} $$
respectively, and the first one is a subcategory of the second one, specified by a stricter grade-restriction rule on the $\Delta$ weights. This gives us an embedding
$$\Br(\cY\tms_{\C^*} \C^*, W)\; \into \; \Br(\cY^{ss}\tms_{\C^*} \C, W)$$
and the image is easy to describe: it is the subcategory of objects $\cE$ such that the restriction $\cE|_{\cY^{ss}}$ lands in the subcategory $\cA_0\subset D^b(\cY^{ss})$. 

This statement holds for any choice of line $\C\into \Wedge^2 V$,  the line just determines the superpotential $W$. Now we want to put this construction into a family where we allow the line to vary through a linear subspace $\P L^\perp\subset \P (\Wedge^2 V)$. This means we consider the stack:
$$\cT^*_{L^\perp} = \Stack{(L^\perp \setminus 0) \times \Hom(V,Q) \times \C  }{(\GSp(Q) \tms \C^{*})}$$
This is the analogue of $\cT^{ss,*}_{L^\perp}$ for the stack $\cY$ instead of $\cY^{ss}$, and the superpotential and R-charge obviously extend. It contains $\cT^{ss,*}_{L^\perp}$ as an open substack, and it also contains the following open substack:
\al{
\Stack{(L^\perp \setminus 0) \times \Hom(V,Q) \times \C^*  }{(\GSp(Q) \tms \C^{*})} & \cong 
\Stack{(L^\perp \setminus 0) \times \Hom(V,Q) }{\GSp(Q) }\\ 
&\cong  \cY \tms_{\C^*} (L^\perp\setminus 0) 
}
To get Proposition \ref{thm:GITHPDualEqualsTautologicalHPDual} we're going to lift $\Br(\cY \tms_{\C^*} (L^\perp\setminus 0) , W)$ to a `window' subcategory in $D^b(\cT^*_{L^\perp}, W)$, restrict it to $D^b(\cT^{ss,*}_{\perp}, W)$, and prove that the image is exactly $\cW^\vee_{L^\perp}$. 

\begin{rem}
We're going to present the proof of Proposition \ref{thm:GITHPDualEqualsTautologicalHPDual} (including Lemmas \ref{lem:BrOnOpen} and \ref{lem:pointwise}) for the special case $L^\perp=\Wedge^2 V$. This is purely to unclutter our notation, \emph{e.g.}~so we can just write $\cT^*$ instead of $\cT^*_{L^\perp}$. The proof for a general $L^\perp$ is identical.
\end{rem}

Let
$$\cT = \Stack{\Wedge^2 V \times \Hom(V,Q) \times \C  }{(\GSp(Q) \tms \C^{*})}$$
with the evident superpotential and R-charge extended from $\cT^*$, this is the analogue of $\cT^{ss}$ for the stack $\cY$ instead of $\cY^{ss}$. This stack $\cT$ contains all the stacks we're currently interested in as open substacks, as summarized in the following diagram:
\begin{center}
\begin{tikzcd}
\cT^{ss} \arrow[r, hook]& \cT \arrow[r, hookleftarrow] & \cY\tms_{\C^*} \Wedge^2 V  \\
\cT^{ss, *} \arrow[r, hook] \arrow[u, hook] & \cT^* \arrow[u, hook] \arrow[r, hookleftarrow]& \cY\tms_{\C^*} (\Wedge^2 V\setminus 0) \arrow[u, hook] 
\end{tikzcd}
\end{center}
The top row all live over $[\Wedge^2 V/ \C^*]$, and the bottom row are their restrictions to $\P(\Wedge^2 V)$. We define a subcategory
$$\Br(\cT, W) \subset D^b(\cT, W)$$
by our usual grade-restriction-rule at the origin, \emph{i.e.}~using the set $Y_{q,s}$ of irreps of $\Sp(Q)$. For any $\C^*_R$-invariant open substack  $\cU \subset \cT$ we as usual define the `brane' subcategory 
$$\Br(\cU, W) \subset D^b(\cU, W)$$
to be the subcategory generated by the image of $\Br(\cT, W)$ under restriction.

The following lemma shows that $\Br(\cY \times_{\C^*}\Wedge^2 V,\, W)$ (under our old definition) agrees with the new definition as the image of $\Br(\cT, W)$. 
\begin{lem}\label{lem:BrOnOpen}
An object $\cE\in D^b(\cY \tms_{\C^*}\Wedge^2 V,\, W)$ is the restriction of an object in $\Br(\cT, W)$ if and only if $\cE|_0$ contains only irreps from the set $Y_{q,s}$.
\end{lem}
\begin{proof}
Take $\cF\in \Br(\cT,W)$ and write in the form provided by Lemma \ref{lem:minimalmodels}, then its restriction to $\cY \tms_{\C^*}\Wedge^2 V$ obviously satisfies the grade-restriction-rule at the origin. For the other direction observe that that there is a map
$$\Stack{ \Wedge^2 V\times \C}{\C^*} \To \Wedge^2 V $$
given by multiplying the two factors, this induces a map:
$$\mu: \cT \To \cY \tms_{\C^*}\Wedge^2 V$$
Take $\cE\in D^b(\cY \tms_{\C^*}\Wedge^2 V,\, W)$ satisfying the grade-restriction-rule, then $\mu^*\cE$ lies in $\Br(\cT, W)$ and it restricts to give $\cE$. 
\end{proof}

The category $\cW^\vee$ is defined by a grade-restriction-rule `point-wise' on the base $\P (\Wedge^2 V)$.
The following lemma shows that the condition of lying in the categories $\Br(\cY \tms_{\C^{*}} (\Wedge^{2} V \setminus 0), W)$ and $\Br(\cT^{ss,*},W)$ is also a point-wise condition over $\P(\Wedge^{2}V)$.
\begin{lem}\noindent \label{lem:pointwise}
 \begin{enumerate}
\item An object $\cF\in D^b(\cY\tms_{\C^*} (\Wedge^2 V \setminus 0), W)$ lies in the subcategory 
$$\Br(\cY\tms_{\C^*} (\Wedge^2 V \setminus 0), W)$$
  if and only if it obeys the following condition: for every point $p\in \P(\Wedge^2 V)$, the restriction $\cF|_{\widetilde{\cY}\times p} \in D^b(\widetilde{\cY}, W_p)$ lies in the subcategory $\Br(\widetilde{\cY}, W_p)$.

\item An object $\cE \in D^b(\cT^{ss,*}, W)$ lies in the subcategory $\Br(\cT^{ss,*}, W)$ if and only if it obeys the following condition: for every point $p\in \P(\Wedge^2 V)$, the restriction $\cE|_{p\times \cY^{ss} \times 0} \in D^b(\cY^{ss})$ lies in the subcategory $\Br(\cY^{ss})$. 
\end{enumerate}

\end{lem}

Another way to express the condition in part (1) is to say that for every $p$ the restriction $\cF|_{0\times p}$ only contains $\Sp(Q)$ irreps from the set $Y_{q,s}$.

\begin{proof}
\begin{enumerate}\item If $\cF$ is the restriction of an object in $\Br(\cY\tms_{\C^*} \Wedge^2 V, W)$ then we can write it in the form provided by Lemma \ref{lem:minimalmodels}, so it obviously obeys the stated condition for all $p\in \P(\Wedge^2 V)$. The condition is preserved under taking mapping cones so it follows that it holds for an arbitrary object of the category $\Br(\cY\tms_{\C^*} (\Wedge^2 V \setminus 0), W)$.

Conversely, suppose $\cF$ is an object that obeys the stated condition for all $p$. The subcategory 
$$\Br(\cY\tms_{\C^*} \Wedge^2 V, W) \; \subset D^b(\cY\tms_{\C^*} \Wedge^2 V, W)$$
is right admissible, by the analogue of Lemma \ref{lem:BranesToModules2}, and the projection functor is linear over $\Wedge^2 V$. It follows that the subcategory 
$$\Br(\cY\tms_{\C^*}(\Wedge^2 V\setminus 0), W) \; \subset D^b(\cY\tms_{\C^*} (\Wedge^2 V\setminus 0), W)$$
 is also right-admissible. Write $\cF^\diamond$ for the projection of $\cF$ into the orthogonal subcategory; we need to prove that $\cF^\diamond\simeq0$. 

 If we restrict $\cF$ to a fibre $\widetilde{\cY}\times p$ and then project into the orthogonal to $\Br(\widetilde{\cY}, W_p)$ then we get zero, by assumption. But these two operations commute, so $\cF^\diamond$ restricts to zero on each fibre.
 Then $\hom(\cF^\diamond, \cF^\diamond)$ is acyclic on each fibre, so it is acyclic, and $\cF^\diamond$ is contractible.

\item This is the same argument as in part (1), apart from the following two points. Firstly the subcategory $\Br(\cT^{ss,*}, W)$ is admissible because the subcategory $\Br(\cT, W)\subset D^b(\cT, W)$ is admissible (again by an analogue of Lemma \ref{lem:BranesToModules2}) and the projection functor is linear over $\Wedge^2 V \times \Wedge^2 V^\vee$. Secondly the condition on fibres only shows that the projected object $\cE^\diamond$ is zero at points lying in $p\times \cY^{ss}\times 0$ for some $p$, but since the support of $\hom(\cE^\diamond, \cE^\diamond)$ must be closed and invariant under $\GSp(Q)\times \C^*\times \C^*_R$ it follows that $\cE^\diamond$ is contractible everywhere.
\end{enumerate}
\end{proof}

\begin{proof}[Proof of Proposition \ref{thm:GITHPDualEqualsTautologicalHPDual}]
Just as in Section \ref{sec:Windows}, for an interval $I\subset \Z$ let us define 
$$\Br(\cT, W)_I \; \subset \Br(\cT, W) $$
to be the full subcategory of objects $\cE$ such that the $\Delta$-weights of $h_\bullet(\cE|_0)$ all lie in the interval $I$. We also define
$$\Br(\cT^*, W)_I \; \subset \Br(\cT^*, W) $$
as (the subcategory generated by) the image of $\Br(\cT, W)_I$ under restriction.\footnote{The subcategory $\Br(\cT, W)_I$ is preserved under tensoring by characters of $\C^*$, so we can regard it as a category over $[\Wedge^2 V / \C^*]$ and its restriction to $\P(\Wedge^2 V)$ is well-behaved, in particular it depends on the choice of $I$.}

The proofs of  Proposition \ref{prop:EasyWindow} and Theorem \ref{thm:ProjectiveWindows} adapt trivially to prove that restriction induces equivalences
$$\Br(\cT, W)_{[-qv, qv)}\; \isoto\; \Br(\cT^{ss}, W) $$
and:
$$\Br(\cT, W)_{[-qv, -qv+2)}\; \isoto\; \Br(\cY\tms_{\C^*}\Wedge^2 V, W)$$
These equivalences are linear over $[\Wedge^2 V / \C^*]$, so if we restrict to the complement of the origin in $\Wedge^2 V$ we get  equivalences
\begin{equation}\label{eq:Twindow1} \Br(\cT^*, W)_{[-qv, qv)}\; \isoto\; \Br(\cT^{ss, *}, W) \end{equation}
and:
\begin{equation}\label{eq:Twindow2}\Br(\cT^*, W)_{[-qv, -qv+2)}\; \isoto\; \Br(\cY\tms_{\C^*}(\Wedge^2 V\setminus 0), W)\end{equation}

We can characterize the subcategory $\Br(\cT^*, W)_{[-qv, -qv+2)}\subset D^b(\cT^*, W)$ in the manner of Lemma \ref{lem:pointwise}. The equivalence \eqref{eq:Twindow2} is part of an embedding
\begin{equation*}D^b(\cY\tms_{\C^*}(\Wedge^2 V\setminus 0), W) \; \into \; D^b(\cT^*, W) \end{equation*}
whose image consists of all objects $\cF$ such for any $p$ the restriction $\cF_{p\times 0\times 0}$ has $\Delta$-weights lying in the interval $[-qv, -qv+2)$. Over any point $p$ this embedding restricts to the embedding
$$D^b(\widetilde{\cY}, W_p) \into D^b(\Stack{\Hom(V, Q)\times \C}{\GSp(Q)}, W_p) $$
which maps:
$$\Br(\widetilde{\cY}, W_p)\; \isoto\; \Br\!\left(\Stack{\Hom(V, Q)\times \C}{\GSp(Q)}, W_p\right)_{[-qv, -qv+2)}$$
Using part (1) of Lemma \ref{lem:pointwise}, this proves that an object $\cE\in D^b(\cT^*, W)$  lies in the subcategory $\Br(\cT^*, W)_{[-qv, -qv+2)}$ if and only if for any $p$, the restriction $\cE|_{p\times 0\times 0}$ contains only $\GSp(Q)$-irreps from the set $Y_{q,s}\times [-qv, -qv+2)$, which is if and only if $\cE|_{p\times \cY\times 0} \in \Br(\cY)_{[-qv, -qv+2)}$. 
 
Using \eqref{eq:Twindow1}, restricting to $\cT^{ss, *}$ gives an equivalence between $\Br(\cT^*, W)_{[-qv, -qv+2)}$ and some subcategory of $\Br(\cT^{ss, *}, W)$. Our claim is that this subcategory is exactly the tautological HP-dual category $\cW^\vee$; combining this claim with \eqref{eq:Twindow2} proves the proposition.

Over a point $p$ we can restrict from $\cT^*$ to the locus $p\times \cY\times 0$, and we can restrict from $\cT^{ss, *}$ to the locus $p\times \cY^{ss}\times 0$. On these loci \eqref{eq:Twindow1} restricts to the equivalence
$$\Br(\cY)_{[-qv, qv)} \isoto \Br(\cY^{ss})$$
and the image of $\Br(\cY)_{[-qv,-qv+2)}$ here is the category $\cA_0$, by definition. It follows immediately that the image of $\Br(\cT^*, W)_{[-qv, -qv+2)}$ inside $D^b(\cT^{ss, *}, W)$ lies inside $\cW^\vee$.

For essential-surjectivity, let $\cE\in \cW^\vee$. By part (2) of Lemma \ref{lem:pointwise} $\cE$ lies in the category $\Br(\cT^{ss, *}, W)$, so under \eqref{eq:Twindow1} it lifts to a unique object $\hat{\cE}\in \Br(\cT^*, W)_{[-qv, qv)}$. For any $p$ the restriction $\hat{\cE}|_{p\times \cY \times 0}$ lies in $\Br(\cY)_{[-qv,qv)}$.
Then since $\hat\cE|_{p\times \cY^{ss} \times 0} \in \cA_{0}$, by definition of $\cA_{0}$ we must have $\hat \cE|_{p\times \cY \times 0} \in \Br(\cY)_{[-qv, -qv+2)}$, and from our characterization above this shows that $\hat{\cE}$ lies in $\Br(\cT^*, W)_{[-qv, -qv+2)}$.
\end{proof}

So for any $L$, we have equivalences
$$ \cW^\vee_{L^\perp} \; \cong \;  \Br( \cY\tms_{\C^*} (L^{\perp} \setminus 0) ,\, W)\;\cong\; \Br(\cX^{\ss}\tms_{\C^*} L, \, W')$$
by Proposition \ref{thm:GITHPDualEqualsTautologicalHPDual} and Corollary \ref{thm:HoriDualityWithLOverSemistableLocus} respectively. If we apply the main theorem of HP duality (Theorem \ref{thm:generalHPDTheorem}), and observe that $\dim \Wedge^2 V^\vee = sv + qv$, we immediately get the following result.

\begin{thm}
  \label{thm:HPDualityBranes}
  Let $L \subset \Wedge^{2} V^{\vee}$ be a linear subspace, let $l=\dim L$ and let $l'=\dim L^\perp$. There are semiorthogonal decompositions
  \[
    \Br( \cY^{\ss}\tms_{\C^*} L^\perp,\, W) = \Big\langle \cC_{L},\; \cA_{l'}(l'), \;\ldots,\; \cA_{qv-1}(qv-1) \Big\rangle 
  \]
  and
  \[
    \Br(\cX^{\ss} \tms_{\C^*} L,\, W) = \Big\langle \cB_{1-sv}(1-sv),\;\ldots, \;\cB_{-l}(-l),\; \cC_{L} \Big\rangle
  \]
  for some category $\cC_L$. 
\end{thm}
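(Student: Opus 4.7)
The plan is to apply the general tautological HP duality theorem (Theorem \ref{thm:generalHPDTheorem}) to $\cW = \Br(\cY^{\ss})$, equipped with the rectangular Lefschetz decomposition established in Lemma \ref{thm:LefschetzDecompositionExists}. In the notation of Section \ref{sec:HPDualityBackground}, the relevant ambient space is $V^{\vee}_{\mathrm{HPD}} = \Wedge^2 V^\vee$, of dimension $\binom{v}{2} = (s+q)v$, using the identity $v = 2s+2q+1$. The rectangular decomposition has $qv$ identical pieces, so in the notation of Proposition \ref{thm:HPDualHasLefschetzDecomposition} we have $N = qv - 1$ and $N' = qv$; consequently $M = (s+q)v - qv - 1 = sv-1$, which matches the extreme index $1 - sv$ in the statement. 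Theorem \ref{thm:generalHPDTheorem} then produces semiorthogonal decompositions of $\cW_L$ and $\cW^\vee_{L^\perp}$ of exactly the shape claimed in Theorem \ref{thm:HPDualityBranes}, sharing a common middle piece $\cC_L$.

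It remains to identify $\cW_L$ and $\cW^\vee_{L^\perp}$ with the two categories appearing in the theorem. The dual side is essentially already done: Proposition \ref{thm:GITHPDualEqualsTautologicalHPDual} gives $\cW^\vee_{L^\perp} \cong \Br((L^\perp \setminus 0) \tms_{\C^{*}} \cY, W)$, and then Corollary \ref{thm:HoriDualityWithLOverSemistableLocus} (the projective avatar of our Hori duality) identifies this with $\Br(\cX^{\ss}\tms_{\C^{*}} L, W')$. Combining these identifications gives the second decomposition of Theorem \ref{thm:HPDualityBranes}.

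For the first decomposition, one proves directly that $\cW_L \cong \Br(\cY^{\ss}\tms_{\C^{*}} L^\perp, W)$. Unpacking the definition in Section \ref{sec:HPDualityBackground}, $\cW_L$ sits inside $D^b(\widetilde{Y}^{\ss}\tms_{\C^{*}} L^\perp, W)$, where $\widetilde{Y} = \Tot(\cO_{\cY^{\ss}}(-1))$ adjoins one extra fibrewise scalar direction to $\cY^{\ss}$; since the quotient $\GSp(Q)/\Sp(Q)$ already scales $\cO(1)$, this extra factor can be absorbed and $\widetilde{Y}^{\ss} \tms_{\C^{*}} L^\perp$ is, up to the irrelevant zero section, naturally equivalent to $\cY^{\ss} \tms_{\C^{*}} L^\perp$. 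The defining condition for $\cW_L$---that restriction to the zero section of $L^\perp$ lie in $\cW = \Br(\cY^{\ss})$---then corresponds, via a family version over $\P L^\perp$ of Theorem \ref{thm:ProjectiveWindows}, to the $\Sp(Q)$ grade-restriction rule defining $\Br(\cY^{\ss}\tms_{\C^{*}} L^\perp, W)$.

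The main technical obstacle is this last identification: one must essentially run the window-category analysis of Proposition \ref{thm:GITHPDualEqualsTautologicalHPDual} on the ``easy'' GIT chamber of $\cY \tms_{\C^{*}} L^\perp$ and in families over $\P L^\perp$, carefully matching the $\Delta$-weight windows. Once this is set up, Theorem \ref{thm:HPDualityBranes} follows by transporting the two decompositions produced by Theorem \ref{thm:generalHPDTheorem} along the two identifications above.
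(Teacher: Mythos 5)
Your overall route is the same as the paper's: apply the general LG-model HPD theorem (Theorem \ref{thm:generalHPDTheorem}) to $\cW=\Br(\cY^{\ss})$ with the rectangular Lefschetz decomposition of Lemma \ref{thm:LefschetzDecompositionExists}, read off $N'=qv$ and $M=sv-1$ from $\dim\Wedge^2V^\vee=(s+q)v$, and identify the dual side via Proposition \ref{thm:GITHPDualEqualsTautologicalHPDual} followed by Corollary \ref{thm:HoriDualityWithLOverSemistableLocus}. But there is a gap you skip entirely: the framework of Section \ref{sec:HPDualityBackground} only applies to a subcategory $\cW\subseteq D^b(Y)$ that is admissible and saturated, and verifying this for $\Br(\cY^{\ss})$ is essentially the whole content of the paper's written proof. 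One needs the full exceptional collection to produce a strong generator, Bondal--Van den Bergh representability for functors $\Br(\cY^{\ss})\to D^b(\C)^{\mathrm{op}}$, the Serre functor of Proposition \ref{thm:SerreOnTheProjectiveNCR} to get representability on the opposite side (hence saturation), and Ext-finiteness of $D^b(\cY^{\ss})$ for admissibility. Applying Theorem \ref{thm:generalHPDTheorem} without this check is not legitimate.

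Your handling of the first decomposition is also off as described. You are right to ask how $\cW_L$ relates to $\Br(\cY^{\ss}\tms_{\C^*}L^\perp,W)$, but the latter is \emph{not} defined by a grade-restriction rule: it is defined as the subcategory generated by the image of $\Br(\cY\tms_{\C^*}L^\perp,W)$, equivalently by matrix factorizations built from the $Y_{q,s}$-bundles, and the paper elsewhere explicitly declines to prove that a zero-section grade-restriction rule characterizes such categories. Moreover, a ``family version of Theorem \ref{thm:ProjectiveWindows} over $\P L^\perp$'' does not parse: the chamber $\cY^{\ss}\tms_{\C^*}L^\perp$ does not fibre over $\P L^\perp$; it is the other chamber $\cY\tms_{\C^*}(L^\perp\setminus 0)$ that lives over $\P L^\perp$, and that family-window analysis is exactly what Proposition \ref{thm:GITHPDualEqualsTautologicalHPDual} already performs for the dual side. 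The easy containment is $\Br(\cY^{\ss}\tms_{\C^*}L^\perp,W)\subseteq\cW_L$, since objects built from brane bundles restrict to the zero section inside $\Br(\cY^{\ss})$; the reverse containment is the nontrivial one, and your sketch offers no workable argument for it (the paper absorbs this identification into its ``combine'' step rather than running any windows-in-families argument of the kind you propose). So the proposal matches the paper in strategy, but both the omitted admissibility/saturation check and the unresolved, mis-aimed treatment of the first semiorthogonal decomposition are genuine gaps.
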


This statement is an upgrade of Corollary \ref{cor:HPDlite}. If $l \geq sv$ (so $ l'\leq qv$) then the second semi-orthogonal decomposition contains only a single piece, so the category  $\Br(\cX^{\ss} \tms_{\C^*} L,\, W)=\cC_L$ embeds as an admissible subcategory of  $\Br( \cY^{\ss}\tms_{\C^*} L^\perp,\, W)$. If the inequalities are reversed then the embedding goes the other way, and if $l=sv$ (so $l'=qv$) then the categories are equivalent.

Combining this theorem with Lemma \ref{thm:BaseChangeForModuleCategoriesIsBaseChangeFactorisationCategories}, we arrive at the claim that $(\Pf_q,B)$ is HP dual to $(\Pf_s,A)$:
\begin{thm}
  \label{thm:HPDualityAlgebrasConcrete}
  If $L \subset \wedge^{2} V^{\vee}$ is a linear subspace such that $\cX^{\ss}|_{\P L^{\perp}}$ and $\cY^{\ss}|_{\P L}$ have the expected dimensions, then we get semiorthogonal decompositions
  \[
    D^{b}(\Pf_{q} \cap \P L,\; B|_{\P L}) = \Big\langle \cC_{L},\; \cA_{l'}(l'), \;\ldots,\; \cA_{qv-1}(qv-1) \Big\rangle 
  \]
  and
  \[
    D^{b}(\Pf_{s} \cap \P L^\perp,\; A|_{\P L^\perp}) = \Big\langle \cB_{1-sv}(1-sv), \;\ldots, \;\cB_{l}(l),\; \cC_{L} \Big\rangle.
  \]
\end{thm}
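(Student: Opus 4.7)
The plan is to derive this theorem as a direct consequence of Theorem \ref{thm:HPDualityBranes}, by transporting the semiorthogonal decompositions there along the equivalences provided by Corollary \ref{cor:NCRslices} and its $\cY$-analogue. There is essentially no new geometric input needed: all the heavy lifting (the affine Hori duality, the windows, the Lefschetz decomposition of $\Br(\cY^{\ss})$, and the identification with the tautological HP dual) has been accomplished by the time we reach Theorem \ref{thm:HPDualityBranes}.

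First I would invoke Corollary \ref{cor:NCRslices}: the assumption that $\cX^{\ss}|_{\P L^{\perp}}$ has the expected dimension produces an equivalence
\[
\Br(\cX^{\ss} \tms_{\C^{*}} L,\, W') \;\isoto\; D^{b}(\Pf_{s} \cap \P L^{\perp},\, A|_{\P L^{\perp}}).
\]
Likewise, the analogous statement on the $\cY$ side (obtained by interchanging the roles of $(S,q)$ and $(Q,s)$ and swapping $\Wedge^{2}V^{\vee}$ with $\Wedge^{2}V$) furnishes, under the expected-dimension hypothesis for $\cY^{\ss}|_{\P L}$, an equivalence
\[
\Br(\cY^{\ss} \tms_{\C^{*}} L^{\perp},\, W) \;\isoto\; D^{b}(\Pf_{q} \cap \P L,\, B|_{\P L}).
\]

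Next I would substitute these two equivalences into the two semiorthogonal decompositions of Theorem \ref{thm:HPDualityBranes}. Both equivalences are $\cO$-linear over $\P(\Wedge^{2}V^{\vee})$ (respectively $\P(\Wedge^{2}V)$) because they are constructed via Kn\"orrer periodicity and the tilting bundle $T$, which are linear over the appropriate base; therefore they intertwine the twists $(i)$ by $\cO(i)$ appearing on both sides of Theorem \ref{thm:HPDualityBranes}, and they preserve admissibility and semi-orthogonality of the constituent subcategories $\cA_{i}(i)$, $\cB_{j}(j)$, and $\cC_{L}$. Transporting everything across gives the two claimed semiorthogonal decompositions.

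There is no serious obstacle here -- the theorem is a translation of Theorem \ref{thm:HPDualityBranes} through two already-established equivalences. The only matter requiring care is the verification that the pieces $\cA_{i}(i)$ and $\cB_{j}(j)$, originally defined as subcategories of the brane categories on $\cY^{\ss}$ and $\cX^{\ss}$, retain their identity after base change -- but this is immediate from the linearity of Corollary \ref{cor:NCRslices} over the projective base, since base change commutes with twisting by $\cO(i)$ and with the inclusion of admissible subcategories.
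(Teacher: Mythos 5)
Your proposal is correct and matches the paper's own argument: the paper obtains this theorem precisely by combining Theorem \ref{thm:HPDualityBranes} with Corollary \ref{cor:NCRslices} (i.e.\ Lemma \ref{thm:BaseChangeForModuleCategoriesIsBaseChangeFactorisationCategories} plus Kn\"orrer periodicity) and its $\cY$-analogue, under the same expected-dimension hypotheses. Your added remark about linearity over the base, which ensures the twists and the Lefschetz pieces are transported correctly, is exactly the implicit point in the paper's one-line deduction.
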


\begin{rem}
  The category $\Br(\cX^{\ss})$ is equipped with two natural Lefschetz decompositions: The first via the natural variant of Lemma \ref{thm:LefschetzDecompositionExists}, and the second obtained from the fact that it is the HP dual of $\Br(\cY^{\ss})$.
  We don't know if these two decompositions are the same, but this matching up is not needed for the relation of HP duality.
\end{rem}

\section{The case when $\dim V$ is even}\label{sec:EvenCase}

In this section - as promised in the introduction - we briefly discuss the case when $v=\dim V$ is even instead of odd. Many of our previous results continue to hold, but a few crucial ones fail. We explain which parts work or do not work, and in particular explain how the `window for the easy phase' (Theorem \ref{thm:ProjectiveWindows}) must be modified.
\pgap

We keep all of our notation from before: the vector spaces $V, S, Q$, the stacks $\cX$ and $\cY$, etc. We now set the dimension $v$ of $V$ to be:
$$v = 2s + 2q $$
Then $\Pf_s$ is still the classical projective dual to $\Pf_q$. We continue to use $Y_{s,q}$ for the set of Young diagrams of height at most $s$ and width at most $q$, and define the subcategory
$$\Br(\cX\tms L,\, \dualW)\;\subset\;D^b(\cX\tms L,\, \dualW)$$
in exactly the same way as before (and the same is true on the $\cY$ side). The crucial thing that changes is that, although $\Br(\cX)$ is still a non-commutative resolution of $\widetilde{\Pf_s}$ by \cite{spenko_non-commutative_2015}, \emph{it is no longer crepant} in general. 
This means that we still have an equivalence between $\Br(\cX)$ and the derived category of an algebra $A$ (defined exactly as before), but $A$ is no longer Cohen--Macaulay.\footnote{In Van den Bergh's definition of a non-commutative crepant resolution `crepancy' is the requirement that $A$ is Cohen--Macaulay \cite{van_den_bergh_non-commutative_2004}. Assuming the singularity is Gorenstein (which is part of Van den Bergh's definition) this is equivalent to the dualizing complex of $A$ being trivial.}  All other statements of Section \ref{sec:Br(X)} continue to hold.

Section \ref{sec:Kernel} in which we define and study the kernel is completely unchanged, since $V$ plays essentially no role here. Section \ref{sec:generic}, in which we prove that $\Phi$ is generically an equivalence, also continues to work without modification (indeed the case when $v$ is even was considered in our earlier papers on which this section is based). 

However, our proof that $\Phi$ is an equivalence everywhere fails. We can still define our objects $\cP_\delta$ (Section \ref{sec:DualObjects}) and their endomorphism dga $A'$, but since $A$ is not Cohen--Macaulay we cannot prove Proposition \ref{prop:AandA'areEquivalent}. In fact the first failure is at Lemma \ref{lem:DegreesOfA'}, which means we cannot even prove that $A'$ is an algebra rather than a dga.

The final steps of Section \ref{sec:completingProof} continue to work, and the end result is that we have a pair of adjoint functors 
$$\begin{tikzcd}[column sep=40pt] 
  \Br(\cX\tms_{\C^*} L, \, \dualW)  \arrow[yshift=.4ex]{r}  &
  \Br(\cY\tms_{\C^*} L^\perp, \, W) \ar[yshift=-.4ex]{l}
\end{tikzcd} $$
but we only know that they are mututally inverse once we restrict to the open set $\OS\subset \Wedge^2 V$ of bivectors having rank $\geq 2s$.
\pgap

Now we move on to Section \ref{sec:Projective}. Base-changing the above adjunction to the complement of the origin in $\Wedge^2 V$, we get an adjunction between $\Br(\cX^{ss}\tms_{\C^*} L, \, \dualW)$ and $\Br(\cY\tms_{\C^*} (L^\perp\setminus 0), \, W)$, which we know to be an equivalence over the open set $\P \OS\subset \P(\Wedge^2 V)$.  The results of Section \ref{sec:NCRslices} are unaffected, so if $L$ is generic then the former category is equivalent to the derived category of the sheaf of algebras $A|_{\P L^\perp}$ on the slice $\Pf_s\cap \P L^\perp$. Since the intersection of $\Pf_s$ with $\P \OS$ is exactly the smooth locus $\Pf_s^{sm}$, this shows that over the smooth locus we have
$$\Br(\cY\tms_{\C^*} (\P\OS\cap L^\perp),\, W) \; \cong \; D^b(\Pf_s^{sm}, A|_{\P L^\perp})$$
which is simply $D^b(\Pf_s^{sm})$.  But once we include the singular locus we don't know whether
these two categories are the same.

The `windows' results of Section \ref{sec:Windows}, which allow us to compare the categories
$$\Br(\cY\tms_{\C^*} (L^\perp\setminus 0), \, W) \aand \Br(\cY^{ss}\tms_{\C^*} L^\perp, \, W)$$
continue to hold, but in a modified form. The window for the `difficult phase' (Proposition \ref{prop:EasyWindow}) needs no adjusting, but we must modify the window for the `easy phase' (Theorem \ref{thm:ProjectiveWindows}) as we now describe.

Recall that the problem is essentially to lift the category $\Br(\cY^{ss})$ to an equivalent subcategory in $\Br(\cY)$. When $v$ was odd, we did this using `prism' in the set of weights of $\GSp(Q)$, restricting the weights of $\Sp(Q)$ and the diagonal 1-parameter subgroup $\Delta$ separately. However, one sees already in the case $q=1$ that this does not work for $v$ even, because in this case Kuznetsov's Lefschetz decomposition of $D^b(\Gr(V, 2))$ is not rectangular \cite{KuznetsovECs}. So our set of allowed weights must have a slightly more complicated shape.

Recall that irreps of $\GSp(Q)$ have highest weights $(\delta, k)$ for $\delta$ a dominant weight of $\Sp(Q)$ and $k$ a weight of $\Delta$ such that $\sum_i \delta_i + k \cong 0 $ mod 2.  We define a subset
$$\Omega \subset Y_{q,s}\tms \Z $$
as the set of weights $(\delta, k)$ such that either
\begin{itemize}
\item $k \in \big[ {-qv}, (q-1)v\big)$, or
\item $k\in  \big[(q-1)v, qv\big)$ and $\delta \in Y_{q,s-1}$.
\end{itemize}
Then we define a corresponding full subcategory
$$\Br(\cY\tms_{\C^*} L^\perp,\, W)_{\proj} \; \subset \; D^b(\cY\tms_{\C^*} L^\perp,\, W) $$
of objects $\cE$ such that $h_\bullet(\cE|_0)$  contains only irreps from the set $\Omega$. The analogue of Theorem \ref{thm:ProjectiveWindows} for the $v$ even case is:

\begin{thm}\label{thm:ProjectiveWindowsEvenCase}
  The restriction functor
  $$\Br(\cY\tms_{\C^*} L^\perp,\, W)_{\proj}  \; \To \; \Br(\cY^{ss}\tms_{\C^*} L^\perp,\, W) $$
  is an equivalence.
\end{thm}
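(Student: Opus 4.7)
The plan is to mirror the proof of Theorem \ref{thm:ProjectiveWindows}, splitting the assertion into fully faithfulness and essential surjectivity of the restriction functor. The shape of $Y_{\proj}$ — a large rectangle $Y_{q,s} \tms [-qv,(q-1)v)$ together with a truncated top $Y_{q,s-1} \tms [(q-1)v, qv)$ — is calibrated so that the total $\Delta$-width of the window is still $2qv$, matching the shift $(\det Q)^{-v}$ in the Serre functor of Proposition \ref{thm:SerreDuality2}, while respecting the fact that the non-crepant resolution has discrepancies supported at partitions of width exactly $s$.

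For fully faithfulness, I would adapt Proposition \ref{prop:Fullyfaithful} and reduce, as in Lemma \ref{prop:FullyfaithfulSimpleCase}, to the vanishing $\RHom(\cO_Z, N(p)) = 0$ for $p \geq 0$, where $N = \pi_*(\Schpur{\alpha_1,k_1}Q^\vee \otimes \Schpur{\alpha_2,k_2}Q)$ with $(\alpha_i,k_i) \in Y_{\proj}$. The cone $\widetilde{\Pf_q}$ is still Gorenstein with canonical module $\cO(-2qv)$, so Serre duality reduces this to (i) $N(p)$ being Cohen--Macaulay, and (ii) $N^\vee$ being generated in degrees $> -2qv$. Condition (ii) holds automatically on $Y_{\proj}$ since $k_1 \geq -qv$ and $k_2 < qv$ force $k_1 - k_2 > -2qv$. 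For (i), we cannot directly invoke \cite[Thm.\ 1.6.4]{spenko_non-commutative_2015} because the strong criticality hypothesis fails when $v$ is even; however, the same \v{S}penko--Van den Bergh machinery delivers an explicit combinatorial criterion for CM-ness of $\pi_*(\Schpur{\delta}Q)$ in terms of $\delta$, and $Y_{\proj}$ is designed precisely so that every irreducible summand appearing in $\Schpur{\alpha_1}Q^\vee \otimes \Schpur{\alpha_2}Q$ satisfies this criterion.

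For essential surjectivity, the argument parallels Proposition \ref{prop:EssentiallySurjective}: define vertex simple objects $P_{\alpha,k} \in \Br(\cY)$ as in the odd case, note that they are supported at the origin and hence restrict to zero on $\cY^{\ss}$, and use them to reduce an arbitrary lift to one built from bundles indexed by $Y_{\proj}$. The analog of Lemma \ref{lem:usefulresolutions} must be sharpened: the weights of $P_{\alpha,k}$ still lie in the strip $Y_{q,s} \tms [k-2qv, k]$ by the Koszul and Serre duality arguments, but one needs finer control on which $(\alpha',k')$ actually appear in order to stay within the irregular shape of $Y_{\proj}$. Induction on $|k|$ via the triangle $\cE \to \Schpur{\alpha,k}Q \to P_{\alpha,k}$ (and its Serre dual for negative $k$), combined with the fully-faithfulness result and the perturbation lemma \cite[Lemma 3.6]{segal_equivalence_2011}, then yields the equivalence.

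The main obstacle will be the Cohen--Macaulay step in fully faithfulness, which requires a combinatorial verification tied exactly to the non-rectangular shape of $Y_{\proj}$: one must show that the truncation from $Y_{q,s}$ to $Y_{q,s-1}$ in the top strip of width $v$ is precisely what is needed to make all tensor-product summands satisfy \v{S}penko--Van den Bergh's CM criterion. If this combinatorial matching works out cleanly, both halves of the proof should then fall into place by direct analogy with the odd case.
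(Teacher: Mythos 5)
Your overall architecture (fully faithfulness plus essential surjectivity, mirroring the odd case) matches the paper, but both of the steps you leave to "combinatorial verification" are precisely where the even case breaks, and your proposed substitutes would not work. For fully faithfulness, you want to keep the odd-case route: local duality on the Gorenstein cone plus Cohen--Macaulayness of the modules of covariants $N = \pi_*(\Schpur{\alpha_1}Q^\vee \otimes \Schpur{\alpha_2}Q)$. But CM-ness is a condition on the $\Sp(Q)$-content of $N$ alone, while membership in $Y_{\proj}$ only constrains \emph{pairs} of $\Sp(Q)$-weights and $\Delta$-weights; taking $\alpha_1,\alpha_2$ of width exactly $s$ with both $k_i$ in the lower strip $[-qv,(q-1)v)$ is allowed in $Y_{\proj}$, and then width-$2s$ summands unavoidably occur. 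If all width-$\le 2s$ covariants were CM, the decoupled argument "CM $+$ generation degree $> -2qv$" would prove fully faithfulness for the whole rectangle $Y_{q,s}\tms[-qv,qv)$, which is false: Proposition \ref{thm:nonVanishingVdB} exhibits a nonzero local cohomology group $H^{*}_{\X^{\us}}(\X,\Schpur{(2s,\mathbf{0}_{q-1}),-(2qv-v)}Q)^{\GSp(Q)}$, and such a weight arises as a summand of $\hom$'s between rectangle bundles. The failure of CM-ness here is the same phenomenon as the failure of crepancy, so there is no combinatorial tuning of $Y_{\proj}$ that rescues your route. What is actually needed, and what the paper proves (Proposition \ref{thm:VanishingVdB}), is a \emph{coupled} vanishing statement: local cohomology along $\X^{\us}$ vanishes for width $\le 2s-1$ with $\Delta$-degree $<2qv$, and for width $2s$ only with $\Delta$-degree $<2qv-v$; this is obtained not by local duality but by refining Van den Bergh's direct computation of $H^{*}_{\X^{\us}}(\X,\cO_{\X})$ as a $\GSp(Q)$-representation, and the shape of $Y_{\proj}$ is exactly what makes every Hom between window bundles fall into one of these two cases.

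For essential surjectivity you correctly note that "finer control" on the weights of $P_{\alpha,k}$ is needed, but you offer no mechanism, and the odd-case tools you cite are unavailable: the Serre-duality lower bound of Lemma \ref{lem:usefulresolutions} rests on the crepancy of $B$ (via Proposition \ref{thm:SerreDuality2}), which fails for $v$ even. The refined statement the induction actually requires is Proposition \ref{thm:weightsOfPDelta}: the weights of $P_{\delta,k}$ lie in $Y_{q,s}\tms[k-2qv,k)$ when $\delta\in Y_{q,s-1}$, but in the shorter strip $Y_{q,s}\tms[k-2(q-1)v,k)$ when $\delta$ has width $s$ — without the second, shorter bound the induction cannot avoid the excluded corner $(Y_{q,s}\setminus Y_{q,s-1})\tms[(q-1)v,qv)$. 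The paper proves this by applying the local-cohomology functor $\Hom_{\cY^{\us}}(E,-)$ to the minimal resolution of $P_{\delta,k}$ and pinning down the minimal weight using the \emph{non}-vanishing results, Propositions \ref{thm:nonVanishingVdB} and \ref{thm:nonVanishingForCanonicalCharacter} (whose proofs — mod-$2$ multiplicity counts through Van den Bergh's spectral sequences, Borel--Weil--Bott combinatorics, and the Littlewood--Newell computation of Lemma \ref{thm:LittlewoodNewellComputation}) — constitute the real content of the even case. Your proposal contains no substitute for these inputs, so both halves of the argument have genuine gaps rather than routine verifications.
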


It takes several pages of detailed calculations to prove this theorem and we won't present them here. The interested reader should consult the addendum to this paper \cite{rennemo_segal_addendum}. 

Applying Theorem \ref{thm:ProjectiveWindowsEvenCase} with $L^{\perp} = 0$ and arguing as in Lemma \ref{thm:LefschetzDecompositionExists}, we see that $\Br(\cY^{\ss})$ has a Lefschetz decomposition
\begin{equation}
  \label{eqn:evenLefschetzDecomposition}
  \Br(\cY^{\ss}) = \Big\langle \cA,\, \cA(1),\, \ldots,\, \cA(qv-\onehalf v),\, \cA^{\prime}( qv +\onehalf v), \,\ldots,\, \cA^{\prime}(qv) \Big\rangle
\end{equation}
where 
$$\cA = \big\langle \Schpur{\delta,k}Q\,\mid\, \delta\in Y_{q,s}, \, k\in \{{-qv}, {-qv}+1\}\big \rangle $$
and:
$$ \cA^{\prime} = \big\langle \Schpur{\delta,k}Q \,\mid\,\delta\in Y_{q,s-1}, \, k\in \{{-qv}, {-qv}+1\}\big\rangle$$
Proposition \ref{thm:GITHPDualEqualsTautologicalHPDual} holds with the same proof, \emph{i.e.}~$\Br(\cY^{ss}\tms_{\C^*} (\Wedge^{2} V \setminus 0),\, W)$ is the HP dual of $\Br(\cY^{\ss})$.

Thus we get relations between $\Br(\cY^{ss}\tms_{\C^*} (L^\perp \setminus 0),\, W)$ and $\Br(\cY^{\ss}|_{\P L})$, which can also be seen concretely in terms of the window categories.
Recall that the window for the `difficult phase' is obtained by restricting the $\Delta$ weights to the interval $[-qv, 2l'-qv)$ where $l'=\dim L^\perp$. For the right ranges of $l'$ one window is contained in the other, we have
$$ \Br(\cY\tms_{\C^*} L^\perp,\, W)_{\proj}\; \subset \; \Br(\cY\tms_{\C^*} L^\perp,\, W)_{[{-qv},\, 2l'-qv)}\quad\quad \mbox{if } l'\geq qv  $$
and:
$$ \Br(\cY\tms_{\C^*} L^\perp,\, W)_{[{-qv},\, 2l'-qv)}\; \subset \; \Br(\cY\tms_{\C^*} L^\perp,\, W)_{\proj}\quad\quad \mbox{if } l'\leq (q-\onehalf )v  $$
Therefore, if $l'\geq qv$ and $L$ is generic then we get an embedding
$$D^b(\Pf_q\cap \P L, B|_{\P L}) \; \into \Br(\cY\tms_{\C^*}(L^\perp\setminus 0),\, W ) $$
and we know that the latter is a categorical resolution of $\Pf_s$. In the other direction, if $l'\geq (q-\onehalf)v$ and $L$ is generic then we get an embedding:
$$ \Br(\cY\tms_{\C^*}(L^\perp\setminus 0),\, W ) \; \into \; D^b(\Pf_q\cap \P L, B|_{\P L})  $$

A similar decomposition to (\ref{eqn:evenLefschetzDecomposition}) exists for $\Br(\cX^{\ss})$, of course.
The Lefschetz pieces of $\Br(\cX^{\ss})$ and $\Br(\cY^{\ss})$ all have full exceptional collections, and it is easy to check that the sizes of these Lefschetz pieces agree with what they would be if $\Br(\cX^{\ss})$ and $\Br(\cY^{\ss})$ were HP dual to each other.
This lends some support to the possibility that Theorem \ref{thm:HoriDualityWithL} and our other results in fact hold when $\dim V$ is even as well.

\bibliographystyle{alpha}

\bibliography{bibliography}

\newcommand{\etalchar}[1]{$^{#1}$}
\begin{thebibliography}{RSVdB17}

\bibitem[ADS15]{ADS_pfaffian_2015}
Nicolas Addington, Will Donovan, and Ed~Segal.
\newblock The {P}faffian-{G}rassmannian equivalence revisited.
\newblock {\em Alg. Geom.}, 2(3):332--364, 2015.

\bibitem[BDF{\etalchar{+}}13]{ballard_homological_2013}
Matthew Ballard, Dragos Deliu, David Favero, M.~Umut Isik, and Ludmil
  Katzarkov.
\newblock Homological projective duality via variation of geometric invariant
  theory quotients.
\newblock {\em JEMS (to appear)}, 2013.
\newblock arXiv:1306.3957.

\bibitem[BFK12]{ballard_variation_2012}
Matthew Ballard, David Favero, and Ludmil Katzarkov.
\newblock Variation of geometric invariant theory quotients and derived
  categories.
\newblock {\em J. Reine Angew. Math. (to appear)}, 2012.
\newblock arxiv:1203.6643.

\bibitem[Bri93]{brion_sur_1993}
Michel Brion.
\newblock Sur les modules de covariants.
\newblock {\em Ann. Sci. {\'E}cole Norm. Sup. (4)}, 26(1):1--21, 1993.

\bibitem[BVdB03]{bondal_generators_2003}
A.~Bondal and M.~Van~den Bergh.
\newblock Generators and representability of functors in commutative and
  noncommutative geometry.
\newblock {\em Mosc. Math. J.}, 3(1):1--36, 258, 2003.

\bibitem[Eis95]{eisenbud_commutative_1995}
David Eisenbud.
\newblock {\em Commutative algebra}, volume 150 of {\em Graduate {Texts} in
  {Mathematics}}.
\newblock Springer-Verlag, New York, 1995.

\bibitem[EP15]{efimov_coherent_2015}
Alexander~I. Efimov and Leonid Positselski.
\newblock Coherent analogues of matrix factorizations and relative singularity
  categories.
\newblock {\em Algebra Number Theory}, 9(5):1159--1292, 2015.

\bibitem[FH91]{fulton_representation_1991}
William Fulton and Joe Harris.
\newblock {\em Representation theory}, volume 129 of {\em Graduate {Texts} in
  {Mathematics}}.
\newblock Springer-Verlag, New York, 1991.

\bibitem[Har66]{hartshorne_residues_1966}
Robin Hartshorne.
\newblock {\em Residues and duality}.
\newblock Lecture notes of a seminar on the work of {A}. {Grothendieck}, given
  at {Harvard} 1963/64. {With} an appendix by {P}. {Deligne}. {Lecture} {Notes}
  in {Mathematics}, {No}. 20. Springer-Verlag, Berlin-New York, 1966.

\bibitem[Har67]{hartshorne_local}
Robin Hartshorne.
\newblock {\em Local cohomology}, volume 1961 of {\em A seminar given by A.
  Grothendieck, Harvard University, Fall}.
\newblock Springer-Verlag, Berlin-New York, 1967.

\bibitem[Har80]{hartshorne_stable_1980}
Robin Hartshorne.
\newblock Stable reflexive sheaves.
\newblock {\em Math. Ann.}, 254(2):121--176, 1980.

\bibitem[Hir16]{hirano_derived_2016}
Yuki Hirano.
\newblock Derived {K}n\"{o}rrer periodicity and {Orlov}'s theorem for gauged
  {Landau}-{Ginzburg} models.
\newblock {\em arXiv:1602.04769}, 2016.

\bibitem[HK13]{hori_linear_2013}
Kentaro Hori and Johanna Knapp.
\newblock Linear sigma models with strongly coupled phases -- one parameter
  models.
\newblock {\em J. High Energy Phys.}, (10), 2013.

\bibitem[HL15]{halpern-leistner_derived_2015}
Daniel Halpern-Leistner.
\newblock The derived category of a {GIT} quotient.
\newblock {\em J. Amer. Math. Soc.}, 28(3):871--912, 2015.

\bibitem[Hor13]{hori_duality_2013}
Kentaro Hori.
\newblock Duality in two-dimensional $(2,2)$ supersymmetric non-{Abelian} gauge
  theories.
\newblock {\em J. High Energy Phys.}, (10), 2013.

\bibitem[HT07]{hori_aspects_2007}
Kentaro Hori and David Tong.
\newblock Aspects of non-abelian gauge dynamics in two-dimensional {$N=(2,2)$}
  theories.
\newblock {\em J. High Energy Phys.}, (5), 2007.

\bibitem[HT13]{hosono_duality_2013}
Shinobu Hosono and Hiromichi Takagi.
\newblock Duality between ${S}^2 \mathbb{P}^4$ and the double quintic
  symmetroid.
\newblock {\em arXiv:1302.5881}, 2013.

\bibitem[Kap84]{kapranov_derived_1984}
M.~Kapranov.
\newblock Derived category of coherent sheaves on {G}rassmann manifolds.
\newblock {\em Izv. Akad. Nauk SSSR Ser. Mat.}, 48(1):192--202, 1984.

\bibitem[KR08]{khovanov_matrix_2008}
Mikhail Khovanov and Lev Rozansky.
\newblock Matrix factorizations and link homology.
\newblock {\em Fund. Math.}, 199(1):1--91, 2008.

\bibitem[Kuz]{KuznetsovECs}
Alexander Kuznetsov.
\newblock Exceptional collections for grassmannians of isotropic lines.
\newblock {\em Proc. London Math. Soc.}, 97(1):155--182.

\bibitem[Kuz06]{kuznetzov_lines_2006}
Alexander Kuznetsov.
\newblock Homological projective duality for {G}rassmannians of lines.
\newblock {\em arXiv:math/0610957}, 2006.

\bibitem[Kuz07]{kuznetsov_homological_2007}
Alexander Kuznetsov.
\newblock Homological projective duality.
\newblock {\em Publications math{\'e}matiques de l'IH{\'E}S}, 105(1):157--220,
  June 2007.

\bibitem[Kuz08]{kuznetsov_derived_2008}
Alexander Kuznetsov.
\newblock Derived categories of quadric fibrations and intersections of
  quadrics.
\newblock {\em Advances in Mathematics}, 218(5):1340--1369, 2008.

\bibitem[Lip02]{lipman_lectures_2002}
Joseph Lipman.
\newblock Lectures on local cohomology and duality.
\newblock In {\em Local cohomology and its applications ({Guanajuato}, 1999)},
  volume 226 of {\em Lecture {Notes} in {Pure} and {Appl}. {Math}.}, pages
  39--89. Dekker, New York, 2002.

\bibitem[NvO82]{nastasescu_graded_1982}
C.~Năstăsescu and F.~van Oystaeyen.
\newblock {\em Graded ring theory}, volume~28 of {\em North-{Holland}
  {Mathematical} {Library}}.
\newblock North-Holland Publishing Co., Amsterdam-New York, 1982.

\bibitem[Orl12]{orlov_matrix_2012}
Dmitri Orlov.
\newblock Matrix factorizations for nonaffine {LG}-models.
\newblock {\em Math. Ann.}, 353(1):95--108, 2012.

\bibitem[Pos11]{positselski_two_2011}
Leonid Positselski.
\newblock Two kinds of derived categories, {Koszul} duality, and
  comodule-contramodule correspondence.
\newblock {\em Mem. Amer. Math. Soc.}, 212(996):vi+133, 2011.

\bibitem[Ren15]{rennemo_homological_2015}
J{\o}rgen Rennemo.
\newblock The homological projective dual of $\operatorname{{S}ym}^2
  \mathbb{P}({V})$.
\newblock {\em arXiv:1509.04107}, 2015.

\bibitem[Ren17]{rennemo_homological}
J{\o}rgen Rennemo.
\newblock The fundamental theorem of homological projective duality via
  variation of {GIT} stability.
\newblock {\em arxiv:1705.01437}, 2017.

\bibitem[RS]{rennemo_segal_addendum}
J{\o}rgen Rennemo and Ed~Segal.
\newblock Addendum to {H}ori-mological projective duality.
\newblock \textit{Available on the authors' websites.}

\bibitem[RSVdB17]{RSVdB}
J{\o}rgen Rennemo, Ed~Segal, and Michel Van~den Bergh.
\newblock A non-commutative {B}ertini theorem.
\newblock {\em arxiv:1705.01366}, 2017.

\bibitem[Seg11]{segal_equivalence_2011}
Ed~Segal.
\newblock Equivalences between {GIT} quotients of {Landau}-{Ginzburg}
  {B}-models.
\newblock {\em Comm. Math. Phys.}, 304(2):411--432, 2011.

\bibitem[Shi12]{shipman_geometric_2012}
Ian Shipman.
\newblock A geometric approach to {Orlov}'s theorem.
\newblock {\em Compos. Math.}, 148(5):1365--1389, 2012.

\bibitem[ST14]{ST_2014}
Ed~Segal and Richard Thomas.
\newblock Quintic threefolds and fano elevenfolds.
\newblock {\em J. Reine Angew. Math. (to appear)}, 2014.
\newblock 1410.6829.

\bibitem[{Sta}16]{stacks-project}
The {Stacks Project Authors}.
\newblock Stacks {Project}.
\newblock \url{http://stacks.math.columbia.edu}, 2016.

\bibitem[Sun86]{sundarem_combinatorics_1986}
Sheila Sundarem.
\newblock {\em On the combinatorics of representations of $\Sp(2n, \C)$}.
\newblock PhD thesis, MIT, 1986.

\bibitem[{\v{S}}VdB15]{spenko_non-commutative_2015}
\v{S}pela {\v{S}}penko and Michel Van~den Bergh.
\newblock Non-commutative resolutions of quotient singularities.
\newblock {\em Invent. Math. (to appear)}, 2015.

\bibitem[Tho15]{thomas_notes_2015}
R.~P. Thomas.
\newblock Notes on {HPD}.
\newblock {\em arXiv:1512.08985 [math]}, 2015.
\newblock arXiv: 1512.08985.

\bibitem[VdB04]{van_den_bergh_non-commutative_2004}
Michel Van~den Bergh.
\newblock Non-commutative crepant resolutions.
\newblock In {\em The legacy of {Niels} {Henrik} {Abel}}, pages 749--770.
  Springer, Berlin, 2004.

\bibitem[Wey03]{weyman_cohomology_2003}
Jerzy Weyman.
\newblock {\em Cohomology of vector bundles and syzygies}, volume 149 of {\em
  Cambridge {Tracts} in {Mathematics}}.
\newblock Cambridge University Press, Cambridge, 2003.

\bibitem[YZ04]{yekutieli_rigid_2004}
Amnon Yekutieli and James~J. Zhang.
\newblock Rigid {Dualizing} {Complexes} on {Schemes}.
\newblock {\em arXiv:math/0405570}, 2004.
\newblock arXiv: math/0405570.

\bibitem[YZ06]{yekutieli_dualizing_2006}
Amnon Yekutieli and James~J. Zhang.
\newblock Dualizing complexes and perverse sheaves on noncommutative ringed
  schemes.
\newblock {\em Selecta Math. (N.S.)}, 12(1):137--177, 2006.

\end{thebibliography}

\end{document}